\documentclass[a4paper,11pt]{article}
\usepackage{makeidx}
\usepackage[mathscr]{eucal}
\usepackage[dvips]{color}
\usepackage{amssymb, amsfonts, amsmath, amsthm, graphics} 

\setlength{\oddsidemargin}{0.3cm}
\setlength{\evensidemargin}{0.5cm}
\setlength{\topmargin}{-0.75cm}
\setlength{\textheight}{23.0cm}
\setlength{\textwidth}{15.1cm}

\newtheorem{proposition}{Proposition}[section]
\newtheorem{theorem}{Theorem}[section]
\newtheorem{lemma}[proposition]{Lemma}
\newtheorem{remark}{Remark}[section]

\numberwithin{equation}{section}

\title{Existence of a ground state and scattering for a nonlinear Schr\"odinger equation with critical growth}
\author{Takafumi Akahori, Slim Ibrahim, Hiroaki Kikuchi and Hayato Nawa}
\date{}
%\pagestyle{plain}

%%  TEXT START    %%%%%%
\begin{document}
\maketitle

\begin{abstract}
We study the energy-critical focusing nonlinear Schr\"odinger equation with an energy-subcritical perturbation. We show the existence of a ground state in the four or higher dimensions.  Moreover, we give a sufficient and necessary condition for a solution to scatter, in the spirit of Kenig-Merle \cite{Kenig-Merle}.   
\end{abstract}

\section{Introduction}
In this paper, we study the following nonlinear Schr\"odinger equation. 
\begin{equation}\tag{NLS}
\label{11/06/12/9:08}
i\frac{\partial \psi}{\partial t}+\Delta \psi 
+
f(\psi) 
+
|\psi|^{2^{*}-2}\psi
=0,
\end{equation}
where $\psi=\psi(x,t)$ is a complex-valued function on $\mathbb{R}^{d}\times \mathbb{R}$ ($d\ge 3$), $\Delta$ is the Laplace operator on $\mathbb{R}^{d}$, $2^{*}:=2+\frac{4}{d-2}$ and $f\colon \mathbb{C}\to \mathbb{C}$ is a continuously differentiable function in the $\mathbb{R}^{2}$-sense. We specify the nonlinearity $f$ later (see the assumptions \eqref{11/07/18/9:45}--\eqref{11/05/02/8:40} below); Especially, we assume the Hamiltonian structure (see \eqref{11/07/18/9:25} below), so that there exists a function $F \in C^{2}(\mathbb{C},\mathbb{R})$ such that the Hamiltonian for \eqref{11/06/12/9:08} is given by   
\begin{equation}
\label{11/07/19/11:34}
\mathcal{H}(u)
:=
\frac{1}{2}
\left\|\nabla u \right\|_{L^{2}}^{2}
-\frac{1}{2}\int_{\mathbb{R}^{d}}F(u)
-\frac{1}{2^{*}}\left\| u \right\|_{L^{2^{*}}}^{2^{*}}.
\end{equation}
Moreover, we assume that $f$ satisfies the mass-supercritical and energy-subcritical condition (see \eqref{11/04/30/22:54} and \eqref{11/05/02/8:40}). Hence, the equation  \eqref{11/06/12/9:08} is considered to be a perturbed one of 
\begin{equation}\tag{${\rm NLS}_{0}$}
\label{11/07/17/16:01}
i\frac{\partial \psi}{\partial t}+\Delta \psi 
+
|\psi|^{2^{*}-2}\psi
=0.
\end{equation}
Here, the Hamiltonian for \eqref{11/07/17/16:01} is 
\begin{equation}\label{11/07/31/18:19}
\mathcal{H}_{0}(u):=\frac{1}{2}\left\|\nabla u \right\|_{L^{2}}^{2}-\frac{1}{2^{*}}\left\| u \right\|_{L^{2^{*}}}^{2^{*}}. 
\end{equation}

It is well-known that the equation \eqref{11/07/17/16:01} dose not have an oscillatory standing wave; In contrast, \eqref{11/07/17/16:01} has the non-oscillatory solution  
\begin{equation}\label{11/04/10/13:23}
W(x):=\left( \frac{\sqrt{d(d-2)}}{1+|x|^{2}} \right)^{\frac{d-2}{d}}, 
\qquad 
x \in \mathbb{R}^{d}. 
\end{equation}
 
Our first aim is to show that a suitable perturbation $f$ gives rise to an oscillatory standing wave for $d\ge 4$. In other words, we intend to prove that for $d\ge 4$ and $\omega>0$, there exists a solution to the elliptic equation  
\begin{equation}\label{11/05/01/17:30}
- \Delta u + \omega u 
-f(u) 
- |u|^{2^{*}-2}u = 0, 
\qquad 
u \in H^{1}(\mathbb{R}^{d})\setminus \{0\}.
\end{equation}
In particular, we show the existence of a ground state for $d\ge 4$ (see Theorem \ref{11/05/01/18:28} and Remark \ref{11/05/01/18:35} below); A ground state means a solution to \eqref{11/05/01/17:30} which minimizes the action $\mathcal{S}_{\omega}$ among the solutions, where 
\begin{equation}
\label{11/05/01/0:00}
\mathcal{S}_{\omega}(u)
:=
\frac{\omega}{2} \left\|u \right\|_{L^{2}}^{2}
+
\mathcal{H}(u).
\end{equation}
\par 
We remark that, in \cite{AIKN}, the same authors considered the case $f(z)=\mu |z|^{p-1}z$ with $\mu \in \mathbb{R}$ and $2_{*}-1 <p <2^{*}-1$ ($2_{*}:=2+\frac{4}{d}$), and proved that if $d\ge 4$, then for any $\mu>0$ and $\omega>0$, there exists a ground state; on the other hand, if $d\ge 3$ and $\mu\le 0$, or $d=3$ and 
 $\displaystyle{\mu /\omega^{\frac{d-2}{4}\{ 2^{*}-(p+1) \}}}$ is sufficiently small, then the equation \eqref{11/05/01/17:30} has no solution. 
\par 
In this paper, we extend the result in \cite{AIKN} to a wider class of perturbations including 
\begin{equation}\label{11/10/11/17:12}
f(z)=\mu_{1}|z|^{p_{1}-1}z+\cdots +\mu_{k}|z|^{p_{k}-1}z,
\end{equation}
where $k \in \mathbb{N}$, $\mu_{1},\ldots, \mu_{k}>0$ and  $2_{*}-1<p_{1}< \cdots < p_{k}< 2^{*}-1$. 
\par 
Our second aim is to give a necessary and sufficient condition for solutions to scatter, in the spirit of Kenig-Merle \cite{Kenig-Merle}, for $d\ge 5$ (see Theorem \ref{11/06/10/17:09} below); Precisely, we introduce a set $A_{\omega,+}$ (see \eqref{11/08/24/12:02} below) and prove that any solution starting from $A_{\omega,+}$ exists globally in time and asymptotically behaves like a free solution in the distant future and past. Although we can introduce the set $A_{\omega,+}$ for $d\ge 3$, the scattering result is open in $d=3,4$, as well as the equation \eqref{11/07/17/16:01} (see \cite{Killip-Visan}).  
\par 
Now, we state our assumption of the perturbation $f$. We first assume that 
\begin{equation}\label{11/07/18/9:45}\tag{A0}
f(0)=\frac{\partial f}{\partial z}(0)
=\frac{\partial f}{\partial \bar{z}}(0)=0.
\end{equation}
For the Hamiltonian structure and mass conservation law, we assume that there exists a real-valued function $F \in C^{2}(\mathbb{C},\mathbb{R})$ such that
\begin{equation}\label{11/07/18/9:25}\tag{A1}
F(0)=0,
\qquad 
\frac{\partial F}{\partial \bar{z}}=f,
\qquad 
\Im{\biggm[ z\frac{\partial F}{\partial z}\biggm]}=0,
\end{equation}
so that $\bar{z}f(z) \in \mathbb{R}$ for any $z \in \mathbb{C}$. Besides, we assume that 
\begin{equation}\label{11/07/18/9:29}\tag{A2}
F\ge 0.
\end{equation}
This assumption \eqref{11/07/18/9:29} rules out the case $F(z)=-\frac{1}{p+1}|z|^{p+1}$ (or $f(z)=-|z|^{p-1}z$) for which the Pohozaev identity shows that there is no solution to \eqref{11/05/01/17:30}. \par  
To ensure the existence of ground state, we further need the monotonicity and convexity conditions, like in \cite{IMN}: Define the operator $D$ by   
\begin{equation}\label{11/07/18/20:36}
Dg(z)
:= 
z \frac{\partial g}{\partial z}(z)
+
\bar{z} \frac{\partial g}{\partial \bar{z}}(z)
\qquad 
\mbox{for $g \in C^{1}(\mathbb{C},\mathbb{C})$}.
\end{equation} 
Then, we assume that there exists $\varepsilon_{0}>0$ such that
\begin{align}
\label{11/04/30/22:39}\tag{A3}
&(D-2_{*}-\varepsilon_{0})F \ge 0, 
\\[6pt]
\label{11/04/30/22:40}\tag{A4}
&(D-2)(D-2_{*}-\varepsilon_{0})F \ge 0.
\end{align}
The conditions  \eqref{11/07/18/9:29}, \eqref{11/04/30/22:39} and  \eqref{11/04/30/22:40} imply  that  
\begin{equation}
\label{11/05/01/21:31}
D^{2}F\ge (2_{*}+\varepsilon_{0})DF \ge (2_{*}+\varepsilon_{0})^{2} F \ge 0.
\end{equation}
Finally, we make an assumption so that $f$ satisfies the mass-supercritical and energy-subcritical growth, like in \cite{IMN}: Fix a cut-off function $\chi \in C^{\infty}(\mathbb{R})$ such that $\chi(u)=1$ for $|u|\le 1$ and $\chi(u)=0$ for $|u|\ge 2$, and put $f_{\le 1}(u):=\chi(u)f(u)$ and $f_{\ge 1}:=f-f_{\le 1}$. Then, we assume that there exist $p_{1}$ and $p_{2}$ such that  $2_{*}-1<p_{1}\le p_{2}<2^{*}-1$, and   
\begin{equation}\tag{A5}
\label{11/04/30/22:54}
\left\{ 
\begin{array}{rll}
\displaystyle{
\left| \frac{\partial f_{\le 1}}{\partial z}(u) \right|
+
\left| \frac{\partial f_{\le 1}}{\partial \bar{z}}(u) \right| 
}
&\lesssim |u|^{p_{1}-1}
&\mbox{if $d=3,4$},
\\[12pt]
\displaystyle{
\left| \frac{\partial f_{\le 1}}{\partial z}(u)
-
\frac{\partial f_{\le 1}}{\partial z}(v)
 \right|
+
\left| \frac{\partial f_{\le 1}}{\partial \bar{z}}(u)
-
\frac{\partial f_{\le 1}}{\partial \bar{z}}(v)
 \right|
}
&\lesssim 
(|u|+|v|)^{p_{1}-2}|u-v|
&\mbox{if $d\ge 5,\ p_{1}\ge 2$},
\\[12pt]
\displaystyle{
\left| \frac{\partial f_{\le 1}}{\partial z}(u)
-
\frac{\partial f_{\le 1}}{\partial z}(v)
 \right|
+
\left| \frac{\partial f_{\le 1}}{\partial \bar{z}}(u)
-
\frac{\partial f_{\le 1}}{\partial \bar{z}}(v)
 \right|}
&\lesssim |u-v|^{p_{1}-1}
&\mbox{if $d\ge 5,\ p_{1}< 2$}
\end{array}
\right.
\end{equation}
and 
\begin{equation}\tag{A6}
\label{11/05/02/8:40}
\left\{ 
\begin{array}{rll}
\displaystyle{
\left| \frac{\partial f_{\ge 1}}{\partial z}(u)\right|
+
\left| \frac{\partial f_{\ge 1}}{\partial \bar{z}}(u)
 \right|}
& \lesssim |u|^{p_{2}-1}
&\mbox{if $d=3,4$},
\\[12pt]
\displaystyle{
\left| \frac{\partial f_{\ge 1}}{\partial z}(u)
-
\frac{\partial f_{\ge 1}}{\partial z}(v)
 \right|
+
\left| \frac{\partial f_{\ge 1}}{\partial \bar{z}}(u)
-
\frac{\partial f_{\ge 1}}{\partial \bar{z}}(v)
 \right|
}
&\lesssim (|u|+|v|)^{p_{2}-2}|u-v|
&\mbox{if $d\ge 5,\ p_{2}\ge 2$},
\\[12pt]
\displaystyle{
\left| \frac{\partial f_{\ge 1}}{\partial z}(u)
-
\frac{\partial f_{\ge 1}}{\partial z}(v)
 \right|
+
\left| \frac{\partial f_{\ge 1}}{\partial \bar{z}}(u)
-
\frac{\partial f_{\ge 1}}{\partial \bar{z}}(v)
 \right|}
&\lesssim |u-v|^{p_{2}-1}
&\mbox{if $d\ge 5,\ p_{2}< 2$}.
\end{array}
\right.
\end{equation}

As mentioned above, we prove the existence of a ground state to the equation \eqref{11/05/01/17:30} under the assumptions \eqref{11/07/18/9:45}--\eqref{11/05/02/8:40}. To this end, for any $\omega>0$, we introduce a variational value $m_{\omega}$: 
\begin{equation}\label{11/04/30/23:55}
m_{\omega}
:=
\inf\left\{ \mathcal{S}_{\omega}(u) 
\biggm| u\in H^{1}(\mathbb{R}^{d})\setminus \{0\},
\ \mathcal{K}(u)=0
\right\},
\end{equation}
where 
\begin{equation}\label{11/04/30/23:58}
\begin{split}
\mathcal{K}(u)
&:=
\frac{d}{d\lambda}\mathcal{H}(T_{\lambda}u)\biggm|_{\lambda=1}
=
\frac{d}{d\lambda}\mathcal{S}_{\omega}(T_{\lambda}u)\biggm|_{\lambda=1}
\\[6pt]
&=
\left\|\nabla u \right\|_{L^{2}}^{2}
-
\frac{d}{4}
\int_{\mathbb{R}^{d}}
\bigm( DF- 2F \bigm)(u)
-
\left\| u \right\|_{L^{2^{*}}}^{2^{*}}
\end{split}
\end{equation}
and 
$T_{\lambda}$ is the $L^{2}$-scaling operator, i.e.,  
\begin{equation}\label{11/04/30/23:59}
(T_{\lambda}u)(x):=\lambda^{\frac{d}{2}}u(\lambda x).
\end{equation}
It is well-known(see, e.g., \cite{Berestycki-Cazenave, LeCoz}) that 
 the minimizer of \eqref{11/04/30/23:55} becomes a ground state to \eqref{11/05/01/17:30}. Thus, it suffices to show the existence of the minimizer. 
\par 
In order to find the minimizer of the variational problem \eqref{11/04/30/23:55}, we need two auxiliary variational problems; The first one is  
\begin{equation}\label{11/04/30/23:54}
\widetilde{m}_{\omega}
=
\inf\left\{ \mathcal{I}_{\omega}(u) 
\biggm| u\in H^{1}(\mathbb{R}^{d})\setminus \{0\},
\ \mathcal{K}(u)\le 0
\right\}, 
\end{equation}
where 
\begin{equation}\label{11/04/30/23:56}
\begin{split}
\mathcal{I}_{\omega}(u)
&:=
\mathcal{S}_{\omega}(u)
- \frac{1}{2}
\mathcal{K}(u)
\\[6pt]
&=
\frac{\omega}{2}\left\| u \right\|_{L^{2}}^{2}
+
\frac{d}{8}\int_{\mathbb{R}^{d}}
\bigm(DF- 2_{*}F \bigm)(u)
+
\frac{1}{d}\left\| u \right\|_{L^{2^{*}}}^{2^{*}},
\end{split}
\end{equation} 
and the other one is 
\begin{equation}\label{11/04/09/17:01}
\sigma
:=
\inf\left\{
\left\| \nabla u \right\|_{L^{2}}^{2}
\biggm|
u \in \dot{H}^{1}(\mathbb{R}^{d}), 
\ 
\left\| u \right\|_{L^{2^{*}}}=1
\right\}. 
\end{equation}
An advantage of the problem \eqref{11/04/30/23:54} is that the functional $\mathcal{I}_{\omega}$ is positive thanks to \eqref{11/04/30/22:39}, and the constraint $\mathcal{K}\le 0$ is stable under the Schwarz symmetrization. Moreover, we have;  
\begin{proposition}\label{11/05/01/17:50}
Assume $d\ge 3$, $\omega>0$ and the conditions \eqref{11/07/18/9:45}--\eqref{11/05/02/8:40}. Then, it holds that 
\\[6pt]
{\rm (i)} $m_{\omega}=\widetilde{m}_{\omega}$,
\\[6pt]
{\rm (ii)} Any minimizer of the variational problem for $\widetilde{m}_{\omega}$ is also a minimizer for $m_{\omega}$.
\end{proposition}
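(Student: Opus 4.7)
The plan is to use the $L^{2}$-preserving scaling $T_{\lambda}$ as a one-parameter deformation that, starting from a competitor $u$ for $\widetilde{m}_{\omega}$, produces a new function $T_{\lambda^{*}}u$ lying on the Pohozaev manifold $\{\mathcal{K}=0\}$ and satisfying $\mathcal{I}_{\omega}(T_{\lambda^{*}}u)\le\mathcal{I}_{\omega}(u)$. The easy inequality $\widetilde{m}_{\omega}\le m_{\omega}$ is immediate because $\mathcal{I}_{\omega}=\mathcal{S}_{\omega}-\tfrac{1}{2}\mathcal{K}$ reduces to $\mathcal{S}_{\omega}$ on $\{\mathcal{K}=0\}\subset\{\mathcal{K}\le 0\}$. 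Hence (i) is reduced to the reverse inequality and (ii) to the statement that any $\widetilde{m}_{\omega}$-minimizer automatically satisfies $\mathcal{K}=0$.

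Fix $u\in H^{1}(\mathbb{R}^{d})\setminus\{0\}$ with $\mathcal{K}(u)\le 0$. The scaling identities $\|T_{\lambda}u\|_{L^{2}}=\|u\|_{L^{2}}$, $\|\nabla T_{\lambda}u\|_{L^{2}}^{2}=\lambda^{2}\|\nabla u\|_{L^{2}}^{2}$, $\|T_{\lambda}u\|_{L^{2^{*}}}^{2^{*}}=\lambda^{2^{*}}\|u\|_{L^{2^{*}}}^{2^{*}}$ together with
\[
\int_{\mathbb{R}^{d}}G(T_{\lambda}u)\,dx=\lambda^{-d}\int_{\mathbb{R}^{d}}G(\lambda^{d/2}u)\,dy\qquad(G=F,\ DF)
\]
make $\lambda\mapsto\mathcal{K}(T_{\lambda}u)$ smooth on $(0,\infty)$. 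From (A0)--(A5) one deduces $|F(z)|+|DF(z)|\lesssim|z|^{p_{1}+1}+|z|^{p_{2}+1}$, so the nonlinear contribution to $\mathcal{K}(T_{\lambda}u)$ is controlled by $\lambda^{d(p_{1}-1)/2}\|u\|_{L^{p_{1}+1}}^{p_{1}+1}+\lambda^{d(p_{2}-1)/2}\|u\|_{L^{p_{2}+1}}^{p_{2}+1}$; both exponents exceed $2$ thanks to the mass-supercritical condition $p_{1}>2_{*}-1=1+\tfrac{4}{d}$, while the critical term is only $\lambda^{2^{*}}\|u\|_{L^{2^{*}}}^{2^{*}}$. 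Consequently $\mathcal{K}(T_{\lambda}u)\sim\lambda^{2}\|\nabla u\|_{L^{2}}^{2}>0$ for $\lambda>0$ small, and the intermediate value theorem combined with $\mathcal{K}(u)\le 0$ produces $\lambda^{*}\in(0,1]$ with $\mathcal{K}(T_{\lambda^{*}}u)=0$.

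Next I show strict monotonicity of $\lambda\mapsto\mathcal{I}_{\omega}(T_{\lambda}u)$. Using $\frac{d}{d\lambda}F(\lambda^{d/2}u)=\frac{d}{2\lambda}(DF)(\lambda^{d/2}u)$ and the identity $D(DF-2_{*}F)-2(DF-2_{*}F)=(D-2)(D-2_{*})F$, a direct differentiation gives
\[
\frac{d}{d\lambda}\mathcal{I}_{\omega}(T_{\lambda}u)
=\frac{d^{2}}{16\,\lambda^{d+1}}\int_{\mathbb{R}^{d}}(D-2)(D-2_{*})F(\lambda^{d/2}u)\,dy
+\frac{2^{*}\lambda^{2^{*}-1}}{d}\|u\|_{L^{2^{*}}}^{2^{*}}.
\]
By (A2)--(A3) one has $DF\ge(2_{*}+\varepsilon_{0})F\ge 2F$, hence $(D-2)F\ge 0$; then (A4) gives $(D-2)(D-2_{*})F=(D-2)(D-2_{*}-\varepsilon_{0})F+\varepsilon_{0}(D-2)F\ge 0$, so the first term is nonnegative and the second is strictly positive because $u\ne 0$.

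Combining the two previous paragraphs, $\mathcal{S}_{\omega}=\mathcal{I}_{\omega}+\tfrac{1}{2}\mathcal{K}$ at $\lambda=\lambda^{*}$ yields $\mathcal{S}_{\omega}(T_{\lambda^{*}}u)=\mathcal{I}_{\omega}(T_{\lambda^{*}}u)\le\mathcal{I}_{\omega}(u)$, so $m_{\omega}\le\mathcal{I}_{\omega}(u)$; passing to the infimum proves (i). For (ii), if $u$ is a minimizer for $\widetilde{m}_{\omega}$ with $\mathcal{K}(u)<0$, the construction forces $\lambda^{*}\in(0,1)$ strictly, and strict monotonicity gives $\mathcal{I}_{\omega}(T_{\lambda^{*}}u)<\mathcal{I}_{\omega}(u)=\widetilde{m}_{\omega}$, contradicting the admissibility of $T_{\lambda^{*}}u$ for $\widetilde{m}_{\omega}$; hence $\mathcal{K}(u)=0$ and $\mathcal{S}_{\omega}(u)=\mathcal{I}_{\omega}(u)=m_{\omega}$. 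The main technical point is the sign statement $(D-2)(D-2_{*})F\ge 0$ in Step~3: this is precisely why the assumptions (A3)--(A4) are formulated with a strict margin $\varepsilon_{0}>0$, which is also the source of the strict monotonicity needed to rule out the strict inequality $\mathcal{K}(u)<0$ in (ii).
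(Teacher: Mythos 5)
Your argument is correct and follows essentially the same route as the paper: you rescale an admissible competitor by $T_{\lambda}$ to land on $\{\mathcal{K}=0\}$ with $\lambda^{*}\le 1$, and exploit the strict monotonicity of $\lambda\mapsto\mathcal{I}_{\omega}(T_{\lambda}u)$ coming from \eqref{11/07/18/9:29}, \eqref{11/04/30/22:39} and \eqref{11/04/30/22:40}, which is exactly the content of Lemma \ref{11/07/20/18:01} used in the paper's proof. The only cosmetic difference is that you verify the needed scaling facts inline (and apply them to a single competitor rather than to a minimizing sequence), which changes nothing essential.
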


The reason why we need the variational problem \eqref{11/04/09/17:01} is the following relation between $m_{\omega}$ and $\sigma$: 
\begin{lemma}
\label{11/05/01/23:05}
Assume $d\ge 4$, $\omega>0$ and the conditions \eqref{11/07/18/9:45}--\eqref{11/05/02/8:40}. Then, we have 
\begin{equation}\label{11/04/09/16:59}
m_{\omega}< \frac{1}{d}\sigma^{\frac{d}{2}}.
\end{equation} 
\end{lemma}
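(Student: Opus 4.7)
The plan is to construct an explicit test function based on the Aubin--Talenti instanton $W$ from \eqref{11/04/10/13:23}. For $\mu > 0$, let $W_\mu(x) := \mu^{(d-2)/2}W(\mu x)$ denote its $\dot{H}^1$-invariant rescaling, so $\|\nabla W_\mu\|_{L^2}^2 = \|W_\mu\|_{L^{2^*}}^{2^*} = \sigma^{d/2}$. In dimensions $d \geq 5$ one has $W_\mu \in H^1(\mathbb{R}^d)$, and I take $u_\mu := W_\mu$. When $d = 4$, since $W \notin L^2(\mathbb{R}^4)$, I multiply by a fixed cutoff $\chi \in C^\infty_c(\mathbb{R}^4)$ equal to $1$ near the origin and set $u_\mu := \chi W_\mu$. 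Standard Brezis--Nirenberg-type computations give $\|\nabla u_\mu\|_{L^2}^2, \|u_\mu\|_{L^{2^*}}^{2^*} = \sigma^{d/2}(1 + o(1))$ as $\mu \to \infty$, while $\|u_\mu\|_{L^2}^2 = O(\mu^{-2})$ for $d \geq 5$ and $O(\mu^{-2}\log\mu)$ for $d = 4$.

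Since $u_\mu \not\equiv 0$, the continuous map $\lambda \mapsto \mathcal{S}_\omega(T_\lambda u_\mu)$ is positive near $\lambda = 0$ and tends to $-\infty$ as $\lambda \to \infty$ (because $2^* > 2$), hence attains its maximum at some $\lambda_*(\mu) > 0$. The identity $\mathcal{K}(T_\lambda u_\mu) = \lambda\,\tfrac{d}{d\lambda}\mathcal{S}_\omega(T_\lambda u_\mu)$, a direct consequence of \eqref{11/04/30/23:58}, forces $\mathcal{K}(T_{\lambda_*}u_\mu) = 0$, so $T_{\lambda_*}u_\mu$ is admissible in \eqref{11/04/30/23:55} and $m_\omega \leq \mathcal{S}_\omega(T_{\lambda_*}u_\mu)$. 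Using $F \geq 0$ and the elementary calculus identity $\max_{\lambda > 0}\bigl[\tfrac{\lambda^2}{2}A - \tfrac{\lambda^{2^*}}{2^*}B\bigr] = \tfrac{1}{d}A^{d/2}B^{-(d-2)/2}$, I obtain
\begin{equation*}
m_\omega \leq \tfrac{\omega}{2}\|u_\mu\|_{L^2}^2 + \tfrac{1}{d}\sigma^{d/2}\bigl(1 + o(1)\bigr) - \tfrac{1}{2}\int_{\mathbb{R}^d} F(T_{\lambda_*}u_\mu).
\end{equation*}

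The strict inequality then reduces to showing that $\int F(T_{\lambda_*}u_\mu)$ dominates $\omega\|u_\mu\|_{L^2}^2$ and the truncation error as $\mu \to \infty$. The gauge invariance \eqref{11/07/18/9:25} forces $F$ to be radial, and integrating the resulting radial form $rF'(r) \geq (2_* + \varepsilon_0)F(r)$ of \eqref{11/04/30/22:39} yields $F(z) \geq c|z|^{2_* + \varepsilon_0}$ for $|z| \geq R_0$ (with $c, R_0 > 0$, provided $F \not\equiv 0$). Since $W_\mu$ concentrates at the origin with peak $\sim \mu^{(d-2)/2}$, a direct change of variables gives
\begin{equation*}
\int_{\mathbb{R}^d}F(u_\mu)\,dx \gtrsim \mu^{(d-2)(2_* + \varepsilon_0)/2 - d} = \mu^{-4/d + (d-2)\varepsilon_0/2},
\end{equation*}
and this exponent is strictly greater than $-2$ in every dimension $d \geq 3$, hence strictly slower decay than $\|u_\mu\|_{L^2}^2$ (the logarithmic factor in $d=4$ being absorbed). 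A routine continuity argument using $\int F(u_\mu) \to 0$ further shows $\lambda_*(\mu) \to 1$, so $\int F(T_{\lambda_*}u_\mu) \sim \int F(u_\mu)$, and $m_\omega < \tfrac{1}{d}\sigma^{d/2}$ follows for $\mu$ sufficiently large.

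The main obstacle is the careful bookkeeping of error terms in dimension $d = 4$: the cutoff corrections in $\|\nabla u_\mu\|_{L^2}^2$ and $\|u_\mu\|_{L^{2^*}}^{2^*}$, as well as $\|u_\mu\|_{L^2}^2$, are all of order $\mu^{-2}$ (up to a log in the last case), while the gain from $\int F(u_\mu)$ is of order $\mu^{-1 + \varepsilon_0}$; the strict supercriticality $\varepsilon_0 > 0$ is precisely what ensures the gain strictly dominates these errors.
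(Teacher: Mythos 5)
Your argument is correct and is essentially the proof the paper omits (the analogue of Lemma 2.2 in \cite{AIKN}): concentrate $W_\mu=T'_\mu W$ (cut off when $d=4$), pass to the point where $\mathcal{K}$ vanishes via Lemma \ref{11/07/20/18:01}(i), bound the critical part by $\frac{1}{d}A_\mu^{d/2}B_\mu^{-(d-2)/2}=\frac{1}{d}\sigma^{d/2}+O(\mu^{-2})$, and beat the $O(\mu^{-2})$ and $O(\mu^{-2}\log\mu)$ errors by the potential term, which \eqref{11/07/18/9:25} (radiality) and \eqref{11/04/30/22:39} bound below by $c|z|^{2_*+\varepsilon_0}$ for large $|z|$, giving a gain of order $\mu^{-4/d+(d-2)\varepsilon_0/2}\gg\mu^{-2}$ precisely when $d\ge 4$. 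The only points to note are the ones you already flag: $F\not\equiv 0$ must be (implicitly) assumed, since for $F\equiv 0$ one has $m_\omega=\frac{1}{d}\sigma^{\frac{d}{2}}$, and the existence of the maximizing $\lambda_*$ is cleanest quoted directly from Lemma \ref{11/07/20/18:01}(i) rather than from the limiting behaviour at $\lambda\to 0$ alone.
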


Here, it is worthwhile noting that the function $W$ given in 
\eqref{11/04/10/13:23} relates to the value $\sigma$: 
\begin{equation}\label{11/04/10/13:24}
\sigma^{\frac{d}{2}}
=\left\| T_{\lambda}'\nabla W \right\|_{L^{2}}^{2}
=\left\| T_{\lambda}'W \right\|_{L^{2^{*}}}^{2^{*}}
\quad 
\mbox{for any $\lambda>0$},
\end{equation}
where $T_{\lambda}'$ denotes the $\dot{H}^{1}$-scaling operator, i.e., 
\begin{equation}\label{11/10/12/11:55}
(T_{\lambda}'u)(x):=\lambda^{\frac{d-2}{2}}u(\lambda x)
.
\end{equation}
\par 
Since the proof of Lemma \ref{11/05/01/23:05} is similar to the one of Lemma 2.2 in \cite{AIKN}, we omit it.  
\par 
Using Lemma \ref{11/05/01/23:05}, we can find:   
\begin{theorem}\label{11/05/01/18:28}
Assume $d\ge 4$, $\omega>0$ and the conditions \eqref{11/07/18/9:45}--\eqref{11/05/02/8:40}. Then, there exists a minimizer of the variational problem for $m_{\omega}$. 
\end{theorem}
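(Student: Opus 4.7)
The plan is to produce a minimizer for the relaxed problem $\widetilde m_\omega$ and then invoke Proposition \ref{11/05/01/17:50}(ii). Start with a sequence $\{u_n\}\subset H^1(\mathbb{R}^d)\setminus\{0\}$ satisfying $\mathcal{K}(u_n)\le 0$ and $\mathcal{I}_\omega(u_n)\to \widetilde m_\omega$. Because the phase symmetry in \eqref{11/07/18/9:25} forces $F(u)$ to depend only on $|u|$, the integrals $\int F(u)$ and $\int DF(u)$ together with every $L^p$ norm are invariant under Schwarz symmetrization, while $\|\nabla u\|_{L^2}^2$ can only decrease. Hence replacing $u_n$ by its symmetric decreasing rearrangement leaves $\mathcal{I}_\omega$ untouched and preserves the constraint $\mathcal{K}\le 0$, and I may assume $u_n$ is radial and nonnegative.

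Boundedness in $H^1$ comes next. Assumption \eqref{11/04/30/22:39} gives $DF-2_{*}F\ge \varepsilon_0 F\ge 0$, so $\sup_n \mathcal{I}_\omega(u_n)<\infty$ controls $\|u_n\|_{L^2}$, $\|u_n\|_{L^{2^*}}$, $\int F(u_n)$ and $\int DF(u_n)$; plugging these into $\mathcal{K}(u_n)\le 0$ bounds $\|\nabla u_n\|_{L^2}$ as well. Along a subsequence $u_n\rightharpoonup u$ in $H^1_{\mathrm{rad}}(\mathbb{R}^d)$, and by the compact radial Sobolev embedding $u_n\to u$ in $L^p$ for every $2<p<2^*$. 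Combined with the growth \eqref{11/04/30/22:54}--\eqref{11/05/02/8:40}, this gives $\int F(u_n)\to \int F(u)$ and $\int DF(u_n)\to \int DF(u)$.

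The first critical step is to rule out $u\equiv 0$. In that case the subcritical convergences force $\int(DF-2F)(u_n)\to 0$, and $\mathcal{K}(u_n)\le 0$ reduces to $\|\nabla u_n\|_{L^2}^2\le \|u_n\|_{L^{2^*}}^{2^*}+o(1)$; combined with the Sobolev inequality $\sigma\|u_n\|_{L^{2^*}}^2\le \|\nabla u_n\|_{L^2}^2$ this yields $\sigma\|u_n\|_{L^{2^*}}^2\le \|u_n\|_{L^{2^*}}^{2^*}+o(1)$. A Gagliardo--Nirenberg estimate---whose scaling exponents on $\|\nabla u_n\|_{L^2}$ strictly exceed $2$ thanks to $p_1,p_2>2_{*}-1$---combined with $\mathcal{K}(u_n)\le 0$ rules out $\|\nabla u_n\|_{L^2}\to 0$, so $\|u_n\|_{L^{2^*}}$ stays bounded below and $\liminf_n \|u_n\|_{L^{2^*}}^{2^*-2}\ge \sigma$, whence $\liminf_n \|u_n\|_{L^{2^*}}^{2^*}\ge \sigma^{d/2}$. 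This forces $\widetilde m_\omega\ge \frac{1}{d}\sigma^{d/2}$, contradicting Lemma \ref{11/05/01/23:05} together with Proposition \ref{11/05/01/17:50}(i).

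To finish, apply a Brezis--Lieb decomposition with $v_n:=u_n-u$. Since $v_n\rightharpoonup 0$ in $H^1$ and $v_n\to 0$ in $L^p$ for $2<p<2^*$, one obtains
\begin{align*}
\mathcal{I}_\omega(u_n)&=\mathcal{I}_\omega(u)+\tfrac{\omega}{2}\|v_n\|_{L^2}^2+\tfrac{1}{d}\|v_n\|_{L^{2^*}}^{2^*}+o(1),\\
\mathcal{K}(u_n)&=\mathcal{K}(u)+\|\nabla v_n\|_{L^2}^2-\|v_n\|_{L^{2^*}}^{2^*}+o(1).
\end{align*}
If $\mathcal{K}(u)\le 0$, the admissibility of $u$ and the first identity force $\mathcal{I}_\omega(u)=\widetilde m_\omega$, and the proof is complete. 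The main obstacle is the opposite case $\mathcal{K}(u)>0$; here the second identity combined with Sobolev gives $\|v_n\|_{L^{2^*}}^{2^*}\ge \mathcal{K}(u)+\sigma\|v_n\|_{L^{2^*}}^2+o(1)$, which forces $\|v_n\|_{L^{2^*}}^{2^*-2}>\sigma$ eventually and hence $\liminf_n \|v_n\|_{L^{2^*}}^{2^*}\ge \sigma^{d/2}$. The first identity then yields $\widetilde m_\omega\ge \frac{1}{d}\sigma^{d/2}$, again contradicting Lemma \ref{11/05/01/23:05}. Hence $\mathcal{K}(u)\le 0$ and $u$ is a minimizer for $\widetilde m_\omega$; Proposition \ref{11/05/01/17:50}(ii) then delivers the desired minimizer of $m_\omega$.
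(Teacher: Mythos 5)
Your proposal is correct, and its skeleton (reduction to $\widetilde m_{\omega}$ via Proposition \ref{11/05/01/17:50}, Schwarz symmetrization, radial compactness, exclusion of a trivial weak limit through $\sigma$ and Lemma \ref{11/05/01/23:05}, then a Brezis--Lieb splitting) coincides with the paper's. The one genuinely different step is how you exclude $\mathcal{K}(u)>0$ for the weak limit: the paper notes that $\mathcal{K}(u_{n}^{*}-Q)<0$ for large $n$, rescales the remainder by $T_{\lambda_{n}}$ onto the constraint $\mathcal{K}=0$ using Lemma \ref{11/07/20/18:01}, and deduces $\mathcal{I}_{\omega}(Q)=0$, contradicting non-triviality; you instead feed $\mathcal{K}(u)>0$ into the splitting of $\mathcal{K}$ together with the Sobolev inequality to force the remainder $v_{n}$ to carry at least $\sigma^{d/2}$ of critical $L^{2^{*}}$ mass, so that the splitting of $\mathcal{I}_{\omega}$ gives $\widetilde m_{\omega}\ge \frac{1}{d}\sigma^{\frac{d}{2}}$, contradicting Lemma \ref{11/05/01/23:05} a second time. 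Both arguments are valid: the paper's version does not consume the strict threshold $m_{\omega}<\frac{1}{d}\sigma^{\frac{d}{2}}$ at this stage (it only needs positivity of $\mathcal{I}_{\omega}$ and the monotonicity \eqref{11/05/01/22:43}), while yours avoids the rescaling machinery inside this proof at the cost of invoking the strict threshold twice. A further small merit of your write-up is that you explicitly rule out $\|\nabla u_{n}\|_{L^{2}}\to 0$ via Gagliardo--Nirenberg before dividing by $\|u_{n}\|_{L^{2^{*}}}^{2}$, a point the paper's chain \eqref{11/04/10/18:22}--\eqref{11/04/18/8:57} leaves implicit; also note that your energy splittings, with the $F$-terms of the remainder discarded, are exactly the paper's \eqref{11/09/13/17:02}--\eqref{11/09/13/17:03} after using the strong subcritical convergence, so no new decomposition lemma is needed.
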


\begin{remark}\label{11/05/01/18:35}
{\rm (i)} 
We find from Proposition \ref{11/05/01/17:50} (i) that 
$m_{\omega}=\widetilde{m}_{\omega}>0$.
\\[6pt]
{\rm (ii)} 
It is necessary that $d\ge 4$; For, when $f(u)=\mu |u|^{p_{1}-1}u$ with a sufficiently small $\mu>0$, there is no minimizer of the variational problem for $m_{\omega}$ (see \cite{AIKN}). 
\\[6pt]
{\rm (iii)} As mentioned above, a minimizer of the variational problem for $m_{\omega}$ becomes a ground state to \eqref{11/05/01/17:30} (see \cite{Berestycki-Cazenave, LeCoz}). 
\end{remark}

In order to state our scattering result, we introduce a set $A_{\omega,+}$:
\begin{equation}\label{11/08/24/12:02}
A_{\omega,+}:=
\left\{ 
 u \in H^{1}(\mathbb{R}^{d}) \biggm| 
\mathcal{S}_{\omega}(u)<m_{\omega},
\ 
\mathcal{K}(u)>0. 
\right\}.
\end{equation}
Then, we have: 
\begin{theorem}\label{11/06/10/17:09}
Assume $d\ge 5$, $\omega>0$ and the conditions \eqref{11/07/18/9:45}--\eqref{11/05/02/8:40}. Then, any solution $\psi$ to \eqref{11/06/12/9:08} starting from $A_{\omega, +}$ exists globally in time. Furthermore, the solution $\psi$ scatters in $H^{1}(\mathbb{R}^{d})$, i.e., there exist $\phi_{+}, \phi_{-} \in H^{1}(\mathbb{R}^{d})$ such that 
\begin{equation}\label{11/12/02/9:55}
\lim_{t\to +\infty}\left\| \psi(t)-e^{it\Delta}\phi_{+}\right\|_{H^{1}}=
\lim_{t\to -\infty}\left\|\psi(t)-e^{it\Delta}\phi_{-} \right\|_{H^{1}}=0.
\end{equation}
\end{theorem}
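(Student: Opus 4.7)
The plan is to follow the Kenig-Merle concentration-compactness/rigidity scheme, adapted to the present perturbed energy-critical setting. I would begin by verifying that $A_{\omega,+}$ is invariant under the flow and that solutions starting in $A_{\omega,+}$ are uniformly bounded in $H^{1}$. Using $\mathcal{S}_{\omega}=\mathcal{I}_{\omega}+\tfrac{1}{2}\mathcal{K}$ together with the positivity of $\mathcal{I}_{\omega}$ (which follows from \eqref{11/04/30/22:39}), any $u\in A_{\omega,+}$ satisfies $\mathcal{I}_{\omega}(u)<m_{\omega}$, which controls $\|u\|_{L^{2}}$, $\|u\|_{L^{2^{*}}}$, and $\int(DF-2_{*}F)(u)$; reinserting these bounds into $\mathcal{S}_{\omega}(u)<m_{\omega}$ then controls $\|\nabla u\|_{L^{2}}$. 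Invariance of $\{\mathcal{K}>0\}$ follows by continuity: at a first crossing time $t_{0}$ we would have $\mathcal{K}(\psi(t_{0}))=0$ with $\psi(t_{0})\ne 0$ admissible for $\widetilde{m}_{\omega}=m_{\omega}$, contradicting $\mathcal{S}_{\omega}(\psi(t_{0}))<m_{\omega}$ via Proposition \ref{11/05/01/17:50}. Global existence is then a consequence of the local $H^{1}$ theory built on Strichartz estimates, the bounds \eqref{11/04/30/22:54}--\eqref{11/05/02/8:40} being used to treat the subcritical perturbation.

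For scattering I would introduce the threshold
\[
m_{c}:=\sup\left\{m\le m_{\omega}\ :\ \text{every solution with } \mathcal{S}_{\omega}(\psi_{0})<m\text{ and }\mathcal{K}(\psi_{0})>0\text{ scatters in }H^{1}\right\},
\]
which is strictly positive by small-data Strichartz theory, and argue by contradiction assuming $m_{c}<m_{\omega}$. A linear profile decomposition of Keraani type for the energy-critical Schr\"odinger flow, involving both spatial translations and dyadic scales, applied to a sequence of initial data with $\mathcal{S}_{\omega}(\psi_{n}(0))\to m_{c}$ and diverging scattering norm, lets me pair profiles with nonlinear solutions of \eqref{11/06/12/9:08}. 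Using Pythagorean orthogonality of $\mathcal{S}_{\omega}$ along the decomposition and the gap $m_{c}<m_{\omega}<\tfrac{1}{d}\sigma^{d/2}$ from Lemma \ref{11/05/01/23:05}, I would exclude multiple nontrivial profiles as well as profiles whose scales escape to $0$ or $\infty$ (such limits would reduce to non-scattering solutions of the purely critical equation \eqref{11/07/17/16:01} below the Kenig-Merle threshold, which cannot exist). The single surviving bounded-scale profile yields a critical element $\psi_{c}$ with $\mathcal{S}_{\omega}(\psi_{c}(0))=m_{c}$, $\mathcal{K}(\psi_{c}(t))>0$, whose orbit is precompact in $H^{1}$ modulo a translation path $x(t)\in\mathbb{R}^{d}$.

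The rigidity step rules out $\psi_{c}$. A second-order expansion along $\lambda\mapsto\mathcal{S}_{\omega}(T_{\lambda}u)$ near the constraint $\{\mathcal{K}=0\}$ produces a uniform coercive lower bound $\mathcal{K}(\psi_{c}(t))\ge c>0$, valid because $\mathcal{S}_{\omega}(\psi_{c})<m_{\omega}$ with a fixed gap. Computing
\[
\frac{d^{2}}{dt^{2}}\int \chi_{R}(x-x(t))\,|\psi_{c}(x,t)|^{2}\,dx = 8\,\mathcal{K}(\psi_{c}(t)) + o_{R}(1),
\]
for a smooth truncation $\chi_{R}$ of $|x|^{2}$, where the error $o_{R}(1)\to 0$ as $R\to\infty$ by compactness modulo translations, and noting that after one integration in $t$ the left-hand side remains $O(R)$ via $|V'(t)|\lesssim R\|\psi_{c}\|_{L^{2}}\|\nabla\psi_{c}\|_{L^{2}}$, I obtain the desired contradiction by integrating on a long interval. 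The main obstacle I expect is the control of $x(t)$, specifically showing $x(t)=o(t)$ so that the virial cutoff can be properly centered; this is handled via a center-of-mass argument exploiting the mass conservation, the precompactness of the orbit, and a Galilean-boost reduction. A secondary difficulty is the treatment of the general nonlinearity $f$ in the profile decomposition, where the error estimates require the pointwise Hölder-type bounds \eqref{11/04/30/22:54}--\eqref{11/05/02/8:40} in place of the usual pure-power identities, and must be split according to whether $p_{1},p_{2}\gtrless 2$.
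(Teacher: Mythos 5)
Your proposal follows essentially the same Kenig--Merle scheme as the paper: variational control and flow-invariance of $A_{\omega,+}$ with a uniform lower bound on $\mathcal{K}$ along the flow, a critical threshold ($m_{\omega}^{*}$ in the paper, your $m_{c}$), a Keraani-type profile decomposition in which the small-scale profiles are handled by the energy-critical scattering theory below the $\tfrac{1}{d}\sigma^{d/2}$ threshold (the paper invokes the non-radial Killip--Visan result, which is precisely where $d\ge 5$ enters, together with an exotic-Strichartz long-time perturbation theory to cope with the merely H\"older-continuous derivative of $f$), then a critical element compact modulo spatial translations with zero momentum, ruled out by a localized virial argument using $x(t)=o(t)$. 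Since this matches the paper's argument in structure and in all the key lemmas, no further comparison is needed.
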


This paper is organised as follows. In Section \ref{11/12/04/11:44}, we summarise basic properties of the functionals $\mathcal{S}_{\omega}$, $\mathcal{K}$ and so on. In Section \ref{11/05/01/22:50}, we give proofs of Proposition \ref{11/05/01/17:50} and 
Theorem \ref{11/05/01/18:28}. In Section \ref{10/10/04/21:43}, we  introduce Strichartz type spaces and discuss the well-posedness for the equation \eqref{11/06/12/9:08}. In Section \ref{10/11/04/14:21}, we give a long-time perturbation theory which plays an important role to prove the scattering result(Theorem \ref{11/06/10/17:09}). In Section \ref{11/06/10/17:05}, we prove Theorem \ref{11/06/10/17:09} by showing the existence of the so-called critical element in a reductive absurdity. 
\\
\par 
Finally, we give several notation used in this paper:
\\[6pt]
{\bf Notation}. 
\begin{enumerate}
\item
\begin{equation}\label{11/12/02/9:51}
2_{*}:=2+\frac{4}{d},
\qquad 
2^{*}:=2+\frac{4}{d-2}
\end{equation} 
\item
\begin{equation}\label{11/04/30/8:50}
s_{p}:=\frac{d}{2}-\frac{2}{p-1}
\quad 
\mbox{for $p>1$}.
\end{equation}
\item 
We denote the H\"older conjugate of $q \in (1,\infty)$ by $q'$, i.e, $q'=\frac{q}{q-1}$.
\item
Let $A$ and $B$ be two positive quantities. The notation $A\lesssim B$ means that there exists a constant $C>0$ such that $A\le CB$, where 
 $C$ can depend on $d$, $\varepsilon_{0}$ in \eqref{11/04/30/22:39}, $p_{1}$ in \eqref{11/04/30/22:54}, $p_{2}$ in \eqref{11/05/02/8:40} and a given $\omega$. 
\end{enumerate}
%%%%%%%%%%%%%%%%%%%%%%%
\section{Preliminaries}
\label{11/12/04/11:44}
%%%%%%%%%%%%%%%%%%%%%%%
In this section, we give basic properties of functionals $\mathcal{S}_{\omega}$, $\mathcal{K}$ and so on. 
\par 
We first summarize easy fact of calculation, without the proofs:  
\begin{align}
\label{11/06/28/9:59}
&\frac{d}{d\lambda} \mathcal{S}_{\omega}(T_{\lambda}u)
=
\frac{d}{d\lambda} \mathcal{H}(T_{\lambda}u)
=\frac{1}{\lambda}\mathcal{K}(T_{\lambda}u),
\\[6pt]
\label{11/07/20/18:30}
&\frac{d}{d\lambda}\int_{\mathbb{R}^{d}}
\hspace{-3pt}
\big( DF -\alpha F \big)(T_{\lambda}u)
=
\frac{d}{2\lambda}
\int_{\mathbb{R}^{d}}\hspace{-6pt}
\big( D^{2}F -(2+\alpha)DF+ 2\alpha F \big)(T_{\lambda}u)
\quad  \mbox{for any $\alpha \in \mathbb{R}$}.
\end{align}
We also see from \eqref{11/07/20/18:30} together with \eqref{11/04/30/22:39} and \eqref{11/04/30/22:40}  that 
\begin{align}
\label{11/07/20/18:05}
&\frac{d}{d\lambda }\int_{\mathbb{R}^{d}}
\bigm( DF -2 F \bigm)(T_{\lambda}u)
\ge 0, 
\\[6pt]
\label{11/07/20/21:39}
&\frac{d}{d\lambda }\int_{\mathbb{R}^{d}}
\bigm( DF -2_{*} F \bigm)(T_{\lambda}u)
\ge 0,
\\[6pt]
\label{11/09/05/10:10}
&\frac{d}{d\lambda }\left\{ \frac{1}{\lambda} \int_{\mathbb{R}^{d}}
\bigm( DF -2 F \bigm)(T_{\lambda}u) \right\}
\ge 0,
\\[6pt]
\label{11/09/05/10:54} 
&\frac{d}{d\lambda }\left\{ \frac{1}{\lambda^{2}} \int_{\mathbb{R}^{d}}
\bigm( DF -2 F \bigm)(T_{\lambda}u) \right\}
\ge 0.
\end{align}

Next, we give important properties of the functionals $\mathcal{K}$ and  $\mathcal{I}_{\omega}$:
\begin{lemma}\label{11/07/20/18:01}
Assume $d\ge 3$ and conditions \eqref{11/07/18/9:45}--\eqref{11/05/02/8:40}. Then, we have:
\\
{\rm (i)} For any non-trivial function $u \in H^{1}(\mathbb{R}^{d})$, there exists  a unique $\lambda(u)>0$ such that
\begin{equation}
\label{11/05/01/22:06}
\mathcal{K}(T_{\lambda}u)
\left\{
\begin{array}{lcl}
>0 & \mbox{if}& 0< \lambda <\lambda(u),
\\[6pt]
=0 & \mbox{if}& \lambda=\lambda(u),
\\[6pt]
<0 & \mbox{if}&  \lambda >\lambda(u).
\end{array} 
\right.
\end{equation}
{\rm (ii)}
\begin{equation}\label{11/05/01/22:43}
\frac{d}{d\lambda} \mathcal{I}_{\omega}(T_{\lambda}u)>0
\qquad 
\mbox{for any $\lambda>0$}, 
\end{equation}
so that $\mathcal{I}_{\omega}(T_{\lambda}u)$ is monotone increasing with respect to $\lambda>0$.
\\
{\rm (iii)} 
\begin{equation}\label{11/09/01/15:50}
\frac{d^{2}}{d^{2}\lambda} \mathcal{S}_{\omega}(T_{\lambda}u)
\le 
\frac{1}{\lambda^{2}}\mathcal{K}(T_{\lambda}u)
\qquad 
\mbox{for any $\lambda>0$}
\end{equation}
and $\mathcal{S}_{\omega}(T_{\lambda}u)$ is concave on $[\lambda(u),\infty)$.
\end{lemma}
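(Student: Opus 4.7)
My plan is to handle (ii) first as a warm-up, then use a similar monotonicity idea on a renormalized version of $\mathcal{K}(T_\lambda u)$ to obtain (i), and finally derive (iii) by a direct second-derivative computation that reduces to the monotonicity identity \eqref{11/09/05/10:54}.

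For (ii) I would differentiate the second line of \eqref{11/04/30/23:56} after applying $T_\lambda$. Since $\|T_\lambda u\|_{L^2} = \|u\|_{L^2}$ and $\|T_\lambda u\|_{L^{2^*}}^{2^*} = \lambda^{2^*}\|u\|_{L^{2^*}}^{2^*}$, one gets
\begin{equation*}
\frac{d}{d\lambda}\mathcal{I}_\omega(T_\lambda u)
= \frac{d}{8}\frac{d}{d\lambda}\int_{\mathbb{R}^d}(DF-2_{*}F)(T_\lambda u) + \frac{2^{*}}{d}\lambda^{2^{*}-1}\|u\|_{L^{2^*}}^{2^*}.
\end{equation*}
The first term is nonnegative by \eqref{11/07/20/21:39} and the second is strictly positive for nontrivial $u$, giving (ii).

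For (i) the key object is $K(\lambda):=\mathcal{K}(T_\lambda u)$. Computing from \eqref{11/04/30/23:58} yields
\begin{equation*}
\frac{K(\lambda)}{\lambda^{2}}
= \|\nabla u\|_{L^{2}}^{2} - \frac{d}{4}\,\frac{1}{\lambda^{2}}\int_{\mathbb{R}^{d}}(DF-2F)(T_\lambda u) - \lambda^{2^{*}-2}\|u\|_{L^{2^{*}}}^{2^{*}}.
\end{equation*}
The middle term is monotone nondecreasing by \eqref{11/09/05/10:54}, while $-\lambda^{2^{*}-2}\|u\|_{L^{2^{*}}}^{2^{*}}$ is strictly decreasing, so $\lambda \mapsto \lambda^{-2}K(\lambda)$ is strictly decreasing on $(0,\infty)$. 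I then need the correct boundary behavior: growth estimates forced by \eqref{11/04/30/22:54}--\eqref{11/05/02/8:40} give $|(DF-2F)(z)|\lesssim |z|^{p_{1}+1}+|z|^{p_{2}+1}$, so a change of variables produces
\begin{equation*}
\frac{1}{\lambda^{2}}\int_{\mathbb{R}^{d}}(DF-2F)(T_\lambda u) \lesssim \lambda^{d(p_{1}-1)/2-2}\|u\|_{L^{p_{1}+1}}^{p_{1}+1} + \lambda^{d(p_{2}-1)/2-2}\|u\|_{L^{p_{2}+1}}^{p_{2}+1}.
\end{equation*}
Since $p_{1},p_{2}>2_{*}-1$ the exponents $d(p_{j}-1)/2-2$ are strictly positive, so this term vanishes as $\lambda\to 0^{+}$ and $\lambda^{-2}K(\lambda)\to \|\nabla u\|_{L^{2}}^{2}>0$. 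As $\lambda\to\infty$ the $-\lambda^{2^{*}-2}\|u\|_{L^{2^{*}}}^{2^{*}}$ term dominates (because $p_{2}<2^{*}-1$ makes the middle integrand grow more slowly than $\lambda^{2^{*}}$), so $\lambda^{-2}K(\lambda)\to -\infty$. A unique zero $\lambda(u)$ now follows by monotonicity, with the sign pattern claimed in \eqref{11/05/01/22:06}. The main delicate step here is the boundary analysis, where the mass-supercritical/energy-subcritical window for $p_{1},p_{2}$ is essential.

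For (iii) I would use \eqref{11/06/28/9:59} to write $g'(\lambda)=\frac{1}{\lambda}K(\lambda)$ with $g(\lambda):=\mathcal{S}_\omega(T_\lambda u)$, so
\begin{equation*}
g''(\lambda) - \frac{1}{\lambda^{2}}K(\lambda) = -\frac{2}{\lambda^{2}}K(\lambda) + \frac{1}{\lambda}K'(\lambda).
\end{equation*}
Substituting the expressions for $K(\lambda)$ and $K'(\lambda)$, the gradient and $L^{2^{*}}$ pieces collapse to $-(2^{*}-2)\lambda^{2^{*}-2}\|u\|_{L^{2^{*}}}^{2^{*}}$, while the $F$-pieces combine into $-\frac{d}{4}\,\lambda\frac{d}{d\lambda}\bigl[\lambda^{-2}\int(DF-2F)(T_\lambda u)\bigr]$, which is $\le 0$ by \eqref{11/09/05/10:54}. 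Hence $g''(\lambda)\le \lambda^{-2}K(\lambda)$, and on $[\lambda(u),\infty)$ the right side is $\le 0$ by part (i), giving concavity.
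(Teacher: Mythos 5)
Your proposal is correct and follows essentially the same route as the paper: part (ii) from \eqref{11/07/20/21:39} plus the strictly increasing $L^{2^{*}}$ term, part (i) from the strict monotonicity of $\lambda^{-2}\mathcal{K}(T_{\lambda}u)$ via \eqref{11/09/05/10:54} together with its sign at $\lambda\to 0^{+}$ and $\lambda\to\infty$, and part (iii) by the identity reducing $\frac{d^{2}}{d\lambda^{2}}\mathcal{S}_{\omega}(T_{\lambda}u)-\lambda^{-2}\mathcal{K}(T_{\lambda}u)$ to nonpositive terms. You merely spell out details the paper leaves terse (the growth bound from \eqref{11/04/30/22:54}--\eqref{11/05/02/8:40} giving positivity of $\mathcal{K}(T_{\lambda}u)$ for small $\lambda$, and the explicit computation behind \eqref{11/09/01/15:50}), which is consistent with the paper's argument.
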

\begin{proof}[Proof of Lemma \ref{11/07/20/18:01}]
We see from \eqref{11/04/30/22:39} and \eqref{11/04/30/22:40} that $\mathcal{K}(T_{\lambda}u)>0$ for any sufficiently small $\lambda>0$. 
We also have   
\begin{equation}\label{11/09/05/10:57}
\mathcal{K}(T_{\lambda}u)\le \lambda^{2} \left\| \nabla u \right\|_{L^{2}}^{2}
- 
\lambda^{2^{*}} \left\|  u \right\|_{L^{2^{*}}}^{2^{*}}
\to -\infty \qquad \mbox{as $\lambda\to \infty$}.
\end{equation}
Hence, there exists $\lambda_{0}>0$ such that $\mathcal{K}(T_{\lambda_{0}}u)=0$. 
Since  \eqref{11/09/05/10:54} shows $\frac{1}{\lambda^{2}}\mathcal{K}(T_{\lambda}u)$ is monotone decreasing with respect to $\lambda$, we find that {\rm (i)} holds.  
\par 
Using \eqref{11/07/20/21:39},  we easily obtain \eqref{11/05/01/22:43}.
\par 
A direct calculation together with \eqref{11/06/28/9:59}, \eqref{11/07/20/18:30} and \eqref{11/04/30/22:40} gives  \eqref{11/09/01/15:50}.  Moreover, \eqref{11/09/01/15:50} together with \eqref{11/05/01/22:06} shows the concavity of $\mathcal{S}_{\omega}(T_{\lambda}u)$.
\end{proof}

%%%%%%%%%%%%%%%%%%%%%%%%%%%%%%%%%%%%%%%%%%%%%%%%%%%
\section{Variational problems}
\label{11/05/01/22:50}
%%%%%%%%%%%%%%%%%%%%%%%%%%%%%%%%%%%%%%%%%%%%%%%%%%%
In this section, we give the proofs of Proposition \ref{11/05/01/17:50} and Theorem \ref{11/05/01/18:28}. 
\par 
First, we prove Proposition \ref{11/05/01/17:50}: 
%%%%%%%%%%%%%
\begin{proof}[Proof of Proposition \ref{11/05/01/17:50}]
%%%%%%%%%%%%%
{\rm (i)} We shall prove $m_{\omega}=\widetilde{m}_{\omega}$. Let $\{u_{n}\}$ be a minimizing sequence of the variational problem for $\widetilde{m}_{\omega}$, i.e., $\{u_{n}\}$ is a sequence in $H^{1}(\mathbb{R}^{d})\setminus \{0\}$ such that 
\begin{align}
\label{11/04/09/16:00}
&\lim_{n\to \infty}\mathcal{I}_{\omega}(u_{n})=\widetilde{m}_{\omega},
\\[6pt]
\label{11/04/09/16:01}
&\mathcal{K}(u_{n})\le 0 
\quad 
\mbox{for any $n \in \mathbb{N}$}.
\end{align} 
Then, it follows from (\ref{11/05/01/22:06}) in Lemma \ref{11/07/20/18:01}  that for each $n \in \mathbb{N}$, there  exists $\lambda_{n}\in (0, 1]$ such that $\mathcal{K}(T_{\lambda_{n}}u_{n})=0$. This together with (\ref{11/05/01/22:43}) leads us to that  
\begin{equation}
\label{11/04/09/16:43}
m_{\omega}
\le 
\mathcal{S}_{\omega}(T_{\lambda_{n}}u_{n})
=
\mathcal{I}_{\omega}(T_{\lambda_{n}}u_{n})
\le 
\mathcal{I}_{\omega}(u_{n})
=
\widetilde{m}_{\omega}+o_{n}(1).
\end{equation} 
Hence, taking $n\to \infty$, we have $m_{\omega} \le \widetilde{m}_{\omega}$. On the other hand, since    
\begin{equation}\label{11/03/04/4:55}
\widetilde{m}_{\omega}
\le \inf_{{u\in H^{1}
\setminus \{0\}}\atop {\mathcal{K}(u)=0}}
\mathcal{I}_{\omega}(u)
=
\inf_{{u\in H^{1}
\setminus \{0\}}\atop {\mathcal{K}(u)=0}}
\mathcal{S}_{\omega}(u)
=
m_{\omega},
\end{equation}
we have $\widetilde{m} \le m$. Hence, it holds that $m_{\omega}=\widetilde{m}_{\omega}$. 
\\[6pt] 
{\rm (ii)} We shall show that any minimizer of the variational problem for $\widetilde{m}_{\omega}$ is also a one for $m_{\omega}$. Let $Q_{\omega}$ be a minimizer for $\widetilde{m}_{\omega}$, i.e., $Q_{\omega} \in H^{1}(\mathbb{R}^{d})\setminus \{0\}$ with $\mathcal{K}(Q_{\omega})\le 0$ and $\mathcal{I}_{\omega}(Q_{\omega})=\widetilde{m}_{\omega}$. Since $\mathcal{I}_{\omega}=\mathcal{S}_{\omega}-\frac{1}{2}\mathcal{K}$ and $m_{\omega}=\widetilde{m}_{\omega}$, it is sufficient to show that $\mathcal{K}(Q_{\omega})=0$. Suppose the contrary that $\mathcal{K}(Q_{\omega})<0$. Then, it follows from (\ref{11/05/01/22:06}) that there exists $0<\lambda_{0}<1$ such that $\mathcal{K}(T_{\lambda_{0}}Q_{\omega})=0$. Moreover, we  see from the definition of $\widetilde{m}_{\omega}$ and (\ref{11/05/01/22:43}) that  
\begin{equation}\label{11/04/30/9:41}
\widetilde{m}_{\omega} \le \mathcal{I}(T_{\lambda_{0}}Q_{\omega})
< \mathcal{I}_{\omega}(Q_{\omega})=\widetilde{m}_{\omega},
\end{equation}
which is a contradiction. Hence, $\mathcal{K}(Q_{\omega})=0$. 
\end{proof}

Next, we give the prove of Theorem \ref{11/05/01/18:28}:  
%%%%%%
\begin{proof}[Proof of Theorem \ref{11/05/01/18:28}]
%%%%%%
In view of Proposition \ref{11/05/01/17:50} (ii), it is sufficient to show the existence of a minimizer of the variational problem for $\widetilde{m}_{\omega}$. Let $\{u_{n}\}$ be a minimizing sequence for $\widetilde{m}_{\omega}$. We denote the Schwarz symmetrization of $u_{n}$ by $u_{n}^{*}$. Then, we have   
\begin{align}
\label{11/03/04/2:13}
&\mathcal{K}(u_{n}^{*})\le 0
\quad 
\mbox{for any $n \in \mathbb{N}$},
\\[6pt]
\label{11/03/04/1:30}
&\lim_{n\to \infty}\mathcal{I}_{\omega}(u_{n}^{*})=\widetilde{m}_{\omega}.
\end{align}
Besides, extracting some subsequence, we may assume that 
\begin{equation}\label{11/04/10/17:40}
\mathcal{I}_{\omega}(u_{n}^{*}) \le 1+\widetilde{m}_{\omega}
\qquad 
\mbox{for any $n \in \mathbb{N}$},
\end{equation}
which together with (\ref{11/05/01/21:31}) gives us the boundedness of $\{u_{n}^{*}\}$ in $L^{2}(\mathbb{R}^{d})$ and $L^{2^{*}}(\mathbb{R}^{d})$: 
\begin{equation}\label{11/05/01/23:29}
\sup_{n \in \mathbb{N}}\left\| u_{n}^{*} \right\|_{L^{2}}
\lesssim 1,
\qquad 
\sup_{n \in \mathbb{N}}\left\| u_{n}^{*} \right\|_{L^{2^{*}}}
\lesssim 1 .
\end{equation}
Moreover, using the Sobolev embedding, \eqref{11/03/04/2:13},  the boundedness in $L^{2}$ and the growth conditions \eqref{11/04/30/22:54} and \eqref{11/05/02/8:40}, we obtain that 
\begin{equation}\label{11/02/25/7:11}
\begin{split}
\left\| u_{n}^{*} \right\|_{L^{2^{*}}}^{2}
\lesssim 
\left\| \nabla u_{n}^{*} \right\|_{L^{2}}^{2}
\lesssim  
\left\| u_{n}^{*} \right\|_{L^{p_{1}+1}}^{p_{1}+1}
+
\left\| u_{n}^{*} \right\|_{L^{2^{*}}}^{2^{*}}
\lesssim 
\left\| u_{n}^{*} \right\|_{L^{2^{*}}}^{\frac{d(p_{1}-1)}{2}}
+
\left\| u_{n}^{*} \right\|_{L^{2^{*}}}^{2^{*}}.
\end{split}
\end{equation}
This together with \eqref{11/05/01/23:29} shows that 
\begin{equation}\label{11/03/04/1:40}
\sup_{n \in \mathbb{N}}\left\| u_{n}^{*} \right\|_{H^{1}}
\lesssim 1.
\end{equation}

Now, since $\{u_{n}^{*}\}$ is  radially symmetric and bounded in $H^{1}(\mathbb{R}^{d})$, there exists a radially symmetric function $Q \in H^{1}(\mathbb{R}^{d})$ such that 
\begin{align}
\label{11/02/25/7:24}
&\lim_{n\to \infty}u_{n}^{*}=Q 
\quad 
\mbox{weakly in $H^{1}(\mathbb{R}^{d})$},
\\[6pt]
\label{11/04/10/17:52}
&\lim_{n\to \infty}u_{n}^{*}=Q 
\quad 
\mbox{strongly in $L^{q}(\mathbb{R}^{d})$ for $2<q<2^{*}$},
\\[6pt]
\label{11/03/04/1:49}
&\lim_{n\to \infty}u_{n}^{*}(x)=Q(x) 
\quad 
\mbox{for almost all $x \in \mathbb{R}^{d}$},
\\[6pt]
\label{11/09/13/17:02}
&\mathcal{I}_{\omega}(u_{n}^{*})- \mathcal{I}_{\omega}(u_{n}^{*}-Q)
-
\mathcal{I}_{\omega}(Q)
=o_{n}(1),
\\[6pt]
\label{11/09/13/17:03}
&\mathcal{K}(u_{n}^{*})- \mathcal{K}(u_{n}^{*}-Q)
-
\mathcal{K}(Q)
=o_{n}(1).
\end{align}
This function $Q$ is a candidate for the minimizer. 
\par 
We shall first show that $Q$ is non-trivial. Suppose the contrary that $Q$ is trivial. Then, it follows from (\ref{11/03/04/2:13}) and (\ref{11/04/10/17:52}) that 
\begin{equation}\label{11/04/10/18:14}
0\ge \limsup_{n\to \infty}\mathcal{K}(u_{n}^{*})
=2\limsup_{n\to \infty}\left\{ 
\left\| \nabla u_{n}^{*} \right\|_{L^{2}}^{2}
-\left\| u^{*} \right\|_{L^{2^{*}}}^{2^{*}}
\right\},
\end{equation}
so that   
\begin{equation}\label{11/04/10/18:18}
\limsup_{n\to \infty}\left\| \nabla u_{n}^{*} \right\|_{L^{2}}^{2}
\le 
\liminf_{n\to \infty}\left\| u_{n}^{*} \right\|_{L^{2^{*}}}^{2^{*}}.
\end{equation}
Moreover, this together with the definition of $\sigma$ (see (\ref{11/04/09/17:01})) gives us that 
\begin{equation}\label{11/04/10/18:22}
\limsup_{n\to \infty}\left\| \nabla u_{n}^{*} \right\|_{L^{2}}^{2}
\ge 
\sigma 
\liminf_{n\to \infty}\left\| u_{n}^{*} \right\|_{L^{2^{*}}}^{2}
\ge 
\sigma \limsup_{n\to \infty}
\left\| \nabla u_{n}^{*} \right\|_{L^{2}}^{\frac{2(d-2)}{d}},
\end{equation} 
so that 
\begin{equation}\label{11/04/18/8:57}
\sigma^{\frac{d}{2}}
\le \limsup_{n\to \infty}\left\| \nabla u_{n}^{*} \right\|_{L^{2}}^{2}
\le 
\liminf_{n\to \infty}\left\| u_{n}^{*} \right\|_{L^{2^{*}}}^{2^{*}}.
\end{equation}
Hence, if $Q$ is trivial, then we see from  Proposition \ref{11/05/01/17:50}, \eqref{11/04/30/22:39} and \eqref{11/04/18/8:57} that 
\begin{equation}\label{11/09/13/16:25}
m_{\omega}=\widetilde{m}_{\omega}
=
\lim_{n\to \infty}\mathcal{I}_{\omega}(u_{n}^{*})
\ge 
\liminf_{n\to \infty}
\frac{1}{d} \left\| u_{n}^{*} \right\|_{L^{2^{*}}}^{2^{*}}
\ge \frac{1}{d}\sigma^{\frac{d}{2}}.
\end{equation}
However, Lemma \ref{11/05/01/23:05} shows that this is a contradiction. Thus, $Q$ is non-trivial. 
\par 
Next, we shall show that $\mathcal{K}(Q)\le 0$.  Suppose the contrary that $\mathcal{K}(Q)>0$. Then, it follows 
from  \eqref{11/03/04/2:13} and \eqref{11/09/13/17:03} that $\mathcal{K}(u_{n}^{*}-Q)<0$ for any sufficiently large $n \in \mathbb{N}$, so that \eqref{11/05/01/22:06} in Lemma \ref{11/07/20/18:01} shows that there exists a unique $\lambda_{n}\in (0,1)$ such that 
$\mathcal{K}(T_{\lambda_{n}}(u_{n}^{*}-Q))=0$. Then, we see from \eqref{11/05/01/22:43} in Lemma \ref{11/07/20/18:01} and \eqref{11/09/13/17:02} that  
\begin{equation}\label{11/02/25/7:42}
\begin{split}
\widetilde{m}_{\omega}
&\le 
\mathcal{I}_{\omega}(T_{\lambda_{n}}(u_{n}^{*}-Q))
\\[6pt]
&\le \mathcal{I}_{\omega}(u_{n}^{*}-Q)
=\mathcal{I}_{\omega}(u_{n}^{*})
-
\mathcal{I}_{\omega}(Q)+o_{n}(1)
\\[6pt]
&= \widetilde{m}_{\omega} -
\mathcal{I}_{\omega}(Q)+o_{n}(1).
\end{split}
\end{equation}
Hence, we conclude that $\mathcal{I}_{\omega}(Q)=0$. However, this contradicts that $Q$ is non-trivial. Thus, $\mathcal{K}(Q)\le 0$. 
\par 
Since $Q$ is non-trivial and $\mathcal{K}(Q)\le 0$, we see from the definition of $\widetilde{m}_{\omega}$  that 
\begin{equation}\label{11/02/25/7:57}
\widetilde{m}_{\omega}
\le \mathcal{I}_{\omega}(Q). 
\end{equation}
Moreover, it follows from \eqref{11/09/13/17:02} and \eqref{11/03/04/1:30} that 
\begin{equation}\label{11/04/10/21:53}
\mathcal{I}_{\omega}(Q) \le \liminf_{n\to \infty}\mathcal{I}_{\omega}(u_{n}^{*})
\le \widetilde{m}_{\omega}.
\end{equation} 
Combining (\ref{11/02/25/7:57}) and (\ref{11/04/10/21:53}), we obtain that $\mathcal{I}_{\omega}(Q)=m_{\omega}$. In particular, we have $m_{\omega}=\widetilde{m}_{\omega}>0$. Thus, we have completed the proof.  
\end{proof}
\section{Well-posedness}
\label{10/10/04/21:43}
%%%%%%%%%%%%%%%%%%%%%%%%
In this section, we discuss the local well-posedness result in the energy critical case (see \cite{Cazenave-Weissler1, Kato1995, Tao-Visan}). \par 
Let us begin with the notion of admissible pairs: A pair of space-time indices $(q,r) \in [2,\infty]\times [2,\infty]$ is said to be $L^{2}$-admissible, if $\frac{1}{r}=\frac{d}{2}\left( \frac{1}{2}-\frac{1}{q} \right)$. In particular, $(2,\infty)$, $(\frac{2(d+2)}{d},\frac{2(d+2)}{d})$, $(\frac{2d(d+2)}{d^{2}+4}, \frac{2(d+2)}{d-2})$, $(\frac{2(d+2)(p-1)}{d(d+2)(p-1)-8}, \, 
\frac{(d+2)(p-1)}{2})$ and $(2^{*},2)$ are $L^{2}$-admissible pairs. 
 For any $L^{2}$-admissible pair $(q,r)$, we have 
\begin{equation}
\label{10/10/29/18:07}
\left\| e^{it\Delta}u \right\|_{L^{r}(\mathbb{R},L^{q})}
\lesssim 
\left\| u \right\|_{L^{2}},
\end{equation}
and for any $L^{2}$-admissible pairs $(q_{1},r_{1})$ and $(q_{2},r_{2})$,  
\begin{equation}
\label{10/10/29/18:11}
\begin{split}
&
\left\| \int_{t_{0}}^{t}
e^{i(t-t')\Delta}v(t')\,dt' \right\|_{L^{r_{1}}(\mathbb{R},L^{q_{1}})}\lesssim 
\left\| v \right\|_{L^{r_{2}'}(\mathbb{R},L^{q_{2}'})}
\quad 
\mbox{for any $t_{0} \in \mathbb{R}$}.
\end{split}
\end{equation}
These estimates are called the Strichartz estimates. 
\par 
Generally,  a pair $(q,r) \in [2,\infty]\times [2,\infty]$ is called $\dot{H}^{s}$-admissible, if $\frac{1}{r}=\frac{d}{2}\left( \frac{1}{2}-\frac{1}{q}-\frac{s}{d} \right)$ (cf. \cite{Holmer-Roudenko, Keraani}).  In particular, the pair $(\frac{(d+2)(p-1)}{2}, \frac{(d+2)(p-1)}{2})$ is the diagonal $\dot{H}^{s_{p}}$-admissible pair. 
\par 
In order to mention the local well-posedness result, we need to introduce several space-time function spaces. Let $I$ be an interval. Then, we introduce Strichartz-type function spaces:    
\begin{align}
\label{11/05/12/17:22}
S(I)
&:=
L^{\infty}(I,L^{2})\cap L^{2}(I,L^{2^{*}}),
\\[6pt]
\label{11/05/27/21;34}
V_{p}(I)
&:=
L^{\frac{(d+2)(p-1)}{2}}(I, L^{\frac{2(d+2)(p-1)}{d(d+2)(p-1)-8}}),
\\[6pt]
\label{11/05/14/17:34}
W_{p}(I)&
:=L^{\frac{(d+2)(p-1)}{2}}(I,L^{\frac{(d+2)(p-1)}{2}}),
\\[6pt]
\label{11/05/12/18:34}
V(I)
&:=V_{1+\frac{4}{d-2}}(I)
=
L^{\frac{2(d+2)}{d-2}}(I, L^{\frac{2d(d+2)}{d^{2}+4}}),
\\[6pt]
\label{11/05/12/17:23} 
W(I)&:=W_{1+\frac{4}{d-2}}(I)=L^{\frac{2(d+2)}{d-2}}(I,L^{\frac{2(d+2)}{d-2}}).
\end{align}

\begin{figure}[h]
\begin{center}
\input{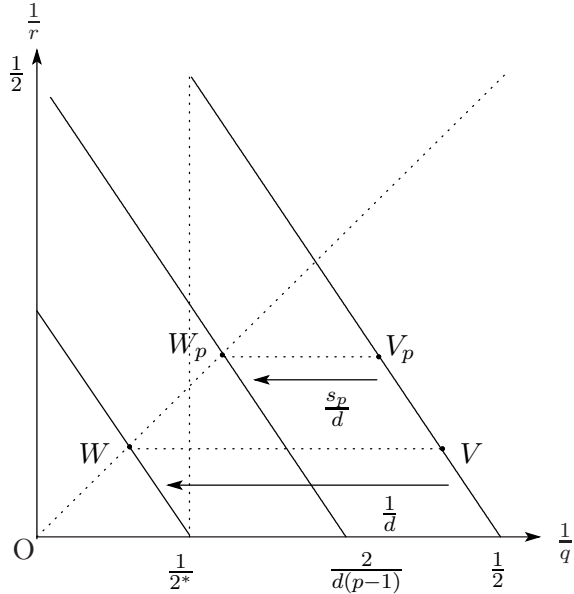}
\end{center}
\caption{Strichartz type spaces}
\end{figure}

It is worthwhile noting that for any $1+\frac{4}{d}<p\le q\le 1+ \frac{4}{d-2}$, we have
\begin{equation}
\label{11/05/11/23:53}
\left\| |\nabla|^{s} \left( |u|^{q-1}u\right) \right\|_{L^{\frac{(d+2)(p-1)}{2p}}(I,L^{\frac{2d(d+2)(p-1)}{d(d+6)(p-1)-8}})}
\lesssim 
\left\| |\nabla|^{s} u \right\|_{V_{p}(I)}
\left\| u \right\|_{W_{q}(I)}^{q-1}
\quad 
\mbox{for $s=0,1$}
\end{equation}
and 
\begin{equation}
\label{11/06/01/18:11}
\left\| u \right\|_{W_{q}(I)}
\lesssim 
\left\| |\nabla|^{s_{q}} u \right\|_{V_{q}(I)}
\le \left\| |\nabla|^{s_{q}} u \right\|_{L^{\infty}(I,L^{2})}^{1-\frac{p-1}{q-1}}
\left\| |\nabla|^{s_{q}} u \right\|_{V_{p}(I)}^{\frac{p-1}{q-1}}.
\end{equation}
Here, the H\"older conjugate of $\bigm(\frac{2d(d+2)(p-1)}{d(d+6)(p-1)-8}, \frac{(d+2)(p-1)}{2p}\bigm)$ is $L^{2}$-admissible. 
\begin{theorem}[Well-posedness in $H^{1}$]
\label{10/10/04/21:44}
Assume $d\ge 3$ and \eqref{11/07/18/9:45}--\eqref{11/05/02/8:40}. Then, we have:
\begin{enumerate}
\item[{\rm (i)}] Let $\psi_{0} \in H^{1}(\mathbb{R}^{d})$, $I$ be an interval, $t_{0}\in I$ and $A>0$. Assume that 
\begin{equation}\label{11/06/01/14:07}
\left\| \psi_{0} \right\|_{H^{1}}\le A.
\end{equation}
Then, there exists $\delta>0$ depending on $A$ with the following property; If  
\begin{equation}
\label{11/05/15/20:19}
\left\| \langle \nabla \rangle e^{i(t-t_{0})\Delta}\psi_{0} 
\right\|_{V_{p_{1}}(I)}
\le \delta,
\end{equation} 
then there exists a solution $\psi \in C(I,H^{1}(\mathbb{R}^{d}))$ to  (\ref{11/06/12/9:08}) such that 
\begin{align}
\label{11/05/29/11:29}
&\psi(t_{0})=\psi_{0}, 
\\[6pt]
\label{11/05/15/20:31}
&\left\| \langle \nabla \rangle \psi \right\|_{S(I)}
\lesssim 
\left\| \psi_{0} \right\|_{H^{1}}, 
\\[6pt]
\label{11/05/15/20:30}
&\left\| \langle \nabla \rangle \psi \right\|_{V_{p_{1}}(I)}
\le 2 
\left\| \langle \nabla \rangle e^{i(t-t_{0})\Delta}\psi_{0} 
\right\|_{V_{p_{1}}(I)}.
\end{align}

\item[{\rm (ii)}] Let $I$ be an interval. Suppose that $\psi_{1}, \psi_{2} \in C(I,H^{1}(\mathbb{R}^{d}))$ are two solutions of 
(\ref{11/06/12/9:08}) with $\psi_{1}(t_{0})=\psi_{2}(t_{0})$ for some $t_{0} \in I$. Then, $\psi_{1}=\psi_{2}$. 
\end{enumerate}

Furthermore, let $\psi \in C(I_{\max},H^{1}(\mathbb{R}^{d}))$ be a solution to (\ref{11/06/12/9:08}), where $I_{\max}$ is the maximal time interval on which the solution $\psi$ exists (Note here that by {\rm (i)}, $I_{\max}$ must be open). Then, we have: 
\begin{enumerate}
\item[{\rm (iii)}] The following conservation laws hold; For any $t, t_{0} \in I_{\max}$, 
\begin{align}
\label{10/08/31/15:26}
\mathcal{M}(\psi(t))
&
:=
\left\| \psi(t) \right\|_{L^{2}}^{2}
=\mathcal{M}(\psi(t_{0})),
\\[6pt]
\label{10/08/31/15:33}
\mathcal{H}(\psi(t))
&
=\mathcal{H}(\psi(t_{0})),
\\[6pt]
\label{10/09/18/15:50}
\mathcal{S}_{\omega}(\psi(t))
&=\mathcal{S}_{\omega}(\psi(t_{0}))
\qquad 
\mbox{for any $\omega \in \mathbb{R}$}, 
\\[6pt]
\label{10/08/31/15:34}
\mathcal{P}(\psi(t))
&
:=
\Im \int_{\mathbb{R}^{d}}\nabla \psi(x,t)\overline{\psi(x,t)}\,dx
=\mathcal{P}(\psi(t_{0})).
\end{align}

\item[{\rm (iv)}]
If $T_{\max}:=\sup{I_{\max}} < +\infty$, then    
\begin{equation}
\label{10/10/04/22:42}
\left\| \psi \right\|_{
W_{p_{1}}([T,\, T_{\max}))
\cap W([T,\, T_{\max}))
}= \infty
\qquad 
\mbox{for any $T \in I_{\max}$}.
\end{equation}
A similar result holds, when $T_{\min}:=\inf{I_{\max}} > -\infty$.
\item[{\rm (v)}] If 
\begin{equation}
\label{11/05/15/20:47}
\left\| \psi \right\|_{W_{p_{1}}(I_{\max})\cap W(I_{\max})}<\infty, 
\end{equation}  
then $T_{\max}=+\infty$, $T_{\min}=-\infty$, 
 and there exist $\phi_{+}, \phi_{-} \in H^{1}(\mathbb{R}^{d})$ such that 
\begin{equation}\label{11/05/15/20:49}
\lim_{t\to + \infty}
\left\| \psi(t) -e^{it\Delta}\phi_{+} \right\|_{H^{1}}
=
\lim_{t\to -\infty}
\left\| \psi(t) -e^{it\Delta}\phi_{-} \right\|_{H^{1}}
=
0.
\end{equation}
\end{enumerate}
\end{theorem}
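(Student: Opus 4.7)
The plan is to handle parts (i)--(v) in order by a contraction-mapping argument in appropriate Strichartz-type spaces, followed by standard continuation and persistence arguments.

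For part (i), I would set up a Duhamel map
\[
\Phi(\psi)(t) := e^{i(t-t_{0})\Delta}\psi_{0} + i\int_{t_{0}}^{t} e^{i(t-s)\Delta}\bigl(f(\psi(s))+|\psi(s)|^{2^{*}-2}\psi(s)\bigr)\,ds
\]
on the complete metric space
\[
X_{A,\delta}:=\bigl\{\psi\in C(I,H^{1}) \,:\, \|\langle\nabla\rangle\psi\|_{S(I)}\le C_{0}A,\ \|\langle\nabla\rangle\psi\|_{V_{p_{1}}(I)}\le 2\delta,\ \|\psi\|_{W(I)}\le \eta\bigr\}
\]
equipped with the metric coming from the $S(I)\cap V_{p_{1}}(I)\cap V(I)$ norm (without derivatives, to avoid trouble with the rough $f_{\le 1}$ when $p_{1}<2$). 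The Strichartz estimates \eqref{10/10/29/18:07}--\eqref{10/10/29/18:11} together with the product estimate \eqref{11/05/11/23:53} and the interpolation \eqref{11/06/01/18:11} reduce the contraction property to controlling the subcritical term by $\|\langle\nabla\rangle\psi\|_{V_{p_{1}}}\|\psi\|_{W_{p_{1}}}^{p_{1}-1}$ (plus the analogous estimate with $p_{2}$ from the assumption \eqref{11/05/02/8:40}) and the critical term by $\|\langle\nabla\rangle\psi\|_{V}\|\psi\|_{W}^{2^{*}-2}$. Choosing $\delta=\delta(A)$ sufficiently small makes $\Phi$ a strict contraction on $X_{A,\delta}$, giving a fixed point satisfying \eqref{11/05/15/20:31}--\eqref{11/05/15/20:30}.

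For (ii), I would take $\psi_{1},\psi_{2}$ coinciding at $t_{0}$, and on a small subinterval apply the same Strichartz/nonlinear estimates to the difference $\psi_{1}-\psi_{2}$; shrinking the interval so that the combined $V_{p_{1}}\cap V$ norms of both solutions are small forces $\psi_{1}=\psi_{2}$ locally, and a standard continuity argument propagates the uniqueness to all of $I$. Conservation laws in (iii) are proved by first regularizing: approximate $\psi_{0}$ in $H^{1}$ by $H^{2}$ data, on which the solution is classical and the identities \eqref{10/08/31/15:26}--\eqref{10/08/31/15:34} follow from the Hamiltonian structure \eqref{11/07/18/9:25} by direct integration by parts; then pass to the limit using the continuous dependence implicit in the contraction argument of (i).

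For (iv), I would argue by contradiction: if $\|\psi\|_{W_{p_{1}}([T,T_{\max}))\cap W([T,T_{\max}))}<\infty$ for some $T<T_{\max}<\infty$, partition $[T,T_{\max})$ into finitely many subintervals on each of which the $W_{p_{1}}\cap W$ norm of $\psi$ is tiny, and use the Strichartz identity
$\psi(t) = e^{i(t-\tau)\Delta}\psi(\tau)+\text{Duhamel}$
together with \eqref{11/05/11/23:53} to bound $\|\langle\nabla\rangle e^{i(t-\tau)\Delta}\psi(\tau)\|_{V_{p_{1}}(J)}$ uniformly for $\tau$ close to $T_{\max}$; then the hypothesis of (i) is satisfied at some such $\tau$ with an interval extending past $T_{\max}$, contradicting maximality. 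Finally, for (v), given \eqref{11/05/15/20:47}, I would verify Cauchy-ness of $\{e^{-it\Delta}\psi(t)\}$ in $H^{1}$ as $t\to+\infty$ (and $-\infty$) by writing the difference as a tail Duhamel integral and applying Strichartz plus the nonlinear estimates, whose right-hand side is small because the $W_{p_{1}}\cap W$ norm on $[T,\infty)$ tends to $0$; defining $\phi_{+}:=\psi_{0}+i\int_{t_{0}}^{\infty}e^{-is\Delta}(f(\psi)+|\psi|^{2^{*}-2}\psi)\,ds$ then yields \eqref{11/05/15/20:49}, and globality $T_{\max}=+\infty$ follows from (iv).

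The main obstacle is matching function spaces so that both the energy-critical nonlinearity and the subcritical perturbation $f$ (with its three distinct regimes in \eqref{11/04/30/22:54}--\eqref{11/05/02/8:40}) can be estimated simultaneously in a scale that closes under Strichartz; the low-regularity case $p_{1}<2$ for $d\ge 5$ in particular forces one to avoid putting a derivative on $f_{\le 1}$, which is why the contraction norm must be chosen without derivatives while the a priori bounds must still control $\langle\nabla\rangle\psi$ in $S(I)$.
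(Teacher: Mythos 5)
Your proposal follows essentially the same route as the paper: a contraction argument in a space carrying a priori bounds on $\langle\nabla\rangle\psi$ in $S(I)$ and $V_{p_{1}}(I)$ but equipped with a derivative-free metric (precisely to cope with the merely H\"older-continuous derivative of $f$ when $p_{1}<2$, $d\ge 5$), with the $W_{q}$- and $W$-norms controlled through \eqref{11/06/01/18:11}, followed by the standard blow-up-criterion and scattering arguments via Strichartz bootstrapping near $T_{\max}$ and Cauchy-ness of $e^{-it\Delta}\psi(t)$. Your variant of (iv) (verifying the smallness hypothesis of (i) at a late time on an interval extending past $T_{\max}$, rather than first showing $\psi(t)$ converges in $H^{1}$ as $t\to T_{\max}$) and your inclusion of a $W(I)$ bound in the contraction space are only cosmetic departures from the paper's proof.
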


\begin{proof}[Proof of Theorem \ref{10/10/04/21:44}] 
We first prove (i). Let $\delta >0$ be a sufficiently small constant specified later; In particular, we take $\delta<1$ . Suppose that 
\begin{equation}\label{11/06/01/14:22}
\left\| \langle \nabla \rangle e^{i(t-t_{0})\Delta} \psi_{0} 
\right\|_{V_{p_{1}}(I)}\le \delta .
\end{equation}
We see from the Strichartz estimate that there exists $C_{st}>0$ depending only on $d$ such that 
\begin{equation}\label{11/05/27/17:55}
\left\| e^{it\Delta} u \right\|_{S(\mathbb{R})}
\le C_{st}\left\| u \right\|_{L^{2}}.
\end{equation} 
We define the space $Y(I)$ and the map $\mathcal{T}$ by 
\begin{align}
\label{11/05/27/17:57}
&Y(I)
:=\left\{ u \in C(I,H^{1}(\mathbb{R}^{d}))
\bigg| 
\begin{array}{l}
\left\| \langle \nabla \rangle u \right\|_{V_{p_{1}}(I)}
\le 
2\left\| \langle \nabla \rangle e^{i(t-t_{0})\Delta} \psi_{0} 
\right\|_{V_{p_{1}}(I)},
\\[6pt]
\left\| \langle \nabla \rangle u \right\|_{S(I)}
\le 
2C_{St} \left\| \psi_{0} \right\|_{H^{1}}
\end{array}
\right\},
\\[6pt]
\label{11/05/27/18:25}
&\mathcal{T}(u)
:=e^{i(t-t_{0})\Delta}\psi_{0}
-
i\int_{t_{0}}^{t}e^{i(t-t')\Delta}
\bigm\{ f(u)+|u|^{2^{*}-2}u \bigm\}(t')\,dt'.
\end{align}
We can verify that $Y(I)$ becomes a metric space with the metric $\rho(u,v):=\left\| u-v \right\|_{S(I)}$. 
\par 
We shall show that $\mathcal{T}$ maps $Y(I)$ into itself for sufficiently small $\delta$.  The Strichartz estimate, together with (\ref{11/04/30/22:54}), (\ref{11/05/02/8:40}), (\ref{11/05/11/23:53}), (\ref{11/06/01/18:11}) and (\ref{11/06/01/14:22}), gives us that 
\begin{equation}\label{11/05/27/21:05}
\begin{split}
\left\| \langle \nabla \rangle \mathcal{T}(u) \right\|_{V_{p_{1}}(I)}
&\le 
\left\| \langle \nabla \rangle e^{i(t-t_{0})\Delta} \psi_{0}  
\right\|_{V_{p_{1}}(I)}
\\[6pt]
&\qquad 
+
C \left( 
1
+
\left\| u \right\|_{L^{\infty}(I,H^{1})}^{p_{2}-p_{1}}
+
\left\| u \right\|_{L^{\infty}(I,H^{1})}^{2^{*}-(p_{1}+1)}
\right)
\left\| \langle \nabla \rangle u \right\|_{V_{p_{1}}(I)}^{p_{1}}
\\[6pt]
&\le  
\left\| \langle \nabla \rangle e^{i(t-t_{0})\Delta} \psi_{0}  
\right\|_{V_{p_{1}}(I)}
\\[6pt]
&\qquad 
+
C(1+A^{p_{2}-p_{1}}+A^{2^{*}-(p_{1}+1)}) \delta^{p_{1}-1}\left\| \langle \nabla \rangle e^{i(t-t_{0})\Delta} \psi_{0}  \right\|_{V_{p_{1}}(I)},
\end{split}
\end{equation}
where $C$ is some universal constant. Hence, taking $\delta$ sufficiently small depending on $A$, we obtain 
\begin{equation}\label{11/06/01/14:15}
\left\| \langle \nabla \rangle \mathcal{T}(u) \right\|_{V_{p_{1}}(I)}
\le 2\left\| \langle \nabla \rangle e^{i(t-t_{0})\Delta} \psi_{0}  
\right\|_{V_{p_{1}}(I)}. 
\end{equation}
Similarly, taking $\delta$ sufficiently small depending on $A$, we have  
\begin{equation}\label{11/05/27/22:09}
\left\| \langle \nabla \rangle \mathcal{T}(u) \right\|_{S(I)}
\le 2C_{St}
\left\| \psi_{0}  \right\|_{H^{1}}.
\end{equation}

Next, we shall show that $\mathcal{T}$ is contraction in $(Y(I),\rho)$ for sufficiently small $\delta>0$.  Take any $u,v \in Y(I)$.  Then, we have in a way similar to the estimate \eqref{11/05/27/21:05} that 
\begin{equation}\label{11/06/01/14:26}
\begin{split}
&\rho(\mathcal{T}(u), \mathcal{T}(v))=\left\| \mathcal{T}(u)-\mathcal{T}(v) \right\|_{S(I)}
\\[6pt]
&\lesssim 
(1+A^{p_{2}-p_{1}}+A^{2^{*}-(p_{1}+1)})
\delta^{p_{1}-1}
\left\| u-v \right\|_{V_{p_{1}}(I)},
\end{split} 
\end{equation}
which shows that if $\delta$ is sufficiently small depending on $A$, then $\mathcal{T}$ is contraction. 
\par 
Thus, the claim (i) follows from the contraction mapping principle in $(Y(I),\rho)$.  
\par 
We omit the proof of (ii) and (iii). 
\par 
We prove (iv) by contradiction. Assume $T_{\max}<+\infty$, and suppose the contrary that    
\begin{equation}\label{11/05/13/18:01}
\left\| \psi \right\|_{W_{p_{1}}([T_{0},\,T_{\max}))
\cap W([T_{0},\,T_{\max}))}< \infty
\quad 
\mbox{for some $T_{0} \in I_{\max}$}.
\end{equation}
In particular, we have 
\begin{equation}\label{11/05/13/18:04}
\lim_{T\to T_{\max}}\left\| \psi \right\|_{W_{p_{1}}([T,\,T_{\max}))
\cap W_{p_{2}}([T,\,T_{\max})) \cap W([T,\,T_{\max}))}=0.
\end{equation}
In view of (i) in this theorem, it suffices to show that  $\displaystyle{\lim_{t\to T_{\max}}\psi(t)}$ exists strongly in $H^{1}(\mathbb{R}^{d})$. We further reduce this to proving that for any sequence $\{t_{n}\}$ in $[T_{0},T_{\max})$ with $\displaystyle{\lim_{n\to \infty}t_{n}=T_{\max}}$, $\{e^{-it_{n}\Delta}\psi(t_{n})\}$ is Cauchy sequence in $H^{1}(\mathbb{R}^{d})$. Let us prove this. Put $p_{3}:=1+\frac{4}{d-2}$ as well as the above. 
The Strichartz estimate together with (\ref{11/05/11/23:53}) yields that 
\begin{equation}\label{11/05/11/21:36}
\begin{split}
&
\left\| 
e^{-it_{n}\Delta}\psi(t_{n})
-e^{-it_{m}\Delta}\psi(t_{m}) 
\right\|_{H^{1}}
\\[6pt]
&\le
\sup_{t \in [t_{m},t_{n}]}
\left\|
\int_{t_{m}}^{t}
e^{-it'\Delta}
\bigm\{ 
f(\psi) + |\psi|^{2^{*}-2}\psi 
\bigm\}(t')\,dt'
\right\|_{H^{1}}
\\[6pt]
&\lesssim 
\left\| \langle \nabla \rangle 
\psi 
\right\|_{V_{p_{1}}([t_{m},\, T_{\max}))} 
\sum_{j=1}^{3}
\left\| 
 \psi 
\right\|_{W_{p_{j}}([t_{m},\, T_{\max}))}^{p_{j}-1}.
\end{split}
\end{equation}
We see from (\ref{11/05/13/18:04}) and (\ref{11/05/11/21:36}) that $\{e^{-it_{n}\Delta}\psi(t_{n})\}$ is Cauchy sequence in $H^{1}(\mathbb{R}^{d})$, provided that   
\begin{equation}\label{11/05/12/18:07}
\left\| \langle \nabla \rangle \psi \right\|_{
V_{p_{1}}([T,\, T_{\max}))
}<\infty
\quad 
\mbox{for some $T\in [T_{0},T_{\max})$}. 
\end{equation}
We shall prove (\ref{11/05/12/18:07}). By (\ref{11/05/13/18:04}), we can take $T_{1} \in [T_{0}, T_{\max})$ such that 
\begin{equation}\label{11/05/13/18:12}
\sum_{j=1}^{3}
\left\| 
 \psi 
\right\|_{W_{p_{j}}([T_{1},\, T_{\max}))}^{p_{j}-1}
\ll 1.
\end{equation} 
Then, an estimate similar to (\ref{11/05/11/21:36}) gives us that 
\begin{equation}\label{11/05/12/18:16}
\left\|\langle \nabla \rangle \psi \right\|_{
V_{p_{1}}([T_{1},\, T_{\max}))}
\le  
C \left\|  \psi(T_{1}) \right\|_{H^{1}}
+
\frac{1}{2}
\left\| \langle \nabla \rangle 
 \psi 
\right\|_{V_{p_{1}}([T_{1},\, T_{\max}))}
\end{equation}
for some universal constant $C>0$, from which we immediately obtain the desired result (\ref{11/05/12/18:07}). 
\par 
Next, we prove (v). We see from the contraposition of (iv) that $T_{\max}=+\infty$ and $T_{\min}=-\infty$. Moreover, an argument similar to  the proof of (iv) shows that there exists $\phi_{+}, \phi_{-} \in H^{1}(\mathbb{R}^{d})$ such that (\ref{11/05/15/20:49}) holds. 
\end{proof}

\begin{proposition}[see \cite{Cazenave-Weissler1}]
\label{11/07/13/16:27}
Assume $d\ge 3$. Let $\phi \in \dot{H}^{1}(\mathbb{R}^{d})$, let $t_{0}\in \mathbb{R}\cup \{\pm \infty\}$ and let $A>0$. Assume that 
\begin{equation}\label{11/06/01/14:07}
\left\| \nabla \phi \right\|_{L^{2}}\le A.
\end{equation}
Then, there exists $\delta>0$ depending on $A$ with the following property; If  
\begin{equation}
\label{11/08/14/15:28}
\left\| \nabla \phi 
\right\|_{L^{2}(\mathbb{R})}
\le \delta,
\end{equation} 
then there exists a global solution $\psi \in C(\mathbb{R},\dot{H}^{1}(\mathbb{R}^{d}))$ to  (\ref{11/07/17/16:01}) such that 
\begin{align}
\label{11/07/13/16:35}
&\left\{ 
\begin{array}{ccc}
\psi(t_{0})=\phi &\mbox{if }& t_{0} \in \mathbb{R}, 
\\[6pt]
\displaystyle{
\lim_{t\to t_{0}}\left\| \psi(t) -e^{it\Delta}\phi \right\|_{H^{1}}=0
}
&\mbox{if}& t_{0} \in \{\pm \infty\}, 
\end{array}
\right.
\\[6pt]
\label{11/07/13/16:34}
&\left\| \nabla \psi \right\|_{S(\mathbb{R})}
\lesssim 
\left\| \nabla \phi 
\right\|_{L^{2}}.
\end{align}
\end{proposition}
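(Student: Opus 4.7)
The plan is to run the standard Cazenave--Weissler small-data contraction in Strichartz spaces at the $\dot H^1$ level, which is the direct analogue of Theorem \ref{10/10/04/21:44} (i) specialized to the pure critical equation \eqref{11/07/17/16:01}. The only new feature is the allowance $t_0\in\{\pm\infty\}$, which is handled by the same fixed-point but with the Duhamel integral running out to infinity.

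\textbf{Step 1 (Integral formulation).} For $t_0\in\mathbb{R}$ I would set
\[
\mathcal{T}(\psi)(t):=e^{i(t-t_0)\Delta}\phi - i\int_{t_0}^{t} e^{i(t-t')\Delta}\bigl(|\psi|^{2^{*}-2}\psi\bigr)(t')\,dt',
\]
and for $t_0=\pm\infty$ replace $e^{i(t-t_0)\Delta}\phi$ by $e^{it\Delta}\phi$ and the lower limit by $\pm\infty$; a fixed point of $\mathcal{T}$ will be the desired solution and the second condition in \eqref{11/07/13/16:35} will follow from dominated convergence applied to the remaining Duhamel integral.

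\textbf{Step 2 (Function space).} Using only the $\dot H^1$-scaling invariant spaces, set
\[
X:=\Bigl\{\psi\in C(\mathbb{R},\dot H^{1})\ \Bigm|\ \|\nabla \psi\|_{V(\mathbb{R})}\le 2\delta,\ \|\nabla \psi\|_{S(\mathbb{R})}\le 2C_{St}\|\nabla\phi\|_{L^{2}}\Bigr\}
\]
with metric $\rho(u,v):=\|u-v\|_{V(\mathbb{R})}$; the smallness hypothesis \eqref{11/08/14/15:28} (read as smallness of $\|\nabla e^{it\Delta}\phi\|_{V(\mathbb{R})}$) guarantees that the free evolution lies in $X$. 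The crucial Strichartz observation is that the pair $\bigl(\frac{2d(d+2)}{d^{2}+4},\frac{2(d+2)}{d-2}\bigr)$ is $L^{2}$-admissible, so Strichartz gives $\|\nabla e^{it\Delta}\phi\|_{V(\mathbb{R})}\lesssim\|\nabla\phi\|_{L^{2}}$ and the same estimate applied to the Duhamel term will close the argument.

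\textbf{Step 3 (Self-mapping and contraction).} Specializing \eqref{11/05/11/23:53}--\eqref{11/06/01/18:11} to $p=q=1+\frac{4}{d-2}=2^{*}-1$ (for which $s_{p}=1$ and $W_{p}=W$, $V_{p}=V$), I obtain the nonlinear estimate
\[
\bigl\|\nabla\bigl(|\psi|^{2^{*}-2}\psi\bigr)\bigr\|_{L^{2}(\mathbb{R},L^{\frac{2d}{d+2}})}
\lesssim \|\nabla\psi\|_{V(\mathbb{R})}\|\psi\|_{W(\mathbb{R})}^{2^{*}-2}
\lesssim \|\nabla\psi\|_{V(\mathbb{R})}^{2^{*}-1},
\]
whose dual pair is $L^{2}$-admissible. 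Strichartz then yields
\[
\|\nabla\mathcal{T}(\psi)\|_{V(\mathbb{R})}\le \|\nabla e^{it\Delta}\phi\|_{V(\mathbb{R})}+C\|\nabla\psi\|_{V(\mathbb{R})}^{2^{*}-1}\le \delta+C(2\delta)^{2^{*}-1}
\]
and likewise $\|\nabla\mathcal{T}(\psi)\|_{S(\mathbb{R})}\le C_{St}\|\nabla\phi\|_{L^{2}}+C\delta^{2^{*}-2}(2C_{St}\|\nabla\phi\|_{L^{2}})$. Choosing $\delta=\delta(A)$ small enough so that $C(2\delta)^{2^{*}-2}\le\tfrac{1}{2}$ closes both bounds, and an identical computation with $|\psi|^{2^{*}-2}\psi-|\varphi|^{2^{*}-2}\varphi$ estimated by $(|\psi|+|\varphi|)^{2^{*}-2}|\psi-\varphi|$ produces the contraction in $(X,\rho)$. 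Banach fixed-point then produces $\psi\in X$ satisfying the integral equation, and \eqref{11/07/13/16:34} is immediate from the Strichartz bound on $\mathcal{T}(\psi)=\psi$.

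\textbf{Main obstacle.} The only real subtlety is the $t_0=\pm\infty$ case: one must verify that $\int_{t}^{\pm\infty}e^{i(t-t')\Delta}(|\psi|^{2^{*}-2}\psi)\,dt'$ is well-defined as an $\dot H^{1}$-valued integral and that the limit in \eqref{11/07/13/16:35} holds. This reduces, via another Strichartz application, to showing that $\|\nabla\int_{t}^{\pm\infty}e^{i(t-t')\Delta}(|\psi|^{2^{*}-2}\psi)\,dt'\|_{L^{2}}\to 0$ as $t\to\pm\infty$, which follows because the same nonlinear estimate applied on $[t,\pm\infty)$ is bounded by $\|\nabla\psi\|_{V([t,\pm\infty))}^{2^{*}-1}$ and the $V$-norm is absolutely continuous on the whole line. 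Everything else is a routine transcription of Theorem \ref{10/10/04/21:44} (i) to the purely $\dot H^{1}$-critical setting, without the $H^{1}$-subcritical terms $f(u)$.
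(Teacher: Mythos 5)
Your argument is correct and is essentially the intended one: the paper does not prove Proposition \ref{11/07/13/16:27} at all (it simply cites Cazenave--Weissler), and what you write is exactly the standard small-data contraction at critical regularity that underlies that citation, i.e.\ the $\dot H^{1}$-critical transcription of the paper's own proof of Theorem \ref{10/10/04/21:44} (i), including the key device of running the contraction in a derivative-free metric (here $\|u-v\|_{V(\mathbb{R})}$, there $\|u-v\|_{S(I)}$) to sidestep the non-Lipschitz derivative of the nonlinearity in high dimensions. Your treatment of $t_{0}=\pm\infty$ via the Duhamel integral from infinity and the vanishing of the $V(\mathbb{R})$ and $W(\mathbb{R})$ tails is likewise the standard route and closes the remaining claims \eqref{11/07/13/16:35}--\eqref{11/07/13/16:34}.
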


We can control the Strichartz spaces by a few spaces. Indeed, we have: \begin{lemma}\label{11/05/14/14:42}
Assume $d\ge 3$. Let $I$ be an interval, $A,B>0$, and
 let $u$ be a function such that 
\begin{equation}\label{11/05/14/14:46}
\left\| u \right\|_{L^{\infty}(I,H^{1})}\le A,
\qquad 
\left\| u \right\|_{W_{p_{1}}(I) \cap W(I)}\le B.
\end{equation}
Let $\varepsilon>0$ and suppose that  
\begin{equation}\label{11/05/27/4:09}
\left\|\langle \nabla \rangle \left( 
i\frac{\partial }{\partial t}u +\Delta u + f(u)+|u|^{2^{*}-2}u
\right) 
\right\|_{L^{\frac{2(d+2)}{d+4}}(I,L^{\frac{2(d+2)}{d+4}})}
\le \varepsilon.
\end{equation} 
Then, we have 
\begin{equation}\label{11/05/14/14:47}
\left\| \langle \nabla \rangle u \right\|_{S(I)}
\lesssim C(A,B) + \varepsilon,
\end{equation}
where $C(A,B)$ is some constant depending on $A$ and $B$. 
\end{lemma}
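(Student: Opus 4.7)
The plan is to view this as an a priori Strichartz estimate for the inhomogeneous equation $iu_t + \Delta u + f(u) + |u|^{2^*-2}u = e$, where $e$ is the residual appearing in \eqref{11/05/27/4:09}. First I would partition $I$ into finitely many subintervals on which the concrete nonlinear spaces are all small; then on each subinterval I would apply the Strichartz estimates of Section \ref{10/10/04/21:43} together with \eqref{11/05/11/23:53}, closing the resulting inequality by an absorption argument; finally I would sum the contributions of the finitely many subintervals. A preliminary remark is that $\|u\|_{W_{p_2}(I)}$ is automatically controlled by the hypotheses: since $W_{p_j} = L^{r_j}_{t,x}$ with $r_1 < r_2 < r_3 = \tfrac{2(d+2)}{d-2}$ (where $r_3$ corresponds to $W$), a H\"older interpolation yields $\|u\|_{W_{p_2}(I)} \le \|u\|_{W_{p_1}(I)}^\theta \|u\|_{W(I)}^{1-\theta} \lesssim B$ for some $\theta \in (0,1)$. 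Accordingly, for a small threshold $\eta = \eta(A) > 0$ to be chosen, I would partition $I = \bigcup_{j=1}^N I_j$ into $N = N(A,B)$ consecutive subintervals on each of which $\|u\|_{W_{p_1}(I_j)} + \|u\|_{W_{p_2}(I_j)} + \|u\|_{W(I_j)} \le \eta$.

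On a fixed subinterval $I_j = [t_j, t_{j+1}]$, Duhamel's formula together with the Strichartz estimate \eqref{10/10/29/18:11}, applied to both the $L^2$-admissible pair defining $S$ and to the $L^2$-admissible pair defining $V_{p_1}$, combined with the product-type bound \eqref{11/05/11/23:53} applied to the three nonlinear terms (for $f_{\le 1}$ with $q = p_1$, for $f_{\ge 1}$ with $q = p_2$, and for $|u|^{2^*-2}u$ with $q = 2^*-1$), will yield
\begin{equation*}
\|\langle \nabla \rangle u\|_{S(I_j) \cap V_{p_1}(I_j)} \lesssim \|u(t_j)\|_{H^1} + \sum_{q \in \{p_1, p_2, 2^*-1\}} \|\langle \nabla\rangle u\|_{V_{p_1}(I_j)} \|u\|_{W_q(I_j)}^{q-1} + \varepsilon_j,
\end{equation*}
where $\varepsilon_j := \|\langle \nabla\rangle e\|_{L^{2(d+2)/(d+4)}(I_j \times \mathbb{R}^d)}$. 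Using $\|u\|_{W_q(I_j)} \le \eta$ and $\|u(t_j)\|_{H^1} \le A$, and choosing $\eta = \eta(A)$ small enough that the prefactors $\eta^{q-1}$ beat the universal Strichartz constant, one first absorbs to obtain $\|\langle \nabla\rangle u\|_{V_{p_1}(I_j)} \lesssim A + \varepsilon_j$; feeding this bound back into the inequality above then gives $\|\langle \nabla\rangle u\|_{S(I_j)} \lesssim A + \varepsilon_j$.

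Summing the $L^2_t L^{2^*}_x$ contributions in $\ell^2$ over the $N = N(A,B)$ subintervals, and taking the supremum in time of the $L^\infty_t H^1$ contribution, will then produce $\|\langle \nabla\rangle u\|_{S(I)} \lesssim C(A,B) + \varepsilon$, as desired. The step I expect to be the main obstacle is the closing in the middle paragraph: the space $S = L^\infty_t L^2 \cap L^2_t L^{2^*}_x$ is not a space in which \eqref{11/05/11/23:53} places its natural derivative, so a term of the form $\|\langle \nabla\rangle u\|_{S(I_j)} \|u\|_{W_q(I_j)}^{q-1}$ cannot be absorbed directly on the $S$ side. This forces the two-step closing procedure, in which $V_{p_1}$ plays the role of the derivative-carrying auxiliary space; the price is that $\eta$ must be chosen small depending on $A$ (so the nonlinear constants are absorbed), which is precisely what makes the final constant depend on both $A$ and $B$.
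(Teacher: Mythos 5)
Your proof is correct, but it follows a genuinely different route from the paper's. The paper does not subdivide $I$ at all: it writes one Duhamel/Strichartz estimate on the whole interval, bounding the nonlinear terms by $\sum_{k}\left\| \langle \nabla \rangle u \right\|_{V_{p_{k}}(I)}\left\| u \right\|_{W_{p_{k}}(I)}^{p_{k}-1}+\left\| \langle \nabla \rangle u \right\|_{V(I)}\left\| u \right\|_{W(I)}^{2^{*}-2}$, and then invokes the H\"older-in-time interpolation \eqref{11/05/14/15:12}, $\left\| \langle \nabla \rangle u \right\|_{V_{p}(I)}\le A^{1-\theta_{p}}\left\| \langle \nabla \rangle u \right\|_{S(I)}^{\theta_{p}}$ with $\theta_{p}=\frac{4}{(d+2)(p-1)}\in(0,1)$, so the unknown $X:=\left\| \langle \nabla \rangle u \right\|_{S(I)}$ enters the right-hand side only through strictly sublinear powers $X^{\theta_{p}}$; Young's inequality then closes the estimate globally, producing $C(A,B)+\varepsilon$ with no smallness parameter, no iteration and no interval bookkeeping. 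Your argument instead makes $V_{p_{1}}$ the derivative-carrying auxiliary space, gains smallness by splitting $I$ into $N(A,B)$ pieces on which the $W$-type norms are at most $\eta$, absorbs there, and sums ($\ell^{2}$ for the $L^{2}_{t}L^{2^{*}}_{x}$ component; the error terms $\varepsilon_{j}$ sum harmlessly because $\frac{2(d+2)}{d+4}<2$, so $(\sum_{j}\varepsilon_{j}^{2})^{1/2}\le\varepsilon$). Both arguments rely on the same tacit qualitative finiteness of the norm being absorbed, and both use the interpolation $\left\| u \right\|_{W_{p_{2}}}\lesssim\left\| u \right\|_{W_{p_{1}}}^{\theta}\left\| u \right\|_{W}^{1-\theta}$ that you state explicitly and the paper uses silently. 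The paper's route buys brevity and a constant quantified directly by Young's inequality; yours is more robust (it would survive in settings without such a sublinear self-improvement) at the price of a constant of size roughly $\sqrt{N(A,B)}\,A$. Two cosmetic points: the threshold $\eta$ only needs to beat universal Strichartz constants, so it need not depend on $A$; and, as you note yourself, the absorption must happen in $V_{p_{1}}(I_{j})$ rather than in $S(I_{j})$, which is exactly why your two-step closing is needed.
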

\begin{proof}[Proof of Lemma \ref{11/05/14/14:47}]
We rewrite the function $u$ by  
\begin{equation}\label{10/10/31/22:18}
u(t)=e^{i(t-t_{0})\Delta}u(t_{0})
+i
\int_{t_{0}}^{t}e^{i(t-t')\Delta}
\left(2i\frac{\partial u}{\partial t}+\Delta u\right)(t')\,dt
\quad 
\mbox{for any $t_{0}\in I$}, 
\end{equation}
where the equality is taken in the weak sense. Then, the Strichartz estimate together with (\ref{11/05/11/23:53}) and (\ref{11/05/27/4:09}) gives us that 
\begin{equation}\label{11/05/14/14:54}
\begin{split}
\left\|\langle \nabla \rangle u \right\|_{S(I)}
&\lesssim 
\left\| u(t_{0}) \right\|_{H^{1}}
+
\sum_{k=1}^{2}
\left\|  \langle \nabla \rangle u \right\|_{V_{p_{k}}(I)}
\left\| u \right\|_{W_{p_{k}}(I)}^{p_{k}-1}
+
\left\|  \langle \nabla \rangle u \right\|_{V(I)}
\left\| u \right\|_{W(I)}^{\frac{4}{d-2}}
+
\varepsilon
\\[6pt] 
&\le 
A
+
\sum_{j=1}^{2}
\left\| \langle \nabla \rangle u \right\|_{V_{p_{k}}(I)}
B^{p_{k}-1}
+
\left\| \langle \nabla \rangle u \right\|_{V(I)}
B^{\frac{4}{d-2}}
+\varepsilon
.
\end{split} 
\end{equation}
Here, we see from the H\"older inequality that  
\begin{equation}
\label{11/05/14/15:12}
\begin{split}
\left\| \langle \nabla \rangle  u \right\|_{V_{p}(I)}
&\le 
\left\| u 
\right\|_{L^{\infty}(I,H^{1})}^{1-\frac{4}{(d+2)(p-1)}}
\left\| \langle \nabla \rangle u 
\right\|_{L^{2}(I,L^{2^{*}})}^{\frac{4}{(d+2)(p-1)}}
\\[6pt]
&\le 
A^{1-\frac{4}{(d+2)(p-1)}}
\left\|\langle \nabla \rangle  u \right\|_{S(I)}^{\frac{4}{(d+2)(p-1)}}
\qquad 
\mbox{for any $1+\frac{4}{d}<p\le 1+\frac{4}{d-2}$}. 
\end{split}
\end{equation}
Combining (\ref{11/05/14/14:54}) with (\ref{11/05/14/15:12}), we easily verify that the desired result (\ref{11/05/14/14:47}) holds.
\end{proof}

%%%%%%%%%%%%%%%%%%%%%%%%%%%%%%%%%
\section{Perturbation Theory}
\label{10/11/04/14:21}
%%%%%%%%%%%%%%%%%%%%%%%%%%%%%%%%%
The derivative of our nonlinearity is no longer Lipschitz 
 continuous when $d \ge 5$; It is merely H\"older continuous of order $p-1>\frac{4}{d}$. Thus, to establish the long-time perturbation theory (see Propositions \ref{11/05/14/12:26}), we need some idea. We will employ the exotic Strichartz estimate (see  \cite{Foschi} and  \cite{Tao-Visan}). 
\par 
Assume that $d\ge 5$. We define $\alpha$ by  $s_{\alpha}=1-\frac{4}{d}s_{p_{1}}$, i.e.,  
\begin{equation}
\label{11/05/21/11:26}
\alpha:=1+\frac{4d(p_{1}-1)}{d(d+2)(p_{1}-1)-16}
\in 
\biggm(1+\frac{4d}{d^{2}-2d+8},\ 1+\frac{4}{d-2} \biggm)
.
\end{equation}
Let  $(\rho, \gamma)$ and $(\rho^{*},\gamma^{*})$ be the pairs such that  
\begin{align}
\rho&:=\frac{\alpha+1+2^{*}}{2},
\qquad 
\frac{1}{\gamma}
=\frac{d}{2}\left( \frac{1}{2}-\frac{1}{\rho}
-\frac{s_{\alpha}}{d}\right), 
\\[6pt]
\label{11/05/21/15:35}
\rho^{*}&:=\rho',
\qquad 
\frac{1}{r^{*}}=1-\frac{d}{2}\left( \frac{1}{2}-\frac{1}{\rho}
+\frac{s_{\alpha}}{d}
\right).
\end{align}
Note here that $\alpha$ and $\rho$ are monotone decreasing 
 with respect to $p_{1}$. 
\par 
Our exotic Strichartz norms are as follows\footnote{When $d\le 4$, we take $\left\| u \right\|_{ES(I)}=\left\| \nabla u \right\|_{V(I)}$ and 
$\left\| u \right\|_{ES^{*}(I)}=\left\| \nabla u \right\|_{L^{\frac{2(d+2)}{d+4}}(I,L^{\frac{2(d+2)}{d+4}})}$.}:  
\begin{align}
\label{11/05/21/12:00}
\left\| u \right\|_{ES(I)}
&:=
\left\| |\nabla|^{\frac{4}{d}s_{p_{1}}}u \right\|_{
L^{\gamma}(I,L^{\rho})}, 
\\[6pt] 
\label{11/05/21/14:05}
\left\| u \right\|_{ES^{*}(I)}
&:=
\left\| |\nabla|^{\frac{4}{d}s_{p_{1}}}u \right\|_{
L^{\gamma^{*}}(I,L^{\rho^{*}})}
.
\end{align}
Since $(\rho, \gamma)$ are $\dot{H}^{1-\frac{4}{d}s_{p_{1}}}$-admissible ($\dot{H}^{s_{\alpha}}$-admissible) with $\frac{(d+2)(p_{1}-1)}{2}<\gamma <\infty$ ($d\ge 5$), we see that 
\begin{align}
\label{11/06/02/22:24}
&\left\| u \right\|_{ES(I)}
\lesssim \left\| u \right\|_{L^{\infty}(I,H^{1})}^{1-\theta_{ES}}
\left\| \langle \nabla \rangle u \right\|_{V_{p_{1}}(I)}^{\theta_{ES}}
\qquad 
\mbox{for some $0< \theta_{ES} <1$},
\\[6pt]
\label{11/06/03/9:59}
&\left\| u \right\|_{L^{\gamma}(I,L^{\frac{d^{2}\rho}{d^{2}+d\rho-4\rho s_{p_{1}}}})}
\lesssim 
\left\| u \right\|_{ES(I)},
\end{align}
where $(\frac{d^{2}\rho}{d^{2}+d\rho-4\rho s_{p_{1}}},\gamma)$ is 
$L^{2}$-admissible.
\begin{lemma}[Exotic Strichartz estimate]
\label{10/10/16/22:32}
Let $I$ be an interval, $t_{0} \in I$, and let $u$ be a function on $\mathbb{R}^{d}\times I$. Then, we have 
\begin{equation}\label{10/10/16/15:57}
\left\| u \right\|_{ES(I)}
\le   
\left\| e^{i(t-t_{0})\Delta}u(t_{0}) \right\|_{ES(I)}
+
C
\left\| i\frac{\partial u}{\partial t}+\Delta u \right\|_{ES^{*}(I)}
\end{equation}
for some constant $C>0$ depending only on $d$. 
\end{lemma}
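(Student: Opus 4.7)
The plan is to apply Duhamel's formula to reduce \eqref{10/10/16/15:57} to a bound on the inhomogeneous term, then establish that bound by combining the pointwise dispersive estimate with one-dimensional Hardy--Littlewood--Sobolev in time; the passage from a non-retarded to a retarded time integral will be handled by the Christ--Kiselev lemma.

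Since $|\nabla|^{\frac{4}{d}s_{p_{1}}}$ commutes with the Schr\"odinger propagator, setting $G:=|\nabla|^{\frac{4}{d}s_{p_{1}}}\bigl(i\partial_{t}u+\Delta u\bigr)$ and invoking Duhamel reduces \eqref{10/10/16/15:57} to the scalar inequality
\[
\left\|\int_{t_{0}}^{t}e^{i(t-t')\Delta}G(t')\,dt'\right\|_{L^{\gamma}(I,L^{\rho})}
\lesssim
\|G\|_{L^{\gamma^{*}}(I,L^{\rho^{*}})}.
\]
I would first prove the non-retarded variant (with $\int_{t_{0}}^{t}$ replaced by $\int_{\mathbb{R}}$). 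The dispersive bound $\|e^{i(t-t')\Delta}G(t')\|_{L^{\rho}}\lesssim |t-t'|^{-\beta}\|G(t')\|_{L^{\rho'}}$ with $\beta:=d\bigl(\tfrac{1}{2}-\tfrac{1}{\rho}\bigr)$, combined with Minkowski in $x$, dominates the left-hand side in $L^{\rho}_{x}$ by the fractional integral $\bigl(|\cdot|^{-\beta}\ast\|G(\cdot)\|_{L^{\rho'}}\bigr)(t)$. A direct computation using the $\dot{H}^{s_{\alpha}}$-admissibility of $(\rho,\gamma)$ together with the definition of $\gamma^{*}$ shows $\tfrac{1}{\gamma^{*}}-\tfrac{1}{\gamma}=1-\beta$, which is precisely the 1D Hardy--Littlewood--Sobolev scaling relation with kernel $|t|^{-\beta}$; recalling $\rho^{*}=\rho'$, HLS in time yields the desired non-retarded estimate. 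To insert the cutoff $\mathbf{1}_{\{t'<t\}}$ and obtain the retarded Duhamel integral, I would invoke the Christ--Kiselev lemma, whose sole hypothesis $\gamma^{*}<\gamma$ is equivalent to $\beta<1$.

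The main obstacle, though mostly routine bookkeeping, is to verify the admissible range of exponents: one needs $0<\beta<1$ and $1<\gamma^{*}<\gamma<\infty$ so that both dispersive and HLS apply. Using the explicit formulas \eqref{11/05/21/11:26}--\eqref{11/05/21/15:35} together with the $\dot{H}^{s_{\alpha}}$-admissibility of $(\rho,\gamma)$ and the finiteness of $\gamma$ noted in the text, this reduces to checking $s_{\alpha}\in(0,1)$ and $\rho\in\bigl(2,\tfrac{2d}{d-2}\bigr)$; these are exactly where the hypothesis $d\ge 5$ enters, which is consistent with the footnote prescribing the standard Strichartz pair when $d\le 4$.
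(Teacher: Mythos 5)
Your argument is correct, but it proves the key estimate from scratch, whereas the paper disposes of it by citation: after writing the Duhamel formula (exactly as you do), the paper simply invokes Foschi's inhomogeneous Strichartz estimates \cite{Foschi} for the non-admissible pair $\bigl((\rho,\gamma),(\rho^{*},\gamma^{*})\bigr)$. Your route exploits the special structure of the pair chosen in \eqref{11/05/21/11:26}--\eqref{11/05/21/15:35}, namely $\rho^{*}=\rho'$: with spatially dual exponents the estimate lies in the classical Kato-type range, so the pointwise dispersive bound $\lesssim |t-t'|^{-\beta}$ with $\beta=d(\tfrac12-\tfrac1\rho)$, Minkowski, one-dimensional Hardy--Littlewood--Sobolev (the scaling identity $\tfrac{1}{\gamma^{*}}-\tfrac{1}{\gamma}=1-\beta$ you compute is exactly right), and Christ--Kiselev (legitimate since $\gamma^{*}<\gamma<\infty$) suffice; your exponent checks $s_{\alpha}\in(0,1)$, $\rho\in\bigl(2,\tfrac{2d}{d-2}\bigr)$ do hold for $d\ge 5$, and the remaining conditions $\gamma^{*}>1$, $\gamma<\infty$ follow from $\beta+s_{\alpha}<2$ and the admissibility statement in the text, so nothing is missing. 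What each approach buys: the paper's citation is shorter and would also cover exponent configurations with non-dual spatial indices, where Foschi's theorem genuinely goes beyond the elementary argument; your proof is self-contained, makes transparent why the particular choice $\rho^{*}=\rho'$ was made, and shows that for this lemma the full strength of \cite{Foschi} is not actually needed. Also note, as you implicitly do, that the homogeneous term requires no estimate at all since \eqref{10/10/16/15:57} carries it on the right-hand side in the same $ES(I)$ norm.
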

\begin{proof}[Proof of Lemma \ref{10/10/16/22:32}]
We can write any function $u$ by  
\begin{equation}\label{11/07/30/11:16}
u(t)=e^{i(t-t_{0})\Delta}u(t_{0})
+i
\int_{t_{0}}^{t}e^{i(t-t')\Delta}
\left(i\frac{\partial u}{\partial t}+\Delta u\right)(t')\,dt,
\end{equation}
where the equality is taken in the weak sense. Then, the claim follows from the inhomogeneous Strichartz estimate by Foschi \cite{Foschi}. 
\end{proof}

In order to treat the fractional differential operator $|\nabla|^{\frac{4}{d}s_{p_{1}}}$ in the exotic Strichartz norms, we employ the following estimates:
\begin{lemma}[see \cite{Christ-Weinstein}]\label{10/10/29/18:36}
Let $s \in (0,1]$ and let $1<q,q_{1},q_{2},q_{3},q_{4}<\infty$ with  $\frac{1}{q}=\frac{1}{q_{1}}+\frac{1}{q_{2}}=\frac{1}{q_{3}}+\frac{1}{q_{4}}$. Then, we have  
\begin{equation}\label{10/10/29/18:38}
\left\| |\nabla|^{s}(uv)\right\|_{L^{q}}
\lesssim 
\left\| u\right\|_{L^{q_{1}}}
\left\| |\nabla|^{s}v \right\|_{L^{q_{2}}}
+
\left\||\nabla|^{s}u \right\|_{L^{q_{3}}}
\left\| v \right\|_{L^{q_{4}}}.
\end{equation}
\end{lemma}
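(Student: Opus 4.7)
The plan is to prove this fractional Leibniz rule via a Littlewood--Paley/paraproduct decomposition, which is the standard route. First I would fix a smooth partition of unity on the Fourier side indexed by dyadic scales $\{2^j\}_{j\in\mathbb Z}$ and let $P_j$ denote the associated Littlewood--Paley projection, with $P_{\le j}=\sum_{k\le j}P_k$. Using $uv=\sum_{j,k}P_j u\cdot P_k v$, I would split the product into the three standard paraproducts
\begin{equation*}
uv=\Pi_1(u,v)+\Pi_2(u,v)+\Pi_3(u,v),
\end{equation*}
where $\Pi_1$ collects the terms with $j\le k-2$ (low--high), $\Pi_2$ the terms with $k\le j-2$ (high--low), and $\Pi_3$ the diagonal terms $|j-k|\le 1$ (high--high).

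Next I would estimate each paraproduct after applying $|\nabla|^s$. For $\Pi_1$, the Fourier support of $P_{\le k-2}u\cdot P_k v$ is contained in an annulus of scale $2^k$, so $|\nabla|^s$ acts like multiplication by $2^{ks}$ up to a bounded Fourier multiplier. One then uses the Littlewood--Paley square function characterization of $L^q$, H\"older's inequality in the form $\frac{1}{q}=\frac{1}{q_1}+\frac{1}{q_2}$, the pointwise bound $|P_{\le k-2}u(x)|\lesssim Mu(x)$ (Hardy--Littlewood maximal function), and the Fefferman--Stein vector-valued maximal inequality to obtain
\begin{equation*}
\bigl\||\nabla|^s\Pi_1(u,v)\bigr\|_{L^q}\lesssim \|u\|_{L^{q_1}}\bigl\||\nabla|^s v\bigr\|_{L^{q_2}}.
\end{equation*}
The estimate for $\Pi_2$ is symmetric, yielding $\|\,|\nabla|^s u\|_{L^{q_3}}\|v\|_{L^{q_4}}$.

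The high--high term $\Pi_3$ is the only delicate one, because the output frequency of $P_k u\cdot P_k v$ can be much smaller than $2^k$, so the naive multiplier bound $|\nabla|^s\sim 2^{ks}$ fails. I would handle it by inserting a wider projection $\widetilde P_{\le k+3}$ on the product (which is transparent since $P_ku\cdot P_k v$ is already supported in $|\xi|\lesssim 2^k$), writing
\begin{equation*}
|\nabla|^s\Pi_3(u,v)=\sum_k |\nabla|^s \widetilde P_{\le k+3}\bigl(P_k u\cdot P_k v\bigr),
\end{equation*}
and using the bound $\bigl\||\nabla|^s\widetilde P_{\le k+3}f\bigr\|_{L^q}\lesssim 2^{ks}\|f\|_{L^q}$ together with H\"older and the Littlewood--Paley square function. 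Symmetrizing so that the derivative falls on whichever factor carries the scale $2^k$ produces both terms on the right-hand side of \eqref{10/10/29/18:38}. The assumption $s\le 1$ is what allows the resulting dyadic sums to converge (one needs $s>0$ for the low-frequency piece and $s\le 1$ for standard multiplier bounds on the Riesz-type operators).

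The main obstacle, as indicated, is the high--high paraproduct $\Pi_3$: one must exploit the extra cancellation that $|\nabla|^s$ sees at the (possibly small) output frequency rather than at the individual input frequencies, otherwise one cannot close the estimate. Everything else reduces to routine Littlewood--Paley bookkeeping and Fefferman--Stein. Since the result is exactly the statement quoted from Christ--Weinstein, I would simply cite \cite{Christ-Weinstein} and include only as much of the above sketch as needed for the reader's orientation.
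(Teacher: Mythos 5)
The paper offers no proof of this lemma at all: it is stated exactly as a quoted result with the citation to \cite{Christ-Weinstein}, which is also how you conclude, and your paraproduct sketch (low--high, high--low, and high--high pieces handled via square functions, H\"older with $\frac1q=\frac1{q_1}+\frac1{q_2}$, the maximal-function bound and Fefferman--Stein) is the standard argument behind that reference and is correct in outline. So your treatment matches the paper's; the only small caveat is that the hypothesis $s\le 1$ is not what makes the dyadic sums converge (only $s>0$ is needed for the high--high piece), it is merely part of the stated range.
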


\begin{lemma}[see \cite{Visan}]
\label{10/10/29/18:41}
Let $h\colon \mathbb{C}\to \mathbb{C}$ be a H\"older continuous function of order $\alpha \in (0,1)$. Then, for any $s \in (0,\alpha)$, $q \in (1,\infty)$ and $\sigma \in (\frac{s}{\alpha},1)$, we have  
\begin{equation}\label{10/10/29/18:44}
\left\| |\nabla|^{s}h(u)\right\|_{L^{q}}
\lesssim 
\left\| u \right\|_{L^{\left( \alpha-\frac{s}{\sigma}\right)q_{1}}}^{\alpha-\frac{s}{\sigma}}
\left\| |\nabla|^{\sigma} u\right\|_{L^{\frac{s}{\sigma}q_{2}}}^{\frac{s}{\sigma}},
\end{equation}
provided that $\frac{1}{q}=\frac{1}{q_{1}}+\frac{1}{q_{2}}$ and 
$(\alpha-\frac{s}{\sigma})q_{1}>\alpha$. 
\end{lemma}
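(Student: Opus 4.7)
The plan is to reduce the inequality to a pointwise control of $|\nabla|^s h(u)$ by differences of $u$, exploiting that the H\"older continuity of $h$ converts differences of $h(u)$ into $\alpha$-th powers of differences of $u$, and then to interpolate these differences between $L^{q_1}$ information on $u$ and $L^{q_2}$ information on $|\nabla|^{\sigma}u$.

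First, I would start from the singular-integral representation of the fractional derivative, which for $s\in(0,1)$ reads
\begin{equation*}
|\nabla|^{s}g(x)
=c_{d,s}\,\text{p.v.}\int_{\mathbb{R}^{d}}\frac{g(x)-g(x+y)}{|y|^{d+s}}\,dy.
\end{equation*}
Applied to $g=h(u)$ and combined with $|h(z)-h(w)|\lesssim |z-w|^{\alpha}$, this gives the key pointwise bound
\begin{equation*}
\bigl||\nabla|^{s}h(u)(x)\bigr|
\lesssim
\int_{\mathbb{R}^{d}}\frac{|u(x+y)-u(x)|^{\alpha}}{|y|^{d+s}}\,dy.
\end{equation*}

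Next, I would split the integral at a threshold $R=R(x)>0$ to be optimized. For $|y|>R$, I use the trivial bound $|u(x+y)-u(x)|^{\alpha}\lesssim |u(x+y)|^{\alpha}+|u(x)|^{\alpha}$ and a direct computation of $\int_{|y|>R}|y|^{-d-s}\,dy$, yielding a control of the far piece by $R^{-s}M(|u|^{\alpha})(x)$, with $M$ the Hardy--Littlewood maximal function. For $|y|\le R$, I factorize
\begin{equation*}
|u(x+y)-u(x)|^{\alpha}
=|u(x+y)-u(x)|^{\alpha-\frac{s}{\sigma}}\cdot|u(x+y)-u(x)|^{\frac{s}{\sigma}},
\end{equation*}
bound the first factor by $\bigl(|u(x)|+|u(x+y)|\bigr)^{\alpha-\frac{s}{\sigma}}$, and bound the second by a $\sigma$-H\"older-type pointwise inequality $|u(x+y)-u(x)|\lesssim |y|^{\sigma}\bigl(M|\nabla|^{\sigma}u\bigr)(x+\text{sth})$ (which follows from the representation of $|\nabla|^{-\sigma}$ as a Riesz potential). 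Integrating $|y|^{s-d}$ over $|y|\le R$ gives a factor $R^{s}$, so that the near piece is dominated by $R^{s}\cdot M(|u|^{\alpha-\frac{s}{\sigma}})(x)\cdot\bigl(M|\nabla|^{\sigma}u(x)\bigr)^{\frac{s}{\sigma}}$.

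Finally, I would optimize $R$ by equating the two pieces, obtaining a pointwise estimate of the schematic form
\begin{equation*}
\bigl||\nabla|^{s}h(u)(x)\bigr|
\lesssim
\bigl(M|u|^{\alpha-\frac{s}{\sigma}\cdot\text{something}}(x)\bigr)^{\alpha-\frac{s}{\sigma}}\bigl(M|\nabla|^{\sigma}u(x)\bigr)^{\frac{s}{\sigma}},
\end{equation*}
and then take the $L^{q}$ norm, apply H\"older's inequality with exponents $q_{1},q_{2}$, and invoke the Hardy--Littlewood maximal inequality. The hypothesis $(\alpha-\tfrac{s}{\sigma})q_{1}>\alpha$ is exactly what is needed to run the maximal inequality on $|u|^{\alpha-s/\sigma}$ in $L^{q_{1}}$, while $\sigma>\tfrac{s}{\alpha}$ ensures $\alpha-\tfrac{s}{\sigma}>0$. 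The main obstacle I anticipate is justifying the $\sigma$-H\"older pointwise differencing of $u$ by a maximal function of $|\nabla|^{\sigma}u$ in a form compatible with the subsequent H\"older/maximal-function argument; this is where one usually borrows Hedberg-type inequalities and where the restriction $\sigma<1$ (so that $|\nabla|^{\sigma}$ is a ``true'' Riesz potential applied to $|\nabla|u$, or rather directly a Riesz potential of order $\sigma$) becomes essential.
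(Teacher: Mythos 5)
The paper itself offers no proof of this lemma: it is quoted from Visan, where the argument is run through a Littlewood--Paley decomposition (one writes $P_N h(u)$ as a convolution against a mean-zero kernel, uses the H\"older continuity of $h$ frequency by frequency, and sums the square function). Your kernel/difference approach, based on the representation $|\nabla|^{s}g(x)=c_{d,s}\int\frac{g(x)-g(x+y)}{|y|^{d+s}}\,dy$ together with Hedberg-type maximal estimates, is therefore a genuinely different route, and it can be made to work --- but not with the exponents as you have set them up.

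The gap is in the near region. You factor $|u(x+y)-u(x)|^{\alpha}$ into powers $\alpha-\frac{s}{\sigma}$ and $\frac{s}{\sigma}$ and bound the second factor by $\bigl(|y|^{\sigma}M|\nabla|^{\sigma}u\bigr)^{s/\sigma}=|y|^{s}\bigl(M|\nabla|^{\sigma}u\bigr)^{s/\sigma}$. Against the kernel $|y|^{-(d+s)}$ this leaves an integrand of size $|y|^{-d}$, not $|y|^{s-d}$: the integral over $|y|\le R$ diverges logarithmically at $y=0$, no factor $R^{s}$ appears, and the subsequent optimization in $R$ degenerates (the far and near pieces scale like $R^{-s}$ and $R^{0}$, so equating them is impossible). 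The repair is to difference with a power $\theta$ strictly larger than $\frac{s}{\sigma}$, which is available precisely because $\sigma>\frac{s}{\alpha}$ gives $\frac{s}{\sigma}<\alpha$: bound $|u(x+y)-u(x)|^{\theta}\lesssim |y|^{\sigma\theta}\bigl(M|\nabla|^{\sigma}u(x)+M|\nabla|^{\sigma}u(x+y)\bigr)^{\theta}$ and $|u(x+y)-u(x)|^{\alpha-\theta}\le\bigl(|u(x)|+|u(x+y)|\bigr)^{\alpha-\theta}$, so the near kernel becomes $|y|^{\sigma\theta-s-d}$ with $\sigma\theta-s>0$, producing $R^{\sigma\theta-s}B(x)$ with $B$ a maximal-function expression; optimizing against the far piece $R^{-s}M(|u|^{\alpha})(x)$ yields $A^{1-\frac{s}{\sigma\theta}}B^{\frac{s}{\sigma\theta}}$, whose total homogeneities in $u$ and $|\nabla|^{\sigma}u$ are again $\alpha-\frac{s}{\sigma}$ and $\frac{s}{\sigma}$, after which your H\"older/maximal endgame (with $(\alpha-\frac{s}{\sigma})q_{1}>\alpha$ ensuring the exponent carrying $M(|u|^{\alpha})$-type terms exceeds $1$) goes through. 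Two further points need justification rather than assertion: the passage from $\||\nabla|^{s}h(u)\|_{L^{q}}$ to the $L^{q}$ norm of the difference integral (the pointwise kernel formula is only formal when $|\nabla|^{s}h(u)$ is a priori a distribution, so argue by density or by the difference characterization of $\dot{W}^{s,q}$), and the Hedberg-type inequality $|u(x+y)-u(x)|\lesssim|y|^{\sigma}\bigl(M|\nabla|^{\sigma}u(x)+M|\nabla|^{\sigma}u(x+y)\bigr)$, which should be proved via the Riesz-potential representation $u=cI_{\sigma}(|\nabla|^{\sigma}u)$ or cited.
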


\begin{lemma}\label{11/05/16/0:44}
Assume that $d\ge 5$. Let $h_{1}$ and $h_{*}$ be H\"older continuous functions of order $p_{1}-1$ and $\frac{4}{d-2}$, respectively. Let $I$ be an interval. 
 Then, we have 
\begin{align}
&\label{11/05/17/18:45}
\left\| h_{1}(v) w \right\|_{ES^{*}(I)}
\lesssim 
\left\| |\nabla|^{s_{p_{1}}}v \right\|_{L^{r_{0}}(I,L^{q_{0}})
\cap L^{\widetilde{r}_{0}}(I,L^{\widetilde{q}_{0}})}^{p_{1}-1}
\left\| w \right\|_{ES(I)},
\\[6pt]
\label{11/05/21/22:32}
&\left\| h_{*}(v) w \right\|_{ES^{*}(I)}
\lesssim 
\left\| \nabla v \right\|_{L^{r_{0}}(I,L^{q_{0}})\cap L^{\widetilde{r}_{0}}(I,L^{\widetilde{q}_{0}})}^{\frac{4}{d-2}}
\left\| w \right\|_{ES(I)},
\end{align}
where $(q_{0},r_{0})$ and $(\widetilde{q}_{0},\widetilde{r}_{0})$ are some $L^{2}$-admissible pairs with $\frac{(d+2)(p_{1}-1)}{2}<r_{0}, \widetilde{r}_{0}<\infty$. 
\end{lemma}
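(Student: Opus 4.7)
The plan is to estimate $\bigl\||\nabla|^{4s_{p_1}/d}(h_1(v)w)\bigr\|_{L^{\gamma^*}(I,L^{\rho^*})}$ by first distributing the spatial derivative via the fractional Leibniz rule (Lemma \ref{10/10/29/18:36}), then handling the two resulting summands with Sobolev embedding and the fractional chain rule, and finally collecting everything with H\"older's inequality in time. Writing $s = 4s_{p_1}/d$ for brevity, Leibniz gives, pointwise in $t$,
\[
\bigl\| |\nabla|^{s}(h_1(v) w)\bigr\|_{L^{\rho^*}}
\lesssim \|h_1(v)\|_{L^{a_1}} \,\||\nabla|^{s} w\|_{L^{b_1}}
+ \||\nabla|^{s} h_1(v)\|_{L^{a_2}} \,\|w\|_{L^{b_2}},
\]
with $\tfrac{1}{\rho^*} = \tfrac{1}{a_i} + \tfrac{1}{b_i}$ for $i=1,2$. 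I will take $b_1 = \rho$ so that $\||\nabla|^{s}w\|_{L^{b_1}}$ is exactly the spatial part of $\|w\|_{ES(I)}$, and pair $b_2$ with the Sobolev conjugate appearing in \eqref{11/06/03/9:59}, so that after time integration $\|w\|_{L^{b_2}}$ is also controlled by $\|w\|_{ES(I)}$.

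For the first summand, the H\"older bound $|h_1(v)| \lesssim |v|^{p_1-1}$ (together with $h_1(0)=0$) gives $\|h_1(v)\|_{L^{a_1}} \lesssim \|v\|_{L^{(p_1-1)a_1}}^{p_1-1}$, which Sobolev embedding converts into a power of $\||\nabla|^{s_{p_1}} v\|_{L^{q}}$ for a suitable exponent $q$. For the second summand, I invoke Lemma \ref{10/10/29/18:41} with H\"older order $\alpha = p_1-1$, derivative order $s = 4s_{p_1}/d$, and intermediate order $\sigma = s_{p_1}$: the mass-supercritical hypothesis $p_1 > 2_* - 1 = 1 + \tfrac{4}{d}$ ensures $s/\sigma = 4/d < \alpha$, and the auxiliary condition $(\alpha - s/\sigma) q_1 > \alpha$ can be met by the appropriate choice of subdivision. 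This yields
\[
\||\nabla|^{s} h_1(v)\|_{L^{a_2}}
\lesssim \|v\|_{L^{\ast}}^{p_1-1-4/d}\,\||\nabla|^{s_{p_1}} v\|_{L^{\ast\ast}}^{4/d},
\]
and one more Sobolev embedding converts $\|v\|_{L^{\ast}}$ into a power of $\||\nabla|^{s_{p_1}} v\|_{L^{q'}}$.

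Applying H\"older's inequality in time to the product of these spatial bounds, I distribute the $L^{\gamma^*}_t$ norm so that the $w$-factor reproduces precisely $\|w\|_{ES(I)}$ via \eqref{11/06/03/9:59}, while the $v$-factors collect into $\||\nabla|^{s_{p_1}} v\|_{L^{r_0}(I,L^{q_0}) \cap L^{\widetilde{r}_0}(I,L^{\widetilde{q}_0})}^{p_1-1}$ for two space--time pairs, one from each Leibniz branch. The parameter $\alpha$ defined in \eqref{11/05/21/11:26} was introduced precisely so that the identity $s_\alpha = 1 - \tfrac{4}{d}s_{p_1}$ matches the derivative weight in the exotic norms, ensuring that the resulting pairs are $L^2$-admissible and that $r_0, \widetilde{r}_0 \in \bigl(\tfrac{(d+2)(p_1-1)}{2},\infty\bigr)$. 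The bound \eqref{11/05/21/22:32} follows by an identical argument with H\"older order $\alpha = \tfrac{4}{d-2}$, intermediate order $\sigma = 1$, and $\nabla v$ in place of $|\nabla|^{s_{p_1}} v$ throughout.

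The principal technical obstacle is the exponent bookkeeping: one must verify simultaneously that the two space--time pairs emerging from the two Leibniz decompositions are $L^2$-admissible, that both lie in the required range on the time exponent, and that the total spatial and temporal scaling matches that of $L^{\gamma^*}(I,L^{\rho^*})$ defined in \eqref{11/05/21/15:35}. This computation is delicate but purely algebraic, and it is the reason two distinct admissible pairs appear in the statement of the lemma.
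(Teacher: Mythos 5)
Your proposal follows essentially the same route as the paper's proof: the fractional Leibniz rule (Lemma \ref{10/10/29/18:36}) with one branch kept as the spatial part of $\left\| w \right\|_{ES(I)}$ and the other recovered by Sobolev embedding, the pointwise bound $|h_{1}(v)|\lesssim |v|^{p_{1}-1}$ for the undifferentiated branch, the fractional chain rule of Lemma \ref{10/10/29/18:41} with $\alpha=p_{1}-1$, $\sigma=s_{p_{1}}$, $s/\sigma=4/d$ for the differentiated branch, and H\"older in time to produce the two admissible pairs, the deferred exponent bookkeeping being exactly what the paper's displays \eqref{11/05/19/16:52}--\eqref{11/06/01/17:55} carry out. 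One small caveat: for $h_{*}$ you take $\sigma=1$, an endpoint excluded by Lemma \ref{10/10/29/18:41} as stated; the paper instead keeps $\sigma=s_{p_{1}}$ and converts $\left\| |\nabla|^{s_{p_{1}}}v\right\|$ into $\left\| \nabla v \right\|$ in an $L^{2}$-admissible norm via Sobolev embedding, a harmless adjustment to your plan.
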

\begin{proof}[Proof of Lemma \ref{11/05/16/0:44}]
Using Lemma \ref{10/10/29/18:36},  we obtain that 
\begin{equation}\label{10/10/30/23:39}
\begin{split}
&\left\|h_{1}(v) w \right\|_{ES^{*}(I)}
\\[6pt]
&\lesssim 
\left\|  h_{1}(v) \right\|_{
L^{\frac{2\rho}{2d-(d-2)\rho}}(I,L^{\frac{\rho}{\rho-2}})}
\left\|  w \right\|_{ES(I)}
\\[6pt]
&\quad  
+
\left\| |\nabla|^{\frac{4}{d}s_{p_{1}}} h_{1}(v)  
\right\|_{
L^{\frac{2\rho}{2d-(d-2)\rho}}
(I,L^{\frac{d^{2}(p_{1}-1)\rho}{d(d+2)(p_{1}-1)\rho-8\rho-2d^{2}(p_{1}-1)}
})}
\left\| w \right\|_{L^{\gamma}(I,L^{\frac{d^{2}\rho}{d^{2}-4s_{p_{1}}\rho}})}.
\end{split}
\end{equation}
Since $\dot{W}^{\frac{4}{d}s_{p_{1}},\rho}\hookrightarrow 
L^{\frac{d^{2}\rho}{d^{2}-4s_{p_{1}}\rho}}$, we further obtain that 
\begin{equation}\label{11/05/21/17:22}
\begin{split}
\left\|h_{1}(v) w \right\|_{ES^{*}(I)}
&\lesssim 
\left\|  v 
\right\|_{L^{ \frac{2(p_{1}-1)\rho}{2d-(d-2)\rho} }
(I,L^{\frac{(p_{1}-1)\rho}{\rho-2}})}^{p_{1}-1} \left\|  w \right\|_{ES(I)}
\\[6pt]
&\quad  
+
\left\| |\nabla|^{\frac{4}{d}s_{p_{1}}} h_{1}(v)  
\right\|_{L^{\frac{2\rho}{2d-(d-2)\rho}}
(I,L^{ \frac{d^{2}(p_{1}-1)\rho}{d(d+2)(p_{1}-1)\rho-8\rho-2d^{2}(p_{1}-1)}
})}\!\!\!
\left\| w \right\|_{ES(I)}.
\end{split}
\end{equation}
Here, $(\frac{(p_{1}-1)\rho}{\rho-2}, \frac{2(p_{1}-1)\rho}{2d-(d-2)\rho})$ is an $\dot{H}^{s_{p_{1}}}$-admissible pair in $(\frac{d(p_{1}-1)}{2}, \frac{(d+2)(p_{1}-1)}{2})  \times (\frac{(d+2)(p_{1}-1)}{2}, \infty)$. Moreover, it follows from Lemma \ref{10/10/29/18:41}
 and the H\"older inequality in time
 that 
\begin{equation}\label{11/05/17/18:22}
\begin{split}
&\left\| |\nabla|^{\frac{4}{d}s_{p_{1}}} h_{1}(v)  
\right\|_{L^{\frac{2\rho}{2d-(d-2)\rho}}
(I,L^{\frac{d^{2}(p_{1}-1)\rho}
{d(d+2)(p_{1}-1)\rho-8\rho-2d^{2}(p_{1}-1)}
})}
\\[6pt]
&\lesssim 
\left\| v \right\|_{
L^{(p_{1}-1-\frac{4}{d})r_{1}}
(I,L^{(p_{1}-1-\frac{4}{d})q_{1}})
}^{p-1-\frac{4}{d}}
\left\| |\nabla|^{s_{p_{1}}} v 
\right\|_{
L^{\frac{4}{d}r_{2}}
(I,L^{\frac{4}{d}q_{2}})}^{\frac{4}{d}},
\end{split}
\end{equation}
where 
\begin{align}
\label{11/05/19/16:52}
&q_{1}:=\frac{d(d+2)(p_{1}-1)}{2d(p_{1}-1)-8},
\qquad 
q_{2}
:=
\frac{d^{2}(d+2)(p_{1}-1)\rho}
{d(p_{1}-1)\left\{ (d^{2}+2d+4)\rho-2(d^{2}+2d)\right\}-16\rho},
\\[6pt]
\label{11/05/19/10:07}
&
r_{1}:=\frac{d(d+2)(p_{1}-1)}{2d(p_{1}-1)-8}, 
\qquad 
r_{2}:=\frac{2d(d+2)(p_{1}-1)\rho}
{16\rho-d^{2}(p_{1}-1)\left\{d\rho- 2(d+2) \right\}}.
\end{align} 
Here, $((p_{1}-1-\frac{4}{d})q_{1},\, (p_{1}-1-\frac{4}{d})r_{1})=(\frac{(d+2)(p_{1}-1)}{2},\, \frac{(d+2)(p_{1}-1)}{2})$ is 
 a diagonal $\dot{H}^{s_{p_{1}}}$-admissible pair, and 
$(\frac{4}{d}q_{2}, 
 \frac{4}{d}r_{2})$ is an $L^{2}$-admissible pair in $(2,\frac{(d+2)(p_{1}-1)}{2})\times (\frac{(d+2)(p_{1}-1)}{2},\infty)$. 
\par 
Combining the estimates (\ref{11/05/21/17:22}) and (\ref{11/05/17/18:22}), we obtain the desired estimate (\ref{11/05/17/18:45}). 
\par 
Next, we consider the estimate (\ref{11/05/21/22:32}). 
Since $(\frac{4}{d-2}\frac{\rho}{\rho-2}, \frac{4}{d-2}\frac{\rho}{\rho-2})$ is $\dot{H}^{1}$-admissible, 
we see from the same estimate as the above that 
\begin{equation}\label{11/05/26/17:16}
\begin{split}
\left\|h_{*}(v) w \right\|_{ES^{*}(I)}
&\lesssim 
\left\| \nabla v \right\|_{L^{r_{3}}(I,L^{q_{3}})}^{\frac{4}{d-2}}\left\|  w \right\|_{ES(I)}
\\[6pt]
&\quad 
+
\left\| |\nabla|^{\frac{4}{d}s_{p_{1}}} h_{*}(v)  
\right\|_{L^{\frac{2\rho}{2d-(d-2)\rho}}
(I,L^{\frac{d^{2}(p_{1}-1)\rho}
{d(d+2)(p_{1}-1)\rho-8\rho-2d^{2}(p_{1}-1)}
})}
\left\| w \right\|_{ES(I)}
\end{split}
\end{equation}
for some $L^{2}$-admissible pair $(q_{3},r_{3})\neq (2,\infty), (2^{*},2)$. 
Moreover, it follows from Lemma \ref{10/10/29/18:41}
 and the H\"older inequality in time
 that 
\begin{equation}\label{11/05/26/17:26}
\begin{split}
&\left\| |\nabla|^{\frac{4}{d}s_{p_{1}}} h_{*}(v)  
\right\|_{L^{\frac{2\rho}{2d-(d-2)\rho}}
(I,L^{\frac{d^{2}(p_{1}-1)\rho}
{d(d+2)(p_{1}-1)\rho-8\rho-2d^{2}(p_{1}-1)}
})}
\\[6pt]
&\lesssim 
\left\| v \right\|_{
L^{(\frac{4}{d-2}-\frac{4}{d})r_{1}^{*}}
(I,L^{(\frac{4}{d-2}-\frac{4}{d})q_{1}^{*}})
}^{\frac{4}{d-2}-\frac{4}{d}}
\left\| |\nabla|^{s_{p_{1}}} v 
\right\|_{
L^{\frac{4}{d}r_{2}^{*}}
(I,L^{\frac{4}{d}q_{2}^{*}})}^{\frac{4}{d}},
\end{split}
\end{equation}
where 
\begin{align}
\label{11/06/01/17:54}
&q_{1}^{*}:=\frac{d(d+2)}{4},
\qquad 
q_{2}^{*}
:=
\frac{d^{2}(d+2)(p_{1}-1)\rho}
{d^{2}(d+4)(p_{1}-1)\rho -8(d+2)\rho -2d^{2}(d+2)(p_{1}-1)},
\\[6pt]
\label{11/06/01/17:55}
&
r_{1}^{*}:=\frac{d(d+2)}{4}, 
\qquad 
r_{2}^{*}:=\frac{2d(d+2)\rho}
{2d^{2}(d+2) -d(d-2)(d+2)\rho -8 \rho}.
\end{align} 
Since $((\frac{4}{d-2}-\frac{4}{d})q_{1}^{*},\, (\frac{4}{d-2}-\frac{4}{d})r_{1}^{*})=(\frac{2(d+2)}{d-2},\, \frac{2(d+2)}{d-2})$ is a diagonal $\dot{H}^{1}$-admissible pair and 
$(\frac{4}{d}q_{2}^{*}, 
 \frac{4}{d}r_{2}^{*})$ is an $\dot{H}^{1-s_{p_{1}}}$-admissible pair in $(2,2^{*})\times (2,\infty)$, the Sobolev embedding gives us 
the desired estimate (\ref{11/05/21/22:32}). 
\end{proof}

Let $(q_{0},r_{0})$ and $(\widetilde{r}_{0},\widetilde{q_{0}})$ be $L^{2}$-admissible pairs  found in Lemma \ref{11/05/16/0:44}. In our proof of the perturbation theories below, we  will need the auxiliary space   
\begin{equation}
\label{11/06/01/17:01}
X(I):=L^{r_{0}}(I,L^{q_{0}})\cap L^{\widetilde{r}_{0}}(I,L^{\widetilde{q}_{0}})
.
\end{equation}

\begin{proposition}[Short-time perturbation theory, \cite{Tao-Visan}]
\label{11/06/02/17:55}
Assume $d\ge 5$. Let $I$ be an interval, $\psi \in C(I,H^{1}(\mathbb{R}^{d}))$ be a solution to (\ref{11/06/12/9:08}), and let $u$ be a function in $C(I,H^{1}(\mathbb{R}^{d}))$. Put 
\begin{equation}\label{11/07/16/15:35}
e:= i\frac{\partial u}{\partial t} +\Delta u + 
f(u)+|u|^{2^{*}-2}u.
\end{equation}
 Let $A_{1}, A_{2}>0$ and $t_{0} \in I$, and assume that 
\begin{align}
\label{11/06/02/17:56}
\left\| \psi \right\|_{L^{\infty}(I,H^{1})}
&\le A_{1},
\\[6pt]
\label{11/08/14/15:14}
\left\| \psi(t_{0})-u(t_{0}) \right\|_{H^{1}}
&\le A_{2}.
\end{align}
Then, there exists $\delta_{0}>0$ depending on $A_{1}$ and $A_{2}$ with the following property: If  
\begin{equation}
\label{11/05/30/9:45}
\left\|\langle \nabla \rangle u \right\|_{V_{p_{1}}(I) \cap V(I) \cap X(I)} \le \delta_{0},
\end{equation}
and 
\begin{align}
\label{11/05/30/9:53}
&\left\| \langle \nabla \rangle 
e 
\right\|_{L^{\frac{2(d+2)}{d+4}}(I,L^{\frac{2(d+2)}{d+4}})}
\le \delta, 
\\[6pt]
\label{11/05/30/9:54}
&\left\| \langle \nabla \rangle  
e^{i(t-t_{0})\Delta}\left(u(t_{0})-\psi(t_{0})\right)
\right\|_{V_{p_{1}}(I)}
\le \delta 
\end{align}
for some $0<\delta \le \delta_{0}$, then we have 
\begin{align}
\label{11/05/30/10:31}
&\left\| \langle \nabla \rangle \left( u- \psi \right) \right\|_{
V_{p_{1}}(I)\cap V(I)\cap X(I)}\lesssim 
\delta 
+ 
\delta^{\frac{4}{d}\theta_{ES}},
\\[6pt]
\label{11/06/07/22:59}
&\left\| \langle \nabla \rangle 
\left( i\frac{\partial }{\partial t}+\Delta \right) (u-\psi)
+
\langle \nabla \rangle e 
\right\|_{L^{2}(I,L^{\frac{2d}{d+2}})}
\lesssim 
\delta 
+ 
\delta^{\frac{4}{d}\theta_{ES}}
,
\\[6pt]
\label{11/05/30/10:32}
&\left\| \langle \nabla \rangle \left( u-\psi \right) \right\|_{S(I)}
\lesssim 
A_{2} + \delta 
+ 
\delta^{\frac{4}{d}\theta_{ES}}
,
\\[6pt]
\label{11/05/30/10:33}
&\left\| \langle \nabla \rangle \psi \right\|_{S(I)}\lesssim A_{1}+A_{2}.
\end{align} 
\end{proposition}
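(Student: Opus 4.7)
The plan is to run a continuity/bootstrap argument on the difference $w := u - \psi$, which satisfies
\begin{equation*}
i\frac{\partial w}{\partial t} + \Delta w = e - \bigl[f(u) - f(\psi)\bigr] - \bigl[|u|^{2^{*}-2}u - |\psi|^{2^{*}-2}\psi\bigr],
\end{equation*}
with $\|w(t_{0})\|_{H^{1}} \le A_{2}$. On nested closed subintervals $J \subset I$ containing $t_{0}$ I would simultaneously track the exotic--Strichartz norm $\|w\|_{ES(J)}$ and the differentiated Strichartz norm $\|\langle \nabla \rangle w\|_{V_{p_{1}}(J) \cap V(J) \cap X(J)}$, and close the $S(J)$ bound only at the end.

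For the $ES$-part I apply Lemma \ref{10/10/16/22:32}; the free evolution is bounded via \eqref{11/06/02/22:24} combined with \eqref{11/05/30/9:54} and $\|w(t_{0})\|_{H^{1}} \le A_{2}$, giving a contribution of order $A_{2}^{1-\theta_{ES}}\delta^{\theta_{ES}}$. For the nonlinear piece I write
\begin{equation*}
f(u) - f(\psi) = w \int_{0}^{1} (\partial_{z} f)(\psi + sw)\,ds + \overline{w} \int_{0}^{1}(\partial_{\bar z} f)(\psi + sw)\,ds
\end{equation*}
and analogously for the critical term. By \eqref{11/04/30/22:54}--\eqref{11/05/02/8:40} the coefficients are H\"older of order $p_{1}-1$ and $\tfrac{4}{d-2}$ respectively, so Lemma \ref{11/05/16/0:44} converts these contributions into bounds of the form $\|\cdot\|_{X(J)}^{p_{1}-1}\|w\|_{ES(J)}$ and $\|\nabla\cdot\|_{X(J)}^{4/(d-2)}\|w\|_{ES(J)}$, whose scalar prefactors are small by \eqref{11/05/30/9:45} and hence absorbable along the bootstrap (using $\psi = u - w$ to transfer smallness from $u$ to $\psi$).

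For the differentiated Strichartz norms I apply the standard Strichartz estimate, controlling the free part by \eqref{11/05/30/9:54}, the forcing $\langle\nabla\rangle e$ by \eqref{11/05/30/9:53}, and the nonlinear differences by \eqref{11/05/11/23:53} when $p_{1} \ge 2$. When $p_{1} < 2$, the derivative of $f_{\le 1}$ is only H\"older of order $p_{1}-1$, so the pointwise differences are estimated via the product and chain estimates of Lemmas \ref{10/10/29/18:36}--\ref{10/10/29/18:41}; a Gagliardo--Nirenberg-type interpolation on $w$ together with \eqref{11/06/02/22:24} produces a factor $\|w\|_{ES(J)}^{4/d}$, which feeds on the previous step's bound and yields the exponent $\tfrac{4}{d}\theta_{ES}$ in \eqref{11/05/30/10:31}. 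Closing the coupled bootstrap for $\delta_{0}$ small depending on $A_{1}, A_{2}$ gives \eqref{11/05/30/10:31}, and \eqref{11/06/07/22:59} is then read off directly from the equation for $w$. The bound \eqref{11/05/30/10:32} comes from one more application of the standard Strichartz estimate to $w$ now retaining the $A_{2}$ initial-data term, and \eqref{11/05/30/10:33} follows from $\psi = u - w$ combined with Lemma \ref{11/05/14/14:42}.

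The main obstacle is precisely the non-Lipschitz character of $\partial f$ and of $z \mapsto |z|^{2^{*}-2}z$ at one derivative when $p_{1} < 2$ and $d \ge 5$: the naive bound $|\nabla[f(u) - f(\psi)]| \lesssim (|u|^{p_{1}-1} + |\psi|^{p_{1}-1})|\nabla w|$ is simply unavailable. The exotic Strichartz norms $ES, ES^{*}$ and the product/chain estimates packaged in Lemma \ref{11/05/16/0:44} are designed to circumvent this by trading the missing Lipschitz regularity for a fractional derivative of order $\tfrac{4}{d} s_{p_{1}}$; arranging the associated $\dot{H}^{s}$--admissible interpolation exponents so that the nonlinear factors close against the smallness \eqref{11/05/30/9:45} while producing only the clean bound $\delta + \delta^{(4/d)\theta_{ES}}$ is the technical heart of the argument.
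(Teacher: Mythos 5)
Your core strategy coincides with the paper's: estimate the difference $w$ in the exotic norm $ES$ via Lemma \ref{10/10/16/22:32} and Lemma \ref{11/05/16/0:44}, using only the H\"older continuity of order $p_{1}-1$ and $\frac{4}{d-2}$ of the differentiated nonlinearity, and then close the $V_{p_{1}}\cap V\cap X$ bound for $\langle\nabla\rangle w$ by ordinary Strichartz estimates in which the rough difference terms carry no derivative of $w$ and are paid for by $\left\|w\right\|_{ES}^{p-1}\left\|\nabla u\right\|_{S}$; this is exactly the paper's pointwise inequality \eqref{11/08/15/10:42} combined with the embedding \eqref{11/06/03/9:59}, and the exponent $\frac{4}{d}\theta_{ES}$ arises because each power $p_{1}-1$, $p_{2}-1$, $2^{*}-2$ exceeds $\frac{4}{d}$. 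Where you genuinely diverge is in how the coefficients involving $\psi$ are made small. You propose a coupled continuity/bootstrap on nested subintervals together with the triangle inequality $\psi=u-w$; the paper avoids any bootstrap by applying the local theory to $\psi$ itself: combining \eqref{11/05/30/9:54} with a Duhamel/Strichartz computation for $u$ (see \eqref{11/06/02/9:14}) gives $\left\|\langle\nabla\rangle e^{i(t-t_{0})\Delta}\psi(t_{0})\right\|_{V_{p_{1}}(I)}\lesssim\delta_{0}$, so Theorem \ref{10/10/04/21:44} (i) yields $\left\|\langle\nabla\rangle\psi\right\|_{V_{p_{1}}(I)}\lesssim\delta_{0}$, and interpolation against $A_{1}$ gives the smallness of $\left\|\psi\right\|_{W_{p_{1}}(I)}$, $\left\|\psi\right\|_{W(I)}$ and $\left\|\langle\nabla\rangle\psi\right\|_{X(I)}$ in one pass (\eqref{11/08/15/12:20}--\eqref{11/08/18/16:11}). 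This buys two things your sketch glosses over: it removes the apparent circularity of ``transferring smallness from $u$ to $\psi$'', and it supplies the a priori finiteness of $\left\|\langle\nabla\rangle\psi\right\|_{V_{p_{1}}\cap V\cap X(I)}$ (hence of the corresponding norms of $w$) that legitimizes absorbing the small-coefficient terms into the left-hand side; note that $w\in C(I,H^{1})$ alone does not make these norms finite, so if you keep the bootstrap you must still invoke the local theory on compact subintervals to justify finiteness and continuity of the bootstrap quantities. Two smaller points: you will also need $\left\|\langle\nabla\rangle u\right\|_{S(I)}\lesssim A_{1}+A_{2}$ (obtained as in \eqref{11/08/15/14:01}--\eqref{11/06/04/10:11}) to handle the terms $\left\|w\right\|_{ES}^{p-1}\left\|\nabla u\right\|_{S(I)}$, and for \eqref{11/05/30/10:33} your route via Lemma \ref{11/05/14/14:42} works, though the paper simply combines \eqref{11/06/04/10:11} with the bound leading to \eqref{11/05/30/10:32}.
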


\begin{proof}[Proof of Proposition \ref{11/06/02/17:55}]
Put \begin{equation}\label{11/05/29/12:28} 
w:=\psi-u.
\end{equation} 
Then, $w$ satisfies that 
\begin{equation}\label{11/05/15/23:41}
\begin{split}
i\frac{\partial }{\partial t}w +\Delta w 
&=
-\bigm(
f(w+u)- f(u)
\bigm) 
\\[6pt]
&\qquad 
-
\bigm( |w+u|^{2^{*}-2}(w+u)
-
 |u|^{2^{*}-2}u
\bigm)
-e .
\end{split}
\end{equation}
We can divide 
\begin{align}
\label{11/06/12/14:42}
&\frac{\partial}{\partial z}
\left(
f(u)
+
|u|^{2^{*}-2}u
\right)
=g_{1}(u)+g_{*}(u), 
\\[6pt]
\label{11/06/12/14:49}
&\frac{\partial}{\partial \bar{z}}
\left(
f(u)+ |u|^{2^{*}-2}u
\right)
=
h_{1}(u)+h_{*}(u),
\end{align}
where $g_{1}$ and $h_{1}$ are H\"older continuous functions of order $p_{1}-1$, and $g_{*}$ and $h_{*}$ are ones of order $2^{*}-2=\frac{4}{d-2}$. 
\par 
The exotic Strichartz estimate (Lemma \ref{10/10/16/22:32}) together with (\ref{11/06/02/22:24}), (\ref{11/05/30/9:53}), (\ref{11/05/30/9:54}) and Lemma \ref{11/05/16/0:44} shows 
\begin{equation}
\label{11/05/16/0:00}
\begin{split}
\left\| w \right\|_{ES(I)}
&\lesssim 
A_{2}^{1-\theta_{ES}}\delta^{\theta_{ES}}
+
\left\| 
\int_{0}^{1}
\bigg(g_{1}+h_{1}\bigg)(u+\theta w)\,d\theta \, w
\right\|_{ES^{*}(I)}
\\[6pt]
&\qquad 
+
\left\| 
\int_{0}^{1}
\bigg(g_{*}+h_{*}\bigg)(u+\theta w)\,d\theta \, \overline{w}
\right\|_{ES^{*}(I)}
+\left\| \langle \nabla  \rangle e \right\|_{L^{\frac{2(d+2)}{d+4}}(I,L^{\frac{2(d+2)}{d+4}})}
\\[6pt]
&\lesssim
A_{2}^{1-\theta_{ES}}\delta^{\theta_{ES}}
+
\left( \left\| |\nabla|^{s_{p_{1}}}u \right\|_{X(I)}^{p_{1}-1}
+
\left\| |\nabla|^{s_{p_{1}}}\psi \right\|_{X (I)}^{p_{1}-1}
\right)\left\| w \right\|_{ES(I)}
\\[6pt]
&\qquad 
+
\left( 
\left\| \nabla u \right\|_{X(I)}^{2^{*}-2}
+
\left\| \nabla \psi \right\|_{X(I)}^{2^{*}-2}
\right)
\left\| w \right\|_{ES(I)}
+
\delta.
\end{split}
\end{equation}
We shall derive an estimate for $\left\|\langle \nabla \rangle \psi \right\|_{X(I)}$. 
Using the triangle inequality and \eqref{11/05/30/9:54}, we have  
\begin{equation}\label{11/08/14/15:33}
\left\|\langle \nabla \rangle e^{i(t-t_{0})\Delta} \psi(t_{0}) 
\right\|_{V_{p_{1}}(I)}
\le 
\left\| \langle \nabla \rangle e^{i(t-t_{0})\Delta} u(t_{0}) 
\right\|_{V_{p_{1}}(I)}
+
\delta. 
\end{equation}
Here, the Strichartz estimate together with \eqref{11/05/11/23:53}, \eqref{11/05/30/9:45} and \eqref{11/05/30/9:53} gives us that 
\begin{equation}\label{11/06/02/9:14}
\begin{split}
&\left\| \langle \nabla \rangle 
e^{i(t-t_{0})\Delta} u(t_{0}) 
\right\|_{V_{p_{1}}(I)}
\\[6pt]
&=
\left\| \langle \nabla \rangle 
\left( u 
-i\int_{t_{0}}^{t} e^{i(t-t')\Delta}\left\{ f(u)+|u|^{2^{*}-2}u+e 
\right\}(t')dt' 
\right)
\right\|_{V_{p_{1}}(I)}
\\[6pt]
&\lesssim 
\delta_{0}
+
\left( \delta_{0}^{p_{1}}
+
\delta_{0}^{p_{2}}
+
\delta_{0}^{2^{*}-1}
\right)
+
\delta 
\lesssim \delta_{0},
\qquad 
\mbox{provided that $\delta_{0}\le 1$}.
\end{split}
\end{equation}
Combining \eqref{11/08/14/15:33} and \eqref{11/06/02/9:14}, we have 
\begin{equation}\label{11/08/14/16:08}
\left\| \langle \nabla \rangle 
e^{i(t-t_{0})\Delta} \psi(t_{0}) 
\right\|_{V_{p_{1}}(I)}\lesssim \delta_{0}. 
\end{equation}
Hence, taking $\delta_{0}$ sufficiently small depending on $A_{1}$, we find from Theorem \ref{10/10/04/21:44} that  
\begin{equation}\label{11/06/02/18:23}
\left\| \langle \nabla \rangle \psi \right\|_{V_{p_{1}}(I)}\lesssim 
\delta_{0},
\end{equation}
which together with the Sobolev embedding, the H\"older inequality and \eqref{11/06/02/17:56} also yields 
\begin{align}
\label{11/08/15/12:20}
&\left\| \psi \right\|_{W_{p_{1}}(I)}
\lesssim 
\delta_{0},
\\[6pt]
\label{11/08/18/16:13}
&
\left\| \psi \right\|_{W(I)}
\lesssim 
\left\| \langle \nabla \rangle \psi \right\|_{V(I)}
\le 
A_{1}^{1-\theta_{V}}\delta_{0}^{\theta_{V}}
\qquad 
\mbox{for some $0<\theta_{V}<1$}, 
\\[6pt]
\label{11/08/18/16:11}
&\left\|\langle \nabla \rangle \psi \right\|_{X(I)}
\lesssim 
A_{1}^{1-\theta_{X}}\delta_{0}^{\theta_{X}}
\qquad 
\mbox{for some $0<\theta_{X}<1$}.
\end{align}
Returning to (\ref{11/05/16/0:00}), we see from \eqref{11/05/30/9:45} and \eqref{11/08/18/16:11} that 
\begin{equation}\label{11/06/13/8:17}
\begin{split}
\left\| w \right\|_{ES(I)}
&\lesssim 
A_{2}^{1-\theta_{ES}}\delta^{\theta_{ES}} +\delta
\\[6pt]
&\qquad + \bigm\{
\delta_{0}^{p_{1}-1}
+
(A^{1-\theta_{X}}\delta_{0})^{p_{1}-1}
+ \delta_{0}^{2^{*}-2}
+
(A^{1-\theta_{X}} \delta_{0}^{\theta_{X}})^{2^{*}-2}
\bigm\}
\left\| w \right\|_{ES(I)},
\end{split}
\end{equation}
so that 
\begin{equation}\label{11/06/03/9:32}
\left\| w \right\|_{ES(I)}
\lesssim A_{2}^{1-\theta_{ES}}\delta^{\theta_{ES}},
\end{equation}
provided that $\delta_{0}$ is sufficiently small depending  on $A_{1}$ and $A_{2}$.
\par 
Now, we shall show \eqref{11/05/30/10:31}. Note that we have by \eqref{11/04/30/22:54} and \eqref{11/05/02/8:40} that 
\begin{equation}\label{11/08/15/10:54}
\left| f(w+u)-f(u) \right|
\lesssim 
\left( |\psi|^{p_{1}-1}+|u|^{p_{1}-1} +|\psi|^{p_{2}-1}+|u|^{p_{2}-1} \right)|w|
\end{equation}
and 
\begin{equation}\label{11/08/15/10:42}
\begin{split}
&\left| \nabla (f(w+u)-f(u)) \right|
\le \left| \nabla f(w+u) -\nabla f(u) \right|
\\[6pt]
&\lesssim 
\left( |\psi|^{p_{1}-1}+|\psi|^{p_{2}-1} \right)|\nabla w|
+
\left( \left| w \right|^{p_{1}-1}+|w|^{p_{2}-1}\right)
|\nabla u |.
\end{split}
\end{equation}
It follows from \eqref{11/05/30/9:54} and the inhomogeneous Strichartz estimate  together with  \eqref{11/08/15/10:54}, \eqref{11/08/15/10:42}, \eqref{11/06/03/9:59} and \eqref{11/05/30/9:53}  that \begin{equation}\label{11/06/03/9:37}
\begin{split}
&\left\| \langle \nabla \rangle w \right\|_{V_{p_{1}}(I)\cap V(I)\cap X(I)}
\\[6pt]
&\lesssim \delta +  
\left( 
\left\| \psi \right\|_{W_{p_{1}}(I)}^{p_{1}-1}
+
\left\| \psi \right\|_{W_{p_{2}}(I)}^{p_{2}-1}
+
\left\| \psi \right\|_{W(I)}^{2^{*}-2}
\right)
\left\| \langle \nabla \rangle w \right\|_{V_{p_{1}}(I)\cap V(I) \cap X(I)}
\\[6pt]
&\qquad 
+
\left( 
\left\| u \right\|_{W_{p_{1}}(I)}^{p_{1}-1}
+
\left\| u \right\|_{W_{p_{2}}(I)}^{p_{2}-1}
+
\left\| u \right\|_{W(I)}^{2^{*}-2}
\right)
\left\| \langle \nabla \rangle w \right\|_{V_{p_{1}}(I)\cap V(I) \cap X(I)}
\\[6pt]
&\qquad 
+ 
\left( \left\| w \right\|_{ES(I)}^{p_{1}-1}
+
\left\| w \right\|_{ES(I)}^{p_{2}-1} 
+
\left\| w \right\|_{ES(I)}^{2^{*}-2}
\right)\left\| \nabla u \right\|_{S(I)}.
\end{split}
\end{equation}
Here, the Strichartz estimate together with \eqref{11/06/02/17:56}, \eqref{11/08/14/15:14}, \eqref{11/05/11/23:53} and \eqref{11/05/30/9:53} yields that 
\begin{equation}\label{11/08/15/14:01}
\left\| \langle \nabla \rangle u \right\|_{S(I)}
\lesssim 
A_{1}+A_{2}+
\left( 
\left\| u \right\|_{W_{p_{1}}(I)}^{p_{1}-1}
+
\left\| u \right\|_{W_{p_{2}}(I)}^{p_{2}-1}
+
\left\| u \right\|_{W(I)}^{2^{*}-2}
\right)
\left\|\langle \nabla \rangle u \right\|_{V_{p_{1}}(I)}
+
\delta
.
\end{equation}
Hence, we see from \eqref{11/08/15/14:01} together with 
 \eqref{11/05/30/9:45}, and then taking $\delta_{0}$ sufficiently small depending on $A_{1}$ and $A_{2}$, we have 
\begin{equation}\label{11/06/04/10:11}
\left\|\langle \nabla \rangle u \right\|_{S(I)}
\lesssim 
A_{1}+A_{2}.
\end{equation}
Taking $\delta_{0}$ sufficiently small depending on $A_{1}$ and $A_{2}$, we find from \eqref{11/06/03/9:37} together with \eqref{11/05/30/9:45}, \eqref{11/08/15/12:20}, \eqref{11/06/03/9:32} and \eqref{11/06/04/10:11} that  
\begin{equation}\label{11/06/04/10:09}
\left\|\langle \nabla \rangle  w \right\|_{V_{p_{1}}(I)\cap V(I)\cap X(I)}
\lesssim 
\delta + 
\delta^{\frac{4}{d}\theta_{ES}},
\end{equation}
which gives \eqref{11/05/30/10:31}.
\par 
Next, we shall show (\ref{11/06/07/22:59}). Using \eqref{11/05/15/23:41}, \eqref{11/08/15/10:54}, \eqref{11/08/15/10:42} and the H\"older inequality, we verify that 
\begin{equation}\label{11/07/16/15:44}
\begin{split}
&\left\| \langle \nabla \rangle 
\left( i\frac{\partial }{\partial t}+\Delta \right) (u-\psi)
+
\langle \nabla \rangle e 
\right\|_{L^{2}(I,L^{\frac{2d}{d+2}})}
\\[6pt]
&\lesssim  
\left( \left\| \psi \right\|_{W_{p_{1}}(I)}^{p_{1}-1}
+
\left\| \psi \right\|_{W_{p_{2}}(I)}^{p_{2}-1}
+
\left\| \psi \right\|_{W(I)}^{2^{*}-2}
\right)
\left\| \langle \nabla \rangle w \right\|_{V(I)}
\\[6pt]
&\qquad +
\left( \left\| u \right\|_{W_{p_{1}}(I)}^{p_{1}-1}
+
\left\| u \right\|_{W_{p_{2}}(I)}^{p_{2}-1}
+
\left\| u \right\|_{W(I)}^{2^{*}-2}
\right)
\left\| \langle \nabla \rangle w \right\|_{V(I)}
\\[6pt]
&\qquad +
\left( \left\| w \right\|_{W_{p_{1}}(I)}^{p_{1}-1}
+
\left\| w \right\|_{W_{p_{2}}(I)}^{p_{2}-1}
+
\left\| w \right\|_{W(I)}^{2^{*}-2}
\right)
\left\| \langle \nabla \rangle u \right\|_{V(I)}
.
\end{split}
\end{equation}
Moreover, it follows from \eqref{11/07/16/15:44} together with \eqref{11/05/30/9:45},   \eqref{11/08/15/12:20} and \eqref{11/06/04/10:09} that\begin{equation}\label{11/06/07/23:20}
\left\| \langle \nabla \rangle 
\left( i\frac{\partial }{\partial t}+\Delta \right) w 
+
\langle \nabla \rangle e 
\right\|_{L^{2}(I,L^{\frac{2d}{d+2}})}
\lesssim 
\delta 
+ 
\delta^{\frac{4}{d}\theta_{ES}},
\end{equation}
provided that $\delta_{0}$ is sufficiently small depending on $A_{1}$ and $A_{2}$. Hence, we have proved (\ref{11/06/07/22:59}).
\par 
Finally. we prove \eqref{11/05/30/10:32} and \eqref{11/05/30/10:33}. 
Combining the Strichartz estimate together with \eqref{11/08/14/15:14}, (\ref{11/05/30/9:53}) and (\ref{11/06/07/23:20}), we obtain  
\begin{equation}\label{11/06/07/22:53}
\left\| \langle \nabla \rangle w \right\|_{S(I)}
\lesssim 
A_{2} + \delta 
+ 
\delta^{\frac{4}{d}\theta_{ES}},
\end{equation}
so that \eqref{11/05/30/10:32} holds. Moreover, \eqref{11/05/30/10:33} follows from (\ref{11/06/04/10:11}) and (\ref{11/06/07/22:53}).
\end{proof}

\begin{proposition}[Long-time perturbation theory]
\label{11/05/14/12:26}
Assume $d\ge 3$ and $2_{*}-1<p<2^{*}-1$. Let $I$ be an interval, $\psi \in C(I,H^{1}(\mathbb{R}^{d}))$ be a solution to (\ref{11/06/12/9:08}), and let $u$ be a function in $C(I,H^{1}(\mathbb{R}^{d}))$.
 Let $A_{1},A_{2},B>0$ and $t_{1} \in I$, and assume that 
\begin{align}
\label{11/05/15/23:09}
&\left\| \psi \right\|_{L^{\infty}(I,H^{1})}
\le A_{1},
\qquad  
\\[6pt]
\label{11/08/14/11:31}
&\left\| \psi(t_{1})-u(t_{1}) \right\|_{H^{1}}
\le A_{2},
\\[6pt]
\label{11/05/30/9:30}
&\left\| u \right\|_{W_{p_{1}}(I)\cap W(I)}\le B.
\end{align}
Then, there exists $\delta>0$ depending on $A_{1}$, $A_{2}$ and $B$ such that if\begin{align}
\label{11/05/15/23:15}
&\left\| \langle \nabla \rangle 
\biggm(
i\frac{\partial }{\partial t}u +\Delta u + f(u)
+|u|^{2^{*}-2}u
\biggm)
\right\|_{L^{\frac{2(d+2)}{d+4}}(I,L^{\frac{2(d+2)}{d+4}})}
\le \delta, 
\\[6pt]
\label{11/05/15/23:24}
&\left\| \langle \nabla \rangle  
e^{i(t-t_{1})\Delta}\left(u(t_{1})-\psi(t_{1})\right)
\right\|_{V_{p_{1}}(I)}
\le \delta
\qquad 
\mbox{for some $t_{1}\in I$},
\end{align}
then we have 
\begin{equation}\label{11/06/10/9:41}
\left\| \langle \nabla \rangle  \psi \right\|_{S(I)}<\infty .
\end{equation}
\end{proposition}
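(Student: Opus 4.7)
The plan is to reduce to the short-time perturbation (Proposition \ref{11/06/02/17:55}, for $d\ge 5$) by partitioning $I$ into finitely many subintervals on each of which the hypotheses of the short-time theorem are satisfied, and then iterating. For $d=3,4$, where the nonlinearity is smoother, the same scheme works with ordinary Strichartz spaces in place of the exotic ones and without need of Lemma \ref{11/05/16/0:44}.

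First, I would establish a global bound $\|\langle\nabla\rangle u\|_{S(I)}\le M$ with $M=M(A_1,A_2,B)$. The estimate $\|u(t_1)\|_{H^1}\le A_1+A_2$ follows from \eqref{11/05/15/23:09} and \eqref{11/08/14/11:31}. Using \eqref{11/05/30/9:30}, I partition $I$ into $N=N(B)$ subintervals $J_1,\ldots,J_N$ on each of which $\|u\|_{W_{p_1}(J_k)\cap W(J_k)}$ is smaller than some small universal constant. Writing $u$ on $J_k$ via Duhamel's formula centered at an endpoint, applying the Strichartz estimate together with \eqref{11/05/11/23:53} and the interpolation \eqref{11/05/14/15:12}, a bootstrap in the spirit of Lemma \ref{11/05/14/14:42} bounds $\|\langle\nabla\rangle u\|_{S(J_k)}$ in terms of the $H^1$ norm at the left endpoint of $J_k$ and the error in \eqref{11/05/15/23:15}. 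Iterating over $k=1,\dots,N$ yields the global $S(I)$ bound, and \eqref{11/05/14/15:12} then also yields finiteness of $\|\langle\nabla\rangle u\|_{V_{p_1}(I)\cap V(I)\cap X(I)}$.

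Next, I would split $I$ into $J=J(A_1,A_2,B)$ subintervals $I_1,\ldots,I_J$ on which $\|\langle\nabla\rangle u\|_{V_{p_1}(I_j)\cap V(I_j)\cap X(I_j)}$ is at most the short-time threshold $\delta_0$ furnished by Proposition \ref{11/06/02/17:55}. I then run an induction on $j$: on $I_1$ the initial error is $A_2$ via \eqref{11/05/15/23:24}; on $I_{j+1}$, the new initial error $\|\psi(t_{j+1})-u(t_{j+1})\|_{H^1}$ is controlled by the previous step's conclusion \eqref{11/05/30/10:32}, while the smallness of $\|\langle\nabla\rangle e^{i(t-t_{j+1})\Delta}(u(t_{j+1})-\psi(t_{j+1}))\|_{V_{p_1}(I_{j+1})}$ is deduced from \eqref{11/05/30/10:31} and \eqref{11/06/07/22:59} combined with the Strichartz estimate and \eqref{11/05/15/23:15}. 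Summing the conclusions \eqref{11/05/30/10:33} over $j=1,\dots,J$ will yield the desired finiteness $\|\langle\nabla\rangle\psi\|_{S(I)}<\infty$.

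The main obstacle will be the bookkeeping of errors in this induction: at step $j$, the effective ``$A_2$'' in Proposition \ref{11/06/02/17:55} is an inflated version of the original, so the corresponding short-time threshold $\delta_0$ shrinks accordingly. Since $J$ depends only on $A_1,A_2,B$ and not on $\delta$, one can first fix $J$ and the sequence of thresholds $\delta_0^{(1)},\ldots,\delta_0^{(J)}$, and then choose $\delta=\delta(A_1,A_2,B)$ small enough that the accumulated error stays below $\delta_0^{(j)}$ at each of the $J$ steps. A secondary subtlety is verifying that the free-evolution smallness \eqref{11/05/30/9:54} propagates; this will be handled by expressing $e^{i(t-t_{j+1})\Delta}w(t_{j+1})$ via Duhamel on $I_j$ and invoking the output \eqref{11/06/07/22:59} of Proposition \ref{11/06/02/17:55} applied on $I_j$ together with the inhomogeneous Strichartz estimate.
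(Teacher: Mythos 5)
Your proposal follows essentially the same route as the paper's proof: a preliminary global bound on $\left\| \langle \nabla \rangle u \right\|_{S(I)}$ obtained by subdividing $I$ according to \eqref{11/05/30/9:30} and iterating a Strichartz bootstrap, then a second subdivision of $I$ into subintervals on which the short-time result (Proposition \ref{11/06/02/17:55}) applies, with the linear-evolution smallness propagated from one subinterval to the next through the Duhamel formula, the Strichartz estimate and the output \eqref{11/06/07/22:59}, and a final summation plus Lemma \ref{11/05/14/14:42}-type control to conclude \eqref{11/06/10/9:41}.

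One bookkeeping point in your plan is circular as stated. You let the effective $A_{2}$ inflate from step to step via \eqref{11/05/30/10:32}, so the short-time thresholds $\delta_{0}^{(1)},\ldots,\delta_{0}^{(J)}$ shrink with $j$; but the number $J$ of subintervals in your second partition is dictated by the smallest of these thresholds, which in turn depends on the inflated $A_{2}^{(J)}$, hence on $J$ itself, so you cannot ``first fix $J$ and the sequence of thresholds.'' The paper avoids this by using the first-stage bound a second time: since $\left\| \langle \nabla \rangle u \right\|_{S(I)}\le C(A_{1},A_{2},B)$, one has the uniform-in-time estimate $\sup_{t\in I}\left\| \psi(t)-u(t) \right\|_{H^{1}}\le A_{1}+C(A_{1},A_{2},B)=:A_{2}'$, so the short-time theorem is invoked on every subinterval with the same pair $(A_{1},A_{2}')$ and a single threshold $\delta_{0}=\delta_{0}(A_{1},A_{2}')$; only the propagated free-evolution smallness $\delta_{j}$ grows along the iteration, and $\delta$ is chosen small so that it stays below $\delta_{0}$ through all $N=N(A_{1},A_{2},B)$ steps. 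Your step one already provides exactly the bound needed for this repair, so the gap is easily closed, but as written your induction does not close.
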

\begin{remark}\label{11/09/06/10:35}
In the long-time perturbation theory above, we assume the uniform boundedness of $\psi$ in $H^{1}(\mathbb{R}^{d})$, instead of the one of $u$ (cf. \cite{IMN, Killip-Visan}); For, it is easier to obtain the bound for $\psi$ than the approximate solution $u$.  
\end{remark}
\begin{proof}[Proof of Proposition \ref{11/05/14/12:26}]
We consider the case $\inf{I}=t_{1}$ only. The other cases can be proven in the same way as this case. 
\par 
Our first step is to derive a bound of $\left\| \langle \nabla \rangle u \right\|_{S(I)}$.  Let $\eta>0$ be a universal constant specified later, and assume $\delta\le \eta$. We see from \eqref{11/05/30/9:30} that: there exist 
\\
{\rm (i)} a number $N'$ depending on $B$ (and $\eta$), and 
\\
{\rm (ii)} disjoint intervals $I_{1}',\ldots, I_{N'}'$ of the form $I_{j}'=[t_{j}',t_{j+1}')$ ($t_{1}'=t_{1}$ and $t_{N+1}':=\max{I}$), 
\\
such that 
\begin{align}
\label{11/05/26/18:28}
&I=\bigcup_{j=1}^{N'}I_{j}', 
\\[6pt]
\label{11/05/26/18:35}
&\left\| u \right\|_{W_{p_{1}}(I_{j}')\cap W(I_{j}')}\le 
\eta
\qquad 
\mbox{for any $j=1, \ldots, N'$}.
\end{align}
Then, it follows from \eqref{11/05/15/23:15} and \eqref{11/05/26/18:35} that 
\begin{equation}\label{11/08/15/20:01}
\begin{split}
\left\| \langle \nabla \rangle u \right\|_{S(I_{j}')}
&\le C_{1}\left\| u(t_{j}')\right\|_{H^{1}} +C_{2}\delta
\\[6pt]
&\quad +
C_{3}\left( \left\| u \right\|_{W_{p_{1}}(I_{j}')}^{p_{1}-1}
+
\left\| u \right\|_{W_{p_{2}}(I_{j}')}^{p_{2}-1}
+
\left\| u \right\|_{W(I_{j}')}^{2^{*}-2}
\right)
\left\| \langle \nabla \rangle u \right\|_{V(I_{j}')}
\\[6pt]
&\le 
C_{1}\left\| u(t_{j}')\right\|_{H^{1}}+C_{2}\delta
\\[6pt]
&\quad + 
C_{3}'\left( \eta^{p_{1}-1}
+\eta^{p_{2}-1}+\eta^{2^{*}-2}
\right)\left\| \langle \nabla \rangle u \right\|_{S(I_{j}')}
\qquad 
\mbox{for any $1\le j \le N'$},
\end{split}
\end{equation}
where $C_{1}$, $C_{2}$, $C_{3}$ and $C_{3}'$ are some universal constants. We choose $\eta$ so small that 
\begin{equation}\label{11/08/16/11:08}
C_{3}'\left( \eta^{p_{1}-1}
+\eta^{p_{2}-1}+\eta^{2^{*}-2}
\right)\le \frac{1}{2}. 
\end{equation}
Here, \eqref{11/05/15/23:09} and \eqref{11/08/14/11:31} shows that\begin{equation}\label{11/08/16/11:00}
\left\| u(t_{1}')\right\|_{H^{1}}
\le 
\left\| \psi \right\|_{L^{\infty}(I,H^{1})}
+
\left\| u(t_{1})-\psi(t_{1}) \right\|_{H^{1}}
\le A_{1}+A_{2},
\end{equation}
so that, taking $\delta$ so small that $C_{2}\delta\le A_{1}+A_{2}$, we see from \eqref{11/08/15/20:01} together with \eqref{11/08/16/11:08} that   
\begin{equation}\label{11/08/15/20:20}
\left\| \langle \nabla \rangle u \right\|_{S(I_{1}')}
\le 2(C_{1}+1)(A_{1}+A_{2}).
\end{equation}
In particular, we have 
\begin{equation}\label{11/08/15/20:23}
\left\| u(t_{2}') \right\|_{H^{1}}\le 2(C_{1}+1)(A_{1}+A_{2}). 
\end{equation}
Hence, using \eqref{11/08/15/20:01} again, we obtain  
\begin{equation}\label{11/08/15/20:20}
\left\| \langle \nabla \rangle u \right\|_{S(I_{2}')}
\le 2^{2}C_{1}(C_{1}+1)(A_{1}+A_{2})+2(A_{1}+A_{2})
\lesssim A_{1}+A_{2}.
\end{equation}
Iterating this, we consequently have 
\begin{equation}\label{11/08/16/11:59}
\left\| \langle \nabla \rangle u \right\|_{S(I)}
\le C(A_{1}, A_{2}, B)
\end{equation}
for some constant $C(A_{1}, A_{2}, B)>0$ depending on $A_{1}$, $A_{2}$ and $B$. In particular, we have 
\begin{equation}\label{11/08/15/20:20}
\sup_{t\in I}\left\| \psi(t)-u(t) \right\|_{H^{1}}
\le A_{1}+C(A_{1},A_{2},B)=:A_{2}'. 
\end{equation} 
\par 
Now, let $\delta_{0}$ be the constant found in Theorem \ref{11/06/02/17:55} which is determined by $A_{1}$ and $A_{2}'$ given in \eqref{11/05/15/23:09} and \eqref{11/08/16/11:59}, and suppose that $\delta<\min\{ \delta_{0}, \eta \}$. 
 Then, we see from \eqref{11/08/16/11:59} that there exist 
\\
{\rm (i)} a number $N$ depending $A_{1}$, $A_{2}$ and $B$, and 
\\
{\rm (ii)} disjoint intervals $I_{1},\ldots, I_{N}$ of the form $I_{j}=[t_{j},t_{j+1})$ ($t_{N+1}:=\max{I}$), 
\\
such that 
\begin{align}
\label{11/08/15/17:11}
&I=\bigcup_{j=1}^{N}I_{j}, 
\\[6pt]
\label{11/08/15/17:14}
&\left\| \langle \nabla \rangle u \right\|_{V_{p_{1}}(I_{j})\cap 
V(I_{j})\cap X(I_{j})}\le \delta_{0}
\qquad 
\mbox{for any $j=1, \ldots, N$}.
\end{align}
Proposition \ref{11/06/02/17:55} together with (\ref{11/05/15/23:15}) and (\ref{11/05/15/23:24}) shows 
\begin{align}
\label{11/08/18/13:52}
&\left\|\langle \nabla \rangle \left( u- \psi \right) 
\right\|_{V_{p_{1}}(I_{1})\cap V(I_{1})\cap X(I_{1})}
\le C_{0}
\left\{  
\delta 
+ 
\delta^{\frac{4}{d}\theta_{EX}}
\right\},
\\[6pt]
\label{11/08/18/13:53}
&\left\| \langle \nabla \rangle 
\left( i\frac{\partial }{\partial t}+\Delta \right) (u-\psi)
+
\langle \nabla \rangle e 
\right\|_{L^{2}(I_{1},L^{\frac{2d}{d+2}})}
\le C_{0}
\left\{
\delta 
+ 
\delta^{\frac{4}{d}\theta_{ES}}
\right\},
\\[6pt]
\label{11/08/18/13:54}
&\left\| \langle \nabla \rangle \left( u-\psi \right) 
\right\|_{S(I_{1})}
\le C_{0} 
\left\{  
A_{2}' + \delta 
+ 
\delta^{\frac{4}{d}\theta_{ES}}
\right\},
\\[6pt]
\label{11/08/18/13:55}
&\left\| \langle \nabla \rangle\psi \right\|_{S(I_{1})}
\le C_{0} 
\left\{   
A_{1}+A_{2}'\right\},
\end{align} 
where $C_{0}$ is some universal constant. Here, we have the formula\begin{equation}\label{11/06/08/15:00}
\begin{split}
e^{i(t-t_{j+1})\Delta}\left(u(t_{j+1})-\psi(t_{j+1})
\right)
&=e^{i(t-t_{j})\Delta}\left(u(t_{j})-\psi(t_{j})
\right)
\\[6pt]
&\quad + 
i
\int_{I_{j}}
\hspace{-2pt}e^{i(t-t')\Delta}
\left\{ \left(
i\frac{\partial }{\partial t}+\Delta \right) (u-\psi) +e 
\right\}
(t')  \,dt'.
\end{split}
\end{equation}
Using the Strichartz estimate, the formula \eqref{11/06/08/15:00}, \eqref{11/05/15/23:24} and \eqref{11/08/18/13:53}, we obtain that\begin{equation}\label{11/06/08/14:28}
\begin{split}
&\left\|  \langle \nabla \rangle 
e^{i(t-t_{2})\Delta}\left(u(t_{2})-\psi(t_{2})\right)
\right\|_{V_{p_{1}}(I)}
\\[6pt]
&\le 
\left\|  \langle \nabla \rangle 
e^{i(t-t_{1})\Delta}\left(u(t_{1})-\psi(t_{1})\right)
\right\|_{V_{p_{1}}(I)}
\hspace{-6pt}+
\left\|\langle \nabla \rangle \left( i\frac{\partial }{\partial t}+\Delta \right) (u-\psi)
+
\langle \nabla \rangle e \right\|_{L^{2}(I_{1},L^{\frac{2d}{d+2}})}
\\[6pt]
&\le  
\delta 
+
C_{0} \left\{ 
\delta
+
\delta^{\frac{4}{d}\theta_{ES}}
\right\}.
\end{split}
\end{equation}
We choose $\delta$ so small that 
\begin{equation}\label{11/08/18/14:18}
\delta_{1}:=\delta 
+
C_{0} \left\{ 
\delta
+
\delta^{\frac{4}{d}\theta_{ES}}
\right\}
< \delta_{0}.
\end{equation} 
Then, it follows from Proposition \ref{11/06/02/17:55} that 
\begin{align}
\label{11/06/08/14:03}
&\left\|\langle \nabla \rangle \left( u- \psi \right) 
\right\|_{V_{p_{1}}(I_{2})\cap V(I_{2})\cap X(I_{2})}
\le C_{0} 
\left\{ 
\delta_{1} 
+ 
\delta_{1}^{\frac{4}{d}\theta_{EX}}
\right\},
\\[6pt]
\label{11/06/08/14:04}
&\left\| \langle \nabla \rangle 
\left( i\frac{\partial }{\partial t}+\Delta \right) (u-\psi)
+
\langle \nabla \rangle e 
\right\|_{L^{2}(I_{2},L^{\frac{2d}{d+2}})}
\le 
C_{0}
\left\{ 
\delta_{1} 
+ 
\delta_{1}^{\frac{4}{d}\theta_{ES}}
\right\},
\\[6pt]
\label{11/06/08/14:05}
&\left\| \langle \nabla \rangle \left( u-\psi \right) 
\right\|_{S(I_{2})}
\le C_{0}
\left\{ 
A_{2}' + \delta_{1} 
+ 
\delta_{1}^{\frac{4}{d}\theta_{ES}}
\right\} 
,
\\[6pt]
\label{11/06/08/14:06}
&\left\| \langle \nabla \rangle\psi \right\|_{S(I_{2})}\le 
C_{0}\left\{  A_{1}+A_{2}' \right\}.
\end{align} 
Moreover, we see from the formula \eqref{11/06/08/15:00} together with \eqref{11/06/08/14:28} and \eqref{11/08/18/13:53} that 
\begin{equation}\label{11/08/18/14:32}
\begin{split}
&\left\|  \langle \nabla \rangle 
e^{i(t-t_{3})\Delta}\left(u(t_{3})-\psi(t_{3})\right)
\right\|_{V_{p_{1}}(I)}
\\[6pt]
&\le 
\left\|  \langle \nabla \rangle 
e^{i(t-t_{2})\Delta}\left(u(t_{2})-\psi(t_{2})\right)
\right\|_{V_{p_{1}}(I)}
\hspace{-6pt}+
\left\|\langle \nabla \rangle \left( i\frac{\partial }{\partial t}+\Delta \right) (u-\psi)
+
\langle \nabla \rangle e \right\|_{L^{2}(I_{2},L^{\frac{2d}{d+2}})}
\\[6pt]
&\le  
\delta_{1} 
+
C_{0} \left\{ 
\delta_{1}
+
\delta_{1}^{\frac{4}{d}\theta_{ES}}
\right\}.
\end{split}
\end{equation}
Hence, taking further small $\delta$ such that
\begin{equation}\label{11/08/18/14:37}
\delta_{2}:=\delta_{1} 
+
C_{0} \left\{ 
\delta_{1}
+
\delta_{1}^{\frac{4}{d}\theta_{ES}}
\right\}<\delta_{0},
\end{equation} 
we can employ Proposition \ref{11/06/02/17:55} on the interval $I_{3}$.  Repeating this procedure $N$ times, we find that if $\delta$ is sufficiently small depending on $A_{1}$, $A_{2}$ and $B$, then we have 
\begin{equation}\label{11/08/18/14:46}
\left\| \langle \nabla \rangle \psi 
\right\|_{S(I_{j})}
\le C_{0}(A_{1}+A_{2}')
\qquad 
\mbox{for any $1\le j\le N$}. 
\end{equation}
Hence, we have  
\begin{equation}\label{11/06/08/15:26}
\left\| \psi \right\|_{W_{p_{1}}(I)}^{\frac{(d+2)(p_{1}-1)}{2}}
=
\sum_{j=1}^{N}\left\| \psi \right\|_{W_{p_{1}}(I_{j})}^{\frac{(d+2)(p_{1}-1)}{2}}
\le 
\sum_{j=1}^{N}C_{0}(A_{1}+A_{2}') \le C_{0}(A_{1}+A_{2}')N.
\end{equation} 
Similarly, we have 
\begin{equation}\label{11/06/08/15:26}
\left\| \psi \right\|_{W(I)}^{\frac{2(d+2)}{d-2}}
\le C_{0}(A_{1}+A_{2}')N.
\end{equation} 
Then, the desired result (\ref{11/06/10/9:41}) follows from Lemma \ref{11/05/14/14:42}. 
\end{proof}

\section{Scattering result}\label{11/06/10/17:05}
In this section, we prove Theorem \ref{11/06/10/17:09}.  
%%%%%%
\subsection{Analysis on $A_{\omega,+}$}
\label{11/06/28/10:28}
%%%%%%
We discuss basic properties of the set $A_{\omega,+}$. 
\par 
We first observe  a relation between $\mathcal{K}$ and $\mathcal{H}$: 
\begin{lemma}\label{11/06/28/10:12}
Let $\{u_{n}\}$ be a bounded sequence in $H^{1}(\mathbb{R}^{d})$ with 
\begin{equation}\label{10/09/16/17:12}
\mathcal{K}(u_{n})\ge 0
\qquad 
\mbox{for any $n\in \mathbb{N}$}
.
\end{equation}
Then, we have 
\begin{equation}\label{10/09/16/16:54}
\mathcal{H}(u_{n})> 0
\qquad 
\mbox{for any $n \in \mathbb{N}$}.
\end{equation}
Furthermore, if the sequence $\{u_{n}\}$ satisfies that 
\begin{equation}\label{10/09/17/17:03}
\liminf_{n\to \infty}\left\| \nabla u_{n} \right\|_{L^{2}}>0,
\end{equation}
then 
\begin{equation}\label{10/09/16/16:54}
\liminf_{n\to \infty}\mathcal{H}(u_{n})> 0.
\end{equation}
\end{lemma}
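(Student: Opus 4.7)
My plan is to reduce both claims to a single lower bound of the form $\mathcal{H}(u)\ge c\,\|\nabla u\|_{L^2}^2$ on the set $\{\mathcal{K}\ge 0\}$, for some constant $c>0$. The tool is an algebraic identity analogous to \eqref{11/04/30/23:56} but with a coefficient strictly less than $\tfrac12$ in front of $\mathcal{K}$, so that a positive multiple of $\|\nabla u\|_{L^2}^2$ survives the cancellation.

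A direct computation from \eqref{11/07/19/11:34} and \eqref{11/04/30/23:58} gives, for every $a>0$,
\[
\mathcal{H}(u)-a\,\mathcal{K}(u)=\Bigl(\tfrac12-a\Bigr)\|\nabla u\|_{L^2}^2+\frac{ad}{4}\int_{\mathbb{R}^d}\!\Bigl(DF-\tfrac{2+2ad}{ad}F\Bigr)(u)+\Bigl(a-\tfrac{1}{2^*}\Bigr)\|u\|_{L^{2^*}}^{2^*}.
\]
I will choose $a$ in the interval $[\,\max\{2/(4+d\varepsilon_0),\,1/2^*\},\,1/2\,)$, which is nonempty since $\varepsilon_0>0$ and $d\ge 3$ both force the lower endpoint to be strictly less than $1/2$. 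This choice simultaneously satisfies $\tfrac12-a>0$, $a-1/2^*\ge 0$, and $\tfrac{2+2ad}{ad}\le 2_*+\varepsilon_0$; using \eqref{11/04/30/22:39} together with $F\ge 0$ from \eqref{11/07/18/9:29}, the middle integrand dominates $(D-2_*-\varepsilon_0)F\ge 0$, and hence all three summands on the right-hand side above are nonnegative.

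Combining these, $\mathcal{K}(u_n)\ge 0$ yields $\mathcal{H}(u_n)\ge(\tfrac12-a)\|\nabla u_n\|_{L^2}^2$. The first assertion follows at once, reading the lemma as applied to nontrivial $u_n$ (so $\|\nabla u_n\|_{L^2}>0$); the second is immediate from the added hypothesis $\liminf_{n\to\infty}\|\nabla u_n\|_{L^2}>0$. The main obstacle is the algebraic bookkeeping, namely verifying that the three sign constraints on $a$ are simultaneously solvable; this relies crucially on the strict positivity of $\varepsilon_0$ in \eqref{11/04/30/22:39}.
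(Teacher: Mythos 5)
Your proposal is correct, and the algebra checks out: with $\mathcal{H}$ as in \eqref{11/07/19/11:34} and $\mathcal{K}$ as in \eqref{11/04/30/23:58}, one indeed has $\mathcal{H}(u)-a\mathcal{K}(u)=(\tfrac12-a)\|\nabla u\|_{L^2}^2+\tfrac{ad}{4}\int\bigl(DF-\tfrac{2+2ad}{ad}F\bigr)(u)+(a-\tfrac{1}{2^*})\|u\|_{L^{2^*}}^{2^*}$, and the window $\max\{2/(4+d\varepsilon_0),\,1/2^*\}\le a<1/2$ is nonempty precisely because $\varepsilon_0>0$ and $d\ge 3$, so that \eqref{11/07/18/9:29} and \eqref{11/04/30/22:39} make all three terms nonnegative and yield $\mathcal{H}(u)\ge(\tfrac12-a)\|\nabla u\|_{L^2}^2$ whenever $\mathcal{K}(u)\ge 0$. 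This is, however, a genuinely different route from the paper's proof of this lemma: the paper argues through the scaling flow, using Lemma \ref{11/07/20/18:01}~(i) to get $\mathcal{K}(T_{\lambda}u_{n})>0$ for $\lambda\in(0,1)$, hence $\mathcal{H}(u_{n})>\mathcal{H}(T_{\lambda}u_{n})$ via \eqref{11/06/28/9:59}, and then bounds $\mathcal{H}(T_{\lambda}u_{n})$ from below for small $\lambda$ using the growth conditions \eqref{11/04/30/22:54}--\eqref{11/05/02/8:40}, Sobolev embedding and the $H^{1}$-boundedness of the sequence. Your argument is essentially the mechanism of the paper's later Lemma \ref{11/08/13/12:54} (where $C_{0}'=2a$ at the left endpoint of your interval), applied here; it buys a quantitative coercivity bound $\mathcal{H}(u)\gtrsim\|\nabla u\|_{L^2}^2$ on $\{\mathcal{K}\ge 0\}$ and dispenses entirely with the boundedness hypothesis, whereas the paper's scaling argument only gives positivity but uses no more than the qualitative structure already set up in Section \ref{11/12/04/11:44}. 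One caveat, which you flag and which is equally implicit in the paper's proof: strict positivity $\mathcal{H}(u_{n})>0$ requires $u_{n}\neq 0$ (for $u_{n}=0$ both sides vanish), so the statement must be read for non-trivial $u_{n}$; with that reading, both conclusions follow from your single inequality exactly as you say.
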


\begin{proof}[Proof of Lemma \ref{11/06/28/10:12}]
Lemma \ref{11/07/20/18:01} together with (\ref{10/09/16/17:12}) shows   that 
\begin{equation}\label{10/08/13/15:51}
\mathcal{K}(T_{\lambda}u_{n})> 0 
\quad 
\mbox{for any $\lambda \in (0,1)$ and $n \in \mathbb{N}$}.
\end{equation}
Hence,  we see from the relation \eqref{11/06/28/9:59} that   
\begin{equation}\label{10/08/13/15:26}
\mathcal{H}(u_{n})>  \mathcal{H}(T_{\lambda}u_{n})
\qquad 
\mbox{for any $\lambda \in (0,1)$ and $n \in \mathbb{N}$}. 
\end{equation}
Here, it follows from \eqref{11/04/30/22:54}, \eqref{11/05/02/8:40} and the Sobolev embedding that there exists a constant $C>0$ independent of $n$ and 
$\lambda$ such that  
\begin{equation}\label{10/09/08/16:14}
\begin{split}
\mathcal{H}(T_{\lambda}u_{n})
&=\lambda^{2}\left\| \nabla u_{n}\right\|_{L^{2}}^{2}
-F(T_{\lambda}u_{n}(x))-\frac{1}{2^{*}}\left\| u_{n} 
\right\|_{L^{2^{*}}}^{2^{*}}
\\[6pt]
&\ge  
\lambda^{2}\left\| \nabla u_{n}\right\|_{L^{2}}^{2}
- C \left(
\sum_{j=1}^{2}
\lambda^{\frac{d}{2}(p_{j}-1)}
\left\| u_{n} \right\|_{H^{1}}^{p_{j}+1}
+\lambda^{2^{*}}
\left\| u_{n} \right\|_{H^{1}}^{2^{*}}
\right).
\end{split}
\end{equation}
Since $\{u_{n}\}$ is bounded in $H^{1}(\mathbb{R}^{d})$ and 
 $2< \frac{d}{2}(p_{1}-1)<\frac{d}{2}(p_{2}-1)<2^{*}$, we obtain the conclusions from the estimates (\ref{10/08/13/15:26}) and (\ref{10/09/08/16:14}).
\end{proof}

Next, we show that for each $\omega>0$, $A_{\omega,+}$ is bounded in $H^{1}(\mathbb{R}^{d})$: 
\begin{lemma}\label{11/06/27/22:43}
Let $\omega>0$, $m>0$ and let $u$ be a function in $H^{1}(\mathbb{R}^{d})$. Assume that 
\begin{equation}
\label{11/07/30/15:47}
\mathcal{K}(u)\ge 0,
\qquad 
\mathcal{S}_{\omega}(u)\le m. 
\end{equation}
Then, we have 
\begin{equation}
\label{11/07/30/15:48}
\left\| u \right\|_{L^{2}}^{2} \le \frac{2m}{\omega},
\qquad 
\left\| \nabla u \right\|_{L^{2}}^{2}
\lesssim 
m+\frac{m}{\omega}.
\end{equation}
In particular, we have 
\begin{equation}
\sup_{u \in A_{\omega,+}}
\left\| u \right\|_{H^{1}}^{2}
\lesssim m_{\omega}+\frac{m_{\omega}}{\omega}.
\end{equation}
\end{lemma}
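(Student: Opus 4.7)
The plan is to exploit the identity $\mathcal{I}_{\omega}=\mathcal{S}_{\omega}-\frac{1}{2}\mathcal{K}$ together with the fact that, under assumptions \eqref{11/07/18/9:29} and \eqref{11/04/30/22:39}, each of the three summands in the expression
\[
\mathcal{I}_{\omega}(u)=\frac{\omega}{2}\|u\|_{L^{2}}^{2}+\frac{d}{8}\int_{\mathbb{R}^{d}}(DF-2_{*}F)(u)+\frac{1}{d}\|u\|_{L^{2^{*}}}^{2^{*}}
\]
from \eqref{11/04/30/23:56} is manifestly non-negative (recall that \eqref{11/04/30/22:39} gives $DF-2_{*}F\ge \varepsilon_{0}F\ge 0$).

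First, using $\mathcal{K}(u)\ge 0$ and the hypothesis $\mathcal{S}_{\omega}(u)\le m$, I obtain
\[
\mathcal{I}_{\omega}(u)=\mathcal{S}_{\omega}(u)-\tfrac{1}{2}\mathcal{K}(u)\le \mathcal{S}_{\omega}(u)\le m.
\]
Reading off the three non-negative summands of $\mathcal{I}_{\omega}(u)$ separately, this immediately gives the $L^{2}$ bound $\|u\|_{L^{2}}^{2}\le \frac{2m}{\omega}$, the $L^{2^{*}}$ control $\|u\|_{L^{2^{*}}}^{2^{*}}\le dm$, and the nonlinear integral control $\int(DF-2_{*}F)(u)\le \frac{8m}{d}$.

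Next, to control $\|\nabla u\|_{L^{2}}^{2}$, I would use $\mathcal{K}(u)\ge 0$ directly in its definition \eqref{11/04/30/23:58}, which rearranges to
\[
\|\nabla u\|_{L^{2}}^{2}\le \frac{d}{4}\int_{\mathbb{R}^{d}}(DF-2F)(u)+\|u\|_{L^{2^{*}}}^{2^{*}}.
\]
I then decompose $DF-2F=(DF-2_{*}F)+(2_{*}-2)F=(DF-2_{*}F)+\frac{4}{d}F$, and invoke \eqref{11/04/30/22:39} once more in the form $F\le \varepsilon_{0}^{-1}(DF-2_{*}F)$ to bound both pieces by the quantity $\int(DF-2_{*}F)(u)$ already controlled in the previous step. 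Combined with $\|u\|_{L^{2^{*}}}^{2^{*}}\le dm$, this yields $\|\nabla u\|_{L^{2}}^{2}\lesssim m$, which is stronger than (and implies) the claimed bound $\|\nabla u\|_{L^{2}}^{2}\lesssim m+\frac{m}{\omega}$.

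Finally, the statement concerning $A_{\omega,+}$ follows by applying what has just been proved with $m=m_{\omega}$: for every $u\in A_{\omega,+}$ the definition \eqref{11/08/24/12:02} gives $\mathcal{K}(u)>0$ and $\mathcal{S}_{\omega}(u)<m_{\omega}$, so $\|u\|_{H^{1}}^{2}=\|u\|_{L^{2}}^{2}+\|\nabla u\|_{L^{2}}^{2}\lesssim m_{\omega}+\frac{m_{\omega}}{\omega}$, with the implicit constant depending only on the quantities allowed by the notational convention. There is no real obstacle here: the entire argument is a purely algebraic consequence of \eqref{11/07/18/9:29}, \eqref{11/04/30/22:39} and the decomposition $\mathcal{I}_{\omega}=\mathcal{S}_{\omega}-\frac{1}{2}\mathcal{K}$; the main point is simply to recognise that the coercivity of $\mathcal{I}_{\omega}$, forced by (A2)--(A3), is strong enough to recover the full $H^{1}$ norm under the sign condition $\mathcal{K}(u)\ge 0$.
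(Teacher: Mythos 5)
Your proof is correct, but the route to the gradient bound is genuinely different from the paper's. For $\|\nabla u\|_{L^{2}}^{2}$ the paper starts from $m\ge \mathcal{S}_{\omega}(u)\ge \mathcal{H}(u)\ge \frac{1}{d}\|\nabla u\|_{L^{2}}^{2}-C_{1}\|u\|_{L^{p_{1}+1}}^{p_{1}+1}-C_{2}\|u\|_{L^{p_{2}+1}}^{p_{2}+1}$ (using $\|u\|_{L^{2^{*}}}^{2^{*}}\le\|\nabla u\|_{L^{2}}^{2}$, which follows from $\mathcal{K}(u)\ge 0$), and then removes the lower-order terms by H\"older interpolation between $L^{2}$ and $L^{2^{*}}$, the mass bound $\|u\|_{L^{2}}^{2}\le 2m/\omega$, and Young's inequality; this uses the growth conditions \eqref{11/04/30/22:54}--\eqref{11/05/02/8:40} and produces the stated bound $m+\frac{m}{\omega}$. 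You instead read off, from the termwise positivity of $\mathcal{I}_{\omega}(u)\le m$, the three bounds $\|u\|_{L^{2}}^{2}\le\frac{2m}{\omega}$, $\int(DF-2_{*}F)(u)\le\frac{8m}{d}$ and $\|u\|_{L^{2^{*}}}^{2^{*}}\le dm$, and then convert $DF-2F=(DF-2_{*}F)+\frac{4}{d}F$ into $(1+\frac{4}{d\varepsilon_{0}})(DF-2_{*}F)$ via \eqref{11/04/30/22:39}, so that $\mathcal{K}(u)\ge 0$ alone yields $\|\nabla u\|_{L^{2}}^{2}\le 2(1+\frac{4}{d\varepsilon_{0}})m+dm$. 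This is purely algebraic, avoids \eqref{11/04/30/22:54}--\eqref{11/05/02/8:40}, Sobolev and Young entirely, and gives the stronger conclusion $\|\nabla u\|_{L^{2}}^{2}\lesssim m$ with a constant depending only on $d$ and $\varepsilon_{0}$ (uniform in $\omega$), which of course implies the lemma; the trick of trading $F$ for $DF-2_{*}F$ through $\varepsilon_{0}$ is the same mechanism the paper itself uses later in Lemma \ref{11/08/13/12:54}, so your argument fits naturally into its framework. The final deduction for $A_{\omega,+}$ with $m=m_{\omega}$ is the same as in the paper.
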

\begin{proof}[Proof of Lemma \ref{11/06/27/22:43}]
It follows from $\mathcal{K}(u)\ge 0$ that  
\begin{align}\label{11/07/10/9:23}
&\left\| u \right\|_{L^{2^{*}}}^{2^{*}}
\le 
\left\| \nabla u \right\|_{L^{2}}^{2},
\\[6pt]
\label{11/07/10/9:53}
&\mathcal{I}_{\omega}(u)
\le \mathcal{S}_{\omega}(u)\le m. 
\end{align}
We see from \eqref{11/07/10/9:53} that 
\begin{equation}\label{11/07/10/10:04}
\left\| u \right\|_{L^{2}}^{2}\le \frac{2m}{\omega},
\end{equation}
which gives the first claim in \eqref{11/07/30/15:48}. 
\par 
Using \eqref{11/04/30/22:54}, \eqref{11/05/02/8:40} and \eqref{11/07/10/9:23}, we obtain that  
\begin{equation}\label{11/07/10/10:07}
m \ge \mathcal{S}_{\omega}(u) \ge \mathcal{H}(u)
\ge 
\frac{1}{d}\left\|\nabla u \right\|_{L^{2}}^{2}
-C_{1}\left\| u \right\|_{L^{p_{1}+1}}^{p_{1}+1}
-C_{2}\left\| u \right\|_{L^{p_{2}+1}}^{p_{2}+1}, 
\end{equation}
where $C_{1}$ and $C_{2}$ are universal positive constants. Moreover, using the H\"older inequality, \eqref{11/07/10/9:23} and \eqref{11/07/10/10:04}, we have
\begin{equation}\label{11/07/10/9:39}
\begin{split}
m &\ge 
\frac{1}{d}\left\|\nabla u \right\|_{L^{2}}^{2}
-C_{1}
\left\| u \right\|_{L^{2}}^{\frac{(d+2)-(d-2)p_{1}}{2}}
\left\| u \right\|_{L^{2^{*}}}^{\frac{d(p_{1}-1)}{2}}
\\[6pt]
&\hspace{78pt}
-C_{2}
\left\| u \right\|_{L^{2}}^{\frac{(d+2)-(d-2)p_{2}}{2}}
\left\| u \right\|_{L^{2^{*}}}^{\frac{d(p_{2}-1)}{2}}
\\[6pt]
&\ge 
\frac{1}{d}\left\|\nabla \psi(t) \right\|_{L^{2}}^{2}
-C_{1}
\left( \frac{2m_{\omega}}{\omega}\right)^{\frac{(d+2)-(d-2)p_{1}}{4}}
\left( 
\left\| \nabla \psi(t)\right\|_{L^{2}}^{2}
\right)^{\frac{(d-2)(p_{1}-1)}{4}}
\\[6pt]
&\hspace{78pt}
-C_{2}
\left( \frac{2m_{\omega}}{\omega}
\right)^{\frac{(d+2)-(d-2)p_{2}}{2}}
\left( \left\|\nabla \psi(t)\right\|_{L^{2}}^{2}\right)^{\frac{(d-2)(p_{2}-1)}{4}}.
\end{split}
\end{equation}
Since $\frac{(d-2)(p_{1}-1)}{4}\le \frac{(d-2)(p_{2}-1)}{4}<1$, \eqref{11/07/10/9:39} together with the Young inequality yields the second claim in \eqref{11/07/30/15:48}.
\end{proof}

For a function $u \in H^{1}(\mathbb{R}^{d})$ with $\mathcal{K}(u)\ge 0$, we can compare $\mathcal{H}(u)$ and $\left\| \nabla u \right\|_{L^{2}}^{2}$: 
 
\begin{lemma}\label{11/08/13/12:54}
Assume $d\ge 3$ and conditions \eqref{11/07/18/9:45}--\eqref{11/05/02/8:40}. Let $u \in H^{1}(\mathbb{R}^{d})$ be a function with $\mathcal{K}(u)\ge 0$. Then, we have  
\begin{equation}\label{11/08/13/12:55}
\left\| \nabla u \right\|_{L^{2}}^{2}
\lesssim \mathcal{H}(u).
\end{equation}
\end{lemma}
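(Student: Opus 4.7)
The plan is to exhibit a constant $\alpha \in (0,\tfrac{1}{2})$ such that the combination $\mathcal{H}(u) - \alpha\mathcal{K}(u)$ is bounded below by a positive multiple of $\|\nabla u\|_{L^{2}}^{2}$ pointwise in $u$. Once this is established, the hypothesis $\mathcal{K}(u)\ge 0$ together with $\alpha\ge 0$ gives $\mathcal{H}(u)\ge \mathcal{H}(u)-\alpha\mathcal{K}(u)\gtrsim \|\nabla u\|_{L^{2}}^{2}$, as desired.

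The computation is direct: using the expressions \eqref{11/07/19/11:34} and \eqref{11/04/30/23:58}, one verifies
\begin{equation*}
\mathcal{H}(u) - \alpha\mathcal{K}(u)
= \Bigl(\tfrac{1}{2}-\alpha\Bigr)\left\|\nabla u\right\|_{L^{2}}^{2}
+ \Bigl(\alpha-\tfrac{1}{2^{*}}\Bigr)\left\| u\right\|_{L^{2^{*}}}^{2^{*}}
+ \int_{\mathbb{R}^{d}}\Bigl[\tfrac{\alpha d}{4}(DF-2F) - \tfrac{1}{2}F\Bigr](u).
\end{equation*}
I would then choose $\alpha$ to make every term on the right nonnegative and the first strictly positive. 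The three conditions are (i) $\alpha<\tfrac{1}{2}$, (ii) $\alpha\ge \tfrac{1}{2^{*}}=\tfrac{d-2}{2d}$, and (iii) the pointwise bound $\tfrac{\alpha d}{4}(DF-2F)\ge \tfrac{1}{2}F$. Since \eqref{11/04/30/22:39} combined with \eqref{11/07/18/9:29} gives $DF-2F\ge(2_{*}+\varepsilon_{0}-2)F=(\tfrac{4}{d}+\varepsilon_{0})F\ge 0$, condition (iii) reduces to $\alpha\ge \tfrac{2}{4+\varepsilon_{0}d}$. Since both $\tfrac{1}{2^{*}}$ and $\tfrac{2}{4+\varepsilon_{0}d}$ are strictly less than $\tfrac{1}{2}$, one may take
\begin{equation*}
\alpha := \tfrac{1}{2}\Bigl(\tfrac{1}{2} + \max\bigl\{\tfrac{1}{2^{*}},\,\tfrac{2}{4+\varepsilon_{0}d}\bigr\}\Bigr)\in\bigl(0,\tfrac{1}{2}\bigr),
\end{equation*}
and all three conditions are met.

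With this $\alpha$ fixed, the displayed identity and the sign analysis yield $\mathcal{H}(u)-\alpha\mathcal{K}(u)\ge(\tfrac{1}{2}-\alpha)\|\nabla u\|_{L^{2}}^{2}$, and using $\mathcal{K}(u)\ge 0$ one concludes $\|\nabla u\|_{L^{2}}^{2}\le (\tfrac{1}{2}-\alpha)^{-1}\mathcal{H}(u)$, which is \eqref{11/08/13/12:55}. There is no serious obstacle here: the entire argument is an algebraic identity plus pointwise sign-chasing driven by the convexity/monotonicity assumption \eqref{11/04/30/22:39}. The only thing to verify with care is that the interval of admissible $\alpha$ is nonempty, which is precisely where $\varepsilon_{0}>0$ is used in an essential way.
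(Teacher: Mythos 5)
Your proof is correct and is essentially the paper's own argument in a slightly repackaged form: the paper bounds $F$ by $\frac{d}{4+d\varepsilon_{0}}(DF-2F)$ via \eqref{11/07/18/9:29} and \eqref{11/04/30/22:39} and then absorbs $\frac{d}{4}\int(DF-2F)+\|u\|_{L^{2^{*}}}^{2^{*}}\le\|\nabla u\|_{L^{2}}^{2}$ (from $\mathcal{K}(u)\ge 0$) with the coefficient $C_{0}'=\max\{\frac{4}{4+d\varepsilon_{0}},\frac{d-2}{d}\}<1$, which is exactly your choice of multiplier $\alpha$ (the paper in effect uses $\alpha=C_{0}'/2$, the left endpoint of your admissible interval). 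No gap; the only point to keep explicit, which you do, is that $F\ge 0$ is needed to pass from $DF-2F\ge(\tfrac{4}{d}+\varepsilon_{0})F$ to the pointwise condition (iii).
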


\begin{proof}[Proof of Lemma \ref{11/08/13/12:54}]
Using the assumption \eqref{11/04/30/22:39}, we have 
\begin{equation}\label{11/08/13/12:59}
\int_{\mathbb{R}^{d}}(DF-2F)(u)
\ge 
\left(\frac{4}{d}+\varepsilon_{0} \right) \int_{\mathbb{R}^{d}}F(u)\ge 0. 
\end{equation}
Put $C_{0}':=\max\{ \frac{4}{4+d\varepsilon_{0}}, \frac{d-2}{d}\}$. Clearly, $C_{0}'<1$. Then, it follows from \eqref{11/08/13/12:59} and $\mathcal{K}(u)\ge 0$  that 
\begin{equation}\label{11/08/13/13:03}
\begin{split}
\mathcal{H}(u)
&=
\frac{1}{2}\left\| \nabla u \right\|_{L^{2}}^{2}
-\frac{1}{2}\int_{\mathbb{R}^{d}}F(u) 
-\frac{1}{2^{*}}\left\| u \right\|_{L^{2^{*}}}^{2^{*}}
\\[6pt]
&\ge 
\frac{1}{2}\left\| \nabla u \right\|_{L^{2}}^{2}
- 
\frac{1}{2} 
\left( \frac{d}{4+d \varepsilon_{0}}
\right)
\int_{\mathbb{R}^{d}}(DF-2F)(\psi(t))
-\frac{1}{2^{*}}\left\| u \right\|_{L^{2^{*}}}^{2^{*}}
\\[6pt]
&\ge 
\frac{1}{2}\left( 1 -C_{0}'\right) 
 \left\| \nabla u \right\|_{L^{2}}^{2},
\end{split}
\end{equation} 
which gives the desired result. 
\end{proof}

The following lemma tells us that $A_{\omega,+}$ is invariant under the flow defined by \eqref{11/06/12/9:08}. Strongly, $\mathcal{K}$ of a solution in $A_{\omega,+}$ is positive uniformly in time:
\begin{lemma}\label{11/06/27/22:29}
Let $\psi$ be a solution to \eqref{11/06/12/9:08} starting from $A_{\omega,+}$, and let $I_{\max}$ be the maximal interval where $\psi$ exists.  Then, we have 
\begin{align}\label{11/06/27/22:31}
&\psi(t) \in A_{\omega,+} 
\qquad 
\mbox{for any $t \in I_{\max}$}, 
\\[6pt]
\label{11/06/27/22:34}
&\inf_{t \in I_{\max}} \mathcal{K}(\psi(t))>0.
\end{align}
\end{lemma}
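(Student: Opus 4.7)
The argument splits into two parts: the flow-invariance statement \eqref{11/06/27/22:31}, and the quantitative positivity \eqref{11/06/27/22:34}. I set $\delta_{0}:=m_{\omega}-\mathcal{S}_{\omega}(\psi(0))>0$, which is meaningful because $\psi(0)\in A_{\omega,+}$.

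For invariance, the action conservation \eqref{10/09/18/15:50} gives $\mathcal{S}_{\omega}(\psi(t))=m_{\omega}-\delta_{0}<m_{\omega}$ throughout $I_{\max}$, so only the sign condition $\mathcal{K}(\psi(t))>0$ needs to be propagated. Mass conservation together with $\psi(0)\neq 0$ (forced by $\mathcal{K}(\psi(0))>0$) ensures $\psi(t)\neq 0$ on $I_{\max}$. Since $\mathcal{K}$ is continuous on $H^{1}(\mathbb{R}^{d})$ and $\psi\in C(I_{\max},H^{1}(\mathbb{R}^{d}))$, the map $t\mapsto \mathcal{K}(\psi(t))$ is continuous; any sign change would, by the intermediate value theorem, produce $t_{0}\in I_{\max}$ with $\mathcal{K}(\psi(t_{0}))=0$ and $\psi(t_{0})\neq 0$. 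Such a $\psi(t_{0})$ is admissible for the variational problem \eqref{11/04/30/23:55}, giving $\mathcal{S}_{\omega}(\psi(t_{0}))\geq m_{\omega}$, in contradiction with $\mathcal{S}_{\omega}(\psi(t_{0}))=m_{\omega}-\delta_{0}$.

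For the uniform lower bound, I fix $t\in I_{\max}$ and invoke Lemma \ref{11/07/20/18:01}(i) to get the unique $\lambda_{t}>1$ with $\mathcal{K}(T_{\lambda_{t}}\psi(t))=0$, so that $T_{\lambda_{t}}\psi(t)$ is admissible for $m_{\omega}$ and $\mathcal{S}_{\omega}(T_{\lambda_{t}}\psi(t))\geq m_{\omega}$. Combining \eqref{11/06/28/9:59} with the monotonicity $\tfrac{d}{d\lambda}[\lambda^{-2}\mathcal{K}(T_{\lambda}\psi(t))]\leq 0$ (a consequence of \eqref{11/09/05/10:54}, as used in the proof of Lemma \ref{11/07/20/18:01}(i)) and integrating from $1$ to $\lambda_{t}$ gives the key inequality
\begin{equation*}
\delta_{0}\leq \mathcal{S}_{\omega}(T_{\lambda_{t}}\psi(t))-\mathcal{S}_{\omega}(\psi(t))=\int_{1}^{\lambda_{t}}\frac{\mathcal{K}(T_{\lambda}\psi(t))}{\lambda}\,d\lambda \leq \frac{\lambda_{t}^{2}-1}{2}\,\mathcal{K}(\psi(t)).
\end{equation*}
Hence \eqref{11/06/27/22:34} will follow once $\sup_{t\in I_{\max}}\lambda_{t}<\infty$ is established.

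The main obstacle is this uniform upper bound on $\lambda_{t}$, and I plan to handle it by contradiction. If $\inf_{t}\mathcal{K}(\psi(t))=0$, choose $t_{n}$ with $\mathcal{K}(\psi(t_{n}))\to 0$; the displayed inequality forces $\lambda_{t_{n}}\to\infty$. The identity $\mathcal{K}(T_{\lambda_{t}}\psi(t))=0$ combined with $F\geq 0$ (assumption \eqref{11/07/18/9:29}) gives $\lambda_{t}^{2^{*}-2}\|\psi(t)\|_{L^{2^{*}}}^{2^{*}}\leq \|\nabla \psi(t)\|_{L^{2}}^{2}$, and since Lemma \ref{11/06/27/22:43} supplies a uniform $H^{1}$-bound on $\psi(t_{n})$, this forces $\|\psi(t_{n})\|_{L^{2^{*}}}\to 0$. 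Gagliardo--Nirenberg interpolation (valid because $2<p_{j}+1<2^{*}$ for $j=1,2$), used together with the growth bounds \eqref{11/04/30/22:54}--\eqref{11/05/02/8:40} on $F$ and $DF$, then propagates the decay to $\int F(\psi(t_{n}))\to 0$ and $\int DF(\psi(t_{n}))\to 0$. Substituting into \eqref{11/04/30/23:58} and using $\mathcal{K}(\psi(t_{n}))\to 0$ once more yields $\|\nabla\psi(t_{n})\|_{L^{2}}^{2}\to 0$. However, the elementary bound $\mathcal{H}(u)\leq \tfrac{1}{2}\|\nabla u\|_{L^{2}}^{2}$ (valid since $F\geq 0$ and the $L^{2^{*}}$-term is subtracted in $\mathcal{H}$), together with Hamiltonian conservation and the strict positivity $\mathcal{H}(\psi(0))>0$ from Lemma \ref{11/06/28/10:12}, yields $\|\nabla\psi(t_{n})\|_{L^{2}}^{2}\geq 2\mathcal{H}(\psi(0))>0$, a contradiction.
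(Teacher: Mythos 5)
Your proof is correct, but for the quantitative bound \eqref{11/06/27/22:34} it follows a genuinely different route than the paper. The paper introduces the modified density $\widetilde{F}(u)=F(u)+\frac{d-2}{d}|u|^{2^{*}}$, computes $\frac{d^{2}}{d\lambda^{2}}\mathcal{S}_{\omega}(T_{\lambda}\psi(t))$, and runs a dichotomy: if a certain pointwise-in-time inequality holds, then $\mathcal{K}(\psi(t))\gtrsim \|\nabla\psi(t)\|_{L^{2}}^{2}\gtrsim \mathcal{H}(\psi(0))$ directly; if it fails at some $t_{0}$, it first proves $\|\psi(t_{0})\|_{L^{2^{*}}}\gtrsim 1$ (Sobolev, mass conservation, growth bounds), deduces $\lambda(t_{0})\lesssim 1$, and then uses strict concavity of $\lambda\mapsto\mathcal{S}_{\omega}(T_{\lambda}\psi(t_{0}))$ on $[1,\lambda(t_{0})]$ to get $\mathcal{K}(\psi(t_{0}))\gtrsim m_{\omega}-\mathcal{S}_{\omega}(\psi(t_{0}))$. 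You avoid both the auxiliary $\widetilde F$ and the case analysis: your key inequality $m_{\omega}-\mathcal{S}_{\omega}(\psi(0))\le\frac{\lambda_{t}^{2}-1}{2}\mathcal{K}(\psi(t))$ follows just from \eqref{11/06/28/9:59}, the admissibility of $T_{\lambda_{t}}\psi(t)$ for $m_{\omega}$, and the monotonicity of $\lambda^{-2}\mathcal{K}(T_{\lambda}u)$ already recorded in \eqref{11/09/05/10:54}; the remaining danger ($\lambda_{t_{n}}\to\infty$ along a sequence with $\mathcal{K}(\psi(t_{n}))\to 0$) is excluded by a clean contradiction with Hamiltonian conservation, since $\lambda_{t_{n}}\to\infty$ plus the uniform $H^{1}$ bound of Lemma \ref{11/06/27/22:43} forces $\|\psi(t_{n})\|_{L^{2^{*}}}\to 0$, hence (by interpolation and the growth bounds) $\|\nabla\psi(t_{n})\|_{L^{2}}\to 0$, contradicting $\mathcal{H}(\psi(t_{n}))=\mathcal{H}(\psi(0))>0$ from Lemma \ref{11/06/28/10:12}. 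Your argument is somewhat more elementary and shorter; the paper's version yields at each time an explicit lower bound (in terms of $\mathcal{H}(\psi(0))$ or $m_{\omega}-\mathcal{S}_{\omega}(\psi(0))$ with controlled constants), whereas your sequential contradiction gives positivity of the infimum without an explicit constant, which is all the lemma requires. Two small attribution slips, neither a gap: discarding the term $-\frac{d}{4}\int(DF-2F)(T_{\lambda_{t}}\psi(t))$ in $\mathcal{K}(T_{\lambda_{t}}\psi(t))=0$ uses $DF-2F\ge 0$, which needs \eqref{11/04/30/22:39} together with \eqref{11/07/18/9:29} (not \eqref{11/07/18/9:29} alone); and the decay of $\int F(\psi(t_{n}))$ and $\int DF(\psi(t_{n}))$ uses the pointwise bounds $|F(u)|+|DF(u)|\lesssim |u|^{p_{1}+1}+|u|^{p_{2}+1}$, which come from \eqref{11/07/18/9:45} combined with \eqref{11/04/30/22:54}--\eqref{11/05/02/8:40}.
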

\begin{proof}[Proof of Lemma \ref{11/06/27/22:29}]
The claim \eqref{11/06/27/22:31} easily follows from the action conservation law and the definition of $m_{\omega}$. 
\par 
We shall prove \eqref{11/06/27/22:34}. Put 
\begin{equation}\label{11/09/04/14:20}
\widetilde{F}(u):=F(u)+\frac{d-2}{d}|u|^{2^{*}}. 
\end{equation}
Then, we have 
\begin{align}
\label{11/09/04/14:22}
&(D-2_{*}-\widetilde{\varepsilon}_{0})\widetilde{F} \ge 0, 
\\[6pt]
\label{11/09/04/14:23}
&(D-2)(D-2_{*}-\widetilde{\varepsilon}_{0})\widetilde{F} \ge 0,
\end{align}
where $\widetilde{\varepsilon}_{0}:=\min\{\varepsilon_{0}, \frac{2}{d-2}\}$.
Let $\psi$ be a solution to \eqref{11/06/12/9:08} starting from $A_{\omega,+}$. It is easy to verify that  
\begin{equation}\label{11/09/02/16:04}
\begin{split}
&\frac{d^{2}}{d\lambda^{2}}\mathcal{S}_{\omega}(T_{\lambda}\psi(t))
\\[6pt]
&=
-\frac{1}{\lambda^{2}}\mathcal{K}(T_{\lambda}\psi(t))
+\frac{2}{\lambda^{2}}\left\|\nabla T_{\lambda}\psi(t) \right\|_{L^{2}}^{2}
-\frac{d}{2\lambda^{2}}
\int_{\mathbb{R}^{d}}\frac{d}{4}(D^{2}\widetilde{F}-4D\widetilde{F}+4\widetilde{F})(T_{\lambda}u).
\end{split}
\end{equation}
Combining \eqref{11/09/02/16:04} with \eqref{11/09/04/14:23}, we obtain 
\begin{equation}\label{11/09/04/14:12}
\begin{split}
&\frac{d^{2}}{d\lambda^{2}}\mathcal{S}_{\omega}(T_{\lambda}\psi(t))
\\[6pt]
&\le 
-\frac{1}{\lambda^{2}}\mathcal{K}(T_{\lambda}\psi(t))
+\frac{2}{\lambda^{2}}\left\|\nabla T_{\lambda}\psi(t) \right\|_{L^{2}}^{2}
-\frac{d}{2\lambda^{2}}
\int_{\mathbb{R}^{d}}\left(1+\frac{d\widetilde{\varepsilon}_{0}}{4}\right)(D\widetilde{F}-2\widetilde{F})(T_{\lambda}\psi(t))
\\[6pt]
&=-\frac{1}{\lambda^{2}}\mathcal{K}(T_{\lambda}\psi(t))
+\frac{2}{\lambda^{2}}
\left\{ \mathcal{K}(T_{\lambda}\psi(t))
-\frac{d}{4}
\int_{\mathbb{R}^{d}}\frac{d\widetilde{\varepsilon}_{0}}{4}(D\widetilde{F}-2\widetilde{F})(T_{\lambda}\psi(t))
\right\}
\\[6pt]
&\hspace{240pt} 
\mbox{for any $\lambda>0$ and $t \in I_{\max}$}.
\end{split}
\end{equation}
Suppose here that  
\begin{equation}\label{11/09/04/14:43}
\mathcal{K}(\psi(t))
-\frac{d}{4}
\int_{\mathbb{R}^{d}}\frac{d\widetilde{\varepsilon}_{0}}{4}(D\widetilde{F}-2\widetilde{F})(\psi(t))\ge 0
\qquad 
\mbox{for any $t \in I_{\max}$}.
\end{equation}
Then,  we have     
\begin{equation}\label{11/09/04/14:49}
\left(1+\frac{d\widetilde{\varepsilon}_{0}}{4}\right)\mathcal{K}(\psi(t))
\ge \frac{d\widetilde{\varepsilon}_{0}}{4}\left\| \nabla \psi(t)\right\|_{L^{2}}^{2}
\gtrsim 
\mathcal{H}(\psi(t))
\gtrsim 1
\qquad 
\mbox{for any $t \in I_{\max}$},
\end{equation}
where we have used the Hamiltonian conservation law and Lemma \ref{11/06/28/10:12} to derive the final inequality. Thus, \eqref{11/06/27/22:34} holds in this case. 
\par 
On the other hand, suppose that there exists $t_{0}\in I_{\max}$ such that 
\begin{equation}\label{11/09/04/14:51}
\mathcal{K}(\psi(t_{0}))
-\frac{d}{4}
\int_{\mathbb{R}^{d}}\frac{d\widetilde{\varepsilon}_{0}}{4}(D\widetilde{F}-2\widetilde{F})(\psi(t_{0}))
< 0. 
\end{equation}
We see from the Sobolev embedding, \eqref{11/09/04/14:51}, \eqref{11/04/30/22:54}, \eqref{11/05/02/8:40} and the H\"older inequality  that  
\begin{equation}\label{11/09/04/22:23}
\left\| \psi(t_{0})\right\|_{L^{2^{*}}}^{2}
\lesssim 
\left\| \nabla \psi(t_{0})\right\|_{L^{2}}^{2}
\lesssim
\sum_{j=1}^{2} 
\left\| \psi(t_{0}) \right\|_{L^{2}}^{p_{j}+1-\frac{d(p_{j}-1)}{2}}
\left\| \psi(t_{0}) \right\|_{L^{2^{*}}}^{\frac{d(p_{j}-1)}{2}}
+
\left\| \psi(t_{0})\right\|_{L^{2^{*}}}^{2^{*}},
\end{equation}
which together with the mass conservation law gives us that 
\begin{equation}\label{11/09/04/22:26}
\left\|\psi(t_{0}) \right\|_{L^{2^{*}}}\gtrsim 1. 
\end{equation}
Let $\lambda(t_{0})$ be a number such that 
\begin{equation}\label{11/09/04/21:58}
\mathcal{K}(T_{\lambda(t_{0})}\psi(t_{0}))=0 .
\end{equation}
Then, Lemma \ref{11/06/27/22:43}, \eqref{11/09/04/21:58} and \eqref{11/09/04/22:26} show that    
\begin{equation}\label{11/09/04/21:59}
\lambda(t_{0})^{2}
\gtrsim 
\lambda(t_{0})^{2}\left\|\nabla \psi(t_{0}) \right\|_{L^{2}}^{2}
\ge 
\lambda(t_{0})^{2^{*}}
\left\| \psi(t_{0}) \right\|_{L^{2^{*}}}^{2^{*}}
\gtrsim 
\lambda(t_{0})^{2^{*}}. 
\end{equation}
Hence, we have 
\begin{equation}\label{11/09/04/22:02}
\lambda(t_{0})\lesssim 1. 
\end{equation}
Now, we see from \eqref{11/09/04/14:51} and \eqref{11/09/05/10:54} that\begin{equation}\label{11/09/04/22:28}
\begin{split}
&\frac{1}{\lambda^{2}}
\left\{ \mathcal{K}(T_{\lambda}\psi(t_{0}))
-\frac{d}{4}
\int_{\mathbb{R}^{d}}\frac{d\widetilde{\varepsilon}_{0}}{4}(D\widetilde{F}-2\widetilde{F})(T_{\lambda}\psi(t_{0}))
\right\}
\\[6pt]
&=
\left\| \nabla \psi(t_{0})) \right\|_{L^{2}}^{2}
-\frac{d}{4}\left( 1+ \frac{d\widetilde{\varepsilon}_{0}}{4}\right)
\frac{1}{\lambda^{2}}
\int_{\mathbb{R}^{d}}(D\widetilde{F}-2\widetilde{F})(T_{\lambda}\psi(t_{0}))
< 0
\quad 
\mbox{for any $\lambda \ge1$}. 
\end{split}
\end{equation}
Hence, \eqref{11/09/04/14:12} together with \eqref{11/05/01/22:06} in Lemma \ref{11/07/20/18:01} and \eqref{11/09/04/22:28} shows   
\begin{equation}\label{11/09/04/15:22}
\frac{d^{2}}{d\lambda^{2}}\mathcal{S}_{\omega}(T_{\lambda}\psi(t_{0}))
< 
-\frac{1}{\lambda^{2}}\mathcal{K}(T_{\lambda}\psi(t_{0}))
< 0
\quad 
\mbox{for any $1\le \lambda \le \lambda(t_{0}) $}. 
\end{equation}
Combining \eqref{11/09/04/15:22} with \eqref{11/09/04/22:02}, we obtain that 
\begin{equation}\label{11/09/04/15:34}
\begin{split}
\mathcal{K}(\psi(t_{0}))  
&\gtrsim 
(\lambda(\psi(t_{0}))-1)\mathcal{K}(\psi(t_{0}))
=
 (\lambda(\psi(t_{0}))-1)\frac{d}{d\lambda}\mathcal{S}_{\omega}(T_{\lambda}\psi(t_{0}))\bigg|_{\lambda=1}
\\[6pt]
&\ge 
\mathcal{S}_{\omega}(T_{\lambda(t_{0})}\psi(t_{0}))
- 
\mathcal{S}_{\omega}(\psi(t_{0}))
\ge m_{\omega}-\mathcal{S}_{\omega}(\psi(t_{0}))\gtrsim 1.
\end{split}
\end{equation}
This completes the proof. 
\end{proof}

%%%%%%
\subsection{Extraction of critical element}
\label{11/06/10/17:06}
%%%%%%
In view of Theorem \ref{10/10/04/21:44} (v), it suffices for Theorem \ref{11/06/10/17:09} to show that any solution $\psi$ to \eqref{11/06/12/9:08} starting from $A_{\omega,+}$ satisfies $\|\psi \|_{W_{p_{1}}(I_{\max})\cap W(I_{\max})}<\infty$, where $I_{\max}$ denotes the maximal interval where $\psi$ exists. To this end,  for $m>0$, we put   
\begin{equation}
\label{11/06/10/17:13}
\tau_{\omega}(m):=\sup
\left\{ \left\| \psi \right\|_{W_{p_{1}}(I_{\max})\cap W(I_{\max})}
\colon 
\begin{array}{l}
\mbox{$\psi$ is a solution to (\ref{11/06/12/9:08}) such that} 
\\[6pt]
\mbox{$\psi \in A_{\omega,+}$ and $\mathcal{S}_{\omega}(\psi)\le m$}
\end{array}
\right\}  
\end{equation}
and define 
\begin{equation}
\label{11/06/10/17:26}
m_{\omega}^{*}:=\sup\left\{ m>0 \colon \tau_{\omega}(m)<\infty 
\right\}.
\end{equation}
These quantities were used in \cite{IMN, Killip-Visan}. It follows from the existence of a ground state for \eqref{11/05/01/17:30} that $m_{\omega}^{*}\le m_{\omega}$. 
\par 
Our aim is to show that $m_{\omega}^{*}= m_{\omega}$. Here, let $\psi$ be a solution to \eqref{11/06/12/9:08} such that $\psi \in A_{\omega,+}$ and $\mathcal{S}_{\omega}(\psi)\le m$. If $m$ is sufficiently small, then Lemma \ref{11/06/27/22:43} shows 
$\left\|\psi(t) \right\|_{H^{1}}\ll 1$. Hence, we see from Theorem \ref{10/10/04/21:44} (i) that $m_{\omega}^{*}>0$. 
\par 
Now, we suppose the contrary that $m_{\omega}^{*}<m_{\omega}$. Then, we shall show the existence of the so-called critical element.
To this end, we employ the following  result for the equation \eqref{11/07/17/16:01} (see Corollary 1.9 in \cite{Killip-Visan}. See also \cite{Kenig-Merle}), which causes the restriction $d\ge 5$ in Theorem \ref{11/06/10/17:09}:
\begin{theorem}\label{11/07/24/20:56}
Assume $d\ge 5$. Put    
\begin{equation}\label{11/07/31/18:17}
A_{0}:=\left\{
u \in \dot{H}^{1}(\mathbb{R}^{d})
\biggm| 
\mathcal{H}_{0}(u)<\frac{1}{d}\sigma^{\frac{d}{2}},
\ 
\left\| \nabla u \right\|_{L^{2}}^{2}< \sigma^{\frac{d}{2}}
\right\}.
\end{equation}
Then, any solution $\psi$ to \eqref{11/07/17/16:01} starting from $A_{0}$ exists globally in time and satisfies 
\begin{equation}
\psi(t) \in A_{0} \quad 
\mbox{for any $t \in \mathbb{R}$},
\qquad 
\label{11/07/31/23:00}
\sup_{t\in \mathbb{R}}\left\| \nabla \psi(t) \right\|_{L^{2}}^{2}\le \sigma^{\frac{d}{2}}
\end{equation}
and 
\begin{equation}\label{11/07/24/21:03}
\left\| \psi \right\|_{W(\mathbb{R})}<\infty. 
\end{equation}
\end{theorem}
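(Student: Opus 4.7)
My plan is to follow the Kenig--Merle concentration-compactness / rigidity scheme. The proof splits into (a) the energy-trapping that forces invariance of $A_{0}$ and gives the uniform bound \eqref{11/07/31/23:00}, and (b) the concentration-compactness plus rigidity argument that upgrades this to \eqref{11/07/24/21:03}.

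For (a), set $y(t):=\left\|\nabla \psi(t) \right\|_{L^{2}}^{2}$ and use the sharp Sobolev inequality $\left\| u \right\|_{L^{2^{*}}}^{2^{*}}\le \sigma^{-2^{*}/2}\left\|\nabla u \right\|_{L^{2}}^{2^{*}}$ coming from \eqref{11/04/09/17:01} together with conservation of $\mathcal{H}_{0}$. This yields
\begin{equation*}
G(y(t)):=\frac{1}{2}y(t)-\frac{1}{2^{*}}\sigma^{-2^{*}/2}y(t)^{2^{*}/2}\le \mathcal{H}_{0}(\psi_{0})<\frac{1}{d}\sigma^{\frac{d}{2}}.
\end{equation*}
The function $G$ is strictly increasing on $[0,\sigma^{d/2}]$, attains its maximum $\frac{1}{d}\sigma^{d/2}$ precisely at $y=\sigma^{d/2}$, and then decreases. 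Since $y(0)<\sigma^{d/2}$ by hypothesis and $y(\cdot)$ is continuous on $I_{\max}$, a standard connectedness argument forces $y(t)<\sigma^{d/2}$ throughout $I_{\max}$; hence $\psi(t)\in A_{0}$ for all $t\in I_{\max}$, and in fact one obtains the quantitative gap $y(t)\le \sigma^{d/2}-\eta$ for some $\eta>0$. Combining this gap with the $\dot{H}^{1}$ small-data theory (Proposition \ref{11/07/13/16:27}) gives global existence $I_{\max}=\mathbb{R}$ in the usual continuity-in-time fashion.

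For (b), I would argue by contradiction. Define
\begin{equation*}
E^{*}:=\sup\Bigl\{\,E>0 \ : \text{ every } \psi_{0}\in A_{0} \text{ with }\mathcal{H}_{0}(\psi_{0})<E \text{ produces a solution with } \left\| \psi \right\|_{W(\mathbb{R})}<\infty\,\Bigr\}.
\end{equation*}
Small-data scattering (Proposition \ref{11/07/13/16:27}) gives $E^{*}>0$, and the goal is $E^{*}=\frac{1}{d}\sigma^{d/2}$. Assuming for contradiction $E^{*}<\frac{1}{d}\sigma^{d/2}$, take a sequence of non-scattering solutions $\psi_{n}$ with $\mathcal{H}_{0}(\psi_{n}(0))\downarrow E^{*}$ and apply Keraani's linear $\dot{H}^{1}$ profile decomposition to $\{\psi_{n}(0)\}$. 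Combining the nonlinear profile decomposition with the energy-critical long-time perturbation theory (the analogue of Proposition \ref{11/05/14/12:26} for \eqref{11/07/17/16:01}, which is why we require $d\ge 5$), one shows that all but one profile scatter at sub-critical energy: if there were more than one nontrivial profile each would have energy strictly below $E^{*}$, hence each would scatter, and perturbation theory would yield $\left\| \psi_{n} \right\|_{W(\mathbb{R})}<\infty$ for large $n$, a contradiction. This produces a \emph{critical element} $\psi_{c}$, a non-scattering solution at the threshold energy $E^{*}$ whose orbit is precompact in $\dot{H}^{1}(\mathbb{R}^{d})$ modulo the symmetries $u\mapsto \lambda^{(d-2)/2}u(\lambda(\cdot-x_{0}))$.

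Finally, I would rule out $\psi_{c}$ via the Kenig--Merle rigidity theorem. Because the orbit of $\psi_{c}$ is precompact modulo scaling and translation, and because $E^{*}<\frac{1}{d}\sigma^{d/2}$ keeps $\psi_{c}$ strictly below the ground state $W$, a localized (truncated) virial/Morawetz identity applied to $\psi_{c}$ has controlled boundary errors and a strictly positive interior contribution proportional to $\int (\left\|\nabla \psi_{c} \right\|^{2}-\left\| \psi_{c} \right\|^{2^{*}}_{L^{2^{*}}})$; integration in time forces $\psi_{c}\equiv 0$, contradicting $\left\| \psi_{c} \right\|_{W(\mathbb{R})}=\infty$. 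Hence $E^{*}=\frac{1}{d}\sigma^{d/2}$, which gives \eqref{11/07/24/21:03}. The main obstacle is this last rigidity step: the choice of the correct localization scale, the control of the boundary errors using the compactness modulus, and the extraction of a sign-definite lower bound from the virial quantity all require the dimensional restriction $d\ge 5$ (which is where the pure power $|u|^{4/(d-2)}$ enjoys enough regularity to close the exotic Strichartz bounds underlying the profile decomposition and perturbation theory).
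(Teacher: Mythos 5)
The paper does not prove this theorem: it is imported as a known result (Corollary 1.9 of \cite{Killip-Visan}; see also \cite{Kenig-Merle}), and indeed the restriction $d\ge 5$ in Theorem \ref{11/06/10/17:09} is inherited precisely from the availability of that citation. So your proposal is effectively an outline of the Killip--Visan proof itself, and judged as such it has genuine gaps. In part (a), the variational trapping argument (sharp Sobolev plus conservation of $\mathcal{H}_{0}$, monotonicity of $G$ on $[0,\sigma^{d/2}]$, continuity of $t\mapsto\left\|\nabla\psi(t)\right\|_{L^{2}}^{2}$) is correct and gives the invariance of $A_{0}$ and the bound \eqref{11/07/31/23:00} with a quantitative gap; but the conclusion that this ``gives global existence in the usual continuity-in-time fashion'' is false for the energy-critical equation: the local existence time is not bounded below in terms of the $\dot H^{1}$ norm of the data (smallness of the free evolution in a critical space-time norm is what is needed, cf.\ Proposition \ref{11/07/13/16:27}), so a uniform kinetic-energy bound does not by itself exclude $T_{\max}<\infty$. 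Global existence only comes out of the space-time bound \eqref{11/07/24/21:03} together with the blow-up criterion, i.e.\ it is a consequence of part (b), not of part (a).

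In part (b), the concentration-compactness step is the standard scheme, but the rigidity step as you describe it (precompactness modulo scaling and translation plus a localized virial with controlled boundary errors) is the Kenig--Merle argument for \emph{radial} data; the theorem needed here concerns arbitrary data in $d\ge 5$, and there the localized virial alone does not close. One must control the moving spatial center $x(t)$ and the frequency scale $N(t)$ of the almost periodic (critical) solution, dispose of the frequency-cascade scenario, and establish additional decay/negative-regularity properties of the critical element before any virial-type argument yields a contradiction; this is the bulk of \cite{Killip-Visan} and is absent from your sketch. Finally, your stated reason for the restriction $d\ge 5$ is backwards: in high dimensions the nonlinearity $|u|^{\frac{4}{d-2}}u$ is \emph{less} regular (its derivative is merely H\"older continuous for $d\ge 7$), which is an obstacle that the exotic Strichartz machinery is designed to overcome, not the source of the restriction; $d\ge 5$ enters because the non-radial below-threshold scattering theory for \eqref{11/07/17/16:01} is only known in those dimensions, the cases $d=3,4$ being open, as the paper itself notes.
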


Under the hypothesis $m_{\omega}^{*}<m_{\omega}$, we can take a sequence $\{\psi_{n}\}$ of solutions to \eqref{11/06/12/9:08} such that 
\begin{align}
\label{11/06/29/16:03}
&\psi_{n}(t)\in A_{\omega,+} 
\qquad 
\mbox{for any $t \in I_{n}$},
\\[6pt] 
\label{11/06/11/6:16}
&\lim_{n\to \infty}\mathcal{S}_{\omega}(\psi_{n})=m_{\omega}^{*},
\\[6pt]
\label{11/06/29/16:04}
&\left\| \psi_{n} \right\|_{W_{p_{1}}(I_{n})\cap W(I_{n})}=\infty,\end{align}
where $I_{n}$ denotes the maximal interval where $\psi_{n}$ exists (by time-translation, we may assume that each $I_{n}$ contains $0$). We also see from Lemma \ref{11/06/27/22:43} that 
\begin{equation}
\label{11/07/03/14:33}
\sup_{n\in \mathbb{N}}\left\| \psi_{n}(t)\right\|_{L^{\infty}(I_{n},H^{1})}^{2}
\lesssim m_{\omega}+\frac{m_{\omega}}{\omega}.
\end{equation}
We apply the profile decomposition (see Theorem 1.6 in \cite{Keraani}) to the sequence $\displaystyle{
\bigm\{|\nabla|^{-1} \langle \nabla \rangle \psi_{n}(0)\bigm\}}$ and obtain some subsequence of $\{\psi_{n}(0)\}$ (still denoted by the same symbol) with the following property: there exists   
\\
{\rm (i)} a family $\{ \widetilde{u}^{1}, \widetilde{u}^{2},\ldots \}$ of functions in $H^{1}(\mathbb{R}^{d})$ (each $\widetilde{u}^{j}$ is called the {\it linear profile}),   
\\
{\rm (ii)} a family $\displaystyle{
\bigm\{ 
\{(x_{n}^{1}, t_{n}^{1},\lambda_{n}^{1}) \},
\{(x_{n}^{2}, t_{n}^{2},\lambda_{n}^{2}) \},
\ldots 
\bigm\}}$ of sequences in 
$\mathbb{R}^{d}\times \mathbb{R} \times (0,1]$ with 
\begin{align}
\label{10/09/21/11:15}
&\lim_{n\to \infty}t_{n}^{j}=t_{\infty}^{j} \in \mathbb{R}
\cup \{\pm \infty \},
\\[6pt]
&
\lim_{n\to \infty}\lambda_{n}^{j}=\lambda_{\infty}^{j} \in \{0,1\}
, 
\qquad 
\lambda_{n}^{j}\equiv 1 \quad \mbox{if $\lambda_{\infty}^{j}=1$},
\\[6pt]
\label{11/07/03/10:21}
&\lim_{n\to \infty}
\left\{ 
 \frac{\lambda_{n}^{j'}}{\lambda_{n}^{j}}
+
\frac{\lambda_{n}^{j}}{\lambda_{n}^{j'}}
+
\frac{ \left| x_{n}^{j}-x_{n}^{j'} \right|}{\lambda_{n}^{j}} 
+ 
\frac{\left| t_{n}^{j}-t_{n}^{j'} \right|}
{\left( \lambda_{n}^{j}\right)^{2}}
\right\}=+\infty
\quad 
\mbox{for any $j'\neq j$},
\end{align}
{\rm (iii)} a family $\{w_{n}^{1},w_{n}^{2},\ldots\}$ of functions in $H^{1}(\mathbb{R}^{d})$ with 
\begin{equation}\label{11/07/06/17:55}
\lim_{j\to \infty}\lim_{n\to \infty}
\left\| |\nabla|^{-1}\langle \nabla \rangle e^{it\Delta} w_{n}^{j}\right\|_{L^{r}(\mathbb{R},L^{q})}
=0
\qquad 
\mbox{for any $\dot{H}^{1}$-admissible pair $(q,r)$},
\end{equation}
such that, defining the transformations $g_{n}^{j}$ and $G_{n}^{j}$ by  
\begin{align}
\label{11/07/25/9:17}
(g_{n}^{j}u)(x)
&:=\frac{1}{(\lambda_{n}^{j})^{\frac{d-2}{2}}}
u \biggm(
\frac{x-x_{n}^{j}}{\lambda_{n}^{j}}
\biggm),
\\[6pt]
\label{11/07/24/16:12}
(G_{n}^{j}v)(x,t)
&:=\frac{1}{(\lambda_{n}^{j})^{\frac{d-2}{2}}}
v \biggm(
\frac{x-x_{n}^{j}}{\lambda_{n}^{j}},
\frac{t-t_{n}^{j}}{(\lambda_{n}^{j})^{2}}
\biggm), 
\end{align}
we have 
\begin{equation}\label{11/06/22/22:27}
\begin{split}
e^{it\Delta}\psi_{n}(0)
&=
\sum_{j=1}^{k}  \langle \nabla \rangle^{-1}|\nabla|
G_{n}^{j}\bigm(e^{it \Delta}   |\nabla|^{-1} \langle \nabla \rangle \widetilde{u}^{j}\bigm)
+
e^{it\Delta}w_{n}^{k}
\\[6pt]
&=
\sum_{j=1}^{k}
G_{n}^{j}\biggm(
\frac{\langle (\lambda_{n}^{j})^{-1}\nabla \rangle^{-1}
\langle \nabla \rangle}{\lambda_{n}^{j}}
e^{it \Delta}\widetilde{u}^{j}\biggm)
+
e^{it\Delta}w_{n}^{k}
\\[6pt]
&=
\sum_{j=1}^{k}\frac{
\langle \nabla \rangle^{-1}\langle \lambda_{n}^{j}\nabla \rangle}{\lambda_{n}^{j}}
e^{i(t-t_{n}^{j})\Delta}g_{n}^{j}\widetilde{u}^{j}
+
e^{it\Delta}w_{n}^{k}
\qquad \mbox{for any $k \in \mathbb{N}$}.
\end{split}
\end{equation} 
Note here that  for any Fourier multiplier $\mu(\nabla)$ and the $L^{2}$-scaling operator $T_{\lambda}$, we have 
\begin{equation}\label{11/07/24/15:19}
\mu(\nabla )T_{\lambda} = T_{\lambda} \mu(\lambda \nabla). 
\end{equation}
Besides, putting  
\begin{equation}
\label{11/07/24/12:34}
\sigma_{n}^{j}
:=
\frac{ 
\langle (\lambda_{n}^{j})^{-1} \nabla \rangle^{-1}\langle \nabla \rangle}
{\lambda_{n}^{j}},
\end{equation}
for any $k \in \mathbb{N}$ and $s=0,1$, we have the expansions:
\begin{align}
\label{10/09/24/21:40} 
&\lim_{n\to \infty}
\left\{ 
\left\| |\nabla|^{s} \psi_{n}(0) \right\|_{L^{2}}^{2}
-
\sum_{j=1}^{k}
\left\||\nabla|^{s} g_{n}^{j}
\biggm(
\sigma_{n}^{j}
e^{-i\frac{t_{n}^{j}}{(\lambda_{n}^{j})^{2}}\Delta}
\widetilde{u}^{j}
\biggm) \right\|_{L^{2}}^{2}
-
\left\| |\nabla|^{s} w_{n}^{k} \right\|_{L^{2}}^{2}
\right\}
=0,
\\[6pt]
\label{11/08/13/16:22}
&
\lim_{n\to \infty}
\left\{ 
\mathcal{H}(\psi_{n}(0))
-
\sum_{j=1}^{k} \mathcal{H}_{\omega}
\biggm(
g_{n}^{j}
\biggm(
\sigma_{n}^{j}
e^{-i\frac{t_{n}^{j}}{(\lambda_{n}^{j})^{2}}\Delta}
\widetilde{u}^{j}
\biggm)
\biggm)
-
\mathcal{H}(w_{n}^{k})
\right\}
=
0,
\\[6pt]
\label{11/06/27/22:18}
&
\lim_{n\to \infty}
\left\{ 
\mathcal{S}_{\omega}(\psi_{n}(0))
-
\sum_{j=1}^{k} \mathcal{S}_{\omega}
\biggm(
g_{n}^{j}
\biggm(
\sigma_{n}^{j}
e^{-i\frac{t_{n}^{j}}{(\lambda_{n}^{j})^{2}}\Delta}
\widetilde{u}^{j}
\biggm)
\biggm)
-
\mathcal{S}_{\omega}(w_{n}^{k})
\right\}
=
0,
\\[6pt]
\label{11/06/27/22:20}
&
\lim_{n\to \infty}
\left\{ 
\mathcal{I}_{\omega}(\psi_{n}(0))
-
\sum_{j=1}^{k} \mathcal{I}_{\omega}
\biggm(
g_{n}^{j}
\biggm(
\sigma_{n}^{j}
e^{-i\frac{t_{n}^{j}}{(\lambda_{n}^{j})^{2}}\Delta}
\widetilde{u}^{j}
\biggm)
\biggm)
-
\mathcal{I}_{\omega}(w_{n}^{k})
\right\}
=
0,
\\[6pt]
\label{11/06/27/22:22}
&
\lim_{n\to \infty}
\left\{ 
\mathcal{K}(\psi_{n}(0))
-
\sum_{j=1}^{k} \mathcal{K}
\biggm(
g_{n}^{j}
\biggm(
\sigma_{n}^{j}
e^{-i\frac{t_{n}^{j}}{(\lambda_{n}^{j})^{2}}\Delta}
\widetilde{u}^{j}
\biggm)
\biggm)
-
\mathcal{K}(w_{n}^{k})
\right\}
=
0.
\end{align}
Note here that \eqref{10/09/24/21:40}  together with \eqref{11/07/03/14:33} yields
\begin{equation}\label{11/07/17/10:57}
\sup_{k\in \mathbb{N}}
\limsup_{n\to \infty}
\left\|\langle \nabla \rangle e^{it\Delta} w_{n}^{k} \right\|_{S(\mathbb{R})}
\lesssim 
\sup_{k\in \mathbb{N}}
\limsup_{n\to \infty}
\left\| w_{n}^{k} \right\|_{H^{1}}
< \infty .
\end{equation}

Next, we define the {\it nonlinear profile}.  
Let $U_{n}^{j}$ be the solution to \eqref{11/06/12/9:08} with $U_{n}^{j}(0)=\frac{\langle \nabla \rangle^{-1}\langle \lambda_{n}^{j}\nabla \rangle}{\lambda_{n}^{j}}e^{-it_{n}^{j}\Delta}g_{n}^{j}\widetilde{u}^{j}$, so that $e^{it\Delta}U_{n}^{j}(0)=G_{n}^{j}(\sigma_{n}^{j}e^{it\Delta}\widetilde{u}^{j})$ (see \eqref{11/06/22/22:27}). Thus, $U_{n}^{j}$ satisfies 
\begin{equation}\label{11/07/24/16:08}
U_{n}^{j}(t)=
G_{n}^{j}
(\sigma_{n}^{j}e^{it\Delta}\widetilde{u}^{j})
+
i\int_{0}^{t}e^{i(t-t')\Delta}\bigm\{ f(U_{n}^{j}) + |U_{n}^{j}|^{2^{*}-2}U_{n}^{j}\bigm\}(t')\,dt'.
\end{equation}
Undoing the transformations $G_{n}^{j}$ and $\sigma_{n}^{j}$ in \eqref{11/07/24/16:08}, we have the equation 
\begin{equation}\label{11/07/24/16:57}
\begin{split}
\widetilde{\psi}_{n}^{j}(t)
&=
e^{it \Delta}\widetilde{u}^{j}
\\[6pt]
&\quad +i
\int_{-\frac{t_{n}^{j}}{(\lambda_{n}^{j})^{2}}}^{t}
\hspace{-4pt}
e^{i(t-t')\Delta}
(\sigma_{n}^{j})^{-1}
\bigg\{ 
(\lambda_{n}^{j})^{\frac{d+2}{2}}f\biggm(
(\lambda_{n}^{j})^{-\frac{d-2}{2}}\sigma_{n}^{j}\widetilde{\psi}_{n}^{j}
\biggm)
+
|\sigma_{n}^{j}\widetilde{\psi}_{n}^{j}|^{2^{*}-2}\sigma_{n}^{j}\widetilde{\psi}_{n}^{j}
\bigg\}(t')\,dt',
\end{split}
\end{equation}
where 
\begin{equation}\label{11/07/24/16:54}
\widetilde{\psi}_{n}^{j}:=(\sigma_{n}^{j})^{-1}(G_{n}^{j})^{-1}U_{n}^{j}. 
\end{equation}
We define the nonlinear profile $\widetilde{\psi}^{j}$ as a solution to the limit equation of \eqref{11/07/24/16:57}: 
\begin{equation}\label{11/07/02/11:49}
\begin{split}
\widetilde{\psi}^{j}(t)
&=
e^{it \Delta}
\widetilde{u}^{j}
\\[6pt]
&\quad +i
\int_{-\frac{t_{\infty}^{j}}{(\lambda_{\infty}^{j})^{2}}}^{t}
\hspace{-3pt}e^{i(t-t')\Delta}
(\sigma_{\infty}^{j})^{-1}\bigg\{ 
(\lambda_{\infty}^{j})^{\frac{d+2}{2}}f\biggm(
(\lambda_{\infty}^{j})^{-\frac{d-2}{2}}\sigma_{\infty}^{j}\widetilde{\psi}^{j}
\biggm)
+
|\sigma_{\infty}^{j}\widetilde{\psi}^{j}|^{2^{*}-1}
\sigma_{\infty}^{j}\widetilde{\psi}^{j}
\bigg\}(t')\,dt'
\\[6pt]
&=
e^{it \Delta}
\widetilde{u}^{j}
+i
\int_{-\frac{t_{\infty}^{j}}{(\lambda_{\infty}^{j})^{2}}}^{t}e^{i(t-t')\Delta}
(\sigma_{\infty}^{j})^{-1}
\mathcal{N}_{j}(\sigma_{\infty}^{j}\widetilde{\psi}^{j}(t'))\,dt',
\end{split}
\end{equation}
where 
\begin{equation}\label{11/07/24/18:07}
\sigma_{\infty}^{j}
:=
\left\{ \begin{array}{ccc}
1 &\mbox{if}& \lambda_{\infty}^{j}=1,
\\[6pt]
|\nabla |^{-1} \langle \nabla \rangle &\mbox{if}& \lambda_{\infty}^{j}=0
\end{array} \right.
\end{equation}
and 
\begin{equation}\label{11/07/02/13:50}
\mathcal{N}_{j}(u)
:=
\left\{ \begin{array}{ccc}
f(u) +|u|^{2^{*}-2}u
&\mbox{if}& \lambda_{\infty}^{j}=1,
\\[6pt]
|u|^{2^{*}-2}u
&\mbox{if}& \lambda_{\infty}^{j}=0.
\end{array}
\right. 
\end{equation}
When $-\displaystyle{\frac{t_{\infty}^{j}}{(\lambda_{\infty}^{j})^{2}}} \in \{\pm \infty\}$, we regard \eqref{11/07/02/11:49} as the final value problem at $\pm \infty$. 
\par 
Let $I^{j}:=(T_{\min}^{j},T_{\max}^{j})$ be the maximal interval where the nonlinear profile $\widetilde{\psi}^{j}$ exists. Note that $T_{\min}^{j}=-\infty$ when  $t_{\infty}^{j}/ (\lambda_{\infty}^{j})^{2}=+\infty$, and $T_{\max}^{j}=+\infty$ when 
$t_{\infty}^{j}/(\lambda_{\infty}^{j})^{2}=-\infty$. We see from the construction of the nonlinear profile that $\widetilde{\psi}^{j} \in C(I^{j},H^{1}(\mathbb{R}^{d}))$ and   
\begin{equation}
\label{11/06/22/22:37}
\lim_{n\to \infty}\left\| \widetilde{\psi}^{j}\biggm(-\frac{t_{n}^{j}}{(\lambda_{n}^{j})^{2}}\biggm)-
\, e^{-i\frac{t_{n}^{j}}{(\lambda_{n}^{j})^{2}}\Delta} 
\widetilde{u}^{j}
 \right\|_{H^{1}}=0.
\end{equation} 

\begin{lemma}\label{11/08/11/11:56}
There exists $\delta>0$ with the following property: Let $j \in \mathbb{N}$ and assume that 
\begin{equation}\label{11/08/11/11:57}
\left\| \widetilde{u}^{j} \right\|_{H^{1}}\le \delta.
\end{equation}
Then, we have $I^{j}=\mathbb{R}$ and 
\begin{equation}\label{11/08/11/11:58}
\left\|\langle \nabla \rangle \widetilde{\psi}^{j} \right\|_{S(\mathbb{R})}
\lesssim 
\left\|\widetilde{u}^{j} \right\|_{H^{1}}.
\end{equation}
\end{lemma}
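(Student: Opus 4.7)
The plan is to split into two cases according to $\lambda_\infty^j\in\{0,1\}$, in each case reducing the existence of the nonlinear profile to a small-data theorem already available in Section \ref{10/10/04/21:43}.

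For $\lambda_\infty^j=1$, I observe that $\sigma_\infty^j=\mathrm{Id}$ and $\mathcal{N}_j(u)=f(u)+|u|^{2^*-2}u$, so \eqref{11/07/02/11:49} is just the Duhamel form of \eqref{11/06/12/9:08} with datum $\widetilde{u}^j$ at $t_0:=-t_\infty^j\in\mathbb{R}\cup\{\pm\infty\}$. First I would apply the Strichartz estimate to bound
\[
\|\langle\nabla\rangle e^{i(t-t_0)\Delta}\widetilde{u}^j\|_{V_{p_1}(\mathbb{R})}\le C\|\widetilde{u}^j\|_{H^1}\le C\delta ,
\]
which, when $t_0\in\mathbb{R}$ and $\delta$ is chosen below the threshold in Theorem \ref{10/10/04/21:44}(i) associated with $A=\delta$, yields a global $\widetilde{\psi}^j\in C(\mathbb{R},H^1)$ together with $\|\langle\nabla\rangle\widetilde{\psi}^j\|_{S(\mathbb{R})}\lesssim\|\widetilde{u}^j\|_{H^1}$ via \eqref{11/05/15/20:31}. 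For $t_0=\pm\infty$ I would rerun the contraction of Theorem \ref{10/10/04/21:44}(i) on
\[
Y:=\bigl\{u\in C(\mathbb{R},H^1):\|\langle\nabla\rangle u\|_{V_{p_1}(\mathbb{R})}\le 2C\delta,\ \|\langle\nabla\rangle u\|_{S(\mathbb{R})}\le 2C\|\widetilde{u}^j\|_{H^1}\bigr\},
\]
with $e^{it\Delta}\widetilde{u}^j$ replacing $e^{i(t-t_0)\Delta}\widetilde{u}^j$ and the Duhamel integral starting at $\pm\infty$; the nonlinear bounds \eqref{11/05/27/21:05}--\eqref{11/06/01/14:26} are unaffected and deliver the same fixed-point conclusion.

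For $\lambda_\infty^j=0$, I would set $\phi:=\sigma_\infty^j\widetilde{\psi}^j=|\nabla|^{-1}\langle\nabla\rangle\widetilde{\psi}^j$. Since $\sigma_\infty^j$ is a Fourier multiplier commuting with $e^{it\Delta}$ and the time integration, applying $\sigma_\infty^j$ to \eqref{11/07/02/11:49} cancels the prefactor $(\sigma_\infty^j)^{-1}$ inside the Duhamel integrand and gives
\[
\phi(t)=e^{it\Delta}\sigma_\infty^j\widetilde{u}^j+i\int_{t_0}^{t}e^{i(t-t')\Delta}\bigl(|\phi|^{2^*-2}\phi\bigr)(t')\,dt',
\]
so $\phi$ solves \eqref{11/07/17/16:01} with state $\sigma_\infty^j\widetilde{u}^j$ at $t_0:=-t_\infty^j/(\lambda_\infty^j)^2$. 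By Plancherel, $\|\nabla\sigma_\infty^j\widetilde{u}^j\|_{L^2}=\|\langle\nabla\rangle\widetilde{u}^j\|_{L^2}=\|\widetilde{u}^j\|_{H^1}\le\delta$, so Proposition \ref{11/07/13/16:27} (which accommodates both $t_0\in\mathbb{R}$ and $t_0=\pm\infty$) furnishes a global $\phi$ with $\|\nabla\phi\|_{S(\mathbb{R})}\lesssim\|\widetilde{u}^j\|_{H^1}$. Undoing the change of variables, $\langle\nabla\rangle\widetilde{\psi}^j=\langle\nabla\rangle|\nabla|\langle\nabla\rangle^{-1}\phi=|\nabla|\phi$, hence $\|\langle\nabla\rangle\widetilde{\psi}^j\|_{S(\mathbb{R})}=\|\nabla\phi\|_{S(\mathbb{R})}\lesssim\|\widetilde{u}^j\|_{H^1}$, which is \eqref{11/08/11/11:58}.

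The one genuinely delicate point will be the final-state variant of Theorem \ref{10/10/04/21:44}(i) needed in Case 1 when $t_0=\pm\infty$; this is not stated explicitly in Section \ref{10/10/04/21:43}. However, its proof requires only that the free-evolution Strichartz norm on $\mathbb{R}$ stays small and that the Duhamel integral from $\pm\infty$ obeys the same estimates \eqref{10/10/29/18:11}, both of which are standard. With that adaptation in hand, both cases become immediate consequences of the machinery already assembled in the paper.
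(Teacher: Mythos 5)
Your argument is correct and is exactly what the paper means by its one-line proof (``standard small-data well-posedness theory''): for $\lambda_{\infty}^{j}=1$ the small-data case of Theorem \ref{10/10/04/21:44} (i) (with the routine final-state variant when $t_{\infty}^{j}=\pm\infty$), and for $\lambda_{\infty}^{j}=0$ the reduction of $\sigma_{\infty}^{j}\widetilde{\psi}^{j}$ to a small-data solution of \eqref{11/07/17/16:01} via Proposition \ref{11/07/13/16:27}, with the multiplier bookkeeping $\langle\nabla\rangle\widetilde{\psi}^{j}=|\nabla|\sigma_{\infty}^{j}\widetilde{\psi}^{j}$ giving \eqref{11/08/11/11:58}. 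No substantive gap; your elaboration matches the intended route.
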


\begin{proof}[Proof of Lemma \ref{11/08/11/11:56}] 
This lemma follows from the standard small-data well-posedness theory.
\end{proof}
Moreover, we can verify the following lemma in a way similar to Lemma \ref{11/05/14/14:42}:
\begin{lemma}\label{11/08/07/22:04}
Assume $d\ge 3$. Let $I$ be an interval, $A,B>0$, and
 let $u$ be a function such that 
\begin{equation}\label{11/08/07/22:08}
\left\| u \right\|_{L^{\infty}(I,H^{1})}\le A,
\qquad 
\left\| u \right\|_{W_{p_{1}}(I) \cap W(I)}\le B.
\end{equation}
Let $\varepsilon>0$ and suppose that  
\begin{equation}\label{11/08/07/22:09}
\left\|\langle \nabla \rangle \left( 
i\frac{\partial }{\partial t}u +\Delta u + \sigma_{\infty}^{-1}\mathcal{N}_{j}(\sigma_{\infty}^{j}u)
\right) 
\right\|_{L^{\frac{2(d+2)}{d+4}}(I,L^{\frac{2(d+2)}{d+4}})}
\le \varepsilon.
\end{equation} 
Then, we have 
\begin{equation}\label{11/08/07/22:10}
\left\| \langle \nabla \rangle u \right\|_{S(I)}
\lesssim C(A,B) + \varepsilon,
\end{equation}
where $C(A,B)$ is some constant depending on $A$ and $B$. 
\end{lemma}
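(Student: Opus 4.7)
The plan is to mimic the proof of Lemma \ref{11/05/14/14:42}, splitting into the two cases $\lambda_{\infty}^{j}=1$ and $\lambda_{\infty}^{j}=0$ recorded in \eqref{11/07/24/18:07}--\eqref{11/07/02/13:50}. In the case $\lambda_{\infty}^{j}=1$ we have $\sigma_{\infty}^{j}=1$ and $\mathcal{N}_{j}(u)=f(u)+|u|^{2^{*}-2}u$, so the equation in \eqref{11/08/07/22:09} reduces \emph{exactly} to the one in \eqref{11/05/27/4:09} and the conclusion follows directly from Lemma \ref{11/05/14/14:42}. It therefore remains to treat the case $\lambda_{\infty}^{j}=0$, where $\sigma_{\infty}^{j}=|\nabla|^{-1}\langle\nabla\rangle$ and $\mathcal{N}_{j}(u)=|u|^{2^{*}-2}u$.

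First I would rewrite $u$ via the Duhamel formula
\begin{equation*}
u(t)=e^{i(t-t_{0})\Delta}u(t_{0})+i\int_{t_{0}}^{t}e^{i(t-t')\Delta}\Bigl(\sigma_{\infty}^{-1}\mathcal{N}_{j}(\sigma_{\infty}u)-e\Bigr)(t')\,dt'
\end{equation*}
taken in the weak sense, for some $t_{0}\in I$. The Strichartz estimate together with the hypothesis \eqref{11/08/07/22:09} then yields
\begin{equation*}
\bigl\|\langle\nabla\rangle u\bigr\|_{S(I)}\lesssim \|u(t_{0})\|_{H^{1}}+\bigl\|\langle\nabla\rangle\sigma_{\infty}^{-1}\mathcal{N}_{j}(\sigma_{\infty}u)\bigr\|_{L^{\frac{2(d+2)}{d+4}}(I,L^{\frac{2(d+2)}{d+4}})}+\varepsilon.
\end{equation*}
Since $\langle\nabla\rangle\sigma_{\infty}^{-1}=|\nabla|$ is a Fourier multiplier with symbol bounded by $1$ and having no low-frequency singularity, and since $\sigma_{\infty}u=|\nabla|^{-1}\langle\nabla\rangle u$ satisfies $\|\sigma_{\infty}u\|_{L^{q}}\lesssim\|\langle\nabla\rangle u\|_{L^{q}}$ for $q$ in the relevant Strichartz range, I expect to control the critical term by the same product-type estimates used in \eqref{11/05/14/14:54}: namely
\begin{equation*}
\bigl\|\langle\nabla\rangle\sigma_{\infty}^{-1}(|\sigma_{\infty}u|^{2^{*}-2}\sigma_{\infty}u)\bigr\|_{L^{\frac{2(d+2)}{d+4}}}\lesssim \bigl\|\langle\nabla\rangle u\bigr\|_{V(I)}\,\bigl\|u\bigr\|_{W(I)}^{\frac{4}{d-2}},
\end{equation*}
invoking the fractional chain and product rules (Lemmas \ref{10/10/29/18:36}--\ref{10/10/29/18:41}) together with the $L^{p}$-boundedness of the multipliers $|\nabla|\langle\nabla\rangle^{-1}$ and $|\nabla|^{-1}\langle\nabla\rangle$ on the relevant Lebesgue spaces (noting that the symbol of the latter, $\langle\xi\rangle/|\xi|$, is Mikhlin away from the origin, and at low frequencies we may use the fact that only $\nabla\sigma_{\infty}u=\mathcal{R}\langle\nabla\rangle u$ enters through the Riesz transform, which is $L^{p}$-bounded for $1<p<\infty$).

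Combining the above with the bound \eqref{11/05/14/15:12}, which interpolates $\|\langle\nabla\rangle u\|_{V(I)}$ between $L^{\infty}_{t}H^{1}_{x}$ and $L^{2}_{t}L^{2^{*}}_{x}=S(I)$, and using \eqref{11/08/07/22:08} to absorb the $V(I)$-norm on the right-hand side into $\|\langle\nabla\rangle u\|_{S(I)}$ with a small constant (or, if necessary, partitioning $I$ into finitely many subintervals on which $\|u\|_{W(I)}$ is small, as in the proof of Proposition \ref{11/05/14/12:26}), I arrive at the desired estimate \eqref{11/08/07/22:10}.

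The main technical obstacle will be handling the multiplier $\sigma_{\infty}=|\nabla|^{-1}\langle\nabla\rangle$, which has a singularity at the origin on the Fourier side, inside the nonlinear expression. The cleanest way is to observe that only the combination $\nabla\sigma_{\infty}=\mathcal{R}\langle\nabla\rangle$ actually appears after applying $|\nabla|$ outside, so the problematic low-frequency behavior cancels and all estimates reduce to Riesz-transform and $\langle\nabla\rangle$-bounded operations on standard Strichartz pairs.
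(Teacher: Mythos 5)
The paper offers no written proof of Lemma \ref{11/08/07/22:04} (it only says the verification is ``similar to Lemma \ref{11/05/14/14:42}''), so the only thing to assess is whether your argument actually closes. Your treatment of the case $\lambda_{\infty}^{j}=1$ is fine: there $\sigma_{\infty}^{j}=1$, $\mathcal{N}_{j}=f+|\cdot|^{2^{*}-2}(\cdot)$, and the statement is literally Lemma \ref{11/05/14/14:42}.

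In the case $\lambda_{\infty}^{j}=0$, however, there is a genuine gap at your central display. The identity $\langle\nabla\rangle(\sigma_{\infty}^{j})^{-1}=|\nabla|$ and the fractional chain rule give, as in \eqref{11/05/11/23:53}, a bound of the form $\|\nabla\sigma_{\infty}^{j}u\|_{V(I)}\,\|\sigma_{\infty}^{j}u\|_{W(I)}^{\frac{4}{d-2}}$; your display replaces $\|\sigma_{\infty}^{j}u\|_{W(I)}$ by $\|u\|_{W(I)}$, which is not justified: $\sigma_{\infty}^{j}=|\nabla|^{-1}\langle\nabla\rangle$ has symbol $\langle\xi\rangle/|\xi|$, singular at $\xi=0$, hence is not bounded on $L^{q}_{x}$, and your auxiliary claim $\|\sigma_{\infty}^{j}u\|_{L^{q}}\lesssim\|\langle\nabla\rangle u\|_{L^{q}}$ is false at equal exponents (it amounts to $L^{q}$-boundedness of $|\nabla|^{-1}$). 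Your closing remark that ``only $\nabla\sigma_{\infty}^{j}u$ enters'' is accurate only for the differentiated factor, which indeed equals Riesz transforms of $\langle\nabla\rangle u$; the undifferentiated factors $|\sigma_{\infty}^{j}u|^{2^{*}-2}$ retain the singular low-frequency part, and that is exactly where the hypothesis $\|u\|_{W(I)}\le B$ is supposed to act. The natural repair --- the Mihlin/Sobolev route the paper itself uses for such multipliers, cf. \eqref{11/08/10/15:57}, namely $\|\sigma_{\infty}^{j}u\|_{W(I)}\lesssim\|\nabla\sigma_{\infty}^{j}u\|_{V(I)}\lesssim\|\langle\nabla\rangle u\|_{V(I)}$ --- does not save the argument: combined with \eqref{11/05/14/15:12} the critical term becomes of size $A^{\frac{4}{d-2}}\|\langle\nabla\rangle u\|_{S(I)}$ (the exponents conspire to give exactly power one on the unknown), which cannot be absorbed since $A$ is not small; and partitioning $I$ into subintervals where $\|u\|_{W(I_{k})}$ is small does not make $\|\sigma_{\infty}^{j}u\|_{W(I_{k})}$ small, so the subdivision trick you invoke from Proposition \ref{11/05/14/12:26} does not apply either. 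Note that where the paper actually needs this type of bound for $\lambda_{\infty}^{j}=0$, in \eqref{11/08/09/9:42} inside the proof of Lemma \ref{11/07/02/15:03}, the finite $W$-norm it uses is that of $\sigma_{\infty}^{j}\widetilde{\psi}^{j}$, not of $\widetilde{\psi}^{j}$. So to complete your proof you must either show how $\|u\|_{W(I)}\le B$ (together with $A$) controls $\|\sigma_{\infty}^{j}u\|_{W(I)}$, or restructure the nonlinear estimate so that the undifferentiated factors are measured in norms of $u$ itself; as written, the $\lambda_{\infty}^{j}=0$ case does not close.
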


\begin{lemma}[Properties of nonlinear profiles]
\label{11/07/07/20:34}
Let $\widetilde{\psi}^{j_{1}}$ be a nonlinear profile, and suppose that it is non-trivial. Then,  we have 
\begin{equation}\label{11/07/07/20:37}
\sigma_{\infty}^{j_{1}} \widetilde{\psi}^{j_{1}}(t)
\in \left\{ \begin{array}{lcc}
 A_{\omega,+}
&\mbox{if } & \lambda_{\infty}^{j_{1}}=1,
\\[6pt]
 A_{0}
&\mbox{if } & \lambda_{\infty}^{j_{1}}=0
\end{array}
\right.
\qquad 
\mbox{for any $t \in I^{j_{1}}$}.
\end{equation}
\end{lemma}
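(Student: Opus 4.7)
The plan is to verify the claimed membership at a single convenient reference time and then propagate it using the conservation laws of the underlying equation. For each $n$, set $\tau_n := -t_n^{j_1}/(\lambda_n^{j_1})^2$ and write $V_n^j := g_n^j\sigma_n^j e^{-it_n^j/(\lambda_n^j)^2\Delta}\widetilde{u}^j$ for the data that appear in the expansions \eqref{10/09/24/21:40}--\eqref{11/06/27/22:22}. By \eqref{11/06/22/22:37}, $\widetilde{\psi}^{j_1}(\tau_n)$ differs from $(g_n^{j_1}\sigma_n^{j_1})^{-1}V_n^{j_1}$ by an $o_n(1)$ error in $H^1$, so bounds on $V_n^{j_1}$ transfer to $\widetilde{\psi}^{j_1}(\tau_n)$ after undoing the scaling.

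The core estimate is an upper bound on $\mathcal{I}_\omega$. Since $\mathcal{I}_\omega\ge 0$ (this is built into \eqref{11/04/30/23:56} via \eqref{11/04/30/22:39}), discarding all but the $j_1$-th summand in the orthogonality expansion \eqref{11/06/27/22:20} gives
\[
\mathcal{I}_\omega(V_n^{j_1}) \le \mathcal{I}_\omega(\psi_n(0)) + o_n(1) \le \mathcal{S}_\omega(\psi_n(0)) + o_n(1) \le m_\omega^{*} + o_n(1) < m_\omega,
\]
where the middle inequality uses $\mathcal{K}(\psi_n(0))>0$. Since $V_n^{j_1}$ is non-trivial (by non-triviality of $\widetilde{u}^{j_1}$), Proposition \ref{11/05/01/17:50}(i) (the identity $m_\omega=\widetilde{m}_\omega$) forbids $\mathcal{K}(V_n^{j_1})\le 0$, hence $\mathcal{K}(V_n^{j_1})>0$. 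The same bound and dichotomy applied to every other $V_n^j$ and to $w_n^k$ yield $\mathcal{K}\ge 0$ on each piece, and therefore $\mathcal{S}_\omega \ge 0$ on each piece. Feeding this back into the $\mathcal{S}_\omega$-expansion \eqref{11/06/27/22:18} yields $\mathcal{S}_\omega(V_n^{j_1}) \le \mathcal{S}_\omega(\psi_n(0)) + o_n(1) < m_\omega$ for $n$ large.

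When $\lambda_\infty^{j_1}=1$, we have $\lambda_n^{j_1}\equiv 1$ and $\sigma_n^{j_1}=1$, so $g_n^{j_1}$ is a mere translation that preserves $\mathcal{S}_\omega$ and $\mathcal{K}$. The bounds transfer directly to $\widetilde{\psi}^{j_1}(\tau_n) \in A_{\omega,+}$ for large $n$. Conservation of $\mathcal{S}_\omega$ under \eqref{11/06/12/9:08} maintains the action bound on all of $I^{j_1}$, and positivity of $\mathcal{K}$ is then preserved by continuity because $\mathcal{K}(u)=0$ with $u\neq 0$ forces $\mathcal{S}_\omega(u)\ge m_\omega$, contradicting the strict action bound.

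When $\lambda_\infty^{j_1}=0$, the $\dot{H}^1$-preserving scaling $g_n^{j_1}$ drives $\|V_n^{j_1}\|_{L^2}\to 0$ and, by the sub-critical growth conditions \eqref{11/04/30/22:54}--\eqref{11/05/02/8:40}, also $\int F(V_n^{j_1})\to 0$; simultaneously $\sigma_n^{j_1}\to|\nabla|^{-1}\langle\nabla\rangle=\sigma_\infty^{j_1}$. Consequently $\mathcal{S}_\omega(V_n^{j_1})\to\mathcal{H}_0(\sigma_\infty^{j_1}\widetilde{\psi}^{j_1}(\tau^{j_1}))$ and $\mathcal{K}(V_n^{j_1})\to\|\nabla\sigma_\infty^{j_1}\widetilde{\psi}^{j_1}(\tau^{j_1})\|_{L^2}^2-\|\sigma_\infty^{j_1}\widetilde{\psi}^{j_1}(\tau^{j_1})\|_{L^{2^{*}}}^{2^{*}}$ for the appropriate limiting time $\tau^{j_1}$. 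Combining this with $m_\omega<\tfrac{1}{d}\sigma^{d/2}$ from Lemma \ref{11/05/01/23:05} and the Sobolev inequality $\sigma\|u\|_{L^{2^{*}}}^2\le\|\nabla u\|_{L^2}^2$ produces $\mathcal{H}_0(\sigma_\infty^{j_1}\widetilde{\psi}^{j_1}(\tau^{j_1}))<\tfrac{1}{d}\sigma^{d/2}$ and $\|\nabla\sigma_\infty^{j_1}\widetilde{\psi}^{j_1}(\tau^{j_1})\|_{L^2}^2<\sigma^{d/2}$, placing $\sigma_\infty^{j_1}\widetilde{\psi}^{j_1}(\tau^{j_1})$ in $A_0$. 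Theorem \ref{11/07/24/20:56} then extends the membership to all $t\in I^{j_1}$. The main technical obstacle is the careful book-keeping of the $\sigma_n^{j_1}$-factor and the demonstration that the sub-critical pieces of $\mathcal{S}_\omega$ and $\mathcal{K}$ genuinely vanish in the concentration regime $\lambda_n^{j_1}\to 0$, particularly when $|t_\infty^{j_1}/(\lambda_\infty^{j_1})^2|=\infty$ so that the reference time must be handled through the scattering condition.
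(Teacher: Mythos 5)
Your argument is correct and follows essentially the same route as the paper: the expansions \eqref{11/06/27/22:18}--\eqref{11/06/27/22:22} together with the positivity of $\mathcal{I}_{\omega}$, $\mathcal{K}(\psi_{n}(0))>0$ and $m_{\omega}^{*}<m_{\omega}$ give $\mathcal{K}>0$ and $\mathcal{S}_{\omega}<m_{\omega}$ on each piece via $m_{\omega}=\widetilde{m}_{\omega}$, these transfer to the nonlinear profile at the reference times through \eqref{11/06/22/22:37}, and membership is propagated by the flow-invariance of $A_{\omega,+}$ (Lemma \ref{11/06/27/22:29}) when $\lambda_{\infty}^{j_{1}}=1$, respectively by the vanishing of the mass and subcritical terms under concentration, Lemma \ref{11/05/01/23:05} and Theorem \ref{11/07/24/20:56} when $\lambda_{\infty}^{j_{1}}=0$, exactly as in the paper. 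The only cosmetic differences are that you obtain $\mathcal{S}_{\omega}\ge 0$ on each piece directly from $\mathcal{S}_{\omega}=\mathcal{I}_{\omega}+\tfrac{1}{2}\mathcal{K}$ rather than via Lemma \ref{11/06/28/10:12}, and you reprove the $A_{\omega,+}$-invariance inline instead of quoting it.
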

\begin{proof}[Proof of Lemma \ref{11/07/07/20:34}] 
Note first that when $\widetilde{\psi}^{j_{1}}$ is non-trivial, we see from \eqref{11/06/22/22:37} that the corresponding linear profile $\widetilde{u}^{j_{1}}$ is also non-trivial.
\par  
We shall show that 
\begin{align}
\label{11/07/07/21:05}
&\mathcal{I}_{\omega}
\bigg( g_{n}^{j} \bigg( 
\sigma_{n}^{j}e^{-i\frac{t_{n}^{j}}{(\lambda_{n}^{j})^{2}}\Delta}
\widetilde{u}^{j}
\bigg)\bigg)
< \frac{m_{\omega}+m_{\omega}^{*}}{2} 
\quad
\mbox{for any $j$ and sufficiently large $n \in \mathbb{N}$}
,
\\[6pt]
\label{11/07/07/21:36}
&\mathcal{K}\biggm( g_{n}^{j} \bigg( 
\sigma_{n}^{j}e^{-i\frac{t_{n}^{j}}{(\lambda_{n}^{j})^{2}}\Delta}
\widetilde{u}^{j}
\bigg)\biggm)
>0
\quad 
\mbox{for any $j\ge 1$ and sufficiently large $n \in \mathbb{N}$},\\[6pt]
&\label{11/07/07/21:32}
\mathcal{S}_{\omega}
\bigg( g_{n}^{j} \bigg( 
\sigma_{n}^{j}e^{-i\frac{t_{n}^{j}}{(\lambda_{n}^{j})^{2}}\Delta}
\widetilde{u}^{j}
\bigg)\bigg)
< \frac{m_{\omega}+m_{\omega}^{*}}{2} 
\quad
\mbox{for any $j$ and sufficiently large $n \in \mathbb{N}$}
.
\end{align}
It follows from \eqref{11/06/11/6:16}, $\mathcal{K}(\psi_{n}(0))>0$ and \eqref{11/06/27/22:20} that 
\begin{equation}\label{11/06/27/22:41}
\begin{split}
m_{\omega}^{*}+o_{n}(1) 
&= \mathcal{S}_{\omega}(\psi_{n}(0))
\\[6pt]
&
\ge 
\mathcal{I}_{\omega}(\psi_{n}(0))
=\sum_{j=1}^{k}\mathcal{I}_{\omega}
\biggm( g_{n}^{j} \bigg( 
\sigma_{n}^{j}e^{-i\frac{t_{n}^{j}}{(\lambda_{n}^{j})^{2}}\Delta}
\widetilde{u}^{j}
\bigg)\biggm)
+\mathcal{I}_{\omega}(w_{n}^{k})+o_{n}(1).
\end{split}
\end{equation}
Hence, \eqref{11/06/27/22:41} together with $m_{\omega}^{*}<m_{\omega}$ and the positivity of $\mathcal{I}_{\omega}$ shows 
\eqref{11/07/07/21:05}. Moreover, \eqref{11/07/07/21:05} together with the definition of $\widetilde{m}_{\omega}$ (see \eqref{11/04/30/23:54}) shows \eqref{11/07/07/21:36}.
\par 
Note here that 
\begin{equation}\label{11/07/09/10:00}
\liminf_{n\to \infty}\left\| 
\nabla g_{n}^{j} \bigg( 
\sigma_{n}^{j}e^{-i\frac{t_{n}^{j}}{(\lambda_{n}^{j})^{2}}\Delta}
\widetilde{u}^{j}
\bigg)
\right\|_{L^{2}}
\hspace{-6pt}=
\left\| \nabla \sigma_{\infty}^{j}
\widetilde{u}^{j}\right\|_{L^{2}}
>0,
\quad 
\mbox{provided that $\widetilde{u}^{j}$ is non-trivial}.
\end{equation}
Hence, we see from Lemma \ref{11/06/28/10:12} together with \eqref{11/07/09/10:00} that 
\begin{equation}
\label{11/07/07/21:47}
\mathcal{S}_{\omega}
\biggm( g_{n}^{j} \bigg( 
\sigma_{n}^{j}e^{-i\frac{t_{n}^{j}}{(\lambda_{n}^{j})^{2}}\Delta}
\widetilde{u}^{j}
\bigg)\biggm)\ge 0
\qquad 
\mbox{for any $j\ge 1$ and sufficiently large $n \in \mathbb{N}$}
.
\end{equation}
Thus, \eqref{11/06/27/22:18} together with 
\eqref{11/06/11/6:16} and \eqref{11/07/07/21:47} gives us 
\eqref{11/07/07/21:32}. 
\par 
We shall prove \eqref{11/07/07/20:37}. Suppose that $\lambda_{\infty}^{j_{1}}=1$, so that $\sigma_{\infty}^{j}=1$.  Then, it follows from \eqref{11/06/22/22:37} that 
\begin{equation}\label{11/07/09/20:20}
\lim_{n\to \infty}
\left\| (\sigma_{\infty}^{j}
\widetilde{\psi}^{j_{1}})\biggm( -\frac{t_{n}^{j}}{(\lambda_{n}^{j})^{2}}\biggm)
-
g_{n}^{j_{1}}
\biggm( \sigma_{n}^{j_{1}}
e^{-i\frac{t_{n}^{j_{1}}}{(\lambda_{n}^{j_{1}})^{2}}\Delta}\widetilde{u}^{j_{1}}
\biggm)
\right\|_{H^{1}}=0.
\end{equation} 
This together with \eqref{11/07/07/21:05} gives us that 
\begin{equation}\label{11/07/09/20:22}
\lim_{n\to \infty}\mathcal{I}_{\omega}
\biggm(\sigma_{\infty}^{j_{1}} \widetilde{\psi}^{j_{1}}\bigg(
-\frac{t_{n}^{j}}{(\lambda_{n}^{j})^{2}}\bigg)\biggm)
\le 
\frac{m_{\omega}+m_{\omega}^{*}}{2}
<\widetilde{m}_{\omega}
,
\end{equation} 
which together with the definition of $\widetilde{m}_{\omega}$ shows 
\begin{equation}\label{11/07/09/20:24}
\mathcal{K}
\biggm(\sigma_{\infty}^{j_{1}} \widetilde{\psi}^{j_{1}}\bigg(
-\frac{t_{n}^{j}}{(\lambda_{n}^{j})^{2}}\bigg)\biggm)
>0
\qquad 
\mbox{for any sufficiently large $n \in \mathbb{N}$}. 
\end{equation}
Moreover, \eqref{11/07/07/21:32} together with 
\eqref{11/07/09/20:20} yields that 
\begin{equation}\label{11/07/09/20:27}
\mathcal{S}_{\omega}
\biggm(\sigma_{\infty}^{j_{1}} \widetilde{\psi}^{j_{1}}\bigg(
-\frac{t_{n}^{j}}{(\lambda_{n}^{j})^{2}}\bigg)\biggm)<m_{\omega}
\qquad 
\mbox{for any sufficiently large $n \in \mathbb{N}$}.
\end{equation}
Hence, $\displaystyle{
\sigma_{\infty}^{j_{1}} \widetilde{\psi}^{j_{1}}\bigg(
-\frac{t_{n}^{j}}{(\lambda_{n}^{j})^{2}}\bigg) \in A_{\omega,+}}$ for any sufficiently large $n \in \mathbb{N}$. Then, \eqref{11/07/07/20:37} immediately follows from Lemma \ref{11/06/27/22:29}. 
\par 
On the other hand, suppose $\lambda_{\infty}^{j_{1}}=0$. Then, we see from \eqref{11/07/07/21:32} that 
\begin{equation}\label{11/07/31/22:01}
\frac{m_{\omega}+m_{\omega}^{*}}{2}
> 
\frac{1}{2}\left\|  
\nabla g_{n}^{j_{1}} \bigg( 
\sigma_{n}^{j_{1}}e^{-i\frac{t_{n}^{j_{1}}}{(\lambda_{n}^{j_{1}})^{2}}\Delta}
\widetilde{u}^{j_{1}}
\bigg)
\right\|_{L^{2}}^{2}
-
\frac{1}{2^{*}}
\left\| 
g_{n}^{j_{1}} \bigg( 
\sigma_{n}^{j_{1}}e^{-i\frac{t_{n}^{j_{1}}}{(\lambda_{n}^{j_{1}})^{2}}\Delta}
\widetilde{u}^{j_{1}}
\bigg)
\right\|_{L^{2^{*}}}^{2^{*}},
\end{equation}
which together with Lemma \ref{11/05/01/23:05} and \eqref{11/06/22/22:37} shows 
\begin{equation}\label{11/07/31/22:23}
\frac{1}{d}\sigma^{\frac{d}{2}}>\mathcal{H}_{0}\biggm( 
\sigma_{\infty}^{j}
\widetilde{\psi}^{j_{1}}\biggm( -\frac{t_{n}^{j_{1}}}{(\lambda_{n}^{j_{1}})^{2}}\biggm)\biggm).
\end{equation}
Moreover, \eqref{11/07/31/22:01} together with \eqref{11/07/07/21:36} yields that 
\begin{equation}\label{11/07/31/22:27}
\frac{m_{\omega}+m_{\omega}^{*}}{2}>
\frac{1}{d}\left\|  
\nabla g_{n}^{j_{1}} \bigg( 
\sigma_{n}^{j_{1}}e^{-i\frac{t_{n}^{j_{1}}}{(\lambda_{n}^{j_{1}})^{2}}\Delta}
\widetilde{u}^{j_{1}}
\bigg)
\right\|_{L^{2}}^{2}.
\end{equation}
Hence, we see from \eqref{11/06/22/22:37} that 
\begin{equation}\label{11/07/31/22:39}
\sigma^{\frac{d}{2}}>\left\| \nabla \sigma_{\infty}^{j}\widetilde{\psi}^{j_{1}}
\biggm( -\frac{t_{n}^{j_{1}}}{(\lambda_{n}^{j_{1}})^{2}}\biggm)
\right\|_{L^{2}}^{2}.
\end{equation}
Thus, we have shown \eqref{11/07/07/20:37}. 
\end{proof}

\begin{lemma}\label{11/08/11/21:15}
There exists $j_{0} \in \mathbb{N}$ such that  $I^{j}=\mathbb{R}$ for any $j>j_{0}$, where $I^{j}$ denotes the maximal interval where the nonlinear profile $\widetilde{\psi}^{j}$ exists, and 
\begin{equation}\label{11/08/12/11:04}
\sum_{j>j_{0}}
\left\| \langle \nabla \rangle \widetilde{\psi}^{j} \right\|_{S(\mathbb{R})}^{2}
\lesssim 
\sum_{j>j_{0}}
\left\| \widetilde{u}^{j} \right\|_{H^{1}}^{2}
<\infty .
\end{equation}
\end{lemma}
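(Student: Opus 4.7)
The plan is to reduce the claim to an application of the small-data well-posedness provided by Lemma \ref{11/08/11/11:56}. Once one knows that the linear profiles satisfy the $H^1$-summability
\[
\sum_{j=1}^\infty \|\widetilde u^j\|_{H^1}^2 < \infty,
\]
the lemma follows at once: choosing $j_0$ so large that $\|\widetilde u^j\|_{H^1}\le\delta$ for every $j>j_0$, with $\delta$ the small-data threshold of Lemma \ref{11/08/11/11:56}, one obtains $I^j=\mathbb R$ and $\|\langle\nabla\rangle\widetilde\psi^j\|_{S(\mathbb R)}\lesssim\|\widetilde u^j\|_{H^1}$ for each such $j$, and then squares and sums.

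The summability itself I would deduce from the Pythagorean expansion \eqref{10/09/24/21:40}, added for $s=0$ and $s=1$, combined with the uniform bound \eqref{11/07/03/14:33}. A direct Fourier-side computation shows that, for every $n$ and $j$,
\[
\|\langle\nabla\rangle g_n^j(\sigma_n^j e^{-it_n^j\Delta/(\lambda_n^j)^2}\widetilde u^j)\|_{L^2}^2 = \|\widetilde u^j\|_{H^1}^2,
\]
because the scaling identities $\|g_n^j v\|_{L^2}^2=(\lambda_n^j)^2\|v\|_{L^2}^2$ and $\|\nabla g_n^j v\|_{L^2}^2=\|\nabla v\|_{L^2}^2$, combined with the multiplier identity $|\lambda_n^j\widehat{\sigma_n^j v}(\xi)|^2=\langle\xi\rangle^2|\hat v(\xi)|^2/((\lambda_n^j)^2+|\xi|^2)$, make the two contributions add up to $\int\langle\xi\rangle^2|\widehat{\widetilde u^j}(\xi)|^2\,d\xi=\|\widetilde u^j\|_{H^1}^2$. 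Feeding this into \eqref{10/09/24/21:40} and dropping the nonnegative remainder $\|w_n^k\|_{H^1}^2$ yields, uniformly in $k$,
\[
\sum_{j=1}^k \|\widetilde u^j\|_{H^1}^2 \le \|\psi_n(0)\|_{H^1}^2 + o_n(1) \lesssim m_\omega + \frac{m_\omega}{\omega},
\]
and sending $k\to\infty$ produces the required finiteness.

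The main technical point is the Fourier-side identity above, which is essential because in the regime $\lambda_\infty^j=0$ neither $g_n^j$ nor $\sigma_n^j$ is an $H^1$-isometry: the $L^2$-leakage of $g_n^j$ is compensated exactly by the low-frequency gain of $\sigma_n^j$, while the $\dot H^1$-isometry of $g_n^j$ is coupled with the high-frequency boundedness of $\sigma_n^j$. Conceptually, this identity is just the translation of the $\dot H^1$-orthogonality provided by Keraani's decomposition, applied to $|\nabla|^{-1}\langle\nabla\rangle\psi_n(0)\in\dot H^1$, into the $H^1$-orthogonality of the linear profiles $\widetilde u^j$ that the small-data criterion of Lemma \ref{11/08/11/11:56} takes as input.
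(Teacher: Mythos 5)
Your proposal is correct and takes essentially the same route as the paper: the Pythagorean expansions \eqref{10/09/24/21:40} for $s=0,1$ together with the uniform bound \eqref{11/07/03/14:33} give $\sum_{j}\|\widetilde{u}^{j}\|_{H^{1}}^{2}<\infty$, and then the small-data Lemma \ref{11/08/11/11:56} applied to all profiles with index beyond a threshold $j_{0}$ yields $I^{j}=\mathbb{R}$ and the summable Strichartz bounds. Your explicit Fourier-side identity $\|\langle\nabla\rangle g_{n}^{j}(\sigma_{n}^{j}e^{-it_{n}^{j}\Delta/(\lambda_{n}^{j})^{2}}\widetilde{u}^{j})\|_{L^{2}}^{2}=\|\widetilde{u}^{j}\|_{H^{1}}^{2}$ is correct and simply makes precise the step the paper leaves implicit in passing from \eqref{11/08/10/17:06} to \eqref{11/08/12/11:24}.
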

\begin{proof}[Proof of Lemma \ref{11/08/11/21:15}]
It follows from \eqref{10/09/24/21:40} together with \eqref{11/07/03/14:33} that   
\begin{equation}\label{11/08/10/17:06}
\sum_{j=1}^{\infty}\lim_{n\to \infty}
\left\||\nabla|^{s} g_{n}^{j}
\biggm(
\sigma_{n}^{j}
e^{-i\frac{t_{n}^{j}}{(\lambda_{n}^{j})^{2}}\Delta}
\widetilde{u}^{j}
\biggm) \right\|_{L^{2}}^{2}
\lesssim 
m_{\omega}+\frac{m_{\omega}}{\omega}
\qquad 
\mbox{for $s=0,1$}.
\end{equation}
We see from \eqref{11/08/10/17:06} that  
\begin{equation}\label{11/08/12/11:24}
\sum_{j=1}^{\infty}
\left\|
\widetilde{u}^{j}
\right\|_{H^{1}}^{2}
<\infty
\end{equation}
and therefore     
\begin{equation}\label{11/08/12/11:11}
\lim_{j\to \infty}
\left\|
\widetilde{u}^{j}
\right\|_{H^{1}}
=
0.
\end{equation}
Hence, Lemma \ref{11/08/11/11:56} together with \eqref{11/06/22/22:37} and \eqref{11/08/12/11:11} shows the desired result. 
\end{proof}

Now, for any $j \in \mathbb{N}$, we define the space $W^{j}$ by 
\begin{equation}\label{11/08/01/10:07}
W^{j}:=
\left\{ \begin{array}{ccc}
W_{p_{1}}\cap W &\mbox{if}& \lambda_{\infty}^{j}=1,
\\[6pt]
W &\mbox{if}& \lambda_{\infty}^{j}=0.
\end{array}\right.
\end{equation}
Put 
\begin{equation}\label{11/07/02/14:46}
\psi_{n}^{j}:=
G_{n}^{j} \sigma_{n}^{j} \widetilde{\psi}^{j}.
\end{equation}
The maximal interval where $\psi_{n}^{j}$ exists is 
$\displaystyle{
I_{n}^{j}
:=
\left((\lambda_{n}^{j})^{2}T_{\min}^{j}+t_{n}^{j}, 
(\lambda_{n}^{j})^{2}T_{\max}^{j}+t_{n}^{j}\right)
}$. 

\begin{lemma}\label{11/07/02/15:03}
Let $k \in \mathbb{N}$ and assume that  
\begin{equation}\label{11/08/07/20:19}
\left\| \sigma_{\infty}^{j} \widetilde{\psi}^{j} 
\right\|_{W^{j}(I^{j})}
<\infty 
\qquad 
\mbox{for any $1\le j\le k$},
\end{equation}
where $I^{j}$ denotes the maximal interval where $\widetilde{\psi}^{j}$ exists. Then, we have $I^{j}=\infty$,  
\begin{equation}
\label{11/08/09/21:56}
\left\| \nabla \psi_{n}^{j} \right\|_{W_{1+\frac{4}{d}}(\mathbb{R})}+
\left\| 
\psi_{n}^{j}\right\|_{W_{p_{1}}(\mathbb{R})\cap W(\mathbb{R})}
\lesssim 
\left\| 
\langle \nabla \rangle 
\widetilde{\psi}^{j}\right\|_{S(\mathbb{R})}
\lesssim 1 
\qquad 
\mbox{for any $1\le j \le k$},
\end{equation}
and there exists $B>0$ with the following property: For any $k \in \mathbb{N}$, there exists $N_{k}\in \mathbb{N}$ such that 
\begin{equation}\label{11/08/10/14:07}
\sup_{n\ge N_{k}}
\left( 
\left\|\nabla \psi_{n}^{j} \right\|_{W_{1+\frac{4}{d}}(\mathbb{R})}
+
\left\| \sum_{j=1}^{k} \psi_{n}^{j} 
\right\|_{W_{p_{1}}(\mathbb{R})\cap W(\mathbb{R})}
\right)
\le B.
\end{equation}
Furthermore, if \eqref{11/08/07/20:19} holds for any $k$, then we have 
\begin{equation}\label{11/07/16/14:23}
\lim_{k\to \infty}\lim_{n\to \infty}
\left\| \psi_{n}^{j} |\nabla|^{s} e^{it\Delta} w_{n}^{k} 
\right\|_{L_{t,x}^{\frac{2(d+2)(p-1)}{d(p-1)+4}}}
=0
\quad 
\mbox{for any $j \ge 1$ and $p_{1}\le p \le 2^{*}-1$}.
\end{equation}

\end{lemma}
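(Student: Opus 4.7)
The plan is to prove each of the three assertions in order, leveraging the well-posedness theory of Section \ref{10/10/04/21:43} and the asymptotic orthogonality encoded in \eqref{11/07/03/10:21}.

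First, I fix $j\le k$. The hypothesis $\|\sigma_{\infty}^{j}\widetilde\psi^{j}\|_{W^{j}(I^{j})}<\infty$, combined with Lemma \ref{11/07/07/20:34}, which places $\sigma_{\infty}^{j}\widetilde\psi^{j}(t)$ into $A_{\omega,+}$ (if $\lambda_{\infty}^{j}=1$) or $A_{0}$ (if $\lambda_{\infty}^{j}=0$) and thereby produces a uniform $H^{1}$ or $\dot H^{1}$ bound, feeds into Theorem \ref{10/10/04/21:44}(v) or into Proposition \ref{11/07/13/16:27} together with Theorem \ref{11/07/24/20:56}, respectively. This yields $I^{j}=\mathbb R$ and $\|\langle\nabla\rangle\widetilde\psi^{j}\|_{S(\mathbb R)}<\infty$. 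Unfolding $G_{n}^{j}$ and $\sigma_{n}^{j}$ is then a matter of change of variables: $G_{n}^{j}$ is an $\dot H^{1}$-scaling composed with space-time translation, under which scaling-invariant Strichartz norms are preserved, while $\sigma_{n}^{j}$ is a Fourier multiplier uniformly bounded on the relevant Lebesgue spaces. (Note that the subcritical norm $W_{p_{1}}$ only appears in $W^{j}$ when $\lambda_{\infty}^{j}=1$, i.e.\ when no ill-behaved scaling occurs.) These observations give \eqref{11/08/09/21:56}.

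For the uniform sum bound \eqref{11/08/10/14:07} I first invoke Lemma \ref{11/08/11/21:15} to fix $j_{0}$ and split the sum at $j_{0}$. The tail $j>j_{0}$ is controlled by the absolutely convergent series of Lemma \ref{11/08/11/21:15} together with the per-profile bound from Step~1, so it suffices to treat the finite sum $\sum_{j\le j_{0}}\psi_{n}^{j}$. For this I would raise the $W$ and $W_{p_{1}}$ norms to their natural integer powers and expand, reducing the cross terms to integrals $\int|\psi_{n}^{j}|^{\alpha}|\psi_{n}^{j'}|^{\beta}\,dx\,dt$ with $j\ne j'\le j_{0}$. By density I can approximate each $\widetilde\psi^{j}$ in $S(\mathbb R)$ by a compactly supported smooth function; after a change of variables into the frame of $\widetilde\psi^{j}$, the companion profile $\widetilde\psi^{j'}$ is translated and rescaled by parameters which, by \eqref{11/07/03/10:21}, escape to infinity, so the cross term vanishes as $n\to\infty$. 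Combining this orthogonality with the diagonal per-profile bounds yields a constant $B$ independent of $k$, provided $n\ge N_{k}$ is chosen so that all cross terms are already negligible.

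The decoupling \eqref{11/07/16/14:23} is then a H\"older-type estimate: I split the exponent $\tfrac{2(d+2)(p-1)}{d(p-1)+4}$ as $\tfrac{1}{r_{1}}+\tfrac{1}{r_{2}}$ (and similarly in space), bounding $\psi_{n}^{j}$ by a Strichartz norm controlled via \eqref{11/08/09/21:56} and $|\nabla|^{s}e^{it\Delta}w_{n}^{k}$ in an $\dot H^{1}$-admissible pair where \eqref{11/07/06/17:55} forces the iterated limit to vanish. The main obstacle I anticipate is the orthogonality argument in the preceding step: the presence of the multipliers $\sigma_{n}^{j}$, whose limits depend on $\lambda_{\infty}^{j}$, complicates the straightforward application of Keraani-type orthogonality, and the standard workaround is to split the profiles into concentrating ($\lambda_{\infty}^{j}=0$) and non-concentrating ($\lambda_{\infty}^{j}=1$) subfamilies, handling the cross terms between the two subfamilies by a direct bilinear estimate exploiting the separation in frequency.
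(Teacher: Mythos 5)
Your outline of the first two assertions is essentially the paper's argument (global existence via Theorem \ref{11/07/24/20:56} or Theorem \ref{10/10/04/21:44}, Mihlin boundedness of the multipliers, the $j_{0}$-splitting from Lemma \ref{11/08/11/21:15}, and killing the cross terms with \eqref{11/07/03/10:21}), although one point is glossed over: the conclusion \eqref{11/08/09/21:56} demands bounds on $\left\|\psi_{n}^{j}\right\|_{W_{p_{1}}(\mathbb{R})}$ and $\left\|\nabla\psi_{n}^{j}\right\|_{W_{1+\frac{4}{d}}(\mathbb{R})}$ for \emph{all} $j$, including the concentrating profiles $\lambda_{\infty}^{j}=0$, and these norms are not invariant under the $\dot{H}^{1}$-scaling $G_{n}^{j}$. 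Your remark that ``the subcritical norm $W_{p_{1}}$ only appears in $W^{j}$ when $\lambda_{\infty}^{j}=1$'' addresses the hypothesis but not the conclusion; what is needed (and what the paper uses in \eqref{11/08/09/11:23}) is that $G_{n}^{j}$ rescales these norms by \emph{positive} powers of $\lambda_{n}^{j}\le 1$, so that interpolating $W_{p_{1}}$ between $W_{1+\frac{4}{d}}$ and $W$ still closes. This is repairable, but it must be said.

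The genuine gap is in your treatment of \eqref{11/07/16/14:23} for $s=1$. You propose a pure H\"older split, placing $|\nabla|^{s}e^{it\Delta}w_{n}^{k}$ ``in an $\dot H^{1}$-admissible pair where \eqref{11/07/06/17:55} forces the iterated limit to vanish.'' But \eqref{11/07/06/17:55} gives smallness only of $|\nabla|^{-1}\langle\nabla\rangle e^{it\Delta}w_{n}^{k}$, i.e.\ essentially of the \emph{undifferentiated} free evolution of the remainder; $\nabla e^{it\Delta}w_{n}^{k}$ is merely bounded (by $\left\|w_{n}^{k}\right\|_{H^{1}}$ through Strichartz, cf.\ \eqref{11/07/17/10:57}) and is not small in any admissible norm. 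Consequently any H\"older pairing of a bounded Strichartz norm of $\psi_{n}^{j}$ with a norm of $\nabla e^{it\Delta}w_{n}^{k}$ yields boundedness, not decay, and the limit in \eqref{11/07/16/14:23} does not follow. The paper's proof circumvents exactly this: it approximates $\widetilde{\psi}^{j}$ by compactly supported $v_{m}^{j}$ as in \eqref{11/07/16/17:42}, and on the compact set $K_{m}^{j}$ it invokes the local estimate of Lemma 2.5 in \cite{Killip-Visan} (see \eqref{11/07/16/18:18}), which controls $\left\|\nabla e^{it\Delta}(g_{n}^{j})^{-1}w_{n}^{k}\right\|_{L_{t,x}^{2}(K_{m}^{j})}$ by $(T_{m}^{j})^{\frac{2}{d+2}}(R_{m}^{j})^{\frac{3d+2}{2(d+2)}}\left\|e^{it\Delta}w_{n}^{k}\right\|_{W(\mathbb{R})}^{\frac{1}{3}}\left\|\nabla w_{n}^{k}\right\|_{L^{2}}^{\frac{2}{3}}$, so that the smallness is imported from the vanishing $W$-norm of the remainder while the derivative enters only through the bounded $L^{2}$ norm. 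Without this (or an equivalent local-smoothing/bilinear substitute), your step 3 fails for $s=1$; your closing suggestion of a frequency-separation bilinear estimate between concentrating and non-concentrating subfamilies is, by contrast, unnecessary for the cross terms in \eqref{11/08/10/14:07}, where the uniform boundedness of the multipliers plus \eqref{11/07/03/10:21} suffices.
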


\begin{proof}[Proof of Lemma \ref{11/07/02/15:03}]
Assume that $\lambda_{\infty}^{j}=0$. Then, $\sigma_{\infty}^{j} \widetilde{\psi}^{j}$ is a solution to \eqref{11/07/17/16:01} (see \eqref{11/07/02/11:49}). Since $I^{j}$ coincides with the maximal interval where $\sigma_{\infty}^{j} \widetilde{\psi}^{j}$ exists, Theorem \ref{11/07/24/20:56} together with Lemma \ref{11/07/07/20:34} shows $I^{j}=\mathbb{R}$. On the other hand, when $\lambda_{\infty}^{j}=1$, $\sigma_{\infty}^{j} \widetilde{\psi}^{j}$ is a solution to \eqref{11/06/12/9:08} and therefore Theorem \ref{10/10/04/21:44} (iv) together with the hypothesis \eqref{11/08/07/20:19} shows $I^{j}=\mathbb{R}$. 
\par 
We shall show that 
\begin{equation}\label{11/08/23/10:57}
\left\|\nabla \psi_{n}^{j} \right\|_{W_{1+\frac{4}{d}}(\mathbb{R})}+
\left\| \psi_{n}^{j} \right\|_{W_{p_{1}}(\mathbb{R})\cap W(\mathbb{R})}
\lesssim 
\left\| \langle \nabla \rangle \widetilde{\psi}^{j} \right\|_{S(\mathbb{R})}. 
\end{equation}
The Mihlin multiplier theorem gives us that 
\begin{equation}\label{11/08/10/15:57}
\begin{split}
\left\| \psi_{n}^{j} \right\|_{W(\mathbb{R})}
&=
\left\| \sigma_{n}^{j} \widetilde{\psi}^{j} \right\|_{W(\mathbb{R})}
=
\left\| \sigma_{n}^{j}(\sigma_{\infty}^{j})^{-1} \sigma_{\infty}^{j}\widetilde{\psi}^{j} \right\|_{W(\mathbb{R})}
\\[6pt]
&\lesssim 
\left\| \sigma_{\infty}^{j}\widetilde{\psi}^{j} 
\right\|_{W(\mathbb{R})}
\lesssim  
\left\| \langle \nabla \rangle \widetilde{\psi}^{j} 
\right\|_{S(\mathbb{R})}
\qquad 
\mbox{for any $j\ge 1$ and $n\ge 1$}.
\end{split}
\end{equation}
Similarly, we have 
\begin{equation}\label{11/08/24/12:20}
\left\|\nabla \psi_{n}^{j} \right\|_{W_{1+\frac{4}{d}}(\mathbb{R})}
\lesssim 
\left\| \langle \nabla \rangle \widetilde{\psi}^{j} 
\right\|_{S(\mathbb{R})}
\qquad 
\mbox{for any $j\ge 1$ and $n\ge 1$}.
\end{equation}
Moreover, using the H\"older inequality, the Mihlin multiplier theorem and \eqref{11/08/10/15:57}, we obtain   
\begin{equation}\label{11/08/09/11:23}
\begin{split}
\left\| \psi_{n}^{j} 
\right\|_{W_{p_{1}}(\mathbb{R})}^{\frac{(d+2)(p_{1}-1)}{2}}
&\le 
\left\|G_{n}^{j} \sigma_{n}^{j}\widetilde{\psi}^{j} 
\right\|_{W_{1+\frac{4}{d}}(\mathbb{R})}^{(d+2)\left\{ 
1-\frac{(d-2)(p_{1}-1)}{4}\right\}}
\left\| G_{n}^{j}\sigma_{n}^{j}\widetilde{\psi}^{j} 
\right\|_{W(\mathbb{R})}^{\frac{(d+2)\left\{d(p_{1}-1)-4\right\}}{4}}
\\[6pt]
&= 
\left\|\lambda_{n}^{j} \sigma_{n}^{j}\widetilde{\psi}^{j} 
\right\|_{W_{1+\frac{4}{d}}(\mathbb{R})}^{(d+2)\left\{ 
1-\frac{(d-2)(p_{1}-1)}{4}\right\}}
\left\| \sigma_{n}^{j}\widetilde{\psi}^{j} 
\right\|_{W(\mathbb{R})}^{\frac{(d+2)\left\{d(p_{1}-1)-4\right\}}{4}}
\\[6pt]
&\lesssim 
\left\|\langle \nabla \rangle \widetilde{\psi}^{j} 
\right\|_{S(\mathbb{R})}^{\frac{(d+2)(p_{1}-1)}{2}}.
\end{split}
\end{equation}

We shall show \eqref{11/08/09/21:56}.  In view of \eqref{11/08/23/10:57}, it suffices to prove that 
\begin{equation}\label{11/08/23/11:01}
\left\| \langle \nabla \rangle \widetilde{\psi}^{j} \right\|_{S(\mathbb{R})}\lesssim 1
\qquad 
\mbox{for any $1\le j \le k$}. 
\end{equation}

When $\lambda_{\infty}^{j}=0$,  it follows from the Strichartz estimate and \eqref{11/05/11/23:53} that  
\begin{equation}\label{11/08/09/9:42}
\begin{split}
\left\| \langle \nabla \rangle \widetilde{\psi}^{j} \right\|_{S(\mathbb{R})}
&\lesssim 
\left\|\widetilde{u}^{j} \right\|_{H^{1}}
+
\left\| \nabla \biggm\{ 
|\sigma_{\infty}^{j}\widetilde{\psi}^{j}|^{2^{*}-2}(\sigma_{\infty}^{j}\widetilde{\psi}^{j}) \biggm\}
\right\|_{L^{2}(\mathbb{R},L^{\frac{2d}{d+2}})}
\\[6pt]
&\lesssim 
\left\|\widetilde{u}^{j} \right\|_{H^{1}}
+
\left\| \nabla \sigma_{\infty}^{j}\widetilde{\psi}^{j}\right\|_{V(\mathbb{R})}
\left\| \sigma_{\infty}^{j}\widetilde{\psi}^{j} \right\|_{W(\mathbb{R})}^{2^{*}-2}
\\[6pt]
&\le 
\left\|\widetilde{u}^{j} \right\|_{H^{1}}
+
\left\|\langle \nabla \rangle \widetilde{\psi}^{j} 
\right\|_{L^{\infty}(\mathbb{R},L^{2})}^{\frac{4}{d+2}}
\left\| \langle \nabla \rangle \widetilde{\psi}^{j}\right\|_{L^{2}(\mathbb{R},L^{2^{*}})}^{\frac{d-2}{d+2}}
\left\| \sigma_{\infty}^{j}\widetilde{\psi}^{j} \right\|_{W(\mathbb{R})}^{2^{*}-2}.
\end{split} 
\end{equation}
Here, \eqref{11/07/31/23:00} together with Lemma \ref{11/07/07/20:34} shows that 
\begin{equation}\label{11/08/09/9:37}
\left\|\nabla \sigma_{\infty}^{j} \widetilde{\psi}^{j} 
\right\|_{L^{\infty}(\mathbb{R},L^{2})}
=
\left\|\langle \nabla \rangle \widetilde{\psi}^{j} 
\right\|_{L^{\infty}(\mathbb{R},L^{2})}
<\infty.
\end{equation}
Hence, \eqref{11/08/09/9:42} together with \eqref{11/08/09/9:37} and the hypothesis \eqref{11/08/07/20:19} shows 
\begin{equation}\label{11/08/12/21:39}
\left\|\langle \nabla \rangle \widetilde{\psi}^{j} \right\|_{S(\mathbb{R})}<\infty
\qquad 
\mbox{for any $1\le j \le k$ with $\lambda_{\infty}^{j}=0$}.
\end{equation}
When $\lambda_{\infty}^{j}=1$, Lemma \ref{11/05/14/14:42} together with Lemma \ref{11/07/07/20:34} and \eqref{11/08/07/20:19} shows 
\begin{equation}\label{11/08/12/21:49}
\left\| \langle \nabla \rangle \widetilde{\psi}^{j} \right\|_{S(\mathbb{R})}<\infty
\qquad 
\mbox{for any $1\le j \le k$ with $\lambda_{\infty}^{j}=1$}.
\end{equation}
Suppose here that $k\le j_{0}$; $j_{0}$ is the number found in Lemma \ref{11/08/11/21:15}. Then, \eqref{11/08/12/21:39} and \eqref{11/08/12/21:49} implies \eqref{11/08/23/11:01}. On the other hand, when $k>j_{0}$, we see from Lemma \ref{11/08/11/21:15} that  
\begin{equation}\label{11/08/12/21:44}
\left\| \langle \nabla \rangle \widetilde{\psi}^{j} \right\|_{S(\mathbb{R})}^{2}
\lesssim 
\sum_{1\le j \le j_{0}} \left\| \langle \nabla \rangle 
\widetilde{\psi}^{j} \right\|_{S(\mathbb{R})}^{2} 
+
\sum_{j>j_{0}}
\left\| \widetilde{u}^{j} \right\|_{H^{1}}^{2}
< \infty.
\end{equation}
Thus, we have shown \eqref{11/08/23/11:01}.
\par 
We shall prove \eqref{11/08/10/14:07}. It is sufficient to consider the case $k\ge j_{0}$. Using the elementary inequality 
\begin{equation}\label{11/08/10/14:47}
\bigg| 
\sum_{1\le j \le k} \psi_{n}^{j}
\bigg|^{q}
-
\sum_{1\le j\le k}
\left| \psi_{n}^{j} \right|^{q}
\le C_{k,q} 
\sum_{1\le j\le k}
\sum_{{1\le j'\le k;}\atop {j'\neq j}}
\left|
\psi_{n}^{j}
\right|^{q-1}
\left|
\psi_{n}^{j'}
\right|,
\quad 
1<q< \infty,
\end{equation}
where $C_{k,q}$ is some constant depends only on $k$ and $q$, we have 
\begin{equation}\label{11/07/02/18:18}
\left\| \sum_{1\le j \le k} \psi_{n}^{j}\right\|_{W(\mathbb{R})}^{\frac{2(d+2)}{d-2}}
\le 
\sum_{1\le j\le k}
\left\| \psi_{n}^{j} 
\right\|_{W(\mathbb{R})}^{\frac{2(d+2)}{d-2}}
+
C_{k}\sum_{1\le j\le k}
\sum_{{1\le j'\le k;}\atop {j'\neq j}}
\int_{\mathbb{R}}\int_{\mathbb{R}^{d}}
\left|
\psi_{n}^{j}
\right|^{\frac{d+6}{d-2}}
\left|
\psi_{n}^{j'}
\right|,
\end{equation}
where $C_{k}>0$ is some constant depending only on $k$ and $d$. 
 We see from Lemma \ref{11/08/11/21:15} and \eqref{11/08/09/21:56} that 
\begin{equation}\label{11/08/11/10:51}
\begin{split}
\sum_{1\le j \le k}
\left\| \psi_{n}^{j} \right\|_{W(\mathbb{R})}^{\frac{2(d+2)}{d-2}}&\lesssim 
\sum_{1\le j \le j_{0}}
\left\| \langle \nabla \rangle \widetilde{\psi}^{j} 
\right\|_{S(\mathbb{R})}^{\frac{2(d+2)}{d-2}}
+
\sum_{j > j_{0}}
\left\| \langle \nabla \rangle \widetilde{\psi}^{j} 
\right\|_{S(\mathbb{R})}^{\frac{2(d+2)}{d-2}}
\\[6pt]
&\lesssim 
\sum_{1\le j \le j_{0}}
\left\| \langle \nabla \rangle \widetilde{\psi}^{j} 
\right\|_{S(\mathbb{R})}^{\frac{2(d+2)}{d-2}}
+
\sum_{j>j_{0}}
\left\| \widetilde{u}^{j}
\right\|_{H^{1}}^{2}
< \infty
.
\end{split}
\end{equation}
Moreover, we see from \eqref{11/07/03/10:21} that 
there exists $N_{j,k}\in \mathbb{N}$ such that 
\begin{equation}\label{11/07/03/11:24}
\int_{\mathbb{R}}\int_{\mathbb{R}^{d}}
\left|
\psi_{n}^{j}
\right|^{\frac{d+6}{d-2}}
\left|
\psi_{n}^{j'}
\right|
\le \frac{1}{C_{k}k^{2}}
\qquad 
\mbox{for any $n\ge N_{j,k}$}. 
\end{equation}
Combining \eqref{11/07/02/18:18} with \eqref{11/08/11/10:51} and \eqref{11/07/03/11:24}, we can take $B_{0}>0$ with the property that for any $k \in \mathbb{N}$, there exists $N_{k} \in \mathbb{N}$ such that 
\begin{equation}\label{11/07/02/22:38}
\sup_{n\ge N_{k}}\left\| \sum_{j=1}^{k}\psi_{n}^{j} \right\|_{W(\mathbb{R})}
\le  B_{0}. 
\end{equation}
Next, we consider the estimate in $W_{p_{1}}(\mathbb{R})$. Using the elementary inequality \eqref{11/08/10/14:47} again, we have 
\begin{equation}\label{11/08/07/20:59}
\begin{split}
\left\| \sum_{1\le j \le k} \psi_{n}^{j}\right\|_{W_{p_{1}}(\mathbb{R})}^{\frac{(d+2)(p_{1}-1)}{2}}
&\le 
\sum_{1\le j\le k}
\left\| \psi_{n}^{j} 
\right\|_{W_{p_{1}}(\mathbb{R})}^{\frac{(d+2)(p_{1}-1)}{2}}
\\[6pt]
&\qquad +C_{k}'
\sum_{1\le j\le k}
\sum_{{1\le j'\le k;}\atop {j'\neq j}}
\int_{\mathbb{R}}\int_{\mathbb{R}^{d}}
\left|
\psi_{n}^{j}
\right|^{\frac{(d+2)(p_{1}-1)-2}{2}}
\left|
\psi_{n}^{j'}
\right|,
\end{split}
\end{equation}
where $C_{k}'$ is some constant depending only on $k$, $d$ and $p_{1}$.  We see from \eqref{11/08/09/21:56} and Lemma \ref{11/08/11/21:15} that 
\begin{equation}\label{11/08/12/11:53}
\sum_{1\le j\le k}
\left\| \psi_{n}^{j} 
\right\|_{W_{p_{1}}(\mathbb{R})}^{\frac{(d+2)(p_{1}-1)}{2}}
\lesssim 
\sum_{1\le j\le j_{0}}
\left\| \langle \nabla \rangle \widetilde{\psi}^{j} 
\right\|_{S(\mathbb{R})}^{\frac{(d+2)(p_{1}-1)}{2}}
+
\sum_{j> j_{0}}
\left\| \widetilde{u}^{j} 
\right\|_{H^{1}}^{2}< \infty
.
\end{equation}
Next, we consider the second term on the right-hand side of \eqref{11/08/07/20:59}. Using the condition \eqref{11/07/03/10:21}, we can take $N_{j,k}'\in \mathbb{N}$ such that
\begin{equation}
\label{11/08/07/21:02}
\int_{\mathbb{R}}\int_{\mathbb{R}^{d}}
\left|
\psi_{n}^{j}
\right|^{\frac{(d+2)(p_{1}-1)-2}{2}}
\left|
\psi_{n}^{j'}
\right|
\le \frac{1}{C_{k}'k^{2}}
\qquad 
\mbox{for any $n\ge N_{j,k}'$}. 
\end{equation}
Combining \eqref{11/08/07/20:59} with \eqref{11/08/12/11:53} and \eqref{11/08/07/21:02}, we can take $B_{1}$ with the property that  for any $k \in \mathbb{N}$, there exists $N_{k} \in \mathbb{N}$ such that 
\begin{equation}\label{11/08/12/22:57}
\sup_{n\ge N_{k}}\left\| \sum_{j=1}^{k}\psi_{n}^{j} 
\right\|_{W_{p_{1}}(\mathbb{R})}
\le B_{1}. 
\end{equation}
Similarly, we have 
\begin{equation}\label{11/08/24/12:26}
\sup_{n\ge N_{k}}\left\| \sum_{j=1}^{k} \nabla \psi_{n}^{j} 
\right\|_{W_{1+\frac{4}{d}}(\mathbb{R})}
\le B_{1}. 
\end{equation}
Thus, we have proved \eqref{11/08/10/14:07}.
\par 
Finally, we shall prove \eqref{11/07/16/14:23}. For each $1\le j \le k$, let $\{v_{m}^{j} \}$ be a sequence in $C_{c}(\mathbb{R}^{d} \times \mathbb{R})$ such that 
\begin{equation}\label{11/07/16/17:42}
\lim_{m\to \infty}
\left\| \langle \nabla \rangle \left( 
\widetilde{\psi}^{j}-v_{m}^{j} 
\right)
\right\|_{V_{p}(\mathbb{R})}=0.
\end{equation}
Then, using the H\"older inequality, the Strichartz estimate and \eqref{11/07/17/10:57}, we have   
\begin{equation}\label{11/07/16/17:41}
\begin{split}
&\left\| \psi_{n}^{j} |\nabla |^{s} e^{it\Delta} w_{n}^{k} 
\right\|_{L_{t,x}^{\frac{2(d+2)(p-1)}{d(p-1)+4}}}
\\[6pt]
&=
(\lambda_{n}^{j})^{\frac{d(p-1)+4}{2(p-1)}-(d-2)}
\left\| \left( \sigma_{n}^{j}\widetilde{\psi}^{j} \right) 
(G_{n}^{j})^{-1} |\nabla |^{s} e^{it\Delta}  w_{n}^{k}
\right\|_{L_{t,x}^{\frac{2(d+2)(p-1)}{d(p-1)+4}}}
\\[6pt]
&\le 
(\lambda_{n}^{j})^{\frac{d(p-1)+4}{2(p-1)}-(d-2)}
\left\|  \sigma_{n}^{j} \left( 
\widetilde{\psi}^{j}- v_{m}^{j} 
\right) 
\right\|_{W_{p}(\mathbb{R})}
\left\| 
(G_{n}^{j})^{-1}|\nabla |^{s} e^{it\Delta}  w_{n}^{k} 
\right\|_{L_{t,x}^{\frac{2(d+2)}{d}}}
\\[6pt]
&\qquad 
+
(\lambda_{n}^{j})^{\frac{d(p-1)+4}{2(p-1)}-(d-2)}
\left\|\left( \sigma_{n}^{j} v_{m}^{j} \right) 
(G_{n}^{j})^{-1}|\nabla |^{s}e^{it\Delta}  w_{n}^{k} 
\right\|_{L_{t,x}^{\frac{2(d+2)(p-1)}{d(p-1)+4}}}.
\end{split}
\end{equation}
Using the Strichartz estimate, the Mihlin multiplier theorem and \eqref{11/07/16/17:42}, we estimate the first term on the right-hand side of \eqref{11/07/16/17:41} as follows:  
\begin{equation}\label{11/08/21/17:37}
\begin{split}
&(\lambda_{n}^{j})^{\frac{d(p-1)+4}{2(p-1)}-(d-2)}
\left\|  \sigma_{n}^{j} \left( 
\widetilde{\psi}^{j}- v_{m}^{j} 
\right) 
\right\|_{W_{p}(\mathbb{R})}
\left\| 
(G_{n}^{j})^{-1}|\nabla |^{s}e^{it\Delta} w_{n}^{k} 
\right\|_{L_{t,x}^{\frac{2(d+2)}{d}}}
\\[6pt]
&=
(\lambda_{n}^{j})^{\frac{4-(d-2)(p-1)}{2(p-1)}}
\left\|  \sigma_{n}^{j} \left( 
\widetilde{\psi}^{j}- v_{m}^{j} 
\right) 
\right\|_{W_{p}(\mathbb{R})}
\left\| |\nabla|^{s}
e^{it\Delta}  w_{n}^{k} 
\right\|_{L_{t,x}^{\frac{2(d+2)}{d}}}
\\[6pt]
&\lesssim 
(\lambda_{n}^{j})^{\frac{4-(d-2)(p-1)}{2(p-1)}}
\left\|  \langle \nabla \rangle 
\left( 
\widetilde{\psi}^{j}- v_{m}^{j}
\right)  
\right\|_{V_{p}(\mathbb{R})}
\left\| w_{n}^{k} \right\|_{H^{1}}
=o_{m}(1). 
\end{split}
\end{equation}
We consider the second term on the right-hand side of \eqref{11/07/16/17:41}. Note here that $\frac{2(d+2)(p-1)}{d(p-1)+4}\le \frac{d+2}{d-1}\le 2$ for $d\ge 4$. When $s=0$, we have by the H\"older inequality and \eqref{11/07/06/17:55} that 
\begin{equation}\label{11/08/23/12:18}
\begin{split}
&
(\lambda_{n}^{j})^{\frac{d(p-1)+4}{2(p-1)}-(d-2)}
\left\|\left( \sigma_{n}^{j} v_{m}^{j} \right) 
(G_{n}^{j})^{-1} |\nabla |^{s} e^{it\Delta}  w_{n}^{k} 
\right\|_{L_{t,x}^{\frac{2(d+2)(p-1)}{d(p-1)+4}}}
\\[6pt]
&\le 
\lambda_{n}^{j}
\left\| \sigma_{n}^{j}v_{m}^{j} \right\|_{L_{t,x}^{\frac{2(d+2)(p-1)}{2(p-1)+4}}}
\left\| 
e^{it\Delta} w_{n}^{k} 
\right\|_{W(\mathbb{R})}
\to 0 \qquad 
\mbox{as $n\to \infty$ and $k\to \infty$}.
\end{split}
\end{equation}
Next, we consider the case where $s=1$. Let $K_{m}^{j}:=\{|x|\le R_{m}^{j}\}\times [-T_{m}^{j}, T_{m}^{j}]$  be the rectangle containing the support of $v_{m}^{j}$. Using the H\"older inequality and employing Lemma 2.5 in \cite{Killip-Visan} (see also \cite{Keraani}), we have
\footnote{
When $d=3$ and $2< \frac{2(d+2)(p-1)}{d(p-1)+4}$ (hence $p>3$), we estimate as follows:   
\begin{equation}\label{11/07/16/17:38}
\begin{split}
&\left\|\left( \sigma_{n}^{j} v_{m}^{j} \right) 
 (G_{n}^{j})^{-1} \nabla e^{it\Delta}  w_{n}^{k} 
\right\|_{L_{t,x}^{\frac{2(d+2)(p-1)}{d(p-1)+4}}}
\\[6pt]
&\le \lambda_{n}^{j}
\left\| \sigma_{n}^{j}v_{m}^{j} \right\|_{L_{t,x}^{\infty}}
\left\| 
\nabla e^{it\Delta}(g_{n}^{j})^{-1} w_{n}^{k} 
\right\|_{L_{t,x}^{2}(K_{m}^{j})}^{\frac{2}{p-1}}
\left\| 
\nabla e^{it\Delta}  (g_{n}^{j})^{-1} w_{n}^{k} 
\right\|_{L_{t,x}^{\frac{2(d+2)}{d}}}^{\frac{p-3}{p-1}}.
\end{split}
\end{equation}
}
\begin{equation}\label{11/07/16/18:18}
\begin{split}
&
(\lambda_{n}^{j})^{\frac{d(p-1)+4}{2(p-1)}-(d-2)}
\left\|\left( \sigma_{n}^{j} v_{m}^{j} \right) 
(G_{n}^{j})^{-1} \nabla  e^{it\Delta}  w_{n}^{k} 
\right\|_{L_{t,x}^{\frac{2(d+2)(p-1)}{d(p-1)+4}}}
\\[6pt]
&\le 
(\lambda_{n}^{j})^{\frac{d(p-1)+4}{2(p-1)}-(d-2)+1}
\left\| \sigma_{n}^{j} v_{m}^{j} 
\right\|_{L_{t,x}^{\frac{2(d+2)(p-1)}{4-2(p-1)}}}
\left\| 
\nabla  
e^{it\Delta} (g_{n}^{j})^{-1}  w_{n}^{k} 
\right\|_{L_{t,x}^{2}(K_{m}^{j})}
\\[6pt]
&\lesssim 
(\lambda_{n}^{j})^{\frac{d(p-1)+4}{2(p-1)}-(d-2)+1}
\left\| \sigma_{n}^{j} v_{m}^{j} 
\right\|_{L_{t,x}^{\frac{2(d+2)(p-1)}{4-2(p-1)}}}
(T_{m}^{j})^{\frac{2}{d+2}}(R_{m}^{j})^{\frac{3d+2}{2(d+2)}}
\left\| e^{it\Delta}w_{n}^{k} \right\|_{W(\mathbb{R})}^{\frac{1}{3}}
\left\| \nabla w_{n}^{k} \right\|_{L^{2}}^{\frac{2}{3}}
.
\end{split}
\end{equation}
Hence,  we see from \eqref{11/07/06/17:55} that  
\begin{equation}\label{11/08/23/11:50}
\lim_{k\to \infty}\lim_{n\to \infty}
(\lambda_{n}^{j})^{\frac{d(p-1)+4}{2(p-1)}-(d-2)}
\left\|\left( \sigma_{n}^{j} v_{m}^{j} \right) 
(G_{n}^{j})^{-1}\nabla e^{it\Delta}  w_{n}^{k} 
\right\|_{L_{t,x}^{\frac{2(d+2)(p-1)}{d(p-1)+4}}(K_{m}^{j})}
=0. 
\end{equation}
Combining \eqref{11/07/16/17:41} with \eqref{11/08/21/17:37}, \eqref{11/08/23/12:18} and \eqref{11/08/23/11:50}, we obtain \eqref{11/07/16/14:23}. 
\end{proof}

\begin{lemma}[cf. Lemma 5.6 in \cite{IMN}]\label{11/07/03/14:31}
Let $j_{0}$ be the number found in Lemma \ref{11/08/11/21:15}. Then, there exists $1\le j \le j_{0}$ such that  
\begin{equation}\label{11/07/03/15:40}
\left\| \sigma_{\infty}^{j}\widetilde{\psi}^{j} 
\right\|_{W^{j}(I^{j})}=\infty,
\end{equation}
where $I^{j}$ denotes the maximal interval where the nonlinear profile $\widetilde{\psi}^{j}$ exists. 
\end{lemma}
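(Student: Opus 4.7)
The plan is to argue by contradiction: assume that $\|\sigma_\infty^j \widetilde{\psi}^j\|_{W^j(I^j)} < \infty$ for every $1 \le j \le j_0$. Combined with Lemma \ref{11/08/11/21:15}, the same bound holds, with $I^j = \mathbb{R}$, for every $j \ge 1$, so all the conclusions of Lemma \ref{11/07/02/15:03} are available; in particular the truncated sum $\sum_{j=1}^k \psi_n^j$ is uniformly bounded in $W_{p_1}(\mathbb{R}) \cap W(\mathbb{R})$ (with a bound independent of $k$) for $n \ge N_k$, and the asymptotic orthogonality statement \eqref{11/07/16/14:23} holds. I then define the approximate solution
\[
u_n^k(t) := \sum_{j=1}^k \psi_n^j(t) + e^{it\Delta} w_n^k,
\]
and observe, using \eqref{11/06/22/22:27} and \eqref{11/06/22/22:37}, that $\|u_n^k(0) - \psi_n(0)\|_{H^1} \to 0$ as $n\to\infty$ for each fixed $k$; together with the Strichartz estimate this forces the smallness condition \eqref{11/05/15/23:24} required by the long-time perturbation theory.

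The heart of the argument is to show that the error
\[
e_n^k := i\partial_t u_n^k + \Delta u_n^k + f(u_n^k) + |u_n^k|^{2^*-2}u_n^k
\]
is small in the norm appearing on the left-hand side of \eqref{11/05/15/23:15}. Using that each $\psi_n^j$ arises from $\widetilde{\psi}^j$ through \eqref{11/07/02/11:49}, the error splits into: (a) the nonlinear decoupling remainder
\[
R_n^k := f(u_n^k) + |u_n^k|^{2^*-2}u_n^k - \sum_{j=1}^k \bigl[ f(\psi_n^j) + |\psi_n^j|^{2^*-2}\psi_n^j \bigr];
\]
(b) for profiles with $\lambda_\infty^j = 0$, the ``missing'' subcritical term $f(\psi_n^j)$ not present in $\mathcal{N}_j$; and (c) commutator corrections measuring the difference between $\sigma_n^j$ and $\sigma_\infty^j$. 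Expanding (a) as a sum of cross-terms and products with $e^{it\Delta} w_n^k$, the off-diagonal products $|\psi_n^j|^{q-1} \psi_n^{j'}$ ($j \ne j'$) vanish in every relevant mixed-Lebesgue norm by the asymptotic orthogonality \eqref{11/07/03/10:21}, while terms containing $e^{it\Delta} w_n^k$ are controlled by \eqref{11/07/06/17:55} and \eqref{11/07/16/14:23}; for (b), after undoing the rescaling, $f(\psi_n^j)$ picks up a positive power of $\lambda_n^j \to 0$ thanks to the energy-subcritical assumptions \eqref{11/04/30/22:54}--\eqref{11/05/02/8:40}; (c) follows from the Mihlin multiplier theorem as in \eqref{11/08/10/15:57}. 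Hence, given $\delta > 0$, we can first pick $k$ large and then $n \ge N_k$ so that both $\|\langle\nabla\rangle e_n^k\|_{L^{\frac{2(d+2)}{d+4}}_{t,x}(\mathbb{R})}$ and the norm in \eqref{11/05/15/23:24} are smaller than $\delta$.

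Applying the long-time perturbation theory (Proposition \ref{11/05/14/12:26}) with $\psi = \psi_n$, $u = u_n^k$, $t_1 = 0$ (the uniform $H^1$-bound \eqref{11/07/03/14:33} provides the constant $A_1$, and \eqref{11/08/10/14:07} provides $B$) then yields $\|\langle\nabla\rangle \psi_n\|_{S(I_n)} < \infty$ for all sufficiently large $n$. In particular $\|\psi_n\|_{W_{p_1}(I_n)\cap W(I_n)} < \infty$, contradicting \eqref{11/06/29/16:04}, and the proof is complete. The main obstacle is the nonlinear decoupling step (a): since $k$ has to be taken large in order to exhaust the tail $\sum_{j>j_0}\|\widetilde{u}^j\|_{H^1}^2$, one needs the $k$-uniform bounds \eqref{11/08/10/14:07} and \eqref{11/08/09/21:56} to cooperate with the $k$-dependent orthogonality \eqref{11/07/16/14:23}, \eqref{11/07/03/10:21} so that the error is small uniformly in $k$. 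The case $\lambda_\infty^j = 0$ is equally delicate: verifying the $\lambda_n^j$-decay of the subcritical contribution is exactly where the mass-supercriticality assumption $p_1 > 2_*-1$ enters.
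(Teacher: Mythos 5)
Your proposal follows essentially the same route as the paper's proof: assume $\bigl\| \sigma_{\infty}^{j}\widetilde{\psi}^{j} \bigr\|_{W^{j}(I^{j})}<\infty$ for all $1\le j\le j_{0}$, build the approximate solution $\psi_{n}^{k\mbox{-}app}=\sum_{j=1}^{k}\psi_{n}^{j}+e^{it\Delta}w_{n}^{k}$, verify the hypotheses of Proposition \ref{11/05/14/12:26} (the uniform $W_{p_{1}}\cap W$ bound from Lemma \ref{11/07/02/15:03}, closeness of initial data from \eqref{11/06/22/22:27} and \eqref{11/06/22/22:37}, smallness of the error via the orthogonality \eqref{11/07/03/10:21}, \eqref{11/07/06/17:55} and \eqref{11/07/16/14:23}), and contradict \eqref{11/06/29/16:04}. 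One small correction to your closing remark: the positive power of $\lambda_{n}^{j}$ gained by the subcritical term for profiles with $\lambda_{\infty}^{j}=0$ comes from energy-subcriticality $p<2^{*}-1$ (the factor $(\lambda_{n}^{j})^{\frac{d+2}{2}-\frac{d-2}{2}p}$ visible in \eqref{11/07/24/16:57}), not from the mass-supercritical bound $p_{1}>2_{*}-1$, which enters elsewhere (the choice of Strichartz exponents and the variational structure).
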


\begin{proof}[Proof of Lemma \ref{11/07/03/14:31}]
We see from Lemma \ref{11/08/11/21:15} that 
\begin{equation}\label{11/08/19/11:12}
\left\| \sigma_{\infty}^{j} \widetilde{\psi}^{j} 
\right\|_{W^{j}(I^{j})}
\lesssim 
\left\|\langle \nabla \rangle \widetilde{\psi}^{j} 
\right\|_{S(\mathbb{R})}
<\infty 
\quad 
\mbox{for any $j>j_{0}$}. 
\end{equation} 

Suppose the contrary that  
\begin{equation}\label{11/08/19/11:16}
\left\| \sigma_{\infty}^{j} \widetilde{\psi}^{j} \right\|_{W^{j}(I^{j})}
< \infty
\qquad 
\mbox{for any $1\le j\le j_{0}$},
\end{equation}
so that 
\begin{equation}\label{11/07/02/15:04}
\left\| \sigma_{\infty}^{j} \widetilde{\psi}^{j} \right\|_{W^{j}(I^{j})}
< \infty
\qquad 
\mbox{for any $j\ge 1$}.
\end{equation}
Let  $k$ be a sufficiently large number to be specified later. Then, we define the function $\psi_{n}^{k\mbox{-}app}$ by
\begin{equation}\label{11/07/04/21:18}
\psi_{n}^{k\mbox{-}app}(x,t):=\sum_{j=1}^{k}
\psi_{n}^{j}(x,t)+e^{it\Delta}w_{n}^{k}(x).
\end{equation}

It follows from Lemma \ref{11/07/02/15:03} together with the 
 hypothesis \eqref{11/07/02/15:04} that $\psi_{n}^{j}$ exists globally in time for any $j\ge 1$ and hence so does $\psi_{n}^{k\mbox{-}app}$. 
\par 
We see from Lemma \ref{11/07/02/15:03} and \eqref{11/07/17/10:57} that there exists $B>0$ with the following property: there exists $N_{1,k} \in \mathbb{N}$ such that  
\begin{equation}
\label{11/07/03/16:49}
\sup_{n\ge N_{1,k}}\left\| \psi_{n}^{k\mbox{-}app} \right\|_{W_{p_{1}}(\mathbb{R})\cap W(\mathbb{R})}
\le B.
\end{equation}
Moreover, it follows from \eqref{11/06/22/22:27} that
\begin{equation}\label{11/08/18/17:22}
\begin{split}
\left\|
\psi_{n}(0)-\psi_{n}^{k\mbox{-}app}(0)
\right\|_{H^{1}}
&=
\left\|
\psi_{n}(0)-\sum_{j=1}^{k}\psi_{n}^{j}(0)
-w_{n}^{k} 
\right\|_{H^{1}}
\\[6pt]
&\lesssim 
\sum_{j=1}^{k}
\left\|
e^{-i\frac{t_{n}^{j}}{(\lambda_{n}^{j})^{2}}\Delta}\widetilde{u}^{j}
-
\widetilde{\psi}^{j}
\biggm( 
-\frac{t_{n}^{j}}{(\lambda_{n}^{j})^{2}}\biggm)
\right\|_{H^{1}}
.
\end{split}
\end{equation}
Hence, \eqref{11/06/22/22:37} shows that for any $k \in \mathbb{N}$, there exists $N_{2,k} \in \mathbb{N}$ such that 
\begin{equation}\label{11/08/18/17:31}
\sup_{n\ge N_{2,k}}\left\|
\psi_{n}(0)-\psi_{n}^{k\mbox{-}app}(0)
\right\|_{H^{1}}
\le 1.
\end{equation}
Now, let $\delta$ be the constant found in Proposition \ref{11/05/14/12:26} which is determined by the bound \eqref{11/07/03/14:33}  and $B$. 
\par 
We see from the estimate \eqref{11/08/18/17:22} together with the Strichartz estimate and \eqref{11/06/22/22:37} that for any $k \in \mathbb{N}$, there exists $N_{3,k} \in \mathbb{N}$ such that  
\begin{equation}\label{11/07/03/16:52}
\sup_{n\ge N_{3,k}}\left\|\langle \nabla \rangle e^{it\Delta}
\left( \psi_{n}(0)-\psi_{n}^{k\mbox{-}app}(0)
\right) \right\|_{V_{p_{1}}}<\delta
.
\end{equation}

We shall show that there exist $k_{0} \in \mathbb{N}$ and $N_{0} \in \mathbb{N}$ such that 
\begin{equation}\label{11/07/02/17:21}
\begin{split}
&\left\|\langle \nabla \rangle  
\left(
i\frac{\partial \psi_{n}^{k_{0} \mbox{-}app}}{\partial t}+\Delta \psi_{n}^{k_{0} \mbox{-}app} +f(\psi_{n}^{k_{0}\mbox{-}app})
+
|\psi_{n}^{k_{0}\mbox{-}app}|^{2^{*}-2}\psi_{n}^{k_{0}\mbox{-}app}\right)
\right\|_{L_{t,x}^{\frac{2(d+2)}{d+4}}}
\\[6pt]
&<\delta
\qquad 
\mbox{for any $n\ge N_{0}$}.
\end{split}
\end{equation}
Before proving this, we remark that \eqref{11/07/02/17:21} together with the long-time perturbation theory leads to an absurd conclusion; Indeed, we see from  \eqref{11/07/03/16:49}, \eqref{11/08/18/17:31} and \eqref{11/07/03/16:52} that
\begin{align}
\label{11/08/18/18:16}
&\left\| \psi_{n}(0)-\psi_{n}^{k_{0}\mbox{-}app} \right\|_{H^{1}}
\le 1,
\\[6pt]
\label{11/08/18/18:22}
&\left\| \psi_{n}^{k_{0}\mbox{-}app} \right\|_{W_{p_{1}}(\mathbb{R})\cap W(\mathbb{R})}
\le B,
\\[6pt]
\label{11/08/18/18:30}
&\left\|\langle \nabla \rangle e^{it\Delta}
\left( \psi_{n}(0)-\psi_{n}^{k_{0}\mbox{-}app}(0)
\right) \right\|_{V_{p_{1}}(\mathbb{R})}<\delta
\end{align}
for any  $n\ge \max\{N_{1,k_{0}},N_{2,k_{0}}, N_{3,k_{0}}, N_{0}\}$. Hence, employing Proposition \ref{11/05/14/12:26} (Long-time perturbation theory), we conclude that 
\begin{equation}
\label{11/07/03/14:28}
\left\|\psi_{n} \right\|_{W_{p}(\mathbb{R})\cap W(\mathbb{R})}
< \infty
\qquad 
\mbox{for any $n\ge \max\{N_{1,k_{0}},N_{2,k_{0}}, N_{3,k_{0}}, N_{0}\}$},
\end{equation}
which contradicts \eqref{11/06/29/16:04}: hence, Lemma \ref{11/07/03/14:31} holds.  
\par 
It remains to prove \eqref{11/07/02/17:21}. Note that 
\begin{equation}\label{11/07/15/9:59}
\begin{split}
&i\frac{\partial \psi_{n}^{k\mbox{-}app}}{\partial t}
+
\Delta \psi_{n}^{k\mbox{-}app}
+
f(\psi_{n}^{k\mbox{-}app})
+
|\psi_{n}^{k\mbox{-}app}|^{2^{*}-2}\psi_{n}^{k\mbox{-}app}
\\[6pt]
& =
\mathcal{N}\big(\psi_{n}^{k\mbox{-}app}\big)
-
\mathcal{N}\big(\psi_{n}^{k\mbox{-}app}-e^{it\Delta}w_{n}^{k}\big)+
\sum_{j=1}^{k} i\frac{\partial \psi_{n}^{j}}{\partial t}
+
\Delta \psi_{n}^{j}
+
\mathcal{N}\biggm(\sum_{j=1}^{k}\psi_{n}^{j}\biggm), 
\end{split}
\end{equation}
where 
\begin{align}
\label{11/08/19/10:32}
\mathcal{N}(u)&:=f(u)+|u|^{2^{*}-2}u, 
\\[6pt]
\label{11/07/15/9:49}
\mathcal{N}_{j}(u)&:=
\left\{ \begin{array}{ccc}
f(u)
+
|u|^{2^{*}-2}u
 &\mbox{if}& \lambda_{\infty}^{j}=1,
\\[6pt]
|u|^{2^{*}-2}u &\mbox{if}& \lambda_{\infty}^{j}=0.
\end{array} \right.
\end{align}
Hence, it suffices for \eqref{11/07/02/17:21} to show that   
\begin{align}
\label{11/07/16/9:46}
&\lim_{k\to \infty}\lim_{n\to \infty}
\left\| \langle \nabla \rangle \left\{
\mathcal{N}( \psi_{n}^{k\mbox{-}app})
-
\mathcal{N} \left( 
\psi_{n}^{k\mbox{-}app}-e^{it\Delta}w_{n}^{k} 
\right)
\right\}\right\|_{L_{t,x}^{\frac{2(d+2)}{d+4}}}
=0,
\\[6pt]
\label{11/07/16/9:50}
&
\lim_{k\to \infty}\lim_{n\to \infty}
\left\| \langle \nabla \rangle \left\{
\sum_{j=1}^{k}
i\frac{\partial \psi_{n}^{j}}{\partial t}
+
\Delta \psi_{n}^{j}
+
\mathcal{N}\biggm( 
\sum_{j=1}^{k} \psi_{n}^{j}
\biggm)
\right\}\right\|_{L_{t,x}^{\frac{2(d+2)}{d+4}}}
\hspace{-9pt}=0.
\end{align}
First, we prove \eqref{11/07/16/9:46}. Using the growth conditions  \eqref{11/04/30/22:54} and \eqref{11/05/02/8:40}, we verify that  
\begin{equation}\label{11/07/16/10:08}
\begin{split}
&
\left\| \langle \nabla \rangle \left\{
\mathcal{N}( \psi_{n}^{k\mbox{-}app})
-
\mathcal{N} \left( 
\psi_{n}^{k\mbox{-}app}-e^{it\Delta}w_{n}^{k} 
\right)
\right\}\right\|_{L_{t,x}^{\frac{2(d+2)}{d+4}}}
\\[6pt]
&\lesssim  
\sum_{j=1}^{2}
\left\| 
\big| \psi_{n}^{k\mbox{-}app} \big|^{p_{j}-1}
e^{it\Delta}w_{n}^{k}
\right\|_{L_{t,x}^{\frac{2(d+2)}{d+4}}}
+
\left\| 
\big| \psi_{n}^{k\mbox{-}app} \big|^{\frac{4}{d-2}}
e^{it\Delta}w_{n}^{k}
\right\|_{L_{t,x}^{\frac{2(d+2)}{d+4}}}
\\[6pt]
&\qquad 
+
\sum_{j=1}^{2}
\left\| 
e^{it\Delta}w_{n}^{k}
\right\|_{L_{t,x}^{\frac{2(d+2)p_{j}}{d+4}}}^{p_{j}}
+
\left\| 
e^{it\Delta}w_{n}^{k}
\right\|_{L_{t,x}^{\frac{2(d+2)(2^{*}-1)}{d+4}}}^{2^{*}-1}
\\[6pt]
&\qquad +
\sum_{j=1}^{2}\left\| 
\big| \psi_{n}^{k\mbox{-}app} \big|^{p_{j}-1}
\nabla e^{it\Delta}w_{n}^{k}
\right\|_{L_{t,x}^{\frac{2(d+2)}{d+4}}}
+
\left\| 
\big| \psi_{n}^{k\mbox{-}app} \big|^{2^{*}-2}
\nabla e^{it\Delta}w_{n}^{k}
\right\|_{L_{t,x}^{\frac{2(d+2)}{d+4}}}
\\[6pt]
&\qquad 
+
\sum_{j=1}^{2}
\left\| 
\big| \big| e^{it\Delta}w_{n}^{k} \big|^{p_{j}-1}
\nabla \psi_{n}^{k\mbox{-}app}
\right\|_{L_{t,x}^{\frac{2(d+2)}{d+4}}}
+
\left\| 
\big| \big| e^{it\Delta}w_{n}^{k} \big|^{2^{*}-2}
\nabla \psi_{n}^{k\mbox{-}app}
\right\|_{L_{t,x}^{\frac{2(d+2)}{d+4}}}
\\[6pt]
&\qquad 
+
\sum_{j=1}^{2}
\left\| 
\big| \big| e^{it\Delta}w_{n}^{k} \big|^{p_{j}-1}
\nabla e^{it\Delta}w_{n}^{k} 
\right\|_{L_{t,x}^{\frac{2(d+2)}{d+4}}}
+
\left\| 
\big| \big| e^{it\Delta}w_{n}^{k} \big|^{2^{*}-2}
\nabla e^{it\Delta}w_{n}^{k} 
\right\|_{L_{t,x}^{\frac{2(d+2)}{d+4}}},
\end{split}
\end{equation}
where we must add the terms 
\begin{align}
\label{11/08/21/11:18}
&
\left\| 
\big| \psi_{n}^{k\mbox{-}app} \big|^{p_{2}-2}
e^{it\Delta}w_{n}^{k} 
\nabla  \psi_{n}^{k\mbox{-}app}
\right\|_{L_{t,x}^{\frac{2(d+2)}{d+4}}}
\qquad \mbox{if $p_{2}>2$},
\\[6pt]
\label{11/08/21/11:19}
& 
\left\| 
\big| \psi_{n}^{k\mbox{-}app} \big|^{\frac{6-d}{d-2}}
e^{it\Delta}w_{n}^{k} 
\nabla  \psi_{n}^{k\mbox{-}app}
\right\|_{L_{t,x}^{\frac{2(d+2)}{d+4}}}
\qquad \mbox{if $d\le 5$}  
\end{align}
to the right-hand side of  \eqref{11/07/16/10:08}.
\par 
Since $\frac{2(d+2)}{d}<\frac{2(d+2)p}{d+4}\le \frac{2(d+2)}{d-2}$ for $p_{1}\le p \le 2^{*}-1$, we easily see from the H\"older inequality, \eqref{11/07/06/17:55} and \eqref{11/07/24/15:19} that the 3rd, 4th, the final and the 2nd final terms on the right-hand side of \eqref{11/07/16/10:08} vanish as $k\to \infty$ and $n\to \infty$.  Using Lemma \ref{11/07/02/15:03}, we also estimate the 7th and 8th terms as follows:  
\begin{equation}\label{11/08/24/12:09}
\begin{split}
&\lim_{k\to \infty}\lim_{n\to \infty}
\left\| 
\big| \big| e^{it\Delta}w_{n}^{k} \big|^{p-1}
\nabla \psi_{n}^{k\mbox{-}app}
\right\|_{L_{t,x}^{\frac{2(d+2)}{d+4}}}
\\[6pt]
&\le 
\lim_{k\to \infty}\lim_{n\to \infty}
\left\| 
e^{it\Delta}w_{n}^{k} \right\|_{W_{p}(\mathbb{R})}^{p-1}
\left\| \nabla \psi_{n}^{k\mbox{-}app}
\right\|_{W_{1+\frac{4}{d}}(\mathbb{R})}
=0.
\end{split}
\end{equation}
\par 
We consider the terms of the form 
\begin{equation}\label{11/08/21/11:35}
\left\| 
\big| \psi_{n}^{k\mbox{-}app} \big|^{p-1}
|\nabla |^{s} e^{it\Delta}w_{n}^{k}
\right\|_{L_{t,x}^{\frac{2(d+2)}{d+4}}},
\qquad 
\mbox{$1+\frac{4}{d}<p \le 2^{*}-1$ and $s=0,1$},
\end{equation}
which corresponds to the 1st, 2nd, 5th, 6th terms on the right-hand side of \eqref{11/07/16/10:08}.
Using the H\"older inequality, \eqref{11/08/10/14:07} and \eqref{11/07/17/10:57}, we have 
\begin{equation}\label{11/08/21/11:39}
\begin{split}
&\left\| 
\big| \psi_{n}^{k\mbox{-}app} \big|^{p-1}
|\nabla |^{s} e^{it\Delta}w_{n}^{k}
\right\|_{L_{t,x}^{\frac{2(d+2)}{d+4}}}
\\[6pt]
&\le 
\left\| \psi_{n}^{k\mbox{-}app} \right\|_{W_{p}(\mathbb{R})}^{\frac{3(p-1)}{4}}
\left\| 
|\nabla|^{s} e^{it\Delta} w_{n}^{k}
\right\|_{L_{t,x}^{\frac{2(d+2)}{d}}}^{\frac{5-p}{4}}
\left\|  \psi_{n}^{k\mbox{-}app} 
|\nabla|^{s} e^{it\Delta} w_{n}^{k} 
\right\|_{L_{t,x}^{\frac{2(d+2)(p-1)}{d(p-1)+4}}}^{\frac{p-1}{4}}
\\[6pt]
&\lesssim 
\left\|  \bigg( \sum_{j=1}^{k}\psi_{n}^{j}\bigg) 
|\nabla|^{s} e^{it\Delta} w_{n}^{k} 
\right\|_{L_{t,x}^{\frac{2(d+2)(p-1)}{d(p-1)+4}}}^{\frac{p-1}{4}}
+ 
\left\| 
e^{it\Delta} w_{n}^{k}
|\nabla|^{s} e^{it\Delta} w_{n}^{k} 
\right\|_{L_{t,x}^{\frac{2(d+2)(p-1)}{d(p-1)+4}}}^{\frac{p-1}{4}}
\\[6pt]
&\hspace{240pt}    
\mbox{for any sufficiently large $n$}. 
\end{split}
\end{equation}
We consider the first term on the right-hand side of \eqref{11/08/21/11:39}.  It follows from Lemmata \ref{11/08/11/21:15} and 
\ref{11/07/02/15:03} that for any $\eta>0$, there exists $J(\eta)\in \mathbb{N}$ such that  
\begin{equation}\label{11/08/24/10:36}
\left( \sum_{j>J(\eta)}\left\| \psi_{n}^{j} \right\|_{W_{p_{1}}(\mathbb{R})\cap W(\mathbb{R})}^{2}\right)^{\frac{1}{2}}< \eta.
\end{equation}
Using the triangle inequality and the H\"older inequality, we have\begin{equation}\label{11/07/16/13:48}
\begin{split}
&\left\| \bigg( \sum_{j=1}^{k}\psi_{n}^{j}\bigg) |\nabla |^{s}
e^{it\Delta} w_{n}^{k} \right\|_{L_{t,x}^{\frac{2(d+2)(p-1)}{d(p-1)+4}}}
\\[6pt]
&\le 
\sum_{j\le J(\eta)}
\left\| \psi_{n}^{j} |\nabla |^{s}
e^{it\Delta} w_{n}^{k} \right\|_{L_{t,x}^{\frac{2(d+2)(p-1)}{d(p-1)+4}}}
+
\left\| \sum_{j>J(\eta)}
 \psi_{n}^{j} \right\|_{W_{p}(\mathbb{R})}
\left\| 
\nabla 
e^{it\Delta} w_{n}^{k} \right\|_{W_{1+\frac{4}{d}}(\mathbb{R})}
.
\end{split}
\end{equation}
Lemma \ref{11/07/02/15:03} shows the first term on the right-hand side of \eqref{11/07/16/13:48} vanishes when $k$ tends to $\infty$ and then $n$ tends to $\infty$. Moreover, we see from the elementary inequality \eqref{11/08/10/14:47}, \eqref{11/07/03/10:21}, \eqref{11/07/17/10:57} and \eqref{11/08/24/10:36} that 
\begin{equation}\label{11/07/03/10:21}
\lim_{n\to \infty}\left\| \sum_{j>J(\eta)}
 \psi_{n}^{j} \right\|_{W_{p}(\mathbb{R})}
\hspace{-9pt}
\left\| 
\nabla 
e^{it\Delta} w_{n}^{J} \right\|_{W_{1+\frac{4}{d}}(\mathbb{R})}
\lesssim 
\left( 
\sum_{j>J(\eta)} \left\|
 \psi_{n}^{j} \right\|_{W_{p}(\mathbb{R})}
\right)^{\frac{1}{2}}
\left\| 
 w_{n}^{k} \right\|_{H^{1}}
\lesssim \eta. 
\end{equation}
Thus, we find that 
\begin{equation}\label{11/08/24/11:28}
\lim_{k\to \infty}\lim_{n\to \infty}
\left\|  \bigg( \sum_{j=1}^{k}\psi_{n}^{j}\bigg) 
|\nabla|^{s} e^{it\Delta} w_{n}^{k} 
\right\|_{L_{t,x}^{\frac{2(d+2)(p-1)}{d(p-1)+4}}}^{\frac{p-1}{4}}
=0.
\end{equation}
On the other hand, it follows form the H\"older inequality and 
 \eqref{11/07/06/17:55} that  
\begin{equation}\label{11/08/24/11:34}
\begin{split}
&\left\| 
e^{it\Delta} w_{n}^{k}
|\nabla|^{s} e^{it\Delta} w_{n}^{k} 
\right\|_{L_{t,x}^{\frac{2(d+2)(p-1)}{d(p-1)+4}}}
\\[6pt]
&\le 
\left\| 
e^{it\Delta} w_{n}^{k}
\right\|_{W_{1+\frac{4}{d}}(\mathbb{R})}^{\frac{2}{p-1}-\frac{d-2}{2}}
\left\| 
e^{it\Delta} w_{n}^{k}
\right\|_{W(\mathbb{R})}^{\frac{d}{2}-\frac{2}{p-1}}
\left\| |\nabla|^{s} e^{it\Delta} w_{n}^{k} 
\right\|_{W_{1+\frac{4}{d}}}
\\[6pt]
&\to 0 
\qquad 
\mbox{as $k\to \infty$ and $n\to \infty$}.
\end{split}
\end{equation}
Combining \eqref{11/08/21/11:39}  with \eqref{11/08/24/11:28} and  \eqref{11/08/24/11:34}, we obtain  
\begin{equation}\label{11/07/17/13:10}
\lim_{k\to \infty}\lim_{n\to \infty}\left\| 
\big| \psi_{n}^{k\mbox{-}app} \big|^{p-1}
|\nabla |^{s} e^{it\Delta}w_{n}^{k}
\right\|_{L_{t,x}^{\frac{2(d+2)}{d+4}}}=0,
\quad 
\mbox{$1+\frac{4}{d}<p \le 2^{*}-1$ and $s=0,1$}
\end{equation}
Hence, we have proved \eqref{11/07/16/9:46}. 
\par 
Finally, we prove \eqref{11/07/16/9:50}. Noting that 
\begin{equation}\label{11/12/04/16:09}
\begin{split}
i\frac{\partial \psi_{n}^{j}}{\partial t}
+
\Delta \psi_{n}^{j}
&=
-\frac{1}{(\lambda_{n}^{j})^{2}}G_{n}^{j}\sigma_{n}^{j}
\big[ (\sigma_{\infty}^{j})^{-1}\mathcal{N}_{j}(\sigma_{\infty}^{j}\widetilde{\psi}^{j}) \big]
\\[6pt]
&
=
\left\{ 
\begin{array}{ccc}
-\mathcal{N}(\psi_{n}^{j}) 
&\mbox{if}& \lambda_{\infty}^{j}=1,
\\[6pt]
-\langle \nabla \rangle^{-1}|\nabla |
\left\{ 
|G_{n}^{j}\sigma_{\infty}^{j}\widetilde{\psi}^{j}|^{2^{*}-2}
(G_{n}^{j}\sigma_{\infty}^{j}\widetilde{\psi}^{j})
\right\}
&\mbox{if}& \lambda_{\infty}^{j}=0, 
\end{array}
\right.
\end{split} 
\end{equation}
we can verify that 
\begin{equation}\label{11/07/17/13:49}
\begin{split}
&
\left\| \langle \nabla \rangle \left\{
\sum_{j=1}^{k} i\frac{\partial \psi_{n}^{j}}{\partial t}
+
\Delta \psi_{n}^{j}
+
\mathcal{N}\biggm( 
\sum_{j=1}^{k} \psi_{n}^{j}
\biggm)
\right\}\right\|_{L_{t,x}^{\frac{2(d+2)}{d+4}}}
\\[6pt]
&\le 
\left\| \langle \nabla \rangle  
\left\{
\mathcal{N}\biggm( 
\sum_{j=1}^{J} \psi_{n}^{j}
\biggm)
-
\sum_{j=1}^{J}  \mathcal{N}
\left(
\psi_{n}^{j}
\right)
\right\}\right\|_{L_{t,x}^{\frac{2(d+2)}{d+4}}}
+o_{n}(1).
\end{split}
\end{equation}
We see from the growth conditions \eqref{11/04/30/22:54} and \eqref{11/05/02/8:40} that 
\begin{equation}\label{11/07/17/15:04}
\begin{split}
&\left\| |\nabla |^{s} 
\left\{
\mathcal{N}\biggm( 
\sum_{j=1}^{J} \psi_{n}^{j}
\biggm)
-
\sum_{j=1}^{J}  \mathcal{N}
\left(
\psi_{n}^{j}
\right)
\right\}\right\|_{L_{t,x}^{\frac{2(d+2)}{d+4}}}
\\[6pt]
&\lesssim 
\sum_{k=1}^{3}\sum_{j=1}^{J}
\sum_{{1\le j' \le J}\atop {j'\neq j}}
\left\| 
\left| \psi_{n}^{j'} \right|^{p_{_{k}}-1}
|\nabla|^{s} \psi_{n}^{j}
\right\|_{L_{t}^{2}L_{x}^{\frac{2d}{d+2}}},
\end{split}
\end{equation}
where $p_{3}:=1+\frac{4}{d-2}$.
Suppose here that $\displaystyle{\lim_{n\to \infty}\frac{\lambda_{n}^{j'}}{\lambda_{n}^{j}}=\infty}$. Then, we have 
\begin{equation}\label{11/07/03/14:11}
\begin{split}
&\left\| 
 \left| \psi_{n}^{j'} \right|^{p_{_{k}}-1}
|\nabla|^{s} \psi_{n}^{j}
\right\|_{L_{t}^{2}L_{x}^{\frac{2d}{d+2}}}
\\[6pt]
&= 
(\lambda_{n}^{j})^{1-s+\frac{d-2}{2}
\left\{\frac{4}{d-2}-(p_{k}-1) \right\}}
\left(\frac{\lambda_{n}^{j}}{\lambda_{n}^{j'}}\right)^{-\frac{(d-2)(p_{l}-1)}{2}}
\\[6pt]
&\qquad \left\| 
\left| \widetilde{\psi}^{j'}
\biggm( 
\frac{\lambda_{n}^{j}x -(x_{n}^{j'}-x_{n}^{j})}{\lambda_{n}^{j'}}, \frac{(\lambda_{n}^{j})^{2}t -(t_{n}^{j'}-t_{n}^{j})}{(\lambda_{n}^{j'})^{2}}
\biggm)
 \right|^{p_{_{l}}-1}
|\nabla|^{s} \widetilde{\psi}^{j}
\right\|_{L_{t}^{2}L_{x}^{\frac{2d}{d+2}}}
\\[6pt]
&
\le 
(\lambda_{n}^{j})^{1-s+\frac{d-2}{2}
\left\{\frac{4}{d-2}-(p_{l}-1) \right\}}
\left(\frac{\lambda_{n}^{j}}{\lambda_{n}^{j'}}\right)^{-\frac{(d-2)(p_{l}-1)}{2}}
\left\|\langle \nabla \rangle \widetilde{\psi}^{j} \right\|_{S(\mathbb{R})}^{p_{l}}
=o_{n}(1).
\end{split}
\end{equation}
When $\displaystyle{\lim_{n\to \infty}\frac{\lambda_{n}^{j}}{\lambda_{n}^{j'}}=\infty}$, we instead have 
\begin{equation}\label{11/07/03/14:10}
\begin{split}
&\left\| 
\left| \psi_{n}^{j'} \right|^{p_{_{l}}-1}
|\nabla|^{s} \psi_{n}^{j}
\right\|_{L_{t}^{2}L_{x}^{\frac{2d}{d+2}}}
\\[6pt]
&= 
(\lambda_{n}^{j'})^{1-s+\frac{d-2}{2}\left\{ \frac{4}{d-2}-(p_{l}-1)\right\}}
\left(\frac{\lambda_{n}^{j'}}{\lambda_{n}^{j}}
\right)^{\frac{d}{2}}
\\[6pt]
&\qquad \left\| 
\biggm( 
\frac{\lambda_{n}^{j'}x -(x_{n}^{j}-x_{n}^{j'})}{\lambda_{n}^{j}}
,
\frac{(\lambda_{n}^{j'})^{2}t -(t_{n}^{j}-t_{n}^{j'})}{(\lambda_{n}^{j})^{2}}
\biggm)
\left| \widetilde{\psi}^{j'}
\right|^{p_{_{l}}-1}
|\nabla|^{s} \widetilde{\psi}^{j}
\right\|_{L_{t}^{2}L_{x}^{\frac{2d}{d+2}}}
\\[6pt]
&
\le 
(\lambda_{n}^{j'})^{1-s+\frac{d-2}{2}\left\{ \frac{4}{d-2}-(p_{l}-1)\right\}}
\left(\frac{\lambda_{n}^{j'}}{\lambda_{n}^{j}}
\right)^{\frac{d}{2}}
\left\|\langle \nabla \rangle \widetilde{\psi}^{j} \right\|_{S(\mathbb{R})}^{p_{l}}
=o_{n}(1).
\end{split}
\end{equation}
On the other hand, if $\displaystyle{\lim_{n\to \infty}
\left\{ \frac{\lambda_{n}^{j}}{\lambda_{n}^{j'}}+
\frac{\lambda_{n}^{j}}{\lambda_{n}^{j'}}
\right\}<\infty
}$, then we deduce from the dichotomy condition \eqref{11/07/03/10:21} that 
\begin{equation}\label{11/07/03/14:21}
\left\| 
\left| \psi_{n}^{j'} \right|^{p_{_{l}}-1}
|\nabla|^{s} \psi_{n}^{j}
\right\|_{L^{2}(I,L^{\frac{2d}{d+2}})}=o_{n}(1).
\end{equation}
Thus, we have shown \eqref{11/07/02/17:21}. 
\end{proof}

\begin{proposition}\label{11/06/10/17:46}
Assume $d\ge 5$ and suppose that $m_{\omega}^{*}<m_{\omega}$. Then, there exists a global solution $\Psi \in  C(\mathbb{R},H^{1}(\mathbb{R}^{d}))$ to  \eqref{11/06/12/9:08} such that 
\begin{align}
\label{11/06/10/17:48}
&
\Psi(t) \in A_{\omega,+}
\qquad 
\mbox{for any $t \in \mathbb{R}$},
\\[6pt]
\label{11/08/20/14:30}
&
\mathcal{S}_{\omega}\big(\Psi(t)\big)=m_{\omega}^{*}
\qquad 
\mbox{for any $t \in \mathbb{R}$},
\\[6pt] 
\label{11/08/20/14:02}
&\left\| \Psi 
\right\|_{W_{p}(\mathbb{R})\cap W(\mathbb{R})}=\infty.
\end{align}
\end{proposition}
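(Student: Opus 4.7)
By Lemma \ref{11/07/03/14:31}, after relabeling I may assume the index $j=1$ satisfies $\|\sigma_\infty^1\widetilde{\psi}^1\|_{W^1(I^1)}=\infty$. The first step is to rule out $\lambda_\infty^1=0$: in that case $\sigma_\infty^1\widetilde{\psi}^1$ solves \eqref{11/07/17/16:01} and lies in $A_0$ by Lemma \ref{11/07/07/20:34}, so Theorem \ref{11/07/24/20:56} (which requires $d\ge 5$) gives $\|\sigma_\infty^1\widetilde{\psi}^1\|_{W(\mathbb{R})}<\infty$, contradicting the choice because $W^1=W$ in this regime. Thus $\lambda_\infty^1=1$, $\sigma_\infty^1=1$, and $\widetilde{\psi}^1$ itself is a solution of \eqref{11/06/12/9:08} confined to $A_{\omega,+}$ on its maximal interval $I^1$.

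I would next show $\widetilde{u}^j=0$ for every $j\ge 2$ and $\|w_n^k\|_{H^1}\to 0$. Since $\widetilde{\psi}^1$ is an \eqref{11/06/12/9:08}-solution in $A_{\omega,+}$ with infinite $W_{p_1}\cap W$ norm on $I^1$, the very definition of $m_\omega^*$ forces $S_\omega(\widetilde{\psi}^1)\ge m_\omega^*$. On the other hand, the Pythagorean expansion \eqref{11/06/27/22:18} combined with positivity of $S_\omega$ on each nontrivial profile and on $w_n^k$ (from $\mathcal{K}>0$ via Lemma \ref{11/07/07/20:34} together with $\mathcal{I}_\omega\ge 0$) and with $\lim_n S_\omega(\psi_n(0))=m_\omega^*$ gives an upper bound $\lim_n S_\omega(\text{first profile})\le m_\omega^*$; a case analysis on $t_\infty^1\in\mathbb{R}$ versus $\pm\infty$ (using continuity of the free flow in the former and dispersive decay of the nonlinear part of $S_\omega$ in the latter) identifies this limit as $S_\omega(\widetilde{\psi}^1)$. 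Forced equality $S_\omega(\widetilde{\psi}^1)=m_\omega^*$ then makes every other term vanish in the limit, and $\mathcal{K}(w_n^k)>0$ plus Lemma \ref{11/06/27/22:43} upgrades $S_\omega(w_n^k)\to 0$ to $\|w_n^k\|_{H^1}\to 0$.

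Setting $\Psi:=\widetilde{\psi}^1$ (after absorbing a spatial and, when $t_\infty^1\in\mathbb{R}$, a temporal translation), Lemma \ref{11/07/07/20:34} gives $\Psi(t)\in A_{\omega,+}$ and the preceding step gives $S_\omega(\Psi)=m_\omega^*$ on $I_{\max}^\Psi$. Once global existence is secured, the infinite Strichartz norm follows: if $\|\Psi\|_{W_{p_1}(\mathbb{R})\cap W(\mathbb{R})}$ were finite, Theorem \ref{10/10/04/21:44}\,(v) would make $\Psi$ scatter, so the long-time perturbation Proposition \ref{11/05/14/12:26} applied with the approximants $\psi_n$ (whose initial data converge to $\Psi(0)$ in $H^1$ modulo translations) would yield $\|\psi_n\|_{W_{p_1}\cap W}<\infty$, contradicting \eqref{11/06/29/16:04}.

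The principal obstacle is globality $I_{\max}^\Psi=\mathbb{R}$. The above reduction actually yields the stronger statement that $\{\Psi(t)\colon t\in I_{\max}^\Psi\}$ is \emph{precompact in $H^1$ modulo spatial translations}: for any sequence $\{t_k\}\subset I_{\max}^\Psi$, the $H^1$-bounded family $\{\Psi(t_k)\}$ inherits the defining properties of the original minimizing sequence $\{\psi_n(0)\}$ (in particular, when $t_k$ approaches an endpoint, $\|\Psi\|_{W_{p_1}\cap W}=\infty$ on suitable sub-intervals by Theorem \ref{10/10/04/21:44}\,(iv)), so the same profile-decomposition-plus-reduction argument forces a single nontrivial profile with vanishing remainder and therefore strong $H^1$-convergence of a subsequence modulo translations. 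This precompactness rules out finite-time blow-up: if $T_{\max}^\Psi<\infty$, picking $t_k\uparrow T_{\max}^\Psi$ and applying Theorem \ref{10/10/04/21:44}\,(i) with $\delta$ chosen uniformly over the precompact family $\{\Psi(t_k)\}$ (possible because $u\mapsto\|\langle\nabla\rangle e^{it\Delta}u\|_{V_{p_1}([0,\tau])}$ is continuous on $H^1$ and tends to zero as $\tau\to 0$) provides a uniform positive existence time past each $t_k$, contradicting the maximality of $T_{\max}^\Psi$. The backward direction is symmetric, completing the proof.
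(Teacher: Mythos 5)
Your overall route is the same as the paper's: reduce to a single nontrivial profile via Lemma \ref{11/07/03/14:31}, rule out $\lambda_{\infty}^{1}=0$ with Lemma \ref{11/07/07/20:34} and Theorem \ref{11/07/24/20:56}, force $\mathcal{S}_{\omega}(\widetilde{\psi}^{1})=m_{\omega}^{*}$ by squeezing between the definition of $m_{\omega}^{*}$ and the Pythagorean expansions (the paper phrases this as ``$J=1$'' by contradiction, but the ingredients are identical), conclude $\|w_{n}^{1}\|_{H^{1}}\to 0$ via Lemma \ref{11/06/28/10:12}, and finally treat globality by rerunning the reduction along $t_{k}\uparrow T_{\max}$ and extracting a uniform existence time from Theorem \ref{10/10/04/21:44}\,(i). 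Two of your steps are harmless detours: \eqref{11/08/20/14:02} needs no perturbation argument, since $\widetilde{\psi}^{1}$ was selected by Lemma \ref{11/07/03/14:31} precisely so that its $W_{p_{1}}\cap W$ norm on $I^{1}$ is infinite.

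The one genuine gap is in the globality step, where you claim that the reduction yields precompactness of $\{\Psi(t_{k})\}$ in $H^{1}$ modulo spatial translations. What the single-profile conclusion actually gives (this is \eqref{11/08/20/16:53} in the paper) is $\Psi(t_{k})-e^{-i\tau_{k}\Delta}v(\cdot-\xi_{k})\to 0$ in $H^{1}$ with an additional \emph{temporal} parameter $\tau_{k}$, and strong convergence modulo spatial translations follows only if $\tau_{k}$ stays bounded. If $\tau_{k}\to\pm\infty$ there is no such precompactness for that subsequence (the profile is being propagated off to temporal infinity), and you cannot a priori exclude, say, $\tau_{k}\to+\infty$: that scenario only says the solution scatters backward from $t_{k}$, which is perfectly compatible with forward blow-up, so no contradiction is available at that stage. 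The fix is exactly what the paper does: do not pass through precompactness at all, but show directly that $\|\langle\nabla\rangle e^{it\Delta}\Psi(t_{k})\|_{V_{p_{1}}(I)}$ is small — on any fixed compact window $I$ when $\tau_{\infty}=\pm\infty$ (dispersive decay, \eqref{11/08/20/18:29}), and on a sufficiently short window when $\tau_{\infty}\in\mathbb{R}$ (\eqref{11/08/20/17:40}) — and then invoke Theorem \ref{10/10/04/21:44}\,(i) to continue $\Psi$ a uniform amount past $t_{k}$, contradicting $T_{\max}<\infty$. With that case analysis inserted in place of the precompactness claim, your argument closes; as written, the key assertion ``therefore strong $H^{1}$-convergence of a subsequence modulo translations'' is unjustified.
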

The function $\Psi$ in Proposition \ref{11/06/10/17:46} is called  the critical element. We further give an important properties of  the critical element:
\begin{proposition}\label{11/08/20/14:32}
Let $\Psi$ be the solution found in Proposition \ref{11/06/10/17:46}. Then, for any $\varepsilon>0$, there exist $R_{\varepsilon}>0$ and $\gamma_{\varepsilon}\in C([0,\infty),\mathbb{R}^{d})$ with $\gamma_{\varepsilon}(0)=0$ such that 
\begin{equation}\label{11/07/17/15:53}
\int_{|x-\gamma_{\varepsilon}(t)|\le R_{\varepsilon}}
\left|\Psi(x,t)\right|^{2}+ \left|\nabla \Psi(x,t)\right|^{2}\,dx
\ge (1-\varepsilon) \left\| \Psi (t)\right\|_{H^{1}}^{2}
\qquad 
\mbox{for any $t \in [0,\infty)$}. 
\end{equation}
Furthermore, the momentum of $\Psi$ is zero: 
\begin{equation}\label{11/06/10/17:59}
\Im{
\int_{\mathbb{R}^{d}}\overline{\Psi(x,t)}\nabla \Psi(x,t)\,dx=0
}
\qquad 
\mbox{for any $t \in \mathbb{R}$}.
\end{equation}
\end{proposition}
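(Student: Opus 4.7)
The plan is to establish the two claims of Proposition \ref{11/08/20/14:32} separately, each by contradiction. The compactness modulo translation follows the classical Kenig--Merle extraction from the profile decomposition; the vanishing momentum follows from Galilean invariance together with the threshold characterization of $m_\omega^*$.

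\emph{Compactness modulo translation.} Suppose \eqref{11/07/17/15:53} fails for some $\varepsilon>0$. Then, up to reflecting time, there exists a sequence $t_n\to+\infty$ along which $\Psi(t_n)$ admits no uniform $H^1$ concentration. Apply the profile decomposition of Section \ref{11/06/10/17:06} to the $H^1$-bounded (Lemma \ref{11/06/27/22:43}) sequence $\{\Psi(t_n)\}$ and set $\Phi_n(t):=\Psi(t+t_n)$; then $\mathcal S_\omega(\Phi_n(0))=m_\omega^*$ by action conservation and $\|\Phi_n\|_{W_{p_1}([0,\infty))\cap W([0,\infty))}=\infty$ by Theorem \ref{10/10/04/21:44}(iv). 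Re-running the extraction of Lemma \ref{11/07/03/14:31} for $\Phi_n$ in place of $\psi_n$ shows that at least one nonlinear profile must fail to scatter. Combined with the decoupling \eqref{11/06/27/22:18}, the positivity of $\mathcal S_\omega$ on $A_{\omega,+}$ (Lemma \ref{11/06/28/10:12}), and the strict inequality $m_\omega^*<m_\omega$, this forces the entire action $m_\omega^*$ to be carried by a single non-trivial profile $\widetilde u^1$, all other profiles and the remainder vanishing in $H^1$. Lemma \ref{11/07/07/20:34} combined with Theorem \ref{11/07/24/20:56} excludes $\lambda_\infty^1=0$ (an energy-bubble profile would lie in $A_0$ and scatter), while dispersive decay of $e^{it\Delta}\widetilde u^1$ combined with Proposition \ref{11/05/14/12:26} excludes $|t_n^1|\to\infty$ (it would force $\Phi_n$ to scatter on a half-line). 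Hence $\lambda_n^1\equiv 1$ and $t_n^1\to t_\infty^1\in\mathbb R$, so $\Psi(t_n)(\cdot+x_n^1)\to e^{it_\infty^1\Delta}\widetilde u^1$ strongly in $H^1$. Taking $R_\varepsilon$ large enough to capture most of the $H^1$-mass of this limit and setting $\gamma_\varepsilon(t_n):=x_n^1$ (extended continuously to $[0,\infty)$ with $\gamma_\varepsilon(0)=0$) contradicts the assumed failure.

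\emph{Vanishing momentum.} The condition $\bar z f(z)\in\mathbb R$ from \eqref{11/07/18/9:25} forces $f$ to be phase-covariant, so \eqref{11/06/12/9:08} enjoys Galilean invariance: for any $\xi\in\mathbb R^d$, the function
\[
\widetilde\Psi(x,t):=e^{i(x\cdot\xi-t|\xi|^2)}\Psi(x-2t\xi,t)
\]
is again a solution. Since $|\widetilde\Psi(x,t)|=|\Psi(x-2t\xi,t)|$, the $F$-integral and the $L^2$- and $L^{2^*}$-norms are preserved, and a direct computation gives
\[
P(\widetilde\Psi(t))=P(\Psi(t))+\xi\|\Psi(t)\|_{L^2}^2,\qquad
\mathcal S_\omega(\widetilde\Psi(t))-\mathcal S_\omega(\Psi(t))=\tfrac{|\xi|^2}{2}\|\Psi(t)\|_{L^2}^2+\xi\cdot P(\Psi(t)).
\]
Choose $\xi:=-P(\Psi(0))/\|\Psi(0)\|_{L^2}^2$; then $P(\widetilde\Psi(t))\equiv 0$ and $\mathcal S_\omega(\widetilde\Psi)=m_\omega^*-|P(\Psi(0))|^2/(2\|\Psi(0)\|_{L^2}^2)$. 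Assuming for contradiction that $P(\Psi(0))\ne 0$, we get $\mathcal S_\omega(\widetilde\Psi)<m_\omega^*<m_\omega$. The identity $m_\omega=\widetilde m_\omega$ (Proposition \ref{11/05/01/17:50}(i)) together with Lemma \ref{11/07/20/18:01} shows that any nontrivial $u$ with $\mathcal S_\omega(u)<m_\omega$ must satisfy $\mathcal K(u)>0$, so $\widetilde\Psi(0)\in A_{\omega,+}$. The very definition of $m_\omega^*$ then yields $\|\widetilde\Psi\|_{W_{p_1}(\mathbb R)\cap W(\mathbb R)}<\infty$. Since $W_{p_1}$ and $W$ are diagonal spacetime Lebesgue norms invariant under $(x,t)\mapsto(x-2t\xi,t)$, we obtain $\|\Psi\|_{W_{p_1}(\mathbb R)\cap W(\mathbb R)}<\infty$, contradicting \eqref{11/08/20/14:02}. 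Hence $P(\Psi(0))=0$, and momentum conservation gives \eqref{11/06/10/17:59}.

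\emph{Main difficulty.} I expect the substantive work to lie in the compactness step, particularly in re-executing the full profile-decoupling and long-time perturbation machinery of Lemmata \ref{11/07/07/20:34}--\ref{11/07/03/14:31} in the present boundary setting where the surviving profile saturates the action threshold $m_\omega^*$ exactly rather than lying strictly below, and in carefully excluding the escaping sub-cases $\lambda_\infty^1=0$ and $|t_n^1|\to\infty$ using the non-scattering of $\Phi_n$ on a half-line. Once the compact-orbit structure is in hand, the vanishing-momentum step reduces to a straightforward algebraic application of Galilean invariance and the threshold inclusion $\{\mathcal S_\omega<m_\omega\}\cap\{u\ne 0\}\subset\{\mathcal K>0\}$.
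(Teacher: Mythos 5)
The paper itself gives no written proof here (it refers to the analogous arguments in \cite{Akahori-Nawa, D-H-R}), and your overall route --- precompactness modulo translation via the profile decomposition, and vanishing momentum via a Galilean boost --- is exactly the intended one. The compactness half is acceptable in outline, though you gloss over the passage from sequential precompactness (along arbitrary sequences $t_{n}$, not just one produced by negation) to the uniform-in-$t$ statement with a \emph{continuous} path $\gamma_{\varepsilon}$ satisfying $\gamma_{\varepsilon}(0)=0$; that reduction is soft but should be stated.

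There is, however, a genuine error in the momentum step: the claim that ``any nontrivial $u$ with $\mathcal{S}_{\omega}(u)<m_{\omega}$ must satisfy $\mathcal{K}(u)>0$'' is false, and neither Proposition \ref{11/05/01/17:50} nor Lemma \ref{11/07/20/18:01} gives it. The sublevel set $\{\mathcal{S}_{\omega}<m_{\omega}\}$ also contains the region $\{\mathcal{K}<0\}$ (take a ground state $Q_{\omega}$ and consider $T_{\lambda}Q_{\omega}$ with $\lambda>1$: then $\mathcal{K}(T_{\lambda}Q_{\omega})<0$ and $\mathcal{S}_{\omega}(T_{\lambda}Q_{\omega})<m_{\omega}$ by \eqref{11/05/01/22:06} and \eqref{11/06/28/9:59}), so membership of $\widetilde{\Psi}(0)$ in $A_{\omega,+}$ does not follow from $\mathcal{S}_{\omega}(\widetilde{\Psi}(0))<m_{\omega}^{*}<m_{\omega}$ alone --- and it is not automatic, since your own computation shows the boost strictly \emph{decreases} $\mathcal{K}$: $\mathcal{K}(\widetilde{\Psi}(0))=\mathcal{K}(\Psi(0))-|\mathcal{P}(\Psi(0))|^{2}/\|\Psi(0)\|_{L^{2}}^{2}$. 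The gap is fixable by using the functional $\mathcal{I}_{\omega}$ instead: by \eqref{11/04/30/23:56}, $\mathcal{I}_{\omega}$ contains no gradient term and, by the gauge invariance encoded in \eqref{11/07/18/9:25}, is unchanged under multiplication by $e^{ix\cdot\xi}$, so $\mathcal{I}_{\omega}(\widetilde{\Psi}(0))=\mathcal{I}_{\omega}(\Psi(0))=\mathcal{S}_{\omega}(\Psi(0))-\tfrac{1}{2}\mathcal{K}(\Psi(0))<m_{\omega}^{*}<m_{\omega}=\widetilde{m}_{\omega}$; if $\mathcal{K}(\widetilde{\Psi}(0))\le 0$ the definition \eqref{11/04/30/23:54} of $\widetilde{m}_{\omega}$ would force $\mathcal{I}_{\omega}(\widetilde{\Psi}(0))\ge\widetilde{m}_{\omega}$, a contradiction, whence $\mathcal{K}(\widetilde{\Psi}(0))>0$ and $\widetilde{\Psi}(0)\in A_{\omega,+}$. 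With that repair (and Lemma \ref{11/06/27/22:29} to propagate membership in $A_{\omega,+}$), the rest of your argument --- $\mathcal{S}_{\omega}(\widetilde{\Psi})<m_{\omega}^{*}$, finiteness of the $W_{p_{1}}\cap W$ norm from the definition of $m_{\omega}^{*}$, and its Galilean invariance contradicting \eqref{11/08/20/14:02} --- goes through.
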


We can prove Proposition \ref{11/08/20/14:32} in a way similar to  \cite{Akahori-Nawa, D-H-R}. Hence, we omit it. 
\par 
Now, we give a proof of Proposition \ref{11/06/10/17:46}: 
\begin{proof}[Proof of Proposition \ref{11/06/10/17:46}]
Using Lemmata \ref{11/08/11/21:15} and \ref{11/07/03/14:31} and reordering indices, we can take a number $J \le j_{1}$ such that \begin{equation}\label{11/07/17/15:20}
\begin{split}
&\left\| \sigma_{\infty}^{j} \widetilde{\psi}^{j} 
\right\|_{W_{j}(I^{j})}=\infty 
\qquad 
\mbox{for any $1\le j \le J$},
\\[6pt]
&\left\| \sigma_{\infty}^{j} \widetilde{\psi}^{j} 
\right\|_{W_{j}(\mathbb{R})}< \infty 
\qquad 
\mbox{for any $j > J$}.
\end{split}
\end{equation}
We see from Lemma \ref{11/07/07/20:34} that 
\begin{equation}\label{11/08/19/14:13}
\sigma_{\infty}^{j}\widetilde{\psi}^{j}(t) 
\in \left\{ \begin{array}{lcc}
 A_{\omega,+}
&\mbox{if } & \lambda_{\infty}^{j}=1,
\\[6pt]
 A_{0}
&\mbox{if } & \lambda_{\infty}^{j}=0
\end{array}
\right.
\qquad 
\mbox{for any $1\le j \le J$ and $t \in I^{j}$}.
\end{equation}
Since $\sigma_{\infty}^{j}\widetilde{\psi}^{j}$ is  a solution to \eqref{11/07/17/16:01} when $\lambda_{\infty}^{j}=0$, Theorem \ref{11/07/24/20:56} together with \eqref{11/08/19/14:13} shows that 
\begin{equation}\label{11/08/19/14:19}
\left\| \sigma_{\infty}^{j}\widetilde{\psi}^{j} \right\|_{W(\mathbb{R})}=
\left\| \sigma_{\infty}^{j}\widetilde{\psi}^{j} \right\|_{W_{j}(I^{j})}<\infty,
\qquad 
\mbox{if $\lambda_{\infty}^{j}=0$}.
\end{equation} 
Hence, we find that 
\begin{equation}\label{11/08/19/14:25}
\lambda_{\infty}^{j}=1, \quad 
\sigma_{\infty}^{j}=1  \qquad 
\mbox{for any $1\le j \le J$}. 
\end{equation}
Then, we also have by \eqref{10/09/24/21:40}, \eqref{11/06/27/22:18}--\eqref{11/06/27/22:22} that 
\begin{align}
\label{11/08/20/14:19}
&\lim_{n\to \infty}
\left\{ 
\left\| |\nabla|^{s} \psi_{n}(0) \right\|_{L^{2}}^{2}
-
\sum_{j=1}^{k}
\left\||\nabla|^{s}
\widetilde{u}^{j}
\right\|_{L^{2}}^{2}
-
\left\| |\nabla|^{s} w_{n}^{k} \right\|_{L^{2}}^{2}
\right\}
=0,
\\[6pt]
\label{11/08/19/14:39}
&
\lim_{n\to \infty}
\left\{ 
\mathcal{S}_{\omega}(\psi_{n}(0))
-
\sum_{j=1}^{J} \mathcal{S}_{\omega}
\biggm(
e^{-it_{n}^{j}\Delta}
\widetilde{u}^{j}
\biggm)
-
\mathcal{S}_{\omega}(w_{n}^{J})
\right\}
=
0,
\\[6pt]
\label{11/08/19/14:38}
&
\lim_{n\to \infty}
\left\{ 
\mathcal{I}_{\omega}(\psi_{n}(0))
-
\sum_{j=1}^{J} \mathcal{I}_{\omega}
\biggm(
e^{-it_{n}^{j}\Delta}
\widetilde{u}^{j}
\biggm)
-
\mathcal{I}_{\omega}(w_{n}^{J})
\right\}
=
0.
\end{align}

We shall show that $J=1$. Note that since $\mathcal{K}(\psi_{n}(0))>0$, we have $\mathcal{I}_{\omega}(\psi_{n}(0))< \mathcal{S}_{\omega}(\psi_{n}(0))$. It follows from \eqref{11/08/19/14:38} together with \eqref{11/06/11/6:16} that 
\begin{equation}\label{11/08/20/10:46}
\begin{split}
&\mathcal{I}_{\omega}\biggm(
e^{-it_{n}^{j}\Delta}
\widetilde{u}^{j}
\biggm), 
\quad 
\mathcal{I}_{\omega}(w_{n}^{J}) 
\\[6pt]
&\le \mathcal{S}_{\omega}(\psi_{n}(0))<\frac{m_{\omega}+m_{\omega}^{*}}{2}
\qquad 
\mbox{for any $1\le j \le J$ and sufficiently large $n$}.  
\end{split}
\end{equation}
Since $\widetilde{m}_{\omega}=m_{\omega}$ (see Proposition \ref{11/05/01/17:50}), we see from the the definition of $\widetilde{m}_{\omega}$ that 
\begin{equation}\label{11/08/20/10:54}
\mathcal{K}\biggm(
e^{-it_{n}^{j}\Delta}
\widetilde{u}^{j}
\biggm)>0, 
\quad 
\mathcal{K}(w_{n}^{J})
>0
\qquad 
\mbox{for any $1\le j \le J$ and sufficiently large $n$}.
\end{equation}
Moreover, we have by Lemma \ref{11/06/28/10:12} together with \eqref{11/08/20/10:54} that 
\begin{equation}\label{11/08/20/11:02}
\liminf_{n\to \infty}\mathcal{S}_{\omega}
\biggm(
e^{-it_{n}^{j}\Delta}
\widetilde{u}^{j}
\biggm)>0 \quad 
\mbox{for any $1\le j \le J$},
\qquad 
\liminf_{n\to \infty}\mathcal{S}_{\omega}(w_{n}^{J})\ge 0
.
\end{equation}

Now, suppose the contrary that $J\ge 2$. Then, it follows from \eqref{11/06/11/6:16}, \eqref{11/08/19/14:39} and \eqref{11/08/20/11:02} that   
\begin{equation}\label{11/08/20/11:15}
\limsup_{n\to \infty}\mathcal{S}_{\omega}
\biggm(
e^{-it_{n}^{j}\Delta}
\widetilde{u}^{j}
\biggm)<m_{\omega}^{*}
\qquad 
\mbox{for any $1\le j \le J$},
\end{equation}
which together with \eqref{11/06/22/22:37} and the action-conservation law yields that 
\begin{equation}\label{11/07/17/15:37}
\mathcal{S}_{\omega}\bigm(\widetilde{\psi}^{j}(t)\bigm)<m_{\omega}^{*},
\qquad 
\mbox{for any $1\le j \le J$ and $t \in I^{j}$}.
\end{equation}
Since $\widetilde{\psi}^{j}$ is a solution to \eqref{11/06/12/9:08}, it follows from the definition of $m_{\omega}^{*}$ that 
\begin{equation}\label{11/07/17/23:05}
\left\|\widetilde{\psi}^{j} \right\|_{W_{p_{1}}(I^{j})\cap W(I^{j})}< \infty 
\qquad 
\mbox{for any $1\le j \le J$}.
\end{equation}
This contradicts \eqref{11/07/17/15:20}. Thus, we have $J=1$. 
\par
Since $\big\| \widetilde{\psi}^{1} \big\|_{W_{p_{1}}(I^{1})\cap W(I^{1})}=\infty$, we have  
\begin{equation}\label{11/07/17/23:16}
\mathcal{S}_{\omega}\bigm(\widetilde{\psi}^{1}(t)\bigm)\ge m_{\omega}^{*}
\qquad 
\mbox{for any $t \in I^{1}$}.
\end{equation}
On the other hand,  we see from a proof similar to the one of \eqref{11/07/17/15:37} that 
\begin{equation}\label{11/07/17/23:20}
\mathcal{S}_{\omega}\bigm(\widetilde{\psi}^{1}(t)\bigm)\le m_{\omega}^{*}
\qquad 
\mbox{for any $t \in I^{1}$}.
\end{equation}
Combining \eqref{11/07/17/23:16} and \eqref{11/07/17/23:20}, we obtain 
\begin{equation}\label{11/07/17/23:21}
\mathcal{S}_{\omega}\bigm(\widetilde{\psi}^{1}(t)\bigm)= m_{\omega}^{*}
\qquad 
\mbox{for any $t \in I^{1}$}.
\end{equation}
Since we have by \eqref{11/06/22/22:37} that 
\begin{equation}\label{11/08/20/11:50}
\mathcal{S}_{\omega}\bigm( \widetilde{\psi}^{1}(t) \bigm)
=
\mathcal{S}_{\omega}
\biggm(
e^{-it_{n}^{1}\Delta}
\widetilde{u}^{1}
\biggm),
\end{equation}
\eqref{11/08/19/14:39} together with \eqref{11/06/11/6:16} and \eqref{11/07/17/23:21} shows   
\begin{equation}
\label{11/07/17/23:07}
\lim_{n\to \infty}\mathcal{S}_{\omega}(w_{n}^{1})
=0.
\end{equation}
Hence, Lemma \ref{11/06/28/10:12} together with \eqref{11/08/20/10:54} with $J=1$ and \eqref{11/07/17/23:07} shows  
\begin{equation}\label{11/07/17/23:36}
\lim_{n\to \infty}\left\| w_{n}^{1} \right\|_{H^{1}}
=0. 
\end{equation}
We see from \eqref{11/06/22/22:27} together with \eqref{11/07/17/23:36} that
\begin{equation}\label{11/07/17/23:37}
\lim_{n\to \infty}\left\| \psi_{n}(0) 
-e^{-it_{n}^{1}\Delta}\widetilde{u}^{1}(\cdot -x_{n}^{1})
\right\|_{H^{1}}
=0.
\end{equation}
Now, we shall show that $I^{1}=\mathbb{R}$. Suppose the contrary that $T^{1}:=\sup{I^{1}}<\infty$. Let $\{t_{n}\}$ be a sequence in $I^{1}$ such that $\displaystyle{\lim_{n\to \infty}t_{n}\uparrow T_{\max}^{1}}$, and put $\widetilde{\psi}_{n}(t):=\widetilde{\psi}^{1}(t+t_{n})$ and $\widetilde{I}_{n}:=I^{1}-\tau_{n}^{1}$. We easily verify that the sequence $\{\widetilde{\psi}_{n} \}$ satisfies that
\begin{align}
\label{11/08/20/16:39}
&\widetilde{\psi}_{n}(t)\in A_{\omega,+} 
\qquad 
\mbox{for any $t \in \widetilde{I}_{n}$},
\\[6pt] 
\label{11/08/20/16:43}
&\lim_{n\to \infty}\mathcal{S}_{\omega}(\widetilde{\psi}_{n})=m_{\omega}^{*},
\\[6pt]
\label{11/08/20/16:45}
&\left\| \widetilde{\psi}_{n} \right\|_{W_{p_{1}}(\widetilde{I}_{n})\cap W(\widetilde{I}_{n})}=\infty.
\end{align}
Then, we can apply the above argument to this sequence, and find  as well as \eqref{11/07/17/23:37} that there exists a non-trivial function $v \in H^{1}(\mathbb{R}^{d})$, a sequence $\{\tau_{n}\}$ with $\displaystyle{\tau_{\infty}:=\lim_{n\to \infty}\tau_{n} \in \mathbb{R}\cup \{\pm \infty\}}$, and a sequence $\{\xi_{n}\}$, such that 
\begin{equation}\label{11/08/20/16:53}
\lim_{n\to \infty}\left\| \widetilde{\psi}^{1}(t_{n}) 
-e^{-i\tau_{n} \Delta}v(\cdot -\xi_{n})
\right\|_{H^{1}}
=
\lim_{n\to \infty}\left\| \widetilde{\psi}_{n}(0) 
-
e^{-i\tau_{n} \Delta}v(\cdot -\xi_{n})
\right\|_{H^{1}}
=0.
\end{equation}  
This together with the Strichartz estimate also yields that  
\begin{equation}\label{11/08/20/17:37}
\lim_{n\to \infty}
\left\| e^{it\Delta}\widetilde{\psi}^{1}(t_{n}) 
- e^{i(t-\tau_{n})\Delta} v(\cdot -\xi_{n}) 
\right\|_{V_{p_{1}}(\mathbb{R})}
=0 
.
\end{equation}
When $\tau_{\infty}=\pm \infty$, it follows from the decay estimate for the free solution that, for any compact interval $I$, we have 
\begin{equation}
\lim_{n\to \infty}
\left\| 
e^{i(t-\tau_{n})\Delta} v  
\right\|_{V_{p_{1}}(I)}=0,
\end{equation}
which, with the help of \eqref{11/08/20/17:37}, also yields that 
\begin{equation}\label{11/08/20/18:29}
\lim_{n\to \infty}\left\| e^{it\Delta}\widetilde{\psi}^{1}(t_{n}) \right\|_{V_{p_{1}}(I)}
=0. 
\end{equation}
On the other hand, when $\tau_{\infty}\in \mathbb{R}$, we have  
\begin{equation}\label{11/08/20/17:40}
\lim_{n\to \infty}\left\| e^{it\Delta}\widetilde{\psi}^{1}(t_{n}) \right\|_{V_{p_{1}(I)}}
\hspace{-6pt}=
\left\| e^{i(t-\tau_{\infty})\Delta}v \right\|_{V_{p_{1}(I)}}
\hspace{-6pt}
\ll 1
\quad 
\mbox{for any interval $I$ with $|I|\ll 1$}. 
\end{equation}
Then, Theorem \ref{10/10/04/21:44} together with \eqref{11/08/20/18:29} and \eqref{11/08/20/17:40} implies that $\widetilde{\psi}^{1}$ exists beyond $T^{1}$, which is a contradiction. Thus, $\sup{I^{1}}=+\infty$. Similarly, we have $\inf{I^{1}}=-\infty$; Hence, $I^{1}=\mathbb{R}$. 
\par 
Put $\Psi:=\widetilde{\psi}^{1}$. Then, this $\Psi$ is what we want.
\end{proof}

\subsection{Completion of the proof of Theorem \ref{11/06/10/17:09}}\label{11/12/05/10:23}
Since our critical element $\Psi$ belongs to $C(\mathbb{R},H^{1}(\mathbb{R}^{d}))$ (see Proposition \ref{11/06/10/17:46}), we can derive a contradiction in the same way as the energy subcritical case (see \cite{Akahori-Nawa, D-H-R}). Thus, we have $m_{\omega}^{*}=m_{\omega}$, which together with Theorem \ref{10/10/04/21:44} {\rm (v)} completes the proof of Theorem \ref{11/06/10/17:09}.

%%%%%%%%%%%%%%%%%%%%%%%%%
\bibliographystyle{plain}

\begin{thebibliography}{99}
\bibitem{AIKN}
Akahori, T., Ibrahim, S., Kikuchi, H. and Nawa, H., 
Existence of a ground state and blow-up problem for a nonlinear Schr\"odinger equation with critical growth. to appear.

\bibitem{AKN}
Akahori, T., Kikuchi, H. and Nawa, H.,
Scattering and blowup problems for a class of nonlinear Schr\"odinger equations
. Preprint.

\bibitem{Akahori-Nawa}
Akahori, T. and Nawa, H., Blowup and Scattering problems for the nonlinear Schr\"odinger equations, preprint, arXiv:1006.1485. 

\bibitem{Berestycki-Cazenave}
 Berestycki, H. and Cazenave, T., Instabilit\'e des \'etats stationnaires dans les \'equations de Schr\"odinger et de Klein-Gordon non lin\'eaires. C. R. Acad. Sci. Paris S\'er. I Math. {\bf 293} (1981), 489--492.

\bibitem{B-C-L-P-Z}
 Bulut, A., Czubak, M., Li, D., Pavlovi\'c, N. and , Zhang, X., 
 Stability and Unconditional Uniqueness of Solutions for Energy Critical Wave Equations in High Dimensions. 
 preprint (arXiv:0911.4534).

\bibitem{Berestycki-Lions}
 Berestycki, H. and Lions, P.-L. Nonlinear scalar field equations. I. Existence of a ground state. Arch. Rational Mech. Anal. {\bf 82} (1983), 313--345. 
 (MR0695535 (84h:35054a))

\bibitem{Brezis-Lieb}
 Br\'ezis, H. and Lieb, E.H.,
 A relation between pointwise convergence of functions and convergence of functionals.
 Proc. Amer. Math. Soc. {\bf 88} (1983) 485--489. 
 MR0699419 (84e:28003)

\bibitem{Cazenave-Weissler1} 
 Cazenave, T. and Weissler, F.B.,
 The Cauchy problem for the critical nonlinear Sch\"odinger equation in $H^{s}$, Nonlinear Anal. {\bf 14} (1990), 807--836.

\bibitem{Christ-Weinstein}
 Christ, F. M. and Weinstein, M. I., 
 Dispersion of small amplitude solutions of the generalized Korteweg-de Vries equation. 
 J. Funct. Anal. {\bf 100} (1991), 87--109.

\bibitem{D-H-R}
 Duyckaerts, T., Holmer, J. and Roudenko, S.,
 Scattering for the non-radial 3D cubic nonlinear Schr\"odinger equation. 
 Math. Res. Lett. {\bf 15} (2008), 1233--1250.

\bibitem{Foschi}
 Foschi, D. Inhomogeneous Strichartz estimates. J. Hyperbolic Differ. Equ. {\bf 2} (2005), 1--24. (MR2134950 (2006a:35043) ) 

\bibitem{Holmer-Roudenko}
 Holmer, J. and Roudenko, S., 
 A sharp condition for scattering of the radial 3D cubic nonlinear Schr\"odinger equation, Comm. Math. Phys. {\bf 282} (2008), 435--467.

\bibitem{IMN}
 Ibrahim, S., Masmoudi, N. and Nakanishi, K.,  
 Scattering threshold for the focusing nonlinear Klein-Gordon equation, 
 preprint, arXiv:1001.1474.

\bibitem{Kato1995} 
  Kato, T., 
 On nonlinear Schr\"odinger equations. II. $H^s$-solutions and unconditional well-posedness, 
  J. Anal. Math. {\bf 67} (1995), 281--306.
  MR1383498 (98a:35124a)

\bibitem{Keel-Tao} 
 Keel, M. and Tao, 
 T., Endpoint Strichartz estimate, 
 Amer. J. Math. {\bf 120} (1998) 955-980.

\bibitem{Kenig-Merle}
 Kenig, C.E. and  Merle, F.,  
 Global well-posedness, scattering and blow-up for the energy-critical, focusing, non-linear Schr\"odinger equation in the radial case. 
 Invent. Math. {\bf 166} (2006), 645--675.

\bibitem{Keraani}
 Keraani, S., 
 On the defect of compactness for the Strichartz estimates of the Schr\"odinger equations. 
 J. Differential Equations {\bf 175} (2001), 353--392.

\bibitem{Killip-Visan}
 Killip R. and Visan M.,  
 The focusing energy-critical nonlinear Schr\"odinger equation in dimensions five and higher. 
 Amer. J. Math. {\bf 132} (2010) 361-424.

\bibitem{LeCoz}
 Le Coz, S., 
 A note on Berestycki-Cazenave's classical instability result for nonlinear 
 Schr\"odinger equations.
 Adv. Nonlinear Stud. {\bf 8} (2008), 455--463. 

\bibitem{Tao-Visan}
 Tao, T. and Visan, M., 
 Stability of energy-critical nonlinear Schr\"odinger equations in high dimensions, 
 Electron. J. Differential Equations {\bf 118} (2005), 1--28.

\bibitem{Visan}
 Visan, M., 
 The defocusing energy-critical nonlinear Schr\"odinger equation in higher dimensions. 
 Duke Math. J. {\bf 138} (2007), 281--374.
\end{thebibliography}
%%%%%%%%%%%%%%%%%%%%%%%%%

\vspace{24pt}
%%%%%%%%%%%%%%%%%
{
Takafumi Akahori,
\\
Faculty of Engineering
\\
Shizuoka University, 
\\ 
Jyohoku 3-5-1, Hamamatsu, 432-8561, Japan 
\\ 
E-mail: ttakaho@ipc.shizuoka.ac.jp
}
\\
\\
{
Slim Ibrahim,
\\ 
Department of Mathematics and Statistics 
\\
University of Victoria
\\
Victoria, British Columbia 
\\
E-mail:ibrahim@math.uvic.ca
}
\\
\\
{
Hiroaki Kikuchi,
\\ 
School of Information Environment
\\
Tokyo Denki University, 
\\
Inzai, Chiba 270-1382, Japan 
\\
E-mail: hiroaki@sie.dendai.ac.jp
}
\\
\\
{
Hayato Nawa,
\\
Division of Mathematical Science, Department of System Innovation
\\
Graduate School of Engineering Science
\\
Osaka University, 
\\
Toyonaka 560-8531, Japan 
\\
E-mail: nawa@sigmath.es.osaka-u.ac.jp
}
\end{document}